\numberwithin{equation}{section}
\tikzset{middlearrow/.style={
	decoration={
		markings,
		mark= at position 0.5 with {\arrow{#1}} ,
	},
	postaction={decorate}
}}
\DeclareFontFamily{U}{BOONDOX-cal}{}
\DeclareFontShape{U}{BOONDOX-cal}{m}{n}{ <-> s*[1.0] BOONDOX-r-cal}{}
\DeclareMathAlphabet{\mathscr}{U}{BOONDOX-cal}{m}{n}
\DeclareFontFamily{OT1}{pzc}{}
\DeclareFontShape{OT1}{pzc}{m}{it}{<-> s * [1.2] pzcmi7t}{}
\DeclareMathAlphabet{\mathpzc}{OT1}{pzc}{m}{it}
\newcommand\Aut{\operatorname{Aut}}
\newcommand\End{\operatorname{End}}
\newcommand\Hom{\operatorname{Hom}}
\newcommand\Dom{\operatorname{Dom}}
\newcommand\supp{\operatorname{supp}}
\renewcommand\ker{\operatorname{Ker}}
\newcommand\im{\operatorname{Im}}
\newcommand\rd{\mathrm{d}} %{\operatorname{d}\!}
\newcommand\Ad{\operatorname{Ad}}
\newcommand\ad{\operatorname{ad}}
\renewcommand\dim{\operatorname{dim}}
\renewcommand\sup{\operatorname{sup}}
\renewcommand\max{\operatorname{max}}
\renewcommand\min{\operatorname{min}}
\renewcommand\Re{\operatorname{Re}}
\newcommand\C{\mathbb{C}}
\newcommand\R{\mathbb{R}}
\newcommand\N{\mathbb{N}}
\newcommand\Z{\mathbb{Z}}
\newcommand\T{\mathbb{T}}
\newcommand\Q{\mathbb{Q}}
\newcommand\II{\mathcal{I}}
\newcommand\JJ{\mathcal{J}}
\newcommand\LL{\mathcal{L}}
\renewcommand\SS{\mathcal{S}}
\DeclareMathOperator{\sign}{sign}
\newtheorem{thm*}{Theorem}
\newtheorem{thm}{Theorem}[section]
\newtheorem{cor}[thm]{Corollary}
\newtheorem{lemma}[thm]{Lemma}
\newtheorem{prop}[thm]{Proposition}
\theoremstyle{definition}
\newtheorem{definition}[thm]{Definition}
\theoremstyle{remark}
\newtheorem{remark}[thm]{Remark}
\newtheorem{example}[thm]{Example}
\newcommand{\sfH}{\mathsf{H}}
\DeclareMathOperator{\Ran}{Ran} % range
\newcommand{\Cl}{\mathscr{C\mspace{-2mu}l\mspace{-2mu}}} % Clifford algebra
\newcommand*{\tildeotimes}{\mathbin{\tilde{\otimes}}}
\newcommand{\VEC}[1]{{\boldsymbol{#1}}}
\newcommand\DC{\VEC{D}}
\title{Parabolic noncommutative geometry}
\author{Magnus Fries, Magnus Goffeng, Ada Masters}
\address{Magnus Fries, Magnus Goffeng, Ada Masters\newline
\indent Centre for Mathematical Sciences\newline 
\indent Lund University\newline 
\indent Box 118, SE-221 00 Lund\newline 
\indent Sweden\newline
\newline}
\email{magnus.fries@math.lth.se, magnus.goffeng@math.lth.se, ada.masters@math.lth.se}
\date{\today}
\begin{document}

\begin{abstract}
We introduce to spectral noncommutative geometry the notion of tangled spectral triple, which encompasses the anisotropies arising in parabolic geometry as well as the parabolic commutator bounds arising in so-called ``bad Kasparov products''. Tangled spectral triples incorporate anisotropy by replacing the unbounded operator in a spectral triple that mimics a Dirac operator with several unbounded operators mimicking directional Dirac operators. We allow for varying and dependent orders in different directions, controlled by using the tools of tropical combinatorics. We study the conformal equivariance of tangled spectral triples as well as how they fit into $K$-homology by means of producing higher order spectral triples. Our main examples are hypoelliptic spectral triples constructed from Rockland complexes on parabolic geometries; we also build spectral triples on nilpotent group $C^*$-algebras from the dual Dirac element and crossed product spectral triples for parabolic dynamical systems.
\end{abstract}

\maketitle

{\footnotesize
\setcounter{tocdepth}{1}
\tableofcontents}

\section{Introduction}

In Connes' program for spectral noncommutative geometry \cite{ConnesNCDG,ConnesBigRed} one encodes geometry by means of spectral triples, or $K$-cycles. The technology surrounding spectral triples \cite{CaGaReSu,Connes_1995,LoReVa,waltbook} allows us to view spectral noncommutative geometry as a vast extension of classical Riemannian geometry to more exotic geometric situations \cite{connesmetric,conneslott,connesgravity,connesreconstr,GMCK,GMRtwist,GRU,leschkaad1,Hoch3,lottlimit,neshtuset,renniereconst,rieffelquant} that give meaning to the term \emph{noncommutative} geometry. The problem we address in this work is how to do noncommutative geometry in situations where there are different directions with drastically different types of behaviour. We restrict ourselves to \emph{parabolic} situations, by which we mean that the different directions, in an appropriate way, come with mutual polynomial bounds. The study of this problem is geometrically motivated by a number of examples where the difference between the directions manifests in various ways, some of extrinsic geometric interest and some of intrinsic interest to noncommutative geometry. \medskip

We begin by describing two of our motivating examples. The first motivating example originates in parabolic geometry \cite{capslovak} where the tangent bundle is filtered and the different tangent directions capture different geometric features. One encodes the geometry through the structure of a graded nilpotent Lie group on each tangent space. Analytically one can study a parabolic geometry through a BGG-complex \cite{capslovaksoucek,davehaller} that replaces the de Rham complex. While the de Rham complex and associated Dirac operators are well understood, and even form prototypical examples in noncommutative geometry, BGG-complexes are still not well understood analytically. The study of BGG-complexes is motivated by recent work \cite{goffhelff} implying the known natural candidates for general classes of Heisenberg elliptic differential operators with interesting spectral noncommutative geometry have trivial index theory. 

The analytic foundations for BGG-complexes were developed by Dave--Haller \cite{davehaller,davehallerheat} building on ideas of Rumin \cite{rumincomp} on contact manifolds. At the level of noncommutative topology, i.e.~index theory, BGG-complexes were studied by the second listed author \cite{goffeng24}. Understanding the spectral noncommutative geometry of parabolic geometries is of interest in order to organise efficiently the differential geometric machinery into a global theory well adapted for studying global invariants. A problem motivating such a machinery is that of finding non-trivial global invariants of parabolic geometries. In fact, already for CR-manifolds this problem is non-trivial; see the prominent work of Fefferman \cite{fefferpara}. A global invariant was studied by Hirachi \cite{hirachiinv} and Ponge \cite{pongeconfinv,pongemore} that was later proven by Boutet de Monvel \cite{boutededkn} to vanish. Ponge introduced new invariants in \cite{pongemore} that were studied further in \cite{pongefurther}. For more general parabolic geometries, Haller \cite{hallergenfive} has studied analytic torsion building on the work of Rumin--Seshadri \cite{ruminsesh} for contact manifolds. \medskip

The second motivating example is a fundamental object for spectral noncommutative geometry: the unbounded Kasparov product. The product implements the Kasparov product on $KK$-groups, that provide the foundation for the utility of Kasparov's $KK$-groups, at the level of unbounded cycles. The unbounded Kasparov product was studied by Kucerovsky \cite{danthedan} and later phrased constructively by Mesland \cite{meslandbeast}. In somewhat technical terms, the unbounded Kasparov product of an unbounded $A$-$B$-cycle $(\mathpzc{E}_1,S)$ with a $B$-$C$-cycle $(\mathpzc{E}_2,T)$ along a connection $\nabla$ is the pair $(\mathpzc{E}_1\otimes_B\mathpzc{E}_2,S\otimes 1+1\otimes_\nabla T)$ that under favorable circumstances form an unbounded $A$-$C$-cycle. There are functional analytic issues with $S\otimes 1+1\otimes_\nabla T$ forming a self-adjoint operator, which additionally needs to be regular in the Hilbert $C^*$-module sense. Such questions were addressed in \cite{meslandbeast} under some technical restrictions which have since matured in the important work of Kaad--Lesch \cite{leschkaad1,kaadlesch} and Lesch--Mesland \cite{bramlesch}. 

An issue that is more delicate and has evaded a proper axiomatization in unbounded $KK$-theory concerns the condition of bounded commutators in the unbounded Kasparov product. There are natural examples arising from dynamics \cite{GMCK,GMRtwist} where $1\otimes_\nabla T$ does not have bounded commutators with a dense subspace of $A$. Rather $1\otimes_\nabla T$ ends up being of ``higher order'' in contrast to $S\otimes 1$ in the sense that commutators with $1\otimes_\nabla T$ are relatively bounded by $(1+S^2)^{-1/2+1/2m}$ for an $m\geq 1$ playing the role of an order. This phenomenon occurs for Kasparov products arising from parabolic dynamics. The ad hoc solution is to inflate the spectrum of $S \otimes 1$ or dampen the spectrum of $1\otimes_\nabla T$ to compensate. The aim of this paper is to widen our view on spectral noncommutative geometry to allow for varying orders of operators and potential anisotropies to persevere as a feature rather than a bug. \medskip

A related issue—which, although it motivated our work, we have not addressed here—stems from the early years of noncommutative geometry, when there was optimism that quantum groups would be particularly well suited for noncommutative geometry \cite{connessuq2,Hoch3,neshtuset}. While much progress has been made in low dimension, little is known in higher dimension despite algebraic versions of BGG-complexes \cite{heckkolb} that have been studied in a noncommutative geometry context by Wagner--Díaz-García--O'Buachalla \cite{wagnerobua} and Voigt--Yuncken \cite{voigtyuncken}. A fundamental problem lies precisely in the complications found in the algebraic relations between the various ``directions'' in a quantum group, a statement made precise by the work of Krähmer--Rennie--Senior \cite{Hoch3}. In fact, the problems arising in Krähmer--Rennie--Senior's work relate to the Kasparov product, as discussed above. This direction of applications for our methods is speculative since the above alluded to parabolic behaviour does not capture the wild, hyperbolic features seen for quantum groups. We mention the connection nevertheless since our main definition drew inspiration from the noncommutative geometry of quantum groups in work of Kaad--Kyed \cite{kaadkyed}, and as a source for future investigations. \medskip

The building blocks of spectral noncommutative geometry are spectral triples. The notion of a spectral triple $(\mathcal{A},\mathpzc{H},D)$ is reviewed in the general context of a higher order spectral triple (HOST) in Definition \ref{defhost}. In the prototypical examples arising from commutative situations, $\mathcal{A}$ consists of an algebra of smooth functions on a manifold and $D$ is a self-adjoint operator on a Hilbert space $\mathpzc{H}$, satisfying axioms making it similar to a differential operator acting on $\mathpzc{H}$, the $L^2$-space of a vector bundle, with some mild ellipticity-like conditions.

We extend this notion to that of a \emph{strictly tangled spectral triple} (ST\textsuperscript{2}) in Definition \ref{def:st2} where $D$ is now allowed to be a \emph{finite collection} $\DC=(D_j)_{j\in I}$ of self-adjoint operators which satisfies an analogue of a mild ellipticity condition and an anticommutation relation. The adjective \emph{strictly} is to indicate that we assume the elements in the collection to anticommute on the nose. We expect our results to hold under more general assumptions, e.g.~when the anticommutators are relatively small (see Remarks \ref{remark:possible_extensions_of_st2} and \ref{relboundgen}), but to reduce the technical burden in the paper we focus on the simpler case that on its own already holds enough interesting examples. As mentioned above, a similar idea has appeared in the work of Kaad--Kyed \cite{Kaad_2020, kaadkyed} and of Kaad--Nest--Wolfsson \cite{kaadnestwolf}. The first and second of these works respectively describe the metric geometry of crossed products by \( \Z \) and of $SU_q(2)$ by means of keeping (twisted) derivations separated according to different directions, and the third of these works studies cohomological invariants on double loop groups in terms of directional quantum derivatives. Our main results are the following. 

\begin{thm*}
\label{mainthm1}
Let $(\mathcal{A},\mathpzc{H},\DC)$ be an ST\textsuperscript{2} with $\DC=(D_j)_{j\in I}$ the finite collection of self-adjoint operators and  bounding matrix $\VEC{\epsilon} \in M_I([0,\infty))$. Consider the non-empty set 
$$\Omega(\VEC{\epsilon}):=\{\VEC{t}=(t_j)\in (0,\infty)^n: \epsilon_{ij}t_i<t_j\; \forall i,j\}.$$ 
For $\VEC{t}\in \Omega(\VEC{\epsilon})$, we define the operator 
$$\overline{D}_{\VEC{t}}:= \sum_{j=1}^n \sign(D_j)|D_j|^{t_j}.$$ 
If $\VEC{t}\in \Omega(\VEC{\epsilon})\cap(0,1]^n$, the triple $(\mathcal{A},\mathpzc{H},\overline{D}_{\VEC{t}})$ defines a higher order spectral triple. If additionally the ST\textsuperscript{2} is regular, then the same holds for any \(\VEC{t}\in \Omega(\VEC{\epsilon})\).
\end{thm*}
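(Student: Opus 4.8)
The plan is to verify the three defining properties of a higher order spectral triple (Definition \ref{defhost}) for $(\mathcal{A},\mathpzc{H},\overline{D}_{\VEC{t}})$: essential self-adjointness of $\overline{D}_{\VEC{t}}$, the (locally) compact resolvent condition, and the higher-order commutator bound, reading off the order along the way. The first step is to record that, since $x\mapsto x^{t_j}$ is a continuous odd function, each $D_j^{t_j}$ is a self-adjoint odd function of $D_j$ and the family $(D_j^{t_j})_{j}$ again consists of strictly anticommuting self-adjoint operators (strict anticommutation is preserved under real Borel functional calculus, using the structure recalled with Definition \ref{def:st2}). Hence $\overline{D}_{\VEC{t}}$ is essentially self-adjoint on the common smooth core, and, the cross terms cancelling by anticommutation, $\overline{D}_{\VEC{t}}^2=\sum_{j}(D_j^{t_j})^2=\sum_j|D_j|^{2t_j}$. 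In particular the positive operators $|D_j|$ mutually commute, so this square is a continuous function of the commuting tuple $(|D_1|,\dots,|D_n|)$, a fact used repeatedly below; non-emptiness of $\Omega(\VEC{\epsilon})$ we take as part of the standing hypotheses on $\VEC{\epsilon}$.

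For the resolvent condition, since $(1+\overline{D}_{\VEC{t}}^2)^{-1}=(1+\sum_j|D_j|^{2t_j})^{-1}$ is a $C_0$-function of $(|D_j|)_j$, it suffices to compare it on the joint spectrum with $(1+\sum_j D_j^2)^{-1}$: on $[0,\infty)^n$ one has $1+\sum_j\lambda_j^{2}\le C\,(1+\sum_j\lambda_j^{2t_j})^{1/\theta}$ for a suitable $\theta>0$, by bounding both sides through $\max_j\lambda_j$. Combining this with the mild ellipticity axiom of the ST\textsuperscript{2} and the elementary fact that an operator dominated by a compact positive operator inside a commutative $C^*$-algebra is itself compact, one gets that $a(1+\overline{D}_{\VEC{t}}^2)^{-1}$ is compact for every $a\in\mathcal{A}$; the same comparison applied between two parameters shows this is independent of the chosen $\VEC{t}\in\Omega(\VEC{\epsilon})$.

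The crux is the commutator estimate. For $a\in\mathcal{A}$ one must show $[\overline{D}_{\VEC{t}},a]$ is bounded from $\Dom(|\overline{D}_{\VEC{t}}|^{\rho})$ to $\mathpzc{H}$ for some $\rho<1$ — equivalently that $[\overline{D}_{\VEC{t}},a](1+\overline{D}_{\VEC{t}}^2)^{-\rho/2}$ is bounded — which is the HOST condition of order $r=1/(1-\rho)$. Write $[\overline{D}_{\VEC{t}},a]=\sum_j[D_j^{t_j},a]$ and treat each summand with a Cauchy/Balakrishnan integral representation of $x^{t_j}$: when $\VEC{t}\in(0,1]^n$ a single resolvent of $D_j$ enters, so $[D_j^{t_j},a]$ becomes a norm-convergent integral built from $(\lambda+D_j^2)^{-1}$, powers of $D_j$, and the single commutator $[D_j,a]$; for general $\VEC{t}\in\Omega(\VEC{\epsilon})$ one first splits off integer powers of $D_j$, producing iterated commutators $[D_j,[D_j,a]],\dots$, and this is exactly where \emph{regularity} of the ST\textsuperscript{2} is used (cf.\ Remark \ref{remark:possible_extensions_of_st2}). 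Inserting the ST\textsuperscript{2} relative bounds for $[D_j,a]$ — whose exponents against the various $D_i$ are recorded by the bounding matrix $\VEC{\epsilon}$ — into the integrand and integrating yields $\|[D_j^{t_j},a]u\|\le C\|\prod_i(1+D_i^2)^{\pi^{(j)}_i/2}u\|$ with an anisotropic order profile $\pi^{(j)}$ determined by $\VEC{\epsilon}$ and $\VEC{t}$, in which the $j$-th entry carries the fractional derivative loss $t_j-1$ and the off-diagonal entries the cross costs $\epsilon_{ij}$. Comparing $\pi^{(j)}$ with the profile $(t_1,\dots,t_n)$ of $\overline{D}_{\VEC{t}}$ via the weighted AM--GM inequality $\prod_i(1+\lambda_i^{2t_i})^{\mu_i}\le 1+\sum_i\lambda_i^{2t_i}$ (for $\mu_i\ge0$, $\sum_i\mu_i\le1$) reduces boundedness of $[D_j^{t_j},a](1+\overline{D}_{\VEC{t}}^2)^{-\rho/2}$ to a linear feasibility problem in the exponents; this is solvable with some $\rho<1$ precisely when $\epsilon_{ij}t_i<t_j$ for all $i,j$, i.e.\ on $\Omega(\VEC{\epsilon})$, and the maximum of the finitely many resulting $\rho$'s is the order of the HOST. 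Packaging and optimising these exponent profiles is what the tropical/max-plus bookkeeping of the paper is designed to handle, and the remaining HOST axioms (preservation of a smooth core, etc.) follow along the same lines.

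The main obstacle is this last step: carrying the anisotropic order profile of $[D_j^{t_j},a]$ faithfully through the operator-integral manipulations — controlling the interaction between the fractional derivative loss $t_j-1$ in the $j$-th direction and the cross-direction costs $\epsilon_{ij}$, and, in the non-$(0,1]^n$ case, the iterated commutators picked up from integer powers — and then checking that the profile obtained is dominated by that of $\overline{D}_{\VEC{t}}$ in the above sense exactly on the open region $\Omega(\VEC{\epsilon})$, never merely on its boundary.
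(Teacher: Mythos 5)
There is a genuine gap in your treatment of the general case $\VEC{t}\in\Omega(\VEC{\epsilon})$, and your account of what regularity buys is incorrect. You write that for $\VEC{t}\notin(0,1]^I$ one ``splits off integer powers of $D_j$, producing iterated commutators $[D_j,[D_j,a]],\dots$, and this is exactly where regularity is used.'' But the ST\textsuperscript{2} axioms only bound the \emph{first} commutators $[D_i,a]$ against the $\VEC{\epsilon}$-weighted powers; nothing in Definition~\ref{def:st2} (nor in regularity, which is a core-preservation condition on $\mathcal{A}$ and $[D_j,\mathcal{A}]$, not a boundedness condition on higher commutators) gives you any control on $[D_j,[D_j,a]]$. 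Any decomposition that requires iterated commutator bounds is therefore not available from the hypotheses, and the citation of Remark~\ref{remark:possible_extensions_of_st2} for this point is also off, as that remark concerns relaxing strict anticommutation, not regularity.

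The paper's route avoids iterated commutators entirely. After choosing an odd integer $k_i\geq t_i$, it writes $D_i^{t_i}=(D_i^{t_i/k_i})^{k_i}$ (using the signed-power convention), so that the Leibniz rule gives
\[
[D_i^{t_i},a]=\sum_{l=0}^{k_i-1}\bigl(D_i^{t_i/k_i}\bigr)^{l}\,\bigl[D_i^{t_i/k_i},a\bigr]\,\bigl(D_i^{t_i/k_i}\bigr)^{k_i-l-1},
\]
in which only \emph{single} commutators of $a$ with the fractional power $D_i^{t_i/k_i}$ (of exponent $\leq1$) appear. Regularity is what legitimises this algebraic manipulation on a common core, and then the complex-interpolation lemma (Lemma~\ref{complinterpsd}, together with the operator comparison of Lemma~\ref{lknlknn}) is used both to treat $[D_i^{t_i/k_i},a]$ with the $(0,1]$-exponent argument and to slide the flanking powers across so each term is absorbed by the right-hand $(1+\sum_j|D_j|^{t_j})^{-1+1/m_{\VEC{t}}}$. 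Your approach for $\VEC{t}\in(0,1]^I$ (Balakrishnan-type integral representation plus the operator comparison you phrase via weighted AM--GM, which is what Lemma~\ref{lknlknn} encapsulates) is in the same spirit as the paper's Lemma~\ref{complinterpsd}; it is only the extension beyond $(0,1]^I$ that you have set up incorrectly, and to repair it you should replace ``iterated commutators'' with the Leibniz/interpolation factorisation sketched above.
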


We note that for $t>0$, the function
$$\R\ni x\mapsto \sign(x)|x|^t\in \R,$$
is continuous, and in fact belongs to the Hörmander class $S^t(\R)$ when localized to outside $x=0$.

The reader can find Theorem \ref{mainthm1} and its proof within Theorem \ref{thm:st2_give_host} below. We provide a number of examples of ST\textsuperscript{2}s throughout the paper and study the role of the transform $(\mathcal{A},\mathpzc{H},\DC)\mapsto (\mathcal{A},\mathpzc{H},\overline{D}_{\VEC{t}})$. In Subsection \ref{subsecex1}, we give a flavour of our main examples, the Rumin complex on the Heisenberg group, and two ``bad Kasparov products'' involving the group $C^*$-algebra of the Heisenberg group and a dynamical system on the torus. These examples are revisited in further detail and generality in Sections \ref{sec:rumin}, \ref{section:nilpotent}, and \ref{section:parabolic}. Finer analytical properties of ST\textsuperscript{2}s are studied in Section \ref{secljnakjnad}, for instance finite summability and equivariance properties. The interesting examples carry conformal actions, in the sense of recent work \cite{AMsomewhere} by the third listed author and Adam Rennie, and we discuss a ``guess-and-check'' method for conformal equivariance of ST\textsuperscript{2} in Subsection \ref{subsecvonf} that we later see in play in Sections \ref{sec:complexes}, \ref{sec:rumin}, and \ref{section:nilpotent}. It is a philosophy similar to that of computing Kasparov products via Kucerovsky's theorem \cite{danthedan}. \medskip

Strictly tangled spectral triples also arise from Hilbert complexes \cite{leschbruening}. We study ST\textsuperscript{2}s arising from Hilbert complexes in some detail in Section \ref{sec:complexes}, where the main example is that of Rockland complexes on filtered manifolds. Describing the noncommutative geometry of filtered manifolds is a non-trivial problem \cite{Hasselmann_2014}. Of particular interest is to associate higher order spectral triples possessing further properties with Rockland complexes. By choosing $\VEC{t}$ in Theorem \ref{mainthm1} appropriately we can produce higher order spectral triples from Rockland complexes either that are $H$-elliptic elements in the Heisenberg calculus or that are differential operators. We summarize the results of Section \ref{sec:complexes} in a Theorem.

\begin{thm*}
\label{mainthm2} 
Consider a compact filtered manifold $X$ equipped with a volume density and hermitian vector bundles $E_j\to X$, $j=0,...,n$. 
Assume that $(C^\infty(X;E_\bullet),\rd_\bullet)$ is a Rockland complex with all differentials being differential operators. Then there is an associated ST\textsuperscript{2}  $(C^\infty(X),L^2(X;\oplus_j E_j),\DC = (\rd_j + \rd_j^*)_j )$ as in Theorem \ref{ljnknknbgs}. Moreover, for any $\tau>0$, $\DC$ assembles into an $H$-elliptic pseudodifferential operator $\overline{D}_\tau$ on $\bigoplus_j E_j$ of order $\tau$, as in Corollary \ref{thm:rockland_H_ellipitc_host}, defining a higher order spectral triple $(C^\infty(X),L^2(X;\oplus_j E_j),\overline{D}_\tau)$.
\end{thm*}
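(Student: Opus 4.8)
The plan is to split the statement into its two halves --- producing the ST\textsuperscript{2} and then identifying the assembled operator \(\overline D_\tau\) --- and to obtain the higher order spectral triple from the first half together with Theorem~\ref{mainthm1}. For the first half I would put \(\mathpzc H:=L^2(X;\bigoplus_{j=0}^n E_j)\) and, for each \(j\), let \(D_j\) be the self-adjoint operator on \(\mathpzc H\) equal to \(\left(\begin{smallmatrix}0&\rd_j^*\\ \rd_j&0\end{smallmatrix}\right)\) (closure) on \(L^2(X;E_j)\oplus L^2(X;E_{j+1})\) and \(0\) on the remaining summands; this is self-adjoint since \(\left(\begin{smallmatrix}0&T^*\\ T&0\end{smallmatrix}\right)\) is self-adjoint for any closed densely defined \(T\) and each \(\rd_j\) is closable as a differential operator on the compact \(X\). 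I would then check the axioms of Definition~\ref{def:st2} on the common core \(C^\infty(X;\bigoplus_j E_j)\): \emph{(i) strict anticommutation} \(D_iD_j+D_jD_i=0\), which for \(|i-j|\ge 2\) is disjointness of supports and for \(j=i+1\) holds because on the single overlapping summand both iterated compositions land in a summand annihilated by the other operator, using \(\rd_{i+1}\rd_i=0\) and its adjoint; \emph{(ii) the bounding matrix} --- with \(k_j\) the Heisenberg order of \(\rd_j\), the commutator \([\rd_j,a]\) for \(a\in C^\infty(X)\) is again a differential operator, now of order \(\le k_j-1\), so one may take \(\epsilon_{ij}=(k_i-1)/k_j\), whence \(\epsilon_{ij}<k_i/k_j\) and the homogeneous ray \(\VEC t=(\tau/k_j)_j\) lies in \(\Omega(\VEC\epsilon)\) for every \(\tau>0\); \emph{(iii) mild ellipticity} --- since \(D^2:=\sum_jD_j^2=(\rd+\rd^*)^2\) is the Hodge Laplacian of a Rockland complex on a compact manifold it is a Rockland operator, hence hypoelliptic with compact resolvent, and since (using the convention \(x^t=\mathrm{sign}(x)|x|^t\) together with (i)) \(\overline D_{\VEC t}^{\,2}=\sum_j(D_j^2)^{t_j}\), a comparison on the joint spectral decomposition of the commuting family \((D_j^2)_j\) shows \(\overline D_{\VEC t}\) has compact resolvent for every \(\VEC t\in\Omega(\VEC\epsilon)\). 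Smoothness of all data gives regularity of the iterated commutators, so \((C^\infty(X),\mathpzc H,\DC)\) is a regular ST\textsuperscript{2}, and Theorem~\ref{mainthm1} in its regular form then yields the higher order spectral triple \((C^\infty(X),\mathpzc H,\overline D_{\VEC t})\) for every \(\VEC t\in\Omega(\VEC\epsilon)\), in particular \(\overline D_\tau:=\overline D_{(\tau/k_j)_j}\) for any \(\tau>0\).

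For the second half, note that for \(\VEC t=(\tau/k_j)_j\) each summand \(D_j^{\tau/k_j}\) has Heisenberg order \(\tau\), so \(\overline D_\tau=\sum_jD_j^{\tau/k_j}\) has order \(\tau\) and the real content is the Rockland property of this \emph{tangled} combination. I would compute the principal cosymbol: at \(x\in X\) and a nontrivial irreducible representation \(\pi\) of the osculating group, \(\pi(\sigma(\overline D_\tau)(x))=\sum_j\pi(\sigma(D_j)(x))^{\tau/k_j}\), whose square is \(\sum_j|\pi(\sigma(D_j)(x))|^{2\tau/k_j}\) by the anticommutation; on the \(E_{l,x}\)-component this is \((\pi\sigma(\rd_l)^*\,\pi\sigma(\rd_l))^{\tau/k_l}+(\pi\sigma(\rd_{l-1})\,\pi\sigma(\rd_{l-1})^*)^{\tau/k_{l-1}}\), and exactness of the symbol complex \(\pi(\sigma(\rd_\bullet))\) --- the defining property of a Rockland complex, together with injectivity at degree \(0\) and surjectivity at degree \(n\) --- forces \(\ker\pi(\sigma(\rd_l))\cap\big(\overline{\range\,\pi(\sigma(\rd_{l-1}))}\big)^\perp=0\), so this operator is invertible. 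Hence \(\overline D_\tau^{\,2}\), and therefore \(\overline D_\tau\), is \(H\)-elliptic of order \(\tau\); on the compact \(X\) this upgrades, via the van~Erp--Yuncken and Dave--Haller calculus, to a parametrix, compact resolvent and the Heisenberg--Sobolev mapping properties, which both reconfirms the HOST and supplies the pseudodifferential statement.

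The routine parts here are self-adjointness, anticommutation and compact resolvent; the delicate point is the pseudodifferential claim. The individual operators \(D_j^2=\rd_j^*\rd_j\oplus\rd_j\rd_j^*\) are \emph{not} \(H\)-elliptic for \(j\ge1\), so one cannot naively assert that their complex powers \((D_j^2)^{\tau/k_j}\), or the tangled sum \(\sum_j(D_j^2)^{\tau/k_j}\), lie in the Heisenberg calculus --- this is precisely where one must invoke (or reprove) the Dave--Haller analysis of the rescaled Hodge Laplacians of Rockland complexes, now with the non-integer ``rescaling'' \(\VEC t=(\tau/k_j)_j\), together with stability of that calculus under complex powers; the earlier Theorem~\ref{ljnknknbgs} and Corollary~\ref{thm:rockland_H_ellipitc_host} are where this is carried out, and the present statement is their assembly. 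A secondary technicality is to fix the bounding matrix \(\VEC\epsilon\) precisely enough that the relative commutator bounds of Definition~\ref{def:st2} hold with constants uniform over \(X\) (subelliptic estimates) while \(\Omega(\VEC\epsilon)\) still contains every homogeneous ray, which is what makes the ``for any \(\tau>0\)'' legitimate.
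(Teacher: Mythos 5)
Your construction of the ST\textsuperscript{2} in the first half matches the paper's Theorem~\ref{ljnknknbgs} / Theorem~\ref{st2fromhilb} closely: your $2\times2$ block operators $D_j$ are the same operators as the paper's $\rd_j+\rd_j^*$, and your choice of bounding matrix is a slightly coarser version of \eqref{alknalkjdnajlkdn}. However, there is already one false claim here: $(\rd+\rd^*)^2$ is \emph{not} a Rockland (nor even a homogeneous Heisenberg) operator for a mixed-order complex, since $\rd_j^*\rd_j$ and $\rd_{j-1}\rd_{j-1}^*$ have different Heisenberg orders on $L^2(X;E_j)$. The paper proves compact resolvent instead via the Rumin--Seshadri Laplacians $\Delta^R_{\VEC{m},i} = (\rd_i^*\rd_i)^{a_i}+(\rd_{i-1}\rd_{i-1}^*)^{a_{i-1}}$, which \emph{are} homogeneous of order $2m$ (with $m=\prod_j m_j$ and $a_i=m/m_i$) and hypoelliptic by \cite[Lemma~2.14]{davehaller}; the commutator bounds of Definition~\ref{def:st2} are likewise deduced from their Heisenberg--Sobolev mapping properties.

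The real gap is in the second half. You compute a ``principal cosymbol'' of $\overline{D}_\tau$ and use exactness of the symbol complex to conclude $H$-ellipticity, but you acknowledge yourself that you have not established that $\overline{D}_\tau$ lies in the Heisenberg calculus at all --- and without that membership there is no principal symbol to compute, so the argument is circular. The missing idea is the identity of Lemma~\ref{alknaljdna}/Proposition~\ref{fixorder}: for $\VEC{t}_{\VEC{m}}(\tau)=(\tau/m_j)_j$,
$$
\overline{D}_\tau = \overline{D}_{\VEC{t}_{\VEC{m}}(\tau)} = \sum_{i=0}^{n-1} \rd_i\bigl(\Delta_{\VEC{m},i}^R\bigr)^{\frac{\tau-m_i}{2m}}+\rd_i^*\bigl(\Delta_{\VEC{m},i+1}^R\bigr)^{\frac{\tau-m_i}{2m}}.
$$
This identity is precisely what lets one avoid complex powers of the non-$H$-elliptic operators $D_j^2$. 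The Rumin--Seshadri Laplacians are hypoelliptic \emph{differential} operators of even homogeneous order $2m$, so Dave--Haller \cite[Theorem~2]{davehallerheat} places their complex powers $(\Delta^R_{\VEC{m},i})^{\beta/2m}$ in $\Psi^\beta_H(X;E_i)$. Each summand of $\overline{D}_\tau$ is then a composition of a differential operator of Heisenberg order $m_i$ with an element of $\Psi^{\tau-m_i}_H$, hence of order $\tau$, giving $\overline{D}_\tau\in\Psi^\tau_H(X;\oplus_j E_j)$; only then does the symbol computation (which you sketch essentially correctly) yield $H$-ellipticity, and only then do the Heisenberg--Sobolev estimates give the remaining HOST axioms. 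You correctly identify that the Dave--Haller analysis of the rescaled Laplacians is where the content lies, but the step above is precisely the bridge you leave unbuilt.
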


In fact, the reader can find a version of Theorem \ref{mainthm2} stated with conformally equivariant actions as Proposition \ref{conaofnadin} below. To be somewhat more precise, assume that $G$ is a locally compact group acting as filtered automorphisms on $X$ and that $(C^\infty(X;E_\bullet),\rd_\bullet)$ is Rockland and $G$-equivariant with the action of $G$ on each $E_j$ being conformal (with respect to the volume density on $X$ and the hermitian structure on $E_j$). In Proposition \ref{conaofnadin} below we show that if the conformal factors in the different degrees are multiplicatively dependent (with respect to powers from $\Omega(\VEC{\epsilon})$) then we can assemble the associated ST\textsuperscript{2}  $(C^\infty(X),L^2(X;\oplus_j E_j), \DC)$ into a conformally equivariant higher order spectral triple.

A sobering observation is that, in practice, there are Rockland complexes equivariant for semisimple Lie groups of rank $>1$ but for which the action will not have a scalar conformal factor for each degree in the complex. Our framework cannot be applicable to semisimple Lie groups $G$ of rank $>1$. Indeed, by Theorem \ref{pmsumconf}, Proposition \ref{conaofnadin} would give a $G$-equivariant finitely summable bounded Fredholm module, which is impossible for a Lie group of rank $>1$ as shown by Puschnigg \cite{puschhigh}. The obstructions in higher rank are discussed in further detail in Remarks \ref{hosthighrank} and \ref{hosthighrank2}.
\medskip

Let us also mention another natural example of an ST\textsuperscript{2} built from the dual Dirac element of a nilpotent group. If $G$ is a simply connected nilpotent Lie group, the image of the dual Dirac element under the descent map \( KK^G_*(\C, C_0(G))\to KK_*(C^*(G),\C) \) produces a $K$-homology class on the group $C^*$-algebra. We discuss in Section \ref{section:nilpotent} how computing this element explicitly at the unbounded level produces an ST\textsuperscript{2}. We summarize the result as follows.

\begin{thm*}
Let \( G \) be a simply connected nilpotent Lie group of depth $s$ and \( H \) be a cocompact, closed subgroup (possibly \( G \) itself). Choose a Malcev basis \( ((e_{j, k})_{k=1}^{\dim \mathfrak{g}_j / \mathfrak{g}_{j+1}})_{j=1}^s \) of \( \mathfrak{g} \) through the lower central series \( \mathfrak{g}_1 = \mathfrak{g}, \mathfrak{g}_2 = [\mathfrak{g}, \mathfrak{g}], \ldots, \mathfrak{g}_s \). Let \( V \) be an irreducible Clifford module for \( \Cl_{\dim \mathfrak{g}} \), whose generators we label \( ((\gamma_{j, k})_{k=1}^{\dim \mathfrak{g}_j} )_{j=1}^s \). Then the collection 
\( (\ell_j)_{j=1}^s : G \to \End_\C(V) \)
of matrix-valued weights given by
\[ \ell_j : \exp_{\mathfrak{g}}\Biggl(\sum_{i=1}^s \sum_{k=1}^{\dim \mathfrak{g}_i / \mathfrak{g}_{i+1}} x_{i, k} e_{i, k}\Biggr) \mapsto \sum_{k=1}^{\dim \mathfrak{g}_j / \mathfrak{g}_{j+1}} x_{j, k} \gamma_{j, k} \]
gives rise to a strictly tangled spectral triple
\[ \left( C^*(H), L^2(H, V), (M_{\ell_n})_{n=1}^s \right) \]
with nontrivial class in \( KK_{\dim \mathfrak{g}}(C^*(H), \C) \) and bounding matrix \( \epsilon_{i j} = \max\{i - j, 0\} \). Moreover, the dual Dirac element of a cocompact closed subgroup of a nilpotent Lie group can be realized the Baaj--Skandalis dual of a strictly tangled spectral triple of the form above.
\end{thm*}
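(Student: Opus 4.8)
The plan is to verify the strictly tangled spectral triple axioms directly, assemble $(M_{\ell_n})_n$ into a higher order spectral triple through Theorem~\ref{thm:st2_give_host}, and then identify the resulting $K$-homology class with the descent of the dual Dirac element. Fix a self-adjoint representation of $\Cl_{\dim\mathfrak{g}}$ on $E$, so $\gamma_{j,k}^{*}=\gamma_{j,k}$ and $\gamma_{j,k}^{2}=1$, let $\mathcal{A}$ be the smooth convolution algebra of $H$ (or $\C[H]$ if $H$ is discrete) acting on $L^{2}(H,E)$ by the left regular representation $\lambda$, and note that each $M_{\ell_{j}}$ is multiplication by the self-adjoint $\End_{\C}(E)$-valued function $h\mapsto\ell_{j}(h)$, which has polynomial growth because the $x_{j,k}$ are restrictions to $H$ of linear functionals on $\mathfrak{g}$; hence each $M_{\ell_{j}}$ is essentially self-adjoint on $C_{c}(H,E)$. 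Strict anticommutation is the Clifford identity $\ell_{i}(h)\ell_{j}(h)+\ell_{j}(h)\ell_{i}(h)=\sum_{k,l}x_{i,k}(h)x_{j,l}(h)(\gamma_{i,k}\gamma_{j,l}+\gamma_{j,l}\gamma_{i,k})$, which vanishes for $i\ne j$ and equals $2|x_{j}(h)|^{2}$ for $i=j$; thus $M_{\ell_{i}}M_{\ell_{j}}+M_{\ell_{j}}M_{\ell_{i}}=0$ for $i\ne j$ and $M_{\ell_{j}}^{2}=M_{|x_{j}|^{2}}$. Joint ellipticity holds since $\sum_{j}M_{\ell_{j}}^{2}=M_{|\log(\cdot)|^{2}}$ and $\log$ restricts to a homeomorphism of $H$ onto a closed subset of $\mathfrak{g}$, so $|\log(\cdot)|^{2}$ is proper on $H$ and $a\,(1+\sum_{j}M_{\ell_{j}}^{2})^{-1/2}=\lambda(f)\,M_{(1+|\log(\cdot)|^{2})^{-1/2}}$ is a norm limit of Hilbert--Schmidt operators, hence compact.

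The bounding matrix is the content of a Baker--Campbell--Hausdorff computation. For $a=\lambda(f)$ with $f$ supported in a compact set $K$ one has $[M_{\ell_{i}},\lambda(f)]\xi(h)=\int_{K}f(k)\bigl(\ell_{i}(h)-\ell_{i}(k^{-1}h)\bigr)\xi(k^{-1}h)\,dk$, so the estimate reduces to bounding the pointwise norm of $\ell_{i}(h)-\ell_{i}(k^{-1}h)$, uniformly over $k\in K$. Expanding $\log(k^{-1}h)=\mathrm{BCH}(\log k^{-1},\log h)$ and subtracting $\log h$, the $\mathfrak{g}_{i}/\mathfrak{g}_{i+1}$-component is a sum of iterated Lie brackets of total weighted degree $i$ containing at least one factor from $\log k^{-1}$; as all weights are $\ge 1$, the factors from $\log h$ have weighted degrees summing to at most $i-1$. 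Hence $\ell_{i}(h)-\ell_{i}(k^{-1}h)$ is Clifford-linear in a polynomial in $\{x_{m,\cdot}(h):m\le i-1\}$ of weighted degree $\le i-1$, with coefficients polynomial in the bounded coordinates of $k$; a monomial with factors of weighted degrees $m_{1},\dots,m_{q}$ ($\sum_{p}m_{p}\le i-1$) is dominated by $\prod_{j}(1+M_{\ell_{j}}^{2})^{c_{j}/2}$ with $\sum_{j}j\,c_{j}\le i-1$, and $c_{j}\le (i-1)/j\le i-j$ whenever $j<i$. This gives precisely the relative commutator bound with bounding matrix $\epsilon_{ij}=\max\{i-j,0\}$. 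As $\Omega(\VEC{\epsilon})$ is non-empty --- for instance $\VEC{t}=(1/j!)_{j}$ lies in $\Omega(\VEC{\epsilon})\cap(0,1]^{s}$ --- Theorem~\ref{thm:st2_give_host} shows $\overline{D}_{\VEC{t}}=\sum_{j}M_{\ell_{j}}^{t_{j}}$ is a higher order spectral triple, and Corollary~\ref{khomfromadk} yields a class in $KK_{\dim\mathfrak{g}}(C^{*}(H),\C)$, the grading coming from the $\Cl_{\dim\mathfrak{g}}$-action on $E$.

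To identify this class, recall that the dual Dirac element $\beta_{G}\in KK^{G}_{\dim\mathfrak{g}}(\C,C_{0}(G))$ is represented by the $G$-equivariant unbounded cycle $(C_{0}(G,E),M_{\ell})$, $\ell=\sum_{j}\ell_{j}$, the equivariance being only up to the lower-order perturbation $\ell(g^{-1}\cdot)-\ell(\cdot)$, which is permitted. When $H=G$, the imprimitivity isomorphism $C_{0}(G)\rtimes G\cong\mathcal{K}(L^{2}(G))$ identifies the Baaj--Skandalis descent $j^{G}(\beta_{G})\in KK(C^{*}(G),C_{0}(G)\rtimes G)=KK_{\dim\mathfrak{g}}(C^{*}(G),\C)$, computed at the unbounded level, with $(L^{2}(G,E),\lambda,M_{\ell})$; and since $\ell(h)\ell^{(\VEC{t})}(h)+\ell^{(\VEC{t})}(h)\ell(h)=2\sum_{j}|x_{j}(h)|^{t_{j}+1}$ is positive away from the identity (here $\ell^{(\VEC{t})}=\sum_{j}\ell_{j}^{t_{j}}$), the straight-line homotopy of symbols connects $M_{\ell}$ to $\overline{D}_{\VEC{t}}$ through resolvent-compact Fredholm modules, so the class of our higher order spectral triple equals $j^{G}(\beta_{G})$. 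For a cocompact closed subgroup $H<G$: torsion-freeness of $H$ (injectivity of $\exp$) makes $G$ a model for $EH$, whence the same formula defines $\beta_{H}=\mathrm{Res}^{G}_{H}\beta_{G}$; as $H$ acts on $G$ freely, properly and cocompactly, $C_{0}(G)\rtimes H$ is Morita equivalent to $C(G/H)$ and, $L^{2}(H)$-Hilbert-module bundles over the compact manifold $G/H$ being trivial (Kuiper), $j^{H}(\beta_{H})$ --- after Morita transport and restriction to the fibre over $eH\in G/H$ --- is represented by exactly the strictly tangled spectral triple above. This is the sense in which the dual Dirac is realized as the Baaj--Skandalis dual of the stated strictly tangled spectral triple, the natural $\hat{H}$-equivariant structure on $C^{*}(H)=\C\rtimes H$ being interchanged with the $H$-equivariant structure of $\beta_{H}$. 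Non-triviality is then the Dirac/dual-Dirac dichotomy: since $H$ is amenable, its descended dual Dirac $j^{H}(\beta_{H})$ is a $KK$-equivalence $C^{*}(H)\simeq C_{0}(G)\rtimes H\simeq C(G/H)$, and under it the fibre restriction defining our class corresponds to $[\mathrm{ev}_{eH}]\in K^{0}(C(G/H))$, which is non-zero since it pairs to $1$ with the class of the unit; hence the class of the strictly tangled spectral triple is non-zero in $KK_{\dim\mathfrak{g}}(C^{*}(H),\C)$.

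I expect the main obstacle to be this identification, not the axiom check. For $H\subsetneq G$ one must carefully reconcile the bare Hilbert space $L^{2}(H,E)$ with the Hilbert $C(G/H)$-module the descent naturally produces --- establishing both that restriction of the descended class to a fibre over the compact nilmanifold $G/H$ returns the strictly tangled spectral triple and that this restriction still detects the non-triviality coming from the Connes--Kasparov equivalence --- and must make the Baaj--Skandalis duality explicit at the unbounded level, including pinning down which model ($C_{0}(G)$ versus $C_{0}(H)$) of the dual Dirac is dual to the triple. The Baker--Campbell--Hausdorff bookkeeping behind the bounding matrix is the second delicate point, since $\epsilon_{ij}=\max\{i-j,0\}$ must be exactly right and uniform over the supports of the test functions; that, however, is a finite computation controlled by the depth $s$.
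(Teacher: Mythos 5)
Your verification of the ST\textsuperscript{2} axioms and the Baker--Campbell--Hausdorff bookkeeping for the bounding matrix are essentially what the paper does (the paper runs the estimate for $\ell_s$ and reads off $\epsilon_{sj}=s-j$; your monomial decomposition is the same computation in different clothes, though the final step from ``$\sum_j jc_j\le i-1$'' to ``$\prod_j|\ell_j|^{c_j}\lesssim 1+\sum_j|\ell_j|^{i-j}$'' needs a weighted Young inequality, $\sum_j c_j/(i-j)\le(\sum_j jc_j)/(i-1)\le 1$, which you elide). The genuine problem is in the identification step for $H=G$. You assert that $(C_0(G,E),M_\ell)$ with $\ell=\sum_j\ell_j$ is a $G$-equivariant unbounded cycle representing $\beta_G$, with equivariance holding ``up to lower order''. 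This is false as soon as the step length $s\ge 3$: along $h=\exp(x_1e_{1,1})$, one has $|\ell(h)|\sim|x_1|$ while the Baker--Campbell--Hausdorff formula produces $\ell_s(k^{-1}h)-\ell_s(h)\sim|x_1|^{s-1}$ for generic fixed $k$, so the perturbation $g\cdot\ell-\ell$ is not relatively bounded by $\ell$ and the bounded transform $F_{M_\ell}$ does not satisfy the compactness (equivariance) condition required of a cycle in $KK^G(\C,C_0(G))$. The same computation shows that, after descent, $(L^2(G,E),F_{M_\ell})$ is not a Fredholm module over $C^*(G)$, so the ``straight-line homotopy through resolvent-compact Fredholm modules'' has nothing to homotope from --- positivity of the anticommutator $\ell\ell^{(\VEC{t})}+\ell^{(\VEC{t})}\ell$ controls resolvent compactness but says nothing about the commutators with $\lambda(C_c(G))$, which is exactly where the tangling is needed. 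The paper sidesteps this entirely: it never introduces $M_\ell$, but instead observes that for $\VEC{t}\in\Omega(\VEC{\epsilon})$ the operator $\overline{D}_{\VEC{t}}$ arises as an iterated (constructive) Kasparov product along the central series, and matches this iterated-product description against Kasparov's original construction of the dual-Dirac element \cite[Theorem 5.7]{Kasparov_1988}; non-triviality then follows from $j^G(\alpha)\otimes j^G(\beta)=1$.

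For $H\subsetneq G$ your route also differs from the paper's, and here the difference is one of economy rather than correctness. You invoke the Higson--Kasparov theorem (amenability $\Rightarrow$ $\gamma=1$) to make $j^H(\beta)$ a $KK$-equivalence, Morita equivalence $C_0(G)\rtimes H\sim C(G/H)$, Kuiper's theorem, and an identification of the fibre restriction with $[\mathrm{ev}_{eH}]$; this last identification (that the descent of the $H$-equivariant restriction $C_0(G)\to C_0(H)$ corresponds under Green's imprimitivity to the point class) is plausible but is not spelled out and would need care. The paper instead follows Valette and constructs an explicit left inverse: it writes the class as $j^H(\beta)\otimes_{C_0(G)\rtimes H}[\pi]$ and produces from cocompactness a class $[\theta]\in KK_0(\C,C_0(G)\rtimes H)$ with $[\theta]\otimes[\pi]=1$, so that $([\theta]\otimes j^H(\alpha))\otimes x=1$. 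This uses only $\alpha\otimes\beta=1$ (not $\gamma=1$) and avoids both the Morita/Kuiper bookkeeping and the identification of $[\pi]$ with a point class, which is the obstacle you yourself correctly anticipated in your last paragraph.
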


If the group \( G \) is Carnot, it is possible to obtain a higher order spectral triple for \( C^*(G) \) which is conformally equivariant under the dilation action. \medskip

In Section \ref{section:parabolic}, we show that parabolic dynamical systems \cite[Chapter 8]{Hasselblatt_2002} give rise to crossed product ST\textsuperscript{2}s, generalising the constructions of \cite{Cornelissen_2008,Bellissard_2010,Hawkins_2013,Paterson_2014} for elliptic dynamical systems. The following result appears as Corollary \ref{cor:parabolicmanifold}.

\begin{thm*}
	Let \( (C_c^{\infty}(X), L^2(X, S), D) \) be the Atiyah–Singer or Hodge–de Rham Dirac spectral triple on a complete Riemannian manifold \( (X, \mathbf{g}) \). Let \( \varphi \) be an action of a locally compact group \( G \) by diffeomorphisms on \( X \). Let \( \ell : G \to \End V \) be a self-adjoint, proper, translation-bounded weight where \(V\) is some finite-dimensional complex vector
	space. Suppose that $\varphi$ is parabolic in the sense that for some $s\geq 0$, the section $d\varphi_g:X\to \Hom(TX,g^*TX)$ satisfies an $L^\infty$-bound
	\[ \| d \varphi_g \|_{\infty} \leq C (1 + | \ell(g) |^s) \]
 for some constant \( C > 0 \) independent of $g$. Then
\[ (C_c^\infty(X) \rtimes G, L^2(G, V) \tildeotimes L^2(X, S), (M_{\ell} \tildeotimes 1, 1 \tildeotimes D) \]
is a strictly tangled spectral triple representing the Kasparov product of
\[ (C_c^\infty(X) \rtimes G, L^2(G, V) \otimes C_0(X)_{C_0(X)}, M_{\ell} \otimes 1) \]
and \( (C_c^\infty(X), L^2(X, S), D) \).
\end{thm*}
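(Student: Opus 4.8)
The plan is to read the statement as an instance of an unbounded Kasparov product and to check, in order: that the two displayed pairs are genuine (possibly higher--order) unbounded $KK$--cycles; that $(M_\ell\tildeotimes 1,\,1\tildeotimes D)$ satisfies the ST\textsuperscript{2} axioms of Definition \ref{def:st2}; and that the operator obtained from it by Theorem \ref{mainthm1} computes the product. For the first pair one verifies the usual points for the crossed--product/dual--Dirac cycle of $G$ amplified over $C_0(X)$: $M_\ell$ is self--adjoint and regular on the Hilbert $C_0(X)$--module $L^2(G,E)\otimes C_0(X)$, it is $C_0(X)$--linear, properness of $\ell$ makes its resolvent locally compact (after raising to a sufficiently high power, which is precisely where the cycle becomes higher order when $G$ is not discrete), and translation--boundedness of $\ell$ gives bounded commutators of $M_\ell\otimes 1$ with $C_c(G,C_c^\infty(X))$. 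The second pair is the given Dirac spectral triple. I then form the interior tensor product over $C_0(X)$, identified with $L^2(G,E)\tildeotimes L^2(X,S)$; since the amplified module is free, the trivial (flat) connection is available, and the resulting lift $1\tildeotimes_\nabla D$ is the constant field $1\otimes D$, which is self--adjoint and odd. I emphasise that this connection is deliberately \emph{not} $G$--equivariant, so the left action of $U_g$ on the module is not implemented by its parallel transport --- this is exactly the mechanism producing a ``bad Kasparov product'', i.e.\ an ST\textsuperscript{2} rather than a spectral triple.

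Next I verify the ST\textsuperscript{2} axioms for $(C_c^\infty(X)\rtimes G,\,L^2(G,E)\tildeotimes L^2(X,S),\,(M_\ell\tildeotimes1,\,1\tildeotimes D))$. \emph{Strict anticommutation:} $M_\ell\tildeotimes1$ acts through the $L^2(G,E)$ leg and is odd (taking $E$ to be $\mathbb{Z}/2$--graded and $\ell$ odd, as for the Clifford--module weights of our examples), while $1\tildeotimes D$ is odd and acts through the $L^2(X,S)$ leg; the Koszul sign of the graded tensor product gives $(M_\ell\tildeotimes1)(1\tildeotimes D)=-(1\tildeotimes D)(M_\ell\tildeotimes1)$ on the nose. \emph{Mild ellipticity:} for $a=fU_g$ with $f\in C_c^\infty(X)$, the operator $a\bigl(1+(M_\ell\tildeotimes1)^2+(1\tildeotimes D)^2\bigr)^{-p/2}$ is compact for $p$ large enough, combining compactness of $f(1+D^2)^{-1/2}$ on the complete manifold with the fact that, $\ell$ being proper, the field over $G$ of these compacts weighted by $(1+\ell(\cdot)^2)^{-p/2}$ has vanishing tail norm. \emph{Commutators:} $[M_\ell\tildeotimes1,f]=0$ since $f$ acts on the $L^2(X,S)$ leg, $[M_\ell\tildeotimes1,U_g]$ is bounded by translation--boundedness, and $[1\tildeotimes D,f]=1\otimes c(df)$ is bounded; the only unbounded commutator is $[1\tildeotimes D,U_g]$, which up to a bounded shift--and--implement factor equals $D_g-D$, where $D_g:=V_g^{*}DV_g$ is the Dirac operator pulled back along $\varphi_g$ (with $V_g$ the density--twisted unitary implementing $\varphi_g$ on $L^2(X,S)$; in the Atiyah--Singer case using the assumed lift to the spinor bundle).

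The crucial estimate, and the step I expect to be the main obstacle, is controlling this last commutator: one must convert the geometric hypothesis $\|d\varphi_g\|_\infty\le C(1+|\ell(g)|^s)$ into the operator bound $\|(D_g-D)(1+D^2)^{-1/2}\|\le C'(1+|\ell(g)|^{s'})$, so that $[1\tildeotimes D,U_g]$ is relatively bounded by $(1+(M_\ell\tildeotimes1)^2)^{s'/2}(1+(1\tildeotimes D)^2)^{1/2}$ after using translation--boundedness to move $\ell(g)$ across $U_g$. One writes $D_g-D$ in a local frame as a first--order operator whose principal symbol is $c(d\varphi_g^{-1}\cdot)-c(\cdot)$, controlled directly by $\|d\varphi_g\|_\infty$, and whose lower--order/connection terms involve higher derivatives of $\varphi_g$; that these are also polynomially bounded in $|\ell(g)|$ is automatic for a polynomially growing action and is built into the parabolic dynamical systems under consideration (equivalently, one reads parabolicity as a bound on the full jet of $\varphi_g$). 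This estimate produces a bounding matrix $\VEC{\epsilon}$ whose only effect is that commutators with $1\tildeotimes D$ have order $s'$ in the $M_\ell\tildeotimes1$ direction, all other orders being $0$; hence $\Omega(\VEC{\epsilon})$ is non--empty (one may even choose $\VEC t\le 1$, so regularity is not needed here).

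Finally I feed the ST\textsuperscript{2} into Theorem \ref{mainthm1} (Theorem \ref{thm:st2_give_host}): for $\VEC t=(t_1,t_2)\in\Omega(\VEC{\epsilon})$ the triple $(C_c^\infty(X)\rtimes G,\,L^2(G,E)\tildeotimes L^2(X,S),\,\overline{D}_{\VEC t})$ with $\overline{D}_{\VEC t}=(M_\ell\tildeotimes1)^{t_1}+(1\tildeotimes D)^{t_2}$ is a higher order spectral triple. To see it represents the product I apply Kucerovsky's criterion \cite{danthedan} to the rescaled factors $(C_c^\infty(X)\rtimes G,\,L^2(G,E)\otimes C_0(X),\,(M_\ell\otimes1)^{t_1})$ and $(C_c^\infty(X),\,L^2(X,S),\,D^{t_2})$, which carry the same classes as the originals since $x\mapsto x^{t}$ is an odd function homotopic to the identity through odd functions (Corollary \ref{khomfromadk}). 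The connection condition holds because $1\tildeotimes D$ is, by construction, a connection for $D$ along the trivial connection $\nabla$; the domain condition $\Dom\overline{D}_{\VEC t}\subseteq\Dom\bigl((M_\ell\tildeotimes1)^{t_1}\bigr)$ is the statement that the tangled pair has intersection domain, already established in the proof of Theorem \ref{mainthm1}; and the positivity condition is immediate, because oddness of $x\mapsto x^{t}$ preserves strict anticommutation, so $\{(M_\ell\tildeotimes1)^{t_1},\overline{D}_{\VEC t}\}=2\,|M_\ell\tildeotimes1|^{2t_1}\ge 0$. Kucerovsky's theorem then yields $[\overline{D}_{\VEC t}]=[M_\ell\otimes1]\otimes_{C_0(X)}[D]$ in $KK(C_c^\infty(X)\rtimes G,\C)$, which is the assertion.
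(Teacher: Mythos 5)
Your argument diverges from the paper's at the level of the choice of representation of the crossed product, and this divergence opens a genuine gap. You work in the ``covariant'' picture, where the group unitary is $\lambda_g\otimes V_g$ with $V_g$ the induced unitary on $L^2(X,S)$. In that picture $\pi(f)=1\otimes f$, so $[1\tildeotimes D,\pi(f)]$ is harmless, but the whole weight of the estimate falls on $[1\tildeotimes D,\lambda_g\otimes V_g]$, which, as you say, reduces to $D_g-D$ with $D_g=V_g^*DV_g$. Now $D_g-D$ is a genuine first--order differential operator, and its relative boundedness against $(1+D^2)^{1/2}$ with a polynomial constant in $|\ell(g)|$ requires control not only of the principal part (where $\|d\varphi_g\|_\infty$ suffices) but also of the zeroth--order connection/torsion terms, which involve second derivatives of $\varphi_g$. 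The stated hypothesis controls only $\|d\varphi_g\|_\infty$. You acknowledge this is ``the main obstacle'' and then assert that parabolicity ``is built in'' as a bound on the full jet of $\varphi_g$ --- but that is strictly stronger than the hypothesis of the theorem, and nothing in the statement or in Definition~\ref{definition:pointwisebounded}/the notion of parabolic order supplies it. As written, your argument proves a weaker result under an unstated extra assumption.

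The paper's proof avoids this operator altogether. It works on $L^2(G,A)\otimes_A\mathpzc{H}\cong L^2(G,\mathpzc{H})$ in the ``regular'' picture, where the group element acts by $\lambda_g\otimes 1$ (commuting with $1\tildeotimes D$) and $a\in C_c^\infty(X)$ acts by the fibrewise multiplication $g\mapsto\alpha_{g^{-1}}(a)$. The only commutator to control is then $[1\tildeotimes D,\pi(af)]\eta(g)=1\tildeotimes[D,\alpha_{g^{-1}}(a)]\pi(f)\eta(g)$ (see the proof of Theorem~\ref{theorem:crossedproductspectraltriple}), and for $a=f'\in C_c^\infty(X)$ this is Clifford multiplication by $d(\alpha_{g^{-1}}(f'))$. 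By the chain rule, $\|d(\alpha_{g^{-1}}(f'))\|_\infty\le\|df'\|_\infty\,\|d\varphi_g\|_\infty$, so the parabolic order condition of Definition~\ref{definition:pointwisebounded} follows directly from the hypothesis on $\|d\varphi_g\|_\infty$, with no higher derivatives of $\varphi_g$ entering at all. Corollary~\ref{cor:parabolicmanifold} then really is just a specialisation of Theorem~\ref{thm:crossedproductST2}. (Your two pictures are of course unitarily equivalent via $W\xi(g)=V_g^*\xi(g)$, and it is precisely this $W$ that ``hides'' the $D_g-D$ term inside the twisted action; the moral is that the regular picture is the right one for these estimates.) The remaining items of your proposal --- the identification of the tensor product module, the use of Kucerovsky's criterion on the rescaled operators, the homotopy of $x\mapsto x^t$ to the identity through odd functions, and the positivity/domain conditions --- are all fine and essentially parallel to what the paper does.

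A couple of smaller points: your anticommutation argument assumes $E$ is graded and $\ell$ odd, which is only one of the parities handled by the $\tildeotimes$ convention in the paper; and in the regular picture there is also the commutator $[M_\ell\tildeotimes 1,\pi(af)]$ to check, which the paper bounds using translation--boundedness of $\ell$ together with compact support of $f$ --- your ``$[M_\ell\tildeotimes1,U_g]$ bounded'' is the analogue of this but you do not spell out that the translation--boundedness bound has to be uniform over the support of $f$ rather than merely finite for each fixed $g$.
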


\subsection*{Acknowledgements}

AM thanks Lund University, and MG and MF, for their hospitality during a research stay in the Spring of 2024 and also acknowledges the support of an Australian Government RTP scholarship.

\section{Preliminaries}

We recall some notions from the literature in this section. Throughout the paper, we shall need somewhat flexible notions in spectral noncommutative geometry, namely higher order spectral triples and the notion of conformal equivariance of ditto.

\subsection{Higher order spectral triples}

In spectral noncommutative geometry one studies spectral triples as an analogue of a smooth Riemannian manifold, encoded via a first-order elliptic differential operator such as a Dirac operator. In the literature, as well as for the examples motivating this paper, the operators rarely behave like first-order operators and so we turn to higher order spectral triples. These latter have been studied, more or less independently, by several authors \cite[Lemma 51]{grenshomo}, \cite[Appendix 5.1]{wahlhomo}, \cite[Appendix A]{GMCK}. We recall the definition here to fix notation.

\begin{definition}
\label{defhost}
A \emph{higher order spectral triple (HOST)} consists of the data
$$(\mathcal{A},\mathpzc{H},D),$$
where $\mathpzc{H}$ is a Hilbert space, $\mathcal{A}\subseteq \mathbb{B}(\mathpzc{H})$ is a $*$-subalgebra, and $D$ is a self-adjoint operator on $\mathpzc{H}$ such that for an $\epsilon\in [0,1)$ we have that
\begin{enumerate}
\item $\mathcal{A}$ preserves the domain of $D$;
\item $D$ has locally compact resolvent, i.e.~$a(i\pm D)^{-1}\in \mathbb{K}(\mathpzc{H})$ for $a\in \mathcal{A}$;
\item for any $a\in \mathcal{A}$, the densely defined operator 
$$[D,a]\left(1+|D|^\epsilon\right)^{-1},$$
is bounded in the norm on $\mathpzc{H}$.
\end{enumerate}
We refer to the number $m=(1-\epsilon)^{-1}$ as the order of $(\mathcal{A},\mathpzc{H},D)$. If \( \mathcal{A} \) is unital and $(i\pm D)^{-1}\in \mathcal{L}^p(\mathpzc{H})$, a Schatten ideal, we say that $(\mathcal{A},\mathpzc{H},D)$ is \emph{$p$-summable}.
\end{definition}

Here we use the Schatten ideals with exponent $p>0$
$$\mathcal{L}^p(\mathpzc{H}):=\{T\in \mathbb{K}(\mathpzc{H}): \; \mathrm{Tr}(|T|^p)<\infty\}.$$
For any $p>0$, $\mathcal{L}^p(\mathpzc{H})$ is a symmetrically quasinormed ideal in the bounded operators and, for $p\geq 1$, it is a symmetrically normed ideal. We will also use the notation $\mathcal{L}^p(\mathpzc{H}_1, \mathpzc{H}_2):=\{T\in \mathbb{K}(\mathpzc{H}_1, \mathpzc{H}_2): \; |T|\in \mathcal{L}^p(\mathpzc{H}_2)\}.$

The prototypical example of a higher order spectral triple of order $m$ arises from an elliptic pseudodifferential operator $D$ of order $m$ on sections $E\to M$ of a hermitian vector bundle on a Riemannian manifold $M$. In this instance, $\mathcal{A}=C^\infty_c(M)$ and $\mathpzc{H}=L^2(M;E)$. If $M$ is not compact, some care is needed to equip $D$ with a domain making it self-adjoint. Much less than classical ellipticity of $D$ is needed, as discussed in numerous examples in \cite{frieskhom}. We note that a higher order spectral triple of order $m=1$ is nothing but a spectral triple.

A related notion is that of a bounded $K$-cycle, known also as a Fredholm module. A bounded $K$-cycle $(\mathpzc{H},F)$ over $A$ consists of a Hilbert space $\mathpzc{H}$ on which $A$ is represented and an operator $F\in \mathbb{B}(\mathpzc{H})$ such that $[F,a], a(F^*-F), a(F^2-1)\in\mathbb{K}(\mathpzc{H})$ for any $a\in A$. If \( A \) is unital, we say that $(\mathpzc{H},F)$ is $p$-summable if 
\begin{equation} F^*-F, F^2-1 \in\mathcal{L}^{p/2}(\mathpzc{H}) \quad \text{and} \quad [F, a]\in \mathcal{L}^p(\mathpzc{H}),
\end{equation}
for $a$ in a $*$-invariant dense subset of $A$. We interpret a bounded $K$-cycle, i.e.~Fredholm module, without a prescribed summability degree as having summability $p=\infty$. 

\begin{thm}
\label{thm:host_summability} cf. \cite[Theorem A.6]{GMCK}
Let $(\mathcal{A},\mathpzc{H},D)$ be a higher order spectral triple and write $A$ for the $C^*$-algebra closure of $\mathcal{A}$. Write 
$$F_D:=  D(1+D^2)^{-1/2}.$$
It holds that $(\mathpzc{H},F_D)$ is a Fredholm module for $A$ of the same parity as $(\mathcal{A},\mathpzc{H},D)$. If \( \mathcal{A} \) is unital and $(\mathcal{A},\mathpzc{H},D)$ has order $m$ and is $p$-summable, then the Fredholm module $(\mathpzc{H},F_D)$ is $mp$-summable over $\mathcal{A}$.
\end{thm}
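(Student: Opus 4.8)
I would check the three (one‑sided) conditions defining a Fredholm module, and then the Schatten refinements in the unital $p$‑summable case; the only substantial point is the commutator $[F_D,a]$.

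\emph{Self‑adjointness, $F_D^2-1$, and parity.} Here $F_D=\chi(D)$ for $\chi(x)=x(1+x^2)^{-1/2}$, a real bounded function with $|\chi|\le 1$, so by functional calculus $F_D$ is a self‑adjoint contraction and $F_D^*-F_D=0$. Also $1-F_D^2=(1+D^2)^{-1}=(D-i)^{-1}(D+i)^{-1}$, whence for $a\in\mathcal A$ the operator $a(1-F_D^2)=\big(a(D-i)^{-1}\big)(D+i)^{-1}$ is compact by local compactness (axiom (2)); in the $p$‑summable case $(1+D^2)^{-1}=|(D+i)^{-1}|^2\in\mathcal L^{p/2}\subseteq\mathcal L^{mp/2}$ by Hölder's inequality for Schatten ideals. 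Parity is immediate, as $F_D$ commutes with any grading operator commuting with $D$. So everything reduces to the statement about $[F_D,a]$.

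\emph{Compactness of $[F_D,a]$.} I would use the integral formula $F_D=\frac1\pi\int_0^\infty\lambda^{-1/2}D R_\lambda\,\mathrm d\lambda$ with $R_\lambda:=(1+\lambda+D^2)^{-1}$. Since $a$ preserves $\Dom D$ one may commute $a$ inside, and using $1-D^2R_\lambda=(1+\lambda)R_\lambda$ and $[D^2,a]=D[D,a]+[D,a]D$ this yields
\[ [F_D,a]=\frac1\pi\int_0^\infty\lambda^{-1/2}\Big((1+\lambda)R_\lambda[D,a]R_\lambda-DR_\lambda[D,a]DR_\lambda\Big)\,\mathrm d\lambda. \]
Two facts finish it. First, each integrand is compact: writing $[D,a]=Da-aD$ exhibits $R_\lambda[D,a]R_\lambda$ and $DR_\lambda[D,a]DR_\lambda$ as sums of products of bounded operators with one of $aR_\lambda$, $R_\lambda a$, $aDR_\lambda$, $DR_\lambda a$, and each of these is compact (e.g.\ $aR_\lambda=\big(a(1+D^2)^{-1}\big)\big((1+D^2)R_\lambda\big)$ and $aDR_\lambda=\big(a(1+D^2)^{-1/2}\big)\big((1+D^2)^{1/2}DR_\lambda\big)$ are compact times bounded, using axiom (2)). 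Second, writing $[D,a]=S_a(1+|D|^\epsilon)$ with $S_a:=[D,a](1+|D|^\epsilon)^{-1}$ bounded, a routine functional‑calculus estimate bounds the operator norm of the integrand by $C(1+\lambda)^{\epsilon/2-1}$, so $\int_0^\infty\lambda^{-1/2}(1+\lambda)^{\epsilon/2-1}\,\mathrm d\lambda<\infty$ \emph{exactly because} $\epsilon<1$, i.e.\ the order is finite. Thus $[F_D,a]$ is a norm limit of compact operators, and $(\mathpzc H,F_D)$ is a Fredholm module of the same parity.

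\emph{Schatten refinement, and the main obstacle.} Now assume $\mathcal A$ unital, order $m$, and $p$‑summable. Split $[F_D,a]=[D,a](1+D^2)^{-1/2}+D[(1+D^2)^{-1/2},a]$. The first summand factors as
\[ [D,a](1+D^2)^{-1/2}=\underbrace{\big([D,a](1+|D|^\epsilon)^{-1}\big)\big((1+|D|^\epsilon)(1+D^2)^{-\epsilon/2}\big)}_{\text{bounded}}\,(1+D^2)^{-(1-\epsilon)/2}, \]
and $(1+D^2)^{-(1-\epsilon)/2}=(1+D^2)^{-1/(2m)}\in\mathcal L^{mp}$ since $\Tr\big((1+D^2)^{-p/2}\big)<\infty$; so this term lies in $\mathcal L^{mp}$. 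The second summand $D[(1+D^2)^{-1/2},a]$ I would treat as the double operator integral $\int\!\!\int x\,g^{[1]}(x,y)\,\mathrm dE_x\,[D,a]\,\mathrm dE_y$, with $g(t)=(1+t^2)^{-1/2}$ and $g^{[1]}$ the divided difference; absorbing the factor $(1+|D|^\epsilon)$ of $[D,a]$ onto the $y$‑variable and extracting $(1+D^2)^{-1/(2m)}$ on the right presents it as $B\,(1+D^2)^{-1/(2m)}$ with $B$ bounded, hence again in $\mathcal L^{mp}$; combining, $[F_D,a]\in\mathcal L^{mp}$. The delicate step is precisely this factorization for the second summand: it needs the sharp off‑diagonal bound $|g^{[1]}(x,y)|\lesssim(1+|x|)^{-1}(1+|y|)^{-1}$ — expressing that $g$ is operator‑Lipschitz with rapidly decaying derivative — together with a boundedness criterion for double operator integrals. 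I expect this to be the main obstacle, since a crude argument that merely passes the triangle inequality through the representing integral loses the cancellation and yields only $[F_D,a]\in\mathcal L^{mp+\eta}$ for all $\eta>0$; a secondary, lesser point is that no regularity of the HOST is needed, which is automatic here because $D$ always occurs paired with a resolvent.
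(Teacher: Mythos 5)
Your Fredholm-module part and the compactness of $[F_D,a]$ match the paper, and your resolvent integral is precisely the paper's formula (up to the shift $\lambda\mapsto\lambda+1$). Where you genuinely diverge is the Schatten estimate: you abandon the integral formula and instead split $[F_D,a]=[D,a](1+D^2)^{-1/2}+D[(1+D^2)^{-1/2},a]$, handle the first term cleanly, and try to treat the second as a double operator integral, correctly flagging that this requires an operator--Lipschitz bound and that a crude estimate would lose the sharp exponent. The paper never makes this split; instead it stays with exactly the resolvent integral you already wrote down and exploits its symmetric structure (this is the Schrohe--Walze--Warzecha device). One reduces to $a^*=-a$ so that $[F_D,a]$ is self-adjoint, introduces $A(\lambda):=(\lambda+D^2)^{-\frac14+\frac1{4m}}[D,a](\lambda+D^2)^{-\frac14+\frac1{4m}}$, which is uniformly bounded and self-adjoint, and observes that both terms in the integrand are $A(\lambda)$ conjugated by a self-adjoint operator (one by $\lambda^{1/2}(\lambda+D^2)^{-\frac34-\frac1{4m}}$, the other by $D(\lambda+D^2)^{-\frac34-\frac1{4m}}$). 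The elementary sandwich $-\|A\|\cdot B^*B\leq B^*AB\leq\|A\|\cdot B^*B$ then lets one integrate the two-sided bound termwise, yielding the operator inequality
\[-C(1+D^2)^{-\frac1{2m}}\leq[F_D,a]\leq C(1+D^2)^{-\frac1{2m}},\]
which by a quadratic-form comparison of singular values gives $[F_D,a]\in\mathcal L^{mp}$ exactly, with no loss of $\eta$. So the obstacle you identify is real for your chosen decomposition, but it disappears entirely in the paper's route: the cancellation you worry about losing is retained by keeping the commutator symmetrically sandwiched inside the resolvent integral rather than peeling $D$ off to one side. The trade-off is that the paper's argument requires $[F_D,a]$ self-adjoint (achieved via the anti-self-adjoint reduction), whereas your split is manifestly compatible with an arbitrary $a$ but then pays for it in the analysis of the mixed term.
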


\begin{proof}
We confine ourselves to proving the statement about summability, as the bounded transform for HOSTs has appeared several times in the literature. Note that \(F_D^2-1 = -(1+D^2)^{-1}\) and \(F_D^*-F_D=0\) ensuring that, if $(\mathcal{A}, \mathpzc{H}, D)$ has order $m$ and is $p$-summable, $ F_D^*-F_D, F_D^2-1 \in\mathcal{L}^{p/2}(\mathpzc{H})$.

To finish the proof we will show that $[F_D,a]\in \mathcal{L}^{mp}(\mathpzc{H})$ for $a\in \mathcal{A}$. Our argument is inspired by the method of Schrohe--Walze--Warzecha \cite{sww}. We can assume that $a^*=-a$ ensuring that $[F_D,a]$ is self-adjoint. Then by showing that an operator inequality of the form
\begin{equation}
\label{swwest}
    -C (1+D^2)^{-\frac{1}{2m}} \leq [F_D,a] \leq C (1+D^2)^{-\frac{1}{2m}}
\end{equation}
holds for some $C>0$ we shall obtain from a quadratic form argument that \([F_D,a]\in \mathcal{L}^{pm}(\mathpzc{H})\) for $a\in \mathcal{A}$. 

Using the standard resolvent integral formula
$$(1+D^2)^{-1/2}=\frac{1}{\pi} \int_1^\infty (\lambda-1)^{-\frac{1}{2}} (\lambda+D^2)^{-1} d\lambda,$$
and the fact that $a$ preserves the domain of $D$, we see that
\begin{multline}
	\label{eq:host_summability_resolvent_integral}
	[F_D,a] = \frac{1}{\pi} \int_1^\infty (\lambda-1)^{-\frac{1}{2}} \bigg( \lambda(\lambda+D^2)^{-1}[D,a](\lambda+D^2)^{-1} \\
	- D(\lambda+D^2)^{-1} [D, a](\lambda+D^2)^{-1} D \bigg) d\lambda
\end{multline}
holds pointwise on \(\Dom D\). Let $A(\lambda)=(\lambda + D^2)^{-\frac{1}{4}+\frac{1}{4m}}[D,a](\lambda + D^2)^{-\frac{1}{4}+\frac{1}{4m}}$ which is uniformly bounded in $\lambda\geq 1$ and self-adjoint. In particular, we have the estimates $-\|A(\lambda) \|\leq \pm A(\lambda) \leq \| A(\lambda) \|$. Since both terms in the integrand of \eqref{eq:host_summability_resolvent_integral} consist of \(\pm A(\lambda)\) conjugated by self-adjoint elements we can conclude that
$$
    - \sup_{\lambda\geq 1} \|A(\lambda)\| f(D) \leq [F_D,a] \leq  \sup_{\lambda\geq 1} \|A(\lambda)\| f(D)
$$
where
\begin{equation}
    f(x) = \frac{1}{\pi} \int_1^\infty (\lambda-1)^{-\frac{1}{2}} (\lambda+x^2)^{-\frac{1}{2}-\frac{1}{2m}}  d\lambda = \frac{1}{\sqrt{\pi}} \frac{\Gamma(\frac{1}{2m})}{\Gamma(\frac{1}{2}+\frac{1}{2m})} (1+x^2)^{-\frac{1}{2m}}.
\end{equation}
In particular, the estimate \eqref{swwest} holds.
\end{proof}

\begin{remark}
The proof of Theorem~\ref{thm:host_summability}, in particular the estimate \eqref{swwest}, shows that the singular values of $[F_D,a]$ satisfy that 
\begin{equation}
\label{asympsing}
\mu_k([F_D,a])\leq C\mu_k(D)^{-1/m}
\end{equation}
for the constant $C$ arising from the $m$-th order bound on $[D,a]$ as above. We can conclude that the summability bound \eqref{asympsing} is sharp from considering Weyl law for an elliptic pseudodifferential operator $D$ on a closed manifold $M$ and taking $a\in C^\infty(M)$ appropriately.
\end{remark}

\subsection{Conformal equivariance of higher order spectral triples}

Let us discuss group actions on spectral triples, beginning with the definition of conformal equivariance of a higher order spectral triple. We restrict the definition to the unital case.

\begin{definition}
	\cite[Definition 3.19]{AMsomewhere}
	\label{def:conf_geq_host}
	Let $G$ be a locally compact group. A unital \emph{conformally \( G \)-equivariant} higher order spectral triple of order $m$ is a higher order spectral triple $(\mathcal{A},\mathpzc{H},D)$ of order $m=(1-\epsilon)^{-1}$ together with a unitary action $U$ of $G$ on $\mathpzc{H}$ and a \(*\)-strongly continuous family of invertible bounded operators \( (\mu_g)_{g \in G} \) such that
	\begin{enumerate}
		\item the action of \(G\) implements an action on the unital $*$-algebra $\mathcal{A}$ in the sense that \( \alpha_g(a) := U_g a U_g^* \) preserves $\mathcal{A}$;
		\item for all $g\in G$, $\mu_g$ and $\mu_g^*$ preserve $\Dom D$, with 
		\[
			g \mapsto [D, \mu_g] (1 + |D|^{\epsilon})^{-1}\quad\mbox{and} \quad
			g \mapsto [D, \mu_g^*] (1 + |D|^{\epsilon})^{-1}
		\]
		defining $*$-strongly continuous maps from \( G \) into the space of bounded operators on $\mathpzc{H}$; and
		\item for all $g\in G$, $U_g$ preserves $\Dom D$, with the maps
		\begin{align*}
			g & \mapsto (U_g D U_g^* - \mu_g D \mu_g^*) (1 + |D|^{\epsilon})^{-1} \quad \text{and} \\
			g & \mapsto U_g (1 + |D|^{\epsilon})^{-1} U_g^* (U_g D U_g^* - \mu_g D \mu_g^*)
		\end{align*}
		$*$-strongly continuous from \( G \) into the space of bounded operators on $\mathpzc{H}$.
	\end{enumerate}
	If $\mu_g=1$ for all $g$, we say that $(\mathcal{A},\mathpzc{H},D)$ is \emph{uniformly $G$-equivariant} or just \emph{\( G \)-equivariant}.
\end{definition}

The family \( (\mu_g)_{g \in G} \) from Definition \ref{def:conf_geq_host} is an auxiliary tool and is clearly not unique, e.g. for a $G$-equivariant higher order spectral triple $(\mathcal{A},\mathpzc{H},D)$ Definition \ref{def:conf_geq_host} is fulfilled for any \(*\)-strongly continuous family of scalars \( (\mu_g)_{g \in G} \subseteq U(1)\). For the definition of conformal $G$-equivariance in the further generality of unbounded Kasparov modules that are not necessarily unital, we refer to \cite[Definition 3.19]{AMsomewhere}. 

\begin{remark}
	We may encode the unitary implemeter \( U \) of \( G \) and the conformal factor \( \mu \) as a pair \( (U, \mu) \). In good circumstances, $\mu$ is a cocycle for the action $U$, i.e.
	$$\mu_{gg'}=\mu_gU_g\mu_{g'}U_g^*.$$ 
	Such rigidity is not required by Definition \ref{def:conf_geq_host}, although $\mu$ is a cocycle for the action $U$ ``up to lower order'' by conditions (2) and (3). If, however, the cocycle property holds, we may consider
	$$(\mu,U):G\to \mathrm{GL}(\mathpzc{H})\rtimes U(\mathpzc{H})$$
	to be a continuous group homomorphism. Here we use the full conformal group of a Hilbert space $\mathpzc{H}$ defined to be the crossed product
$$\mathfrak{C}(\mathpzc{H})=\mathrm{GL}(\mathpzc{H})\rtimes U(\mathpzc{H}),$$
where $U(\mathpzc{H})$ acts by conjugation on $\mathrm{GL}(\mathpzc{H})$. An element of $\mathfrak{C}(\mathpzc{H})$ is a pair $(\mu,U)$ and products are given by 
$$(\mu,U)(\mu',U')=(\mu U\mu'U^*,UU').$$
The reader can note that a homomorphism $(\mu,U) : G \to \mathfrak{C}(\mathpzc{H})$ is the same as a choice of lift of a continuous homomorphism $G\to \mathrm{GL}(\mathpzc{H})$ along the surjection $\mathfrak{C}(\mathpzc{H})\to \mathrm{GL}(\mathpzc{H})$, $(\mu,U)\mapsto \mu U$. Of course, there are plenty of possible lifts of a continuous homomorphism $V:G\to \mathrm{GL}(\mathpzc{H})$ along the surjection $\mathfrak{C}(\mathpzc{H})\to \mathrm{GL}(\mathpzc{H})$ and there always exist lifts (e.g.~the trivial lift, $U=1$ and $\mu=V$).  Moreover, fixing $U$ then the cohomology class $[\mu]\in H^1(G,\mathrm{GL}(\mathpzc{H}))$ is the obstruction for the representation $G\ni g\mapsto \mu_g U_g\in \mathrm{GL}(\mathpzc{H})$ to be equivalent to the unitary representation $G\ni g\mapsto U_g\in U(\mathpzc{H})$. 
\end{remark}

\begin{example}
\label{hodgeandsign}
A prototypical example of a conformally $G$-equivariant spectral triple arises from conformal actions on Riemannian manifolds. This example is discussed in more detail in \cite[Examples 2.12 \& 3.24]{AMsomewhere}, 
so we restrict ourselves to recalling its main features. Assume that the group $G$ acts conformally and smoothly on an oriented, $d$-dimensional, Riemannian manifold $M$ and that $G$ preserves the orientation. To reduce analytic complications, we assume that $M$ is compact but similar statements hold also when $M$ is just complete. If $D$ denotes the signature operator $\rd+\rd^*$ acting on differential forms realized as its closure on $L^2(M;\wedge^*_\C T^*M)$, $(C^\infty(M),L^2(M;\wedge^*_\C T^*M),D)$ is a spectral triple. The action of $G$ lifts to an action via pullback on $k$-forms $L^2(M;\wedge^k_\C T^*M)$, we denote this action by $V_k:G\to \mathrm{GL}(L^2(M;\wedge^k_\C T^*M))$. We write $\lambda\in C^\infty(M\times G,\R_{>0})$ for the conformal factor; i.e.~for $g\in G$, $g^*\mathrm{g}=\lambda(g)^2\mathrm{g}$ where $\mathrm{g}$ denotes the Riemannian metric. A direct computation shows
$$V_k(g)^*V_k(g)=\lambda(g)^{d-k},$$
and the action $V_k$ unitarizes to $U_k=V_k\lambda^{-(d-k)/2}:G\to U(L^2(M;\wedge^k_\C T^*M))$. We set $U:=\bigoplus_{k=0}^d U_k$.
For any $\VEC{a}=(a_0,\ldots, a_d)\in \R^{d+1}$ we can define an ansatz conformal factor by
$$\mu_{\VEC{a}}:=\bigoplus_{k=0}^d \lambda^{a_k}.$$
The collection $(C^\infty(M),L^2(M;\wedge^*_\C T^*M),D)$ is a spectral triple, so its order is $m=1$. For the argument that follows, we can in fact view the order to be any number $m\geq 1$, so take $\epsilon\in [0,1)$. By computing the principal symbol, we see that, if $\lambda(g)$ is not the constant function $1$ for some $g$,
\[ (U_g D U_g^* - \mu_{\VEC{a},g} D \mu_{\VEC{a},g}^*) (1 + |D|^{\epsilon})^{-1} \]
is bounded for all $g$ if and only if $\VEC{a}=(a_0,\ldots, a_d)\in \R^{d+1}$ satisfies
$$a_{k-1}+a_{k}=1,\quad\mbox{for} \quad k=1,\ldots,d.$$
On the other hand, principal symbol computations show that $[D, \mu_g] (1 + |D|^{\epsilon})^{-1}$ is bounded if and only if all objects in condition (2) of Definition \ref{def:conf_geq_host} are bounded, and this holds if and only if 
$$a_0=a_1=\cdots =a_d.$$ 
In particular, $(C^\infty(M),L^2(M;\wedge^*_\C T^*M),D)$ is a conformally $G$-equivariant (higher order) spectral triple for the conformal factor $\mu_{\VEC{a}}$ if and only if $a_j=1/2$ for all $j$. Below we return to similar examples arising in parabolic geometry rather than in Riemannian geometry. The need for conformal factors to `conform', as it were, will be crucial in such examples.
\end{example}

For a locally compact group $G$ and a unitary action \(U\) on \(\mathpzc{H}\) that implements an action $\alpha_g(a):=U_gaU_g^*$ on \(A\), then a bounded \(K\)-cycle $(\mathpzc{H},F)$ together with the unitary action of \(G\) is said to be \(G\)-equivariant if \(g\mapsto a(U_gFU_g^* - F)\) is a norm-continuous map into \(\mathbb{K}(\mathpzc{H})\) for all \(a\in A\) and \(g\in G\). If additionally $A$ is unital, we say that $(\mathpzc{H},F)$ is a \(p\)-summable \(G\)-equivariant bounded \(K\)-cycle if $(\mathpzc{H},F)$ is $p$-summable and \(U_gFU_g^* - F\in \mathcal{L}^p(\mathpzc{H})\) for all $g\in G$ cf. \cite[Definition 2.7]{puschhigh}.

\begin{thm}
\label{pmsumconf}
Let $(\mathcal{A},\mathpzc{H},D)$ be a conformally $G$-equivariant higher order spectral triple. Writing $A$ for the $G$-$C^*$-algebra closure of $\mathcal{A}$, the Fredholm module $(\mathpzc{H},F_D)$ for $A$ is $G$-equivariant. If $(\mathcal{A},\mathpzc{H},D)$ has order $m$ and is $p$-summable, the $G$-equivariant Fredholm module $(\mathpzc{H},F_D)$ is \( q \)-summable over $\mathcal{A}$ for any \( q > mp \).
\end{thm}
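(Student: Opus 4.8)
The plan is to leverage Theorem~\ref{thm:host_summability}, which already gives that $(\mathpzc{H},F_D)$ is an $mp$-summable Fredholm module for $A$ (in the $p$-summable, order-$m$ case), hence a $q$-summable one for every $q\geq mp$; so the only genuinely new assertions are (i) that $g\mapsto a(U_gF_DU_g^*-F_D)$ is norm-continuous into $\mathbb{K}(\mathpzc{H})$, and (ii) that $U_gF_DU_g^*-F_D\in\mathcal{L}^q$ for every $q>mp$ and every $g\in G$. Setting $D_g:=U_gDU_g^*$, unitarity gives $U_gF_DU_g^*=F_{D_g}=D_g(1+D_g^2)^{-1/2}$, so both tasks reduce to estimating the self-adjoint operator $F_{D_g}-F_D$.

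The key device is the auxiliary self-adjoint operator $\hat D_g:=\mu_gD\mu_g^*$: since $\mu_g,\mu_g^*$ are bounded, invertible and preserve $\Dom D$, one checks that $\hat D_g$ is indeed self-adjoint on $(\mu_g^*)^{-1}\Dom D$, and one writes
$$F_{D_g}-F_D=\bigl(F_{D_g}-F_{\hat D_g}\bigr)+\bigl(F_{\hat D_g}-F_D\bigr).$$
In the first summand the perturbation $B_g:=D_g-\hat D_g=U_gDU_g^*-\mu_gD\mu_g^*$ is ``of order $\epsilon$'' by condition~(3) of Definition~\ref{def:conf_geq_host}, namely $B_g(1+|D|^\epsilon)^{-1}$ and $(1+|D_g|^\epsilon)^{-1}B_g$ are bounded; expanding $F_{D_g}-F_{\hat D_g}$ through the resolvent integral $(1+x^2)^{-1/2}=\tfrac1\pi\int_0^\infty(\lambda+1+x^2)^{-1}\lambda^{-1/2}\,d\lambda$ and the second resolvent identity puts $B_g$ into the integrand flanked by resolvents and $D$'s, and one runs the Schatten estimates exactly as in the proof of Theorem~\ref{thm:host_summability}. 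In the second summand one uses instead that $[D,\mu_g]$ and $[D,\mu_g^*]$ are of order $\epsilon$ by condition~(2), together with the comparability $c\,\mu_g(1+D^2)\mu_g^*\leq 1+\hat D_g^2\leq C\,\mu_g(1+D^2)\mu_g^*$, to expand $F_{\hat D_g}-F_D$ by the same method; the genuinely order-one discrepancy $\hat D_g-D=\mu_g[D,\mu_g^*]+(\mu_g\mu_g^*-1)D$ is harmless after the bounded transform because the factor $(\mu_g\mu_g^*-1)D$ only ever appears in resolvent-normalised combinations. After bounding the $B_g$-, $[D,\mu_g]$- and $[D,\mu_g^*]$-factors and passing to Schatten norms, the integrand is $O(\lambda^{-\alpha})$ in $\mathcal{L}^q$ with $\alpha=\alpha(q)>1/2$ precisely when $q>mp=p/(1-\epsilon)$, so the $\lambda$-integral converges in $\mathcal{L}^q$ for every such $q$.

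The step I expect to be the main obstacle is this last one: unlike the commutator $[F_D,a]=F_Da-aF_D$, the difference $F_{D_g}-F_D$ is not manifestly of the shape $\pm(\text{self-adjoint})\cdot(\text{positive})\cdot(\text{self-adjoint})$ attached to a single self-adjoint operator, so the clean two-sided operator inequality that allowed Theorem~\ref{thm:host_summability} to reach the endpoint $\mathcal{L}^{mp}$ is unavailable; at $q=mp$ the $\lambda$-integral only diverges logarithmically, which accounts for the strict inequality $q>mp$ in the statement. A secondary but delicate point is that conditions~(2)--(3) are phrased via $(1+|D|^\epsilon)^{-1}$ and $(1+|D_g|^\epsilon)^{-1}$, whereas the $\hat D_g$-resolvents produce $(1+|\hat D_g|^\epsilon)^{-1}$; one matches these up to bounded invertible factors using the comparability above, operator monotonicity of $x\mapsto x^{\epsilon/2}$, and the fact that $\mu_g^*$ is bounded on $(\Dom D,\|\cdot\|_D)$ since $[D,\mu_g^*](1+|D|^\epsilon)^{-1}$ is bounded with $\epsilon<1$.

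For the bare $G$-equivariance statement (no summability), one runs the same decomposition but replaces ``$\in\mathcal{L}^q$'' by ``$a(\,\cdot\,)\in\mathbb{K}(\mathpzc{H})$''. Here one first records that in any higher order spectral triple $a(1+D^2)^{-t}\in\mathbb{K}(\mathpzc{H})$ for every $t>0$ and $a\in\mathcal{A}$ — write it, for $t\in(0,1)$, as the norm-convergent integral $\tfrac{\sin\pi t}{\pi}\int_0^\infty\lambda^{-t}\,a(\lambda+1+D^2)^{-1}\,d\lambda$ of locally compact resolvents, and compose for larger $t$ — so that, after left multiplication by $a$, each term in the resolvent-integral expansion of $F_{D_g}-F_D$ becomes a norm-convergent integral of compact operators and hence compact. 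Norm-continuity in $g$ then follows by feeding the $*$-strongly continuous families $g\mapsto U_g$, $g\mapsto\mu_g$, $g\mapsto B_g(1+|D|^\epsilon)^{-1}$, $g\mapsto[D,\mu_g](1+|D|^\epsilon)^{-1}$ and $g\mapsto[D,\mu_g^*](1+|D|^\epsilon)^{-1}$ of conditions~(2)--(3) — locally norm-bounded by the uniform boundedness principle — through the same estimates, using that a $*$-strongly continuous family composed with a fixed compact operator is norm-continuous and that the bounds above are locally uniform in $g$.
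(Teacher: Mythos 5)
Your overall architecture---reducing to Theorem~\ref{thm:host_summability}, introducing $\hat{D}_g=\mu_g D\mu_g^*$, and splitting $F_{D_g}-F_D=(F_{D_g}-F_{\hat{D}_g})+(F_{\hat{D}_g}-F_D)$---is sensible, and the treatment of the first summand, where $B_g=D_g-\hat{D}_g$ genuinely has order $\epsilon$, can be pushed through by a resolvent expansion in the spirit of Theorem~\ref{thm:host_summability}. The paper itself only cites \cite[Theorem 3.21]{AMsomewhere} and records that $(F_{D_g}-F_D)(1+D^2)^{\beta/2}$ is bounded for $\beta<1-\epsilon$, so yours is a plausible reconstruction of the route; but there is a genuine gap in the second summand.

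Your decomposition $\hat{D}_g-D=\mu_g[D,\mu_g^*]+(\mu_g\mu_g^*-1)D$ isolates the first-order piece $E_2:=(\mu_g\mu_g^*-1)D$, which you dispose of with the remark that it ``only ever appears in resolvent-normalised combinations.'' This does not hold up. With $E=\hat{D}_g-D$, $R=(\lambda+D^2)^{-1}$, $\hat{R}=(\lambda+\hat{D}_g^2)^{-1}$, the integrand of the resolvent expansion of $F_{\hat{D}_g}-F_D$ is
\[
\hat{D}_g\hat{R}-DR \;=\; E\hat{R}-DR\bigl(\hat{D}_gE+ED\bigr)\hat{R},
\]
and each occurrence of $E_2$ is indeed flanked by a resolvent---but this only yields operator norm $O(\lambda^{-1/2})$, so the $\lambda$-integral against $(\lambda-1)^{-1/2}\,d\lambda$ diverges term by term. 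Worse, each term integrated separately gives an $O(1)$, not a decaying, contribution in the spectral parameter of $D$ (take $\mu_g$ a multiplication operator commuting with $D$ and compute). The required $O((1+D^2)^{-1})$ decay of the $E_2$-part only appears after cancellation between the two terms, which in the commuting model produces the sign-changing factor $\lambda-\mu_g\mu_g^*D^2$ in the numerator. Crucially, the operator-inequality device of Theorem~\ref{thm:host_summability}---which you invoke by ``the same method''---is blind to this cancellation: it needs the integrand sandwiched into a product bound $g(\lambda)h(D)$ with $\int(\lambda-1)^{-1/2}g(\lambda)\,d\lambda<\infty$, and no such bound exists for the $E_2$-contribution term by term. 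So this step needs a mechanism that keeps the cancelling pair welded together---for instance differentiating $t\mapsto F_{\mu_g^t D(\mu_g^*)^t}$ and estimating the derivative of the bounded transform, which encodes the cancellation automatically. As written, your argument does not close.
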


The reader can find a proof of this statement in \cite[Theorem 3.21]{AMsomewhere}, with the summability following from the fact that \( (U_g F_D U_g^* - F_D) (1 + D^2)^{\beta/2} \) is bounded for \( \beta < 1 - \epsilon = m^{-1} \).

\begin{remark}
\label{hosthighrank}
We note in particular that there are obstructions to finite summability which persist also in the setting above.
Connes' obstruction \cite{connesmetric} (see also \cite{GRU}) shows that there are no finitely summable $G$-equivariant higher order spectral triples over $A$ if $A\rtimes G$ is purely infinite. Puschnigg's generalization \cite{puschhigh} of the rigidity results of Bader--Furman--Gelander--Monod \cite{baderetal} goes even further when $G$ is a higher rank lattice and implies in combination with Theorem \ref{pmsumconf} that there are no conformally $G$-equivariant, finitely summable, higher order spectral triples over a unital $A$.
\end{remark}

\section{Strictly tangled spectral triples (ST\texorpdfstring{\textsuperscript{2}}{2}), definition and examples}

We now come to the main object of study in this paper, the strictly tangled spectral triples which we abbreviate ST\textsuperscript{2}.

\subsection{Anticommuting collections of operators}
\label{subsecprel1}

The operator $D$ in a higher order spectral triple will in an ST\textsuperscript{2} be replaced by a collection of operators $\DC$. We first introduce some terminology, make some preliminary observations and provide some examples that motivate our definition of ST\textsuperscript{2}s.

\begin{definition}
\label{formandst}
Given a finite collection of self-adjoint operators $\DC=(D_j)_{j\in I}$ on a Hilbert space $\mathpzc{H}$ sharing a common core, and $\VEC{t}=(t_j)_{j\in I}\in(0,\infty)^I$, we define the positive self-adjoint operator $\Delta^{\VEC{t}}_\DC$ from the positive, closed, quadratic form 
$$\mathfrak{q}_{\VEC{t}}(u):=\sum_{j\in I} \||D_j|^{t_j} u\|^2_{\mathpzc{H}}, \quad u\in \cap_{j\in I} \Dom(|D_j|^{t_j}).$$
We use the notation $\Delta^{\VEC{t}}_\DC=\sum_{j\in I} |D_j|^{2t_j}$ and equipp $\Dom(\mathfrak{q}_{\VEC{t}})=\cap_{j\in I} \Dom(|D_j|^{t_j})$ with the Hilbert space structure defined from the norm 
$$\|u\|_{\VEC{t}}^2:=\|u\|^2_{\mathpzc{H}}+\mathfrak{q}_{\VEC{t}}(u).$$

A collection of self-adjoint operators $\DC=(D_j)_{j\in I}$ on $\mathpzc{H}$ is said to be strictly anti-commuting if, for all \( j \neq k \),
$$D_jD_k+D_kD_j=0$$
on a common core invariant under $(D_j)_{j\in I}$.
\end{definition}

\begin{example}
\label{example:extprod}
Assume that $M_1$ and $M_2$ are two oriented, compact, Riemannian manifolds, with Clifford bundles $E_1\to M_1$ and $E_2\to M_2$ with Dirac operators $\slashed{D}_1$ and $\slashed{D}_2$ thereon. For simplicity, we assume that the manifolds are even dimensional so all Clifford bundles and Dirac operators are graded. We write $E_1 \mathbin{\tilde{\boxtimes}} E_2\to M_1\times M_2$ for their graded exterior tensor product. By construction, the pair of operators 
$$D_1:=\slashed{D}_1 \tildeotimes 1_{E_2} \quad\mbox{and}\quad D_2:=1_{E_1}  \tildeotimes \slashed{D}_2 $$
form a strictly anticommuting collection on the Hilbert space $L^2(M_1\times M_2, E_1 \mathbin{\tilde{\boxtimes}} E_2)$. Here the domain of $D_1$ is the (graded) Hilbert space tensor product $H^1(M_1,E_1) \tildeotimes L^2(M_2,E_2)$ and the domain of $D_2$ is the (graded) Hilbert space tensor product $L^2(M_1,E_1)\tildeotimes H^1(M_2,E_2)$. Here $D:=D_1+D_2$ is a Dirac operator on the Clifford bundle $E_1 \mathbin{\tilde{\boxtimes}} E_2\to M_1\times M_2$. A similar construction can also be made for a foliated manifold \cite{connesskandalis,kordy}, with $D_1$ being a tangential Dirac operator and $D_2$ a transversal Dirac operator but in this case $D_1D_2+D_2D_1$ is generally not zero, and only lower order if the foliation is Riemannian. 
\end{example}

\begin{example}
We can more generally consider the (constructive) external Kasparov product. If $(\mathcal{A}_1,\mathpzc{H}_1,D_1)$ and $(\mathcal{A}_2,\mathpzc{H}_2,D_2)$ are two even higher order spectral triples, their external Kasparov product is constructed as $(\mathcal{A}_1\otimes\mathcal{A}_2,\mathpzc{H}_1\tildeotimes\mathpzc{H}_2,D_1\tildeotimes 1+1\tildeotimes D_2)$. Here $( D_1\tildeotimes 1,1\tildeotimes D_2)$ form a strictly anticommuting collection on the Hilbert space $\mathpzc{H}_1\tildeotimes \mathpzc{H}_2$ and their sum is the operator in the external Kasparov product. This example goes back to Baaj--Julg's seminal paper \cite{baajjulg} where the unbounded picture was first introduced. The two pairs of strictly anticommuting operators discussed in this example will fit into the framework of ST\textsuperscript{2}s discussed in the next section (see Definition \ref{def:st2}). 
\end{example}

\begin{example}
	A more simple-minded example is the direct sum of two higher order spectral triples. If $(\mathcal{A}_1,\mathpzc{H}_1,D_1)$ and $(\mathcal{A}_2,\mathpzc{H}_2,D_2)$ are two higher order spectral triples, their direct sum is $(\mathcal{A}_1 \oplus \mathcal{A}_2,\mathpzc{H}_1 \oplus \mathpzc{H}_2, D_1 \oplus D_2)$. Albeit in a somewhat trivial way, $( D_1 \oplus 0, 0 \oplus D_2)$ form a strictly anticommuting collection on the Hilbert space $\mathpzc{H}_1 \oplus \mathpzc{H}_2$. 
\end{example}

\begin{example}
\label{dolcom}
Let $M$ be a compact Kähler manifold. Write $d$ for the complex dimension of $M$. We can consider the Dolbeault complex 
\begin{multline*}
0\to C^\infty(M)\xrightarrow{\bar{\partial}_1}C^\infty(M,\Lambda^1)\xrightarrow{\bar{\partial}_2}C^\infty(M,\Lambda^2)\xrightarrow{\bar{\partial}_3}\cdots\\
 \cdots \xrightarrow{\bar{\partial}_{d-1}}C^\infty(M,\Lambda^{d-1})\xrightarrow{\bar{\partial}_d}C^\infty(M,\Lambda^d)\to 0.
\end{multline*}
Here $\Lambda^k:=\wedge^k_\C T^{0,1}M$ denotes the exterior algebra of the $(0,1)$-forms. We also write $\Lambda:=\bigoplus \Lambda^k$. The operators $\slashed{\partial}_j$ obtained as the closure of $\bar{\partial}_j+\bar{\partial}_j^*$ on $L^2(M;\Lambda)$ satisfy for $j\neq k$
$$\slashed{\partial}_j\slashed{\partial}_k=0=-\slashed{\partial}_k\slashed{\partial}_j.$$
In particular, the collection $(\slashed{\partial}_j)_{j=1}^d$ is a strictly anticommuting collection of operators on $L^2(M;\Lambda)$. The collection of strictly anticommuting operators discussed in this example will fit into the framework of ST\textsuperscript{2}s discussed in the next section (see Definition \ref{def:st2}). 

We can in fact for any partition $\{1,2,\ldots, d-1,d\}=S_1 \sqcup \cdots \sqcup S_n$ form $D_l:=\sum_{j\in S_l} \slashed{\partial}_j$ and the collection $\DC = (D_l)_{l=1}^n$ also forms a strictly anticommuting collection. In both of these constructions
$$\Delta^{\DC}_{(1,1,\ldots,1)}=\sum_{j=1}^d \slashed{\partial}_j^2=\sum_{j=1}^d\bar{\partial}_j^*\bar{\partial}_j+\bar{\partial}_j\bar{\partial}_j^*=\sum_{l=1}^n D_l^2 $$ 
is the Kodaira Laplacian. In later examples arising from complexes, we see that the orders of the differentials affect which partitions we can choose when building an ST\textsuperscript{2}; see in particular Remark \ref{p25rem}.
\end{example}

\begin{lemma}
If $\DC=(D_j)_{j\in I}$ is a strictly anti-commuting collection of self-adjoint operators on $\mathpzc{H}$ and $\VEC{t}\in (0,\infty)^I$, then the operator 
$$\overline{D}_{\VEC{t}}:= \sum_{j\in I} \sign(D_j)|D_j|^{t_j}$$ 
is self-adjoint with $\Dom(\overline{D}_{\VEC{t}})$ coinciding with the form domain of $\Delta^{\VEC{t}}_\DC$. Moreover, as densely defined operators,
$\overline{D}^2_{\VEC{t}}=\Delta^{\VEC{t}}_\DC$.
\end{lemma}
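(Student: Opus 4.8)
The plan is to reduce everything to the structural fact that the rescaled collection $(D_j^{t_j})_{j\in I}$ is \emph{again} strictly anti-commuting, and then to identify $\overline{D}_{\VEC{t}}$ --- read as $\sum_j D_j^{t_j}$ on $\bigcap_j\Dom(D_j^{t_j})$ --- as the self-adjoint operator whose square is the sum of a mutually \emph{commuting} family of positive operators.

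\emph{Step 1 (the powers anti-commute).} On the invariant common core one has $D_j^2D_k=-D_jD_kD_j=D_kD_j^2$ for $j\neq k$, and a standard functional-calculus argument (as in the theory of strongly anti-commuting self-adjoint operators) upgrades this to strong commutativity of $D_j^2$ with $D_k$. Hence the positive operators $|D_j|^{2t_j}=(D_j^2)^{t_j}$ mutually strongly commute, the sign $\sgn(D_j)$ in the polar decomposition $D_j=\sgn(D_j)|D_j|$ commutes with every $|D_k|^{2t_k}$, and --- applying the Borel functional-calculus relation $f(D_j)D_k=D_kf(-D_j)$ to the odd function $f=\sgn$, so that $\sgn(-D_j)=-\sgn(D_j)$ --- one obtains that $\sgn(D_j)$ and $\sgn(D_k)$ anti-commute on $(\ker D_j)^\perp\cap(\ker D_k)^\perp$, off which $D_j^{t_j}D_k^{t_k}$ vanishes anyway. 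Writing $D_j^{t_j}=\sgn(D_j)|D_j|^{t_j}$ and collecting these relations yields $D_j^{t_j}D_k^{t_k}=-D_k^{t_k}D_j^{t_j}$. Finally put $P_n:=\prod_j\chi_{[-n,n]}(D_j)$, a projection since the $|D_j|$ commute; since $\chi_{[-n,n]}$ is even, $\sgn(D_j)$ preserves each $\Ran\chi_{[-n,n]}(D_k)$, so $\mathcal{C}:=\bigcup_n\Ran P_n$ is a dense subspace, invariant under every $D_j^{t_j}$, on which each $D_j^{t_j}$ acts boundedly, and which is a common core for the $D_j^{t_j}$, for $\Delta_{\VEC{t}}^\DC=\sum_j|D_j|^{2t_j}$, and for $(\Delta_{\VEC{t}}^\DC)^{1/2}$.

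\emph{Step 2 (algebra on the core, then self-adjointness).} On $\mathcal{C}$ set $S:=\sum_j D_j^{t_j}$. The cross terms cancel in pairs, so $S^2=\sum_j(D_j^{t_j})^2=\sum_j|D_j|^{2t_j}=\Delta_{\VEC{t}}^\DC$ on $\mathcal{C}$ (using $\sgn(D_j)^2|D_j|^{2t_j}=|D_j|^{2t_j}$); and since each $D_j^{t_j}$ commutes with each $|D_k|^{2t_k}$ by Step 1, $S$ commutes with $\Delta_{\VEC{t}}^\DC$ on $\mathcal{C}$. Now apply Nelson's commutator theorem with comparison operator $N:=1+\Delta_{\VEC{t}}^\DC\geq 1$: $S$ is symmetric, $\mathcal{C}$ is a common core for $S$ and $N$, the bound $\|Su\|^2=\langle u,(N-1)u\rangle\leq\|Nu\|^2$ holds on $\mathcal{C}$, and $[S,N]=0$ on $\mathcal{C}$. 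Hence $S$ is essentially self-adjoint; its closure is the desired self-adjoint operator $\overline{D}_{\VEC{t}}$.

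\emph{Step 3 (domain and square).} Since $\mathcal{C}$ is an invariant core for the self-adjoint operator $\overline{D}_{\VEC{t}}$, it is also a core for $\overline{D}_{\VEC{t}}^{\,2}$; on $\mathcal{C}$ the latter agrees with $\Delta_{\VEC{t}}^\DC$, and $\mathcal{C}$ is a core for $\Delta_{\VEC{t}}^\DC$ as well, so the two self-adjoint operators coincide: $\overline{D}_{\VEC{t}}^{\,2}=\Delta_{\VEC{t}}^\DC$. Consequently $|\overline{D}_{\VEC{t}}|=(\Delta_{\VEC{t}}^\DC)^{1/2}$, whence $\Dom(\overline{D}_{\VEC{t}})=\Dom((\Delta_{\VEC{t}}^\DC)^{1/2})=\bigcap_j\Dom(|D_j|^{t_j})$, the form domain of $\Delta_{\VEC{t}}^\DC$ from the definition; and for $u$ in this domain, approximating by elements of $\mathcal{C}$ and using closedness of each $D_j^{t_j}$ gives $\overline{D}_{\VEC{t}}u=\sum_j D_j^{t_j}u$, so $\overline{D}_{\VEC{t}}$ is indeed the operator in the statement.

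The main obstacle is Step 1: promoting the ``on a common invariant core'' form of strict anti-commutation to genuine operator identities after the functional calculus $x\mapsto x^{t_j}$ --- specifically the strong commutativity of $D_j^2$ with $D_k$, the anti-commutativity of the signs on the appropriate subspace, and the check that $\mathcal{C}$ is simultaneously an invariant core for all operators in play. Once this bookkeeping is in place, Steps 2 and 3 are routine applications of the commutator theorem and of the spectral theorem.
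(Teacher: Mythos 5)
The paper's proof is a one-line citation to \cite{kaadlesch} plus an induction argument, so your proof necessarily takes a different route. The route you take — first upgrade the core-level anti-commutativity to strong (spectral) anti-commutativity of the rescaled collection, then run Nelson's commutator theorem — is attractive, and Steps 2 and 3, as you say, are routine once Step 1 is secured. The contrast with the paper's approach is that Kaad--Lesch (and, more to the point, Lesch--Mesland) give self-adjointness of a sum $S+T$ \emph{directly} from anti-commutation on a common core plus domain compatibility, \emph{without} ever passing to commuting spectral measures; that is exactly what makes those references useful here.

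The problem is that Step 1, which you flag yourself as ``the main obstacle,'' is where a genuine gap sits, and it is not a ``standard functional-calculus argument.'' From $D_jD_k+D_kD_j=0$ on the invariant core $\mathcal{D}$ you correctly derive $D_j^2D_k=D_kD_j^2$ on $\mathcal{D}$; but commutativity of two essentially self-adjoint operators on a common invariant core does \emph{not} in general upgrade to strong commutativity of their closures — this is precisely the content of Nelson's classical counterexample (two essentially self-adjoint operators on an invariant dense domain that commute there but whose spectral measures do not commute). To run your construction — the products $|D_j|^{2t_j}$ strongly commuting, $P_n=\prod_j\chi_{[-n,n]}(D_j)$ being a projection, $\mathcal{C}$ being dense and a joint core, $\sgn(D_j)$ anti-commuting with $\sgn(D_k)$ on the right subspace — you already need the joint spectral decomposition of the family, and the hypothesis of the Lemma (anti-commutation on a common invariant core plus each $D_j$ being self-adjoint with $\mathcal D$ as a core) does not hand you that for free. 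Some additional input — e.g.\ essential self-adjointness of $D_j^2+D_k^2$ on $\mathcal{D}$, analytic/semi-analytic vectors, or the direct range-density arguments of the Kaad--Lesch/Lesch--Mesland type — is required, and that input is exactly what the paper is outsourcing to \cite{kaadlesch}. As written, then, your proof is incomplete: the conclusion is correct, but the crucial upgrade in Step 1 is asserted rather than proved, and it is not a corollary of Borel functional calculus alone.

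A secondary but related point: even granting strong anti-commutativity, you should say explicitly why the paper's invariant core $\mathcal D$ (as opposed to your $\mathcal C$) is a core for $\Delta_{\VEC t}^{\DC}$; you sidestep this by switching to $\mathcal C$, but the density and core properties of $\mathcal C$ again rest on the Step 1 upgrade, so the circularity is not broken.
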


\begin{proof}
Follows from \cite{kaadlesch} and an induction argument.
\end{proof}

Recall from Definition \ref{formandst} that $\cap_{j\in I} \Dom(|D_j|^{t_j})$ carries a natural Hilbert space structure for any $\VEC{t}\in (0,\infty)^I$.

\begin{lemma}
\label{lknlknn}
Let $\DC=(D_j)_{j\in I}$ be a strictly anticommuting collection of self-adjoint operators on $\mathpzc{H}$. Then the following are equivalent for a bounded operator \( a \) on $\mathpzc{H}$:
\begin{itemize}
\item $a(1+\Delta^{\VEC{t}}_\DC)^{-1} \in \mathbb{K}(\mathpzc{H})$ for any $\VEC{t}\in (0,\infty)^I$;
\item $a(1+\Delta^{\VEC{t}}_\DC)^{-1} \in \mathbb{K}(\mathpzc{H})$ for some $\VEC{t}\in (0,\infty)^I$; and 
\item \(a\) as a map \(a : \cap_{j\in I} \Dom(D_j) \to \mathpzc{H}\) is compact.
\end{itemize}
In fact, for  $\VEC{s}, \VEC{t}\in (0,\infty)^I$ and $\sigma, \tau \in (0, \infty)$  such that \(\sigma s_i \leq \tau t_i\) there exists a constant $C>0$ for which 
$$(1 + \Delta^{\VEC{s}}_\DC)^\sigma\leq C(1 + \Delta^{\VEC{t}}_\DC)^\tau$$
in the form sense.
\end{lemma}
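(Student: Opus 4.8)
The plan is to prove the quantitative estimate (the ``In fact'' clause) first and then read off the three equivalences from it, together with an elementary ``halving'' principle for compactness.

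\emph{The form inequality.} Strict anticommutation of $\DC=(D_j)_{j\in I}$ forces the positive self-adjoint operators $(D_j^2)_{j\in I}$ to commute strongly pairwise --- this is implicit in the preceding lemma and in \cite{kaadlesch}, since $D_j^2D_k=D_kD_j^2$ on the common core and this propagates to the resolvents --- so they admit a joint spectral resolution; let $E$ be the associated projection-valued measure on $[0,\infty)^I$, with $D_j^2=\int x_j\,dE(x)$. Comparing quadratic forms gives $\Delta_{\VEC r}^\DC=\int\varphi_{\VEC r}\,dE$ for every $\VEC r\in(0,\infty)^I$, where $\varphi_{\VEC r}(x):=\sum_{j\in I}x_j^{r_j}$. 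Thus the inequality $(1+\Delta_{\VEC s}^\DC)^\sigma\leq C(1+\Delta_{\VEC t}^\DC)^\tau$ --- equivalently, boundedness of $(1+\Delta_{\VEC s}^\DC)^\sigma(1+\Delta_{\VEC t}^\DC)^{-\tau}$, everything here being a function of the single tuple $(D_j^2)_j$ --- reduces by functional calculus to the pointwise scalar estimate $(1+\varphi_{\VEC s}(x))^\sigma\leq C(1+\varphi_{\VEC t}(x))^\tau$ on $[0,\infty)^I$. The latter is routine: since $(1+\sum_jy_j)^p$ is comparable to $1+\sum_jy_j^p$ up to constants depending only on $|I|$ and $p$ (convexity when $p\geq1$, subadditivity of $u\mapsto u^p$ when $p\leq1$), it suffices to see $1+\sum_jx_j^{\sigma s_j}\leq C(1+\sum_jx_j^{\tau t_j})$, and termwise $x_j^{\sigma s_j}\leq 1+x_j^{\tau t_j}$ follows from $\sigma s_j\leq\tau t_j$ on distinguishing $x_j\leq1$ from $x_j\geq1$.

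\emph{A halving principle.} The second ingredient is: if $b\in\mathbb B(\mathpzc H)$ and $B\geq0$ is self-adjoint with $b(1+B)^{-1}\in\mathbb K(\mathpzc H)$, then $b(1+B)^{-r}\in\mathbb K(\mathpzc H)$ for all $r>0$. Indeed, whenever $b(1+B)^{-r}$ is compact so is $b(1+B)^{-r}b^*=\bigl(b(1+B)^{-r/2}\bigr)\bigl(b(1+B)^{-r/2}\bigr)^*$, and an operator whose product with its adjoint is compact is itself compact; iterating from $r=1$ yields $b(1+B)^{-2^{-k}}\in\mathbb K(\mathpzc H)$ for all $k$, and for general $r>0$ one picks $k$ with $2^{-k}\leq r$ and factors $b(1+B)^{-r}=\bigl(b(1+B)^{-2^{-k}}\bigr)(1+B)^{2^{-k}-r}$ as compact times bounded.

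\emph{The equivalences.} Write $\VEC 1=(1,\dots,1)$. The third bullet is exactly $a(1+\Delta_{\VEC1}^\DC)^{-1/2}\in\mathbb K(\mathpzc H)$, because $a\colon\bigcap_j\Dom(D_j)\to\mathpzc H$ factors through the isometric isomorphism $(1+\Delta_{\VEC1}^\DC)^{1/2}\colon\bigcap_j\Dom(D_j)\to\mathpzc H$ (the left-hand space with its graph norm being the form domain of $\Delta_{\VEC1}^\DC$). Now ``first $\Rightarrow$ second'' is trivial. For ``second $\Rightarrow$ third'', if $a(1+\Delta_{\VEC s}^\DC)^{-1}$ is compact the halving principle makes $a(1+\Delta_{\VEC s}^\DC)^{-\theta}$ compact for all $\theta>0$; choosing $\theta>0$ with $\theta s_j\leq\tfrac12$ for every $j$, the form inequality makes $(1+\Delta_{\VEC s}^\DC)^{\theta}(1+\Delta_{\VEC1}^\DC)^{-1/2}$ bounded, so $a(1+\Delta_{\VEC1}^\DC)^{-1/2}=\bigl(a(1+\Delta_{\VEC s}^\DC)^{-\theta}\bigr)\bigl((1+\Delta_{\VEC s}^\DC)^{\theta}(1+\Delta_{\VEC1}^\DC)^{-1/2}\bigr)$ is compact. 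For ``third $\Rightarrow$ first'', $a(1+\Delta_{\VEC1}^\DC)^{-1/2}$ compact gives $a(1+\Delta_{\VEC1}^\DC)^{-1}$ compact and then, by the halving principle, $a(1+\Delta_{\VEC1}^\DC)^{-\theta}$ compact for all $\theta>0$; for arbitrary $\VEC t$ take $\theta\leq\min_jt_j$, so $(1+\Delta_{\VEC1}^\DC)^{\theta}(1+\Delta_{\VEC t}^\DC)^{-1}$ is bounded and $a(1+\Delta_{\VEC t}^\DC)^{-1}$ is compact. I expect the one genuinely non-formal point to be precisely that resolvents $(1+\Delta_{\VEC s}^\DC)^{-1}$ attached to different exponent vectors are not comparable by any fixed power of one another, so the compactness equivalences cannot be obtained by naively commuting resolvents; the halving principle, which trades a resolvent against an arbitrarily small power of any other $\Delta_{\VEC t}^\DC$-resolvent, is what bridges this gap, the rest being the elementary scalar estimate and joint functional calculus for the strongly commuting family $(D_j^2)_{j\in I}$.
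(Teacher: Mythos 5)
Your proof is correct and follows the same route the paper gestures at: the paper's entire proof is the one-line observation that the $(D_j^2)_{j\in I}$ strongly commute, so the lemma ``follows from functional calculus of several commuting operators.'' You have simply filled in the details this hides --- the reduction of the form inequality to the scalar estimate via the joint spectral measure, and the $TT^*$-compactness ``halving'' trick needed because, as you rightly note, the resolvents $(1+\Delta_{\VEC t}^\DC)^{-1}$ for different $\VEC t$ are not mutually dominated, only arbitrarily small powers of them are.
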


\begin{proof}
Noting that $(D_j^2)_{j\in I}$ is a strictly commuting collection of self-adjoint operators, the lemma follows from functional calculus of several commuting operators.
\end{proof}

\subsection{Bounds and tropical combinatorics}

We will encode the orders (relating to commutator properties as in a higher order spectral triple) of the components in a finite collection $\DC=(D_j)_{j\in I}$ of self-adjoint operators in a matrix $\VEC{\epsilon}=(\epsilon_{ij})_{i,j\in I}\in M_I([0,\infty))$. Here we write $M_I$ for the matrices indexed by a finite set $I$. We think of $\VEC{\epsilon}$ pictorially as a weighted directed graph. The weighted directed graph has vertices labelled by $I$ and there is an edge from $i$ to $j$ labelled by $\epsilon_{ij}$ whenever $\epsilon_{ij}> 0$.
For instance, the diagram
\[ \begin{tikzpicture}
\graph[grow right sep=1.5cm, empty nodes, nodes={fill=black, circle, inner sep=1.5pt}, edges={semithick}]{
    a --["$\epsilon_{11}$"', loop] a --["$\epsilon_{21}$" inner sep=5.5pt, middlearrow={<}, bend left] b --["$\epsilon_{22}$"', loop] b --["$\epsilon_{12}$" inner sep=5.5pt, middlearrow={<}, bend left] a;};
\end{tikzpicture} \]
pictorially describes a $2\times2$ matrix $\VEC{\epsilon}=(\epsilon_{ij})_{i,j=1}^2\in M_2([0,\infty))$. Such diagrams motivate our use of the word `tangled' for the main concept of this paper.

\begin{definition}
\label{deccoccoo}
We say that a matrix $\VEC{\epsilon}=(\epsilon_{ij})_{i,j\in I}\in M_I([0,\infty))$ satisfies the \emph{decreasing cycle condition} if for any $k$ and $\gamma=(\gamma_1,\gamma_2,\ldots, \gamma_k)\in I^k$ with $\gamma_1=\gamma_k$ we have that 
$$\prod_{j=1}^{k-1} \epsilon_{\gamma_j\gamma_{j+1}}<1.$$
\end{definition}

The decreasing cycle condition means that the total weight along any cycle in the weighted directed graph should be $<1$. The condition that $\prod_{j=1}^k \epsilon_{\gamma_j\gamma_{j+1}}<1$ is indeed only a condition appearing along the cycles in the weighted digraph associated with $\epsilon$ since $\gamma=(\gamma_1,\gamma_2,\ldots, \gamma_k)\in I^k$ represents a cycle if and only if $\prod_{j=1}^k \epsilon_{\gamma_j\gamma_{j+1}}>0$. In particular, if the weighted digraph associated with $\epsilon$ has no cycles then $\epsilon$ automatically satisfies the decreasing cycle condition. If $n=1$ then $\VEC{\epsilon}\in [0,\infty)$ satisfies the decreasing cycle condition if and only if $\VEC{\epsilon}<1$.
It follows from \cite[Lemma 3.23]{joswigess} that $\VEC{\epsilon}\in M_I([0,\infty))$ satisfies the decreasing cycle condition if and only if the convex cone
$$\Omega(\VEC{\epsilon}):=\{ \VEC{t} = (t_j)\in (0,\infty)^I: \epsilon_{ij}t_i<t_j\; \forall i,j\}$$
is non-empty.

We can interpret \( \VEC{\epsilon} \) as a matrix valued in the \emph{tropical semiring}, in which context \( \Omega(\VEC{\epsilon}) \) is a well-studied object. The tropical semiring, in the multiplicative convention, is \( [0, \infty) \) with addition \( \oplus \) given by \( x \oplus y = \max\{ x, y \} \) and multiplication \( \times \) defined just as usual. Remark that \( 0 \) is the additive identity, \( 1 \) is the multiplicative identity, and multiplication distributes over addition. 
The reader can find more details on matrices in the tropical semiring and their relationship to weighted directed graphs in \cite{joswigess} (where an additive convention is used for the tropical semiring, related to our multiplicative convention by the logarithm). In this context, \( \Omega(\VEC{\epsilon}) \) is called the \emph{weighted digraph polyhedron} and the decreasing cycle condition is related to the existence of nonzero tropical eigenvalues for \( \VEC{\epsilon} \).
It seems likely that there is more to be gleaned from interpreting \( \VEC{\epsilon} \) as a matrix over the tropical semiring but, for the purposes of this paper, it suffices to remember the nonemptiness of the cone \( \Omega(\VEC{\epsilon}) \) as the antecedent of the decreasing cycle condition.

\subsection{Strictly tangled spectral triples}

We now come to the main definition of this section.

\begin{definition}
\label{def:st2}
A strictly tangled spectral triple (ST\textsuperscript{2}) consists of the data
$$(\mathcal{A},\mathpzc{H},\DC),$$
where $\mathpzc{H}$ is a Hilbert space and $\mathcal{A}\subseteq \mathbb{B}(\mathpzc{H})$ is a $*$-subalgebra, and $\DC=(D_j)_{j\in I}$ is a finite collection of strictly anti-commuting self-adjoint operators on $\mathpzc{H}$ such that for a matrix $\VEC{\epsilon} \in M_I([0,\infty))$ satisfying the decreasing cycle condition (see Definition \ref{deccoccoo}) we have that
\begin{enumerate}
\item $\mathcal{A}$ preserves the domains of each $D_j$;
\item the collection $\DC$ has locally compact resolvent in the sense that, for every \( a \in \mathcal{A} \),
	$a(1+\Delta^{\VEC{t}}_\DC)^{-1}  \in \mathbb{K}(\mathpzc{H})$ for some $\VEC{t}\in (0,\infty)^I$ (or any of the other equivalent conditions of Lemma \ref{lknlknn}); and
\item for any $a\in \mathcal{A}$ and $i\in I$, the densely defined operator
$$[D_i,a] \Biggl(1+\sum_{j\in I}|D_j|^{\epsilon_{ij}} \Biggr)^{-1}$$
is bounded in the norm on $\mathpzc{H}$.
\end{enumerate}
We refer to $\VEC{\epsilon}$ as a bounding matrix.
If additionally $\mathcal{A}$ and \([D_j,\mathcal{A}]\) for each \(j\) preserve $\Dom(\Delta^{\VEC{t}}_\DC)$, for any $\VEC{t}\in [0,\infty)^l$, we say that $(\mathcal{A},\mathpzc{H},\DC)$ is a regular ST\textsuperscript{2}. 

If $\mathpzc{H}$ is graded, \(\mathcal{A}\) consists of even operators and all operators in $\DC$ are odd, we say that $(\mathcal{A},\mathpzc{H},\DC)$ is an even ST\textsuperscript{2}. If $\mathpzc{H}$ carries no grading we say that $(\mathcal{A},\mathpzc{H},\DC)$ is an odd ST\textsuperscript{2}.
\end{definition}

\begin{remark}
A strictly tangled spectral triple $(\mathcal{A},\mathpzc{H},\DC)$ with $n=1$ is the same as a higher order spectral triple as defined in Definition \ref{defhost} above. Recall from above that for $n=1$ the decreasing cycle condition on $\VEC{\epsilon}\in [0,\infty)$ is equivalent to $\VEC{\epsilon}<1$. In this case, if $\VEC{\epsilon}\in (0,1)$ is the bounding matrix then $(\mathcal{A},\mathpzc{H},\DC)$ is a higher order spectral triple of order $m=(1-\VEC{\epsilon})^{-1}$. Furthermore, we point out that there is an implicit lower bound $m\geq 1$ on the order of our higher order spectral triples originating in the requirement on $\VEC{\epsilon}$ to have coefficients in $[0,\infty)$ that in turn is set in order to apply the toolbox of tropical geometry \cite{joswigess}.
\end{remark}

\begin{remark}
\label{remark:possible_extensions_of_st2}
There is a straightforward extension of Definition \ref{def:st2} to strictly tangled unbounded Kasparov modules but, in the interests of exposition, we do not pursue it here.

We note that our definition of a strictly tangled spectral triple is somewhat restrictive in requiring the elements of the collection $\DC=(D_j)_{j\in I}$ to be strictly anti-commuting. We expect that this definition can be relaxed to include collections $\DC=(D_j)_{j\in I}$ on which there is a size constraint on the anti-commutator $D_jD_k+D_kD_j$ along the lines of for instance \cite{bramlesch}. For our applications to complexes, in particular Rockland complexes, we will make do with strictly anti-commuting collections but in order for more general applications to Rockland sequences \cite{davehaller,goffengkuzmin,goffeng24} and more general Kasparov product constructions \cite{GMCK,leschkaad1,meslandbeast} to fit into the framework one needs to extend the notion above to a weaker anticommutation condition. See Remark \ref{relboundgen} for futher comments on where in the proofs this is used.
\end{remark}

\begin{remark}
	\label{remark:operators_with_prescribed_orders}
If the operators \(\DC=(D_j)_{j\in I}\) have a prescribed order \(\VEC{m}=(m_j)_{j\in I}\in [1,\infty)^I\) in an appropriate sense, e.g.~in some pseudodifferential calculus, there is an intuitive guess of bounding matrix \(\VEC{\epsilon}\). Similar to the intuition of a higher order spectral triple, a commutator \([D_i,a]\) should behave like one order lower than \(D_i\) and therefore be controlled by operators of order \(m_i-1\). Therefore, a natural choice that turns out to be correct in examples is the bounding matrix \(\epsilon_{ij}=\frac{m_i - 1}{m_j}\), for \(i,j\in I\), represented by the weighted digraph
\[\cdots \vcenter{\hbox{
		\begin{tikzpicture}
			\graph[grow right sep=2cm, empty nodes, nodes={fill=black, circle, inner sep=1.5pt}, edges={semithick}]{
			a --["$\frac{m_i - 1}{m_j}$"' inner sep=5.5pt, middlearrow={<}, bend right=20] b;
			b --["$\frac{m_j - 1}{m_i}$"' inner sep=5.5pt, middlearrow={<}, bend right=20] a;
			a --["$\frac{m_i - 1}{m_i}$"', loop] a;
			b --["$\frac{m_j - 1}{m_j}$"', loop] b;
			};
		\end{tikzpicture}
	}} \cdots \]
which also matches the order of a higher order spectral triple in that \(m_i=(1-\epsilon_{ii})^{-1}\). Such an \(\VEC{\epsilon}\) fulfils the decreasing cycle condition since 
\begin{equation}
\label{estforesp}
\prod_{j=1}^k \epsilon_{\gamma_j,\gamma_{j+1}} = \frac{\prod_{j=1}^k (m_{\gamma_j} - 1)}{\prod_{j=1}^k m_{\gamma_{j+1}}} =\frac{\prod_{j=1}^k (m_{\gamma_j} - 1)}{\prod_{j=1}^k m_{\gamma_j}} = \prod_{j=1}^k \left(1 - \frac{1}{m_{\gamma_j}}\right) < 1
\end{equation}
	for any cycle \(\gamma=(\gamma_1,\ldots,\gamma_k)\), where we use the cycle property $\gamma_1=\gamma_k$ in the second equality. In particular, \(\Omega(\VEC{\epsilon})\) contains a ray of the form
	$$\VEC{t}_{\VEC{m}}(\tau):=\left(\frac{\tau}{m_j}\right)_{j\in I}\in \Omega(\VEC{\epsilon}), \quad \tau>0.$$
Indeed, $\VEC{t}_{\VEC{m}}(\tau)\in \Omega(\VEC{\epsilon})$ since \(\epsilon_{ij}t_i=\epsilon_{ii}t_j<t_j\). The operator 
	\[\overline{D}_\tau:=\overline{D}_{\VEC{t}_{\VEC{m}}(\tau)} = \sum_{j\in I} \sign(D_j) |D_j|^{\frac{\tau}{m_j}}\]
	constructed from this ray should then morally be a sum of operators of order \(\tau\), which is discussed further in Remark \ref{remark:order_of_st2_with_prescribed_orders} and placed in a solid mathematical foundation in Proposition \ref{fixorder}.
	
	Operators with prescribed orders \(\VEC{m}\) with this type of bounding matrix \(\VEC{\epsilon}\) will be considered further in Section \ref{sec:complexes} in the context of complexes. For an ST\textsuperscript{2} arising from a complex, we will depart from the preceding discussion by setting \(\epsilon_{ij}=0\) if the operators \(D_i\) and \(D_j\) are ``far apart'' in the complex.
	
	When we consider the $C^*$-algebras of nilpotent groups in Section \ref{section:nilpotent}, we will see that one does not always have a natural prescription of orders. However, in the case of a Carnot group, there will be a natural way of assigning orders, related to conformal equivariance under the dilation action.
\end{remark}

\subsection{Examples}
\label{subsecex1}

Before delving into the general theory and the main examples of the paper, we provide some simpler examples to clarify and justify the structure underlying ST\textsuperscript{2}s. Further examples, generalising these, will be presented in Sections \ref{sec:complexes}, \ref{sec:rumin}, \ref{section:nilpotent}, and \ref{section:parabolic}.

\subsubsection{The Rumin complex in three dimensions on a nilmanifold}
\label{rubsecrumin3d}

The Rumin complex is an example of a Rockland complex. We will consider Rockland complexes in Section \ref{sec:complexes} and return to explain and study Rumin complexes in more generality in Section \ref{sec:rumin}. Let us start with the simplest situation to explain the ideas motivating the notion of ST\textsuperscript{2}s. We consider the 3-dimensional Heisenberg group \( \sfH_3 \). As a manifold, \( \sfH_3 \) coincides with $\R^3$ but is equipped with the product $(x,y,z)(x',y',z')=(x+x',y+y',z+z'+xy')$. We write $\Gamma$ for the cocompact subgroup defined from the integer points $\Z^3$. On the nilmanifold $M= \sfH_3/\Gamma$, the Rumin complex takes the form
\[
     \begin{tikzcd}[cells={nodes={font=\small}}, sep=small]
         & & C^{\infty}(M) \rd x \ar{r}{Z+XY} \ar{rdd}[pos=0.8]{Y^2} &[3em] C^{\infty}(M) \rd x \wedge \theta \ar{rd}{Y} & &\\
         0 \ar{r} & C^{\infty}(M) \ar{ru}{X} \ar{rd}[swap]{Y} & & & C^{\infty}(M) \rd x \wedge \rd y \wedge \theta \ar{r} & 0 \\
         & & C^{\infty}(M) \rd y \ar{r}[swap]{Z-YX} \ar{ruu}[pos=0.9,swap]{-X^2}  & C^{\infty}(M) \rd y \wedge \theta \ar{ru}[swap]{-X} & &
     \end{tikzcd}
\]
where \( X=\partial_x -y\partial_z\), \( Y =\partial_y\), and \( Z=\partial_z \) are the standard basis elements of the Heisenberg Lie algebra with the commutator identity \([X,Y]=Z\), here acting as vector fields on $M$. Here $\theta=y\rd x+\rd z$ denotes the contact form. We equip $M$ with the volume density induced from the Haar measure on $\sfH_3/\Gamma$ and declare $\rd x$, $\rd y$ and $\theta$ to be an orthonormal frame. With these choices, the Rumin complex above is completed into a Hilbert complex, see \cite{leschbruening} or Section \ref{sec:complexes} below.

We shall shorten the notation for the operators in the Rumin complex to $\rd_\bullet^R=(\rd_0^R,\rd_1^R,\rd_2^R)$. It is a mixed order differential complex. Let
$$D_1 := \rd^R_0+(\rd^R_0)^* + \rd^R_2+(\rd^R_2)^* \quad \mbox{and}\quad D_2:= \rd^R_1+(\rd^R_1)^* . $$
We view $D_1$ and $D_2$ as densely defined, self-adjoint operators on $L^2(M;\mathcal{H})$ where $\mathcal{H}\to M$ is the sum of all line bundles appearing in the Rumin complex; so $\mathcal{H}\cong M\times \C^6$. The differential operators $D_1$ and $D_2$ are of order $m_1=1$ and $m_2=2$ respectively. We note that $D_1D_2=D_2D_1=0$, so $D_1$ and $D_2$ are strictly anticommuting. The Rumin Laplacian takes the form
$$\Delta^R=D_1^4+D_2^2.$$
The data $(C^\infty(M),L^2(M;\mathcal{H}),(D_1,D_2))$ constitute an ST\textsuperscript{2} with bounding matrix
\[ \VEC{\epsilon}=\begin{pmatrix} 0& 0\\1& \frac{1}{2}\end{pmatrix} \qquad\qquad \vcenter{\hbox{\begin{tikzpicture}
\graph[grow right sep=1.5cm, empty nodes, nodes={fill=black, circle, inner sep=1.5pt}, edges={semithick}]{
    a --["$1$" inner sep=5.5pt, middlearrow={<}] b --["$\frac{1}{2}$"', loop] b;};
\end{tikzpicture}}} . \]
If one squints, the bounding matrix can be guessed from the structure of the Rumin complex. The diagonal arrows, corresponding to the operators \( -X^2 \) and \( Y^2 \), require the weight-\( 1/2 \) loop and the horizontal arrows, corresponding to the operators \( Z + X Y \) and \( Z - Y X \), require the weight-\( 1 \) edge from \( D_2 \) to \( D_1 \). The other arrows, forming part of \( D_1 \) are all first order, making no contribution to the bounding matrix.
Of course, this is not a rigorous argument; for that we will have to wait until Subsection \ref{subsec:naive}.

Below, in Subsection \ref{subsec:naive}, we will show that the naïvely formed candidate $D_1+D_2$ for a noncommutative geometry on $M$ fails to be a higher order spectral triple, motivating the need for ST\textsuperscript{2}s.
But, for any $\VEC{t}=(t_1,t_2)\in (0,\infty)^2$ with 
$$t_1>t_2,$$
we arrive at a higher order spectral triple with Dirac operator 
$$\overline{D}_{\VEC{t}}=D_1|D_1|^{t_1-1}+D_2|D_2|^{t_2-1}=D_1(\Delta^R)^{\frac{t_1-1}{4}}+D_2(\Delta^R)^{\frac{t_2-1}{2}}.$$
If $\VEC{t}$ lies along the ray spanned by $(1,1/2)$ then $\overline{D}_{\VEC{t}}$ is an $H$-elliptic operator in the Heisenberg calculus and if $\VEC{t}=(2k_1+1,2k_2+1)$ where $k_1>k_2$ are natural numbers then $\overline{D}_{\VEC{t}}$ is a differential operator. The reader can find more details in Section \ref{sec:rumin} below.

\subsubsection{A strictly tangled spectral triple on the Heisenberg group}
\label{subsubheis}
A natural $K$-homology class on the $C^*$-algebra of a connected Lie group with no compact subgroups arises from Baaj--Skandalis duality $KK^G(\C,C_0(G))\cong KK^{\hat{G}}(C^*(G),\C)$ and the dual Dirac element in $KK^G(\C,C_0(G))$. Below in Section \ref{section:nilpotent} we discuss this construction for closed subgroups of nilpotent Lie groups but here we present a miniature version of this example on the $3$-dimensional real Heisenberg group $\sfH_3$. In the $3\times 3$-matrix presentation, we can write
\[\sfH_3=\left\{g\in M_3(\R): g = \begin{pmatrix} 1 & a & c \\ 0& 1 & b \\ 0&0 & 1 \end{pmatrix}\right\} . \]
The group $\sfH_3$ is a central extension of $\R^2$ by $\R$, fitting into an exact sequence
$$\begin{tikzcd}
0 \arrow[r] & \R \arrow[r, "\iota"] & \sfH_3 \arrow[r, "q"] \arrow[l, "r", dashed, bend left] & \R^2 \arrow[r] & 0,
\end{tikzcd}$$
where $r$ is the (topological) retraction \( r : \sfH_3 \to \R \) given by \( r(g) = c \).
	
\begin{definition}
\label{definition:w465ybe57unu467inr4nin6i}
\cite[Definition II.2.2]{AMthesis} cf. \cite[Definitions 2.8,10]{rubinetal}
A \emph{weight} on a locally compact group \( G \) is a continuous function $\ell$ from \( G \) to matrices on a finite-dimensional complex vector space \( V \). If \( V \) is \( \Z/2\Z \)-graded, \( \ell \) is required to be odd. We say that \( \ell \) is
\begin{itemize}
	\item self-adjoint if \( \ell^* = \ell \);
	\item \emph{proper} if \( (1 + \ell^* \ell)^{-1} \in C_0(G, \End V) = C_0(G) \otimes \End V \); and
	\item \emph{translation-bounded} if, for all \( g \in G \),
	\( \sup_{h \in G} \| \ell(g h) - \ell(h) \| < \infty \)
	and there exists a neighbourhood \( U \) of the identity in \( G \) such that
	\( \sup_{g \in U, h \in G} \| \ell(g h) - \ell(h) \| < \infty \).
\end{itemize}
\end{definition}

By \cite[Theorem II.2.24]{AMthesis}, an operator valued length function gives rise to a spectral triple
\[ (C_c(G), L^2(G, E), M_{\ell}) \]
for the group $C^*$-algebra of \( G \). Similar examples have been studied several times in the literature, e.g.~in \cite{connesmetric, GRU}, although often with \( \ell \) valued in \( [0, \infty) \) and so giving rise to trivial $K$-homology.

For notational clarity, write $Z:=\R$ for the central subgroup and use $q$ to identify $\sfH_3/Z= \R^2$. We define the weights
\begin{align*}
\ell_{Z} : Z & \to \C & \ell_{\sfH_3/Z} : \sfH_3/Z & \to \Cl^2 \\
c & \mapsto c & (a, b) & \mapsto a \gamma_1 + b \gamma_2 .
\end{align*}
We can form the unbounded Kasparov $C^*(\sfH_3)$-$C^*(Z)$-module
\[ (C_c(\sfH_3), \overline{(C_c(\sfH_3) \otimes \C^2)_{C^*(Z)}}, M_{q^*(\ell_{\sfH_3/Z})}) \]
representing a generator of \( KK_0(C^*(\sfH_3), C^*(Z)) \). Here $\overline{(C_c(\sfH_3) \otimes \C^2)_{C^*(Z)}}$ denotes Hilbert $C^*(Z)$-module obtained as the completion of $C_c(\sfH_3) \otimes \C^2$ in the $C_c(Z)$-valued inner product $\langle f_1,f_2\rangle:=(f_1^**f_2)|_Z$ where the convolution is over $\sfH_3$. We also form the unbounded Kasparov $C^*(Z)$-$\C$-module
\[ (C_c(Z), L^2(Z), M_{\ell_{Z}}) \]
representing a generator of \( KK_1(C^*(Z), \C) \). In fact, up to a Fourier transform, this is the standard Dirac operator on $Z = \R$. Were we naïvely to form the constructive unbounded Kasparov product of $(C_c(\sfH_3), \overline{(C_c(\sfH_3) \otimes \C^2)_{C^*(Z)}}, M_{q^*(\ell_{\sfH_3/Z})})$ and $(C_c(Z), L^2(Z), M_{\ell_{Z}})$, it would produce the triple
$$(C_c(\sfH_3), L^2(\sfH_3, \C^2), M_{\ell})$$
where the operator valued length function $\ell$ would be given by
\[ \ell(g) = q^*(\ell_{\sfH_3/Z})(g) + r^*(\ell_{z})(g) \gamma_3 = a \gamma_1 + b \gamma_2 + c \gamma_3 . \]
The choice of retraction \( r \) is related to the choice of a connection in the constructive Kasparov product. Alas,
\[ \ell(g h) - \ell(h) = a \gamma_1 + b \gamma_2 + (c + a b') \gamma_3 \]
is not bounded in \( h \). Instead, if we let \( \ell_1 = q^*(\ell_{\sfH_3/Z}) \) and \( \ell_2 = r^*(\ell_{Z}) \gamma_3 \), we see that
\[ \sup_h |\ell_1(g h) - \ell_1(h)| = |\ell_1(g)| < \infty \]
but $\ell_2$ exhibits the parabolic feature that
\[ |\ell_2(g h) - \ell_2(h)| | 1 + \ell_1(h) |^{-1} = |c + a b'| (1 + ({a'}^2 + {b'}^2)^{1/2})^{-1} \leq |\ell_2(g)| + |\ell_1(g)| \]
so that
\[ \sup_h |\ell_2(g h) - \ell_2(h)| | 1 + \ell_1(h) |^{-1} < \infty . \]
We therefore have a strictly tangled spectral triple
\[ \left( C^*(\sfH_3), L^2(\sfH_3, \C^2), (M_{\ell_1}, M_{\ell_2}) \right) \]
with bounding matrix
\[ \VEC{\epsilon}=\begin{pmatrix}0&0\\1&0\end{pmatrix} \qquad \vcenter{\hbox{\begin{tikzpicture}
\graph[grow right sep=1.5cm, empty nodes, nodes={fill=black, circle, inner sep=1.5pt}, edges={semithick}]{
a --["$1$" inner sep=5.5pt, middlearrow={<}] b;
};
\end{tikzpicture}}} . \]
The set $\Omega(\VEC{\epsilon})$ consists of $t_1, t_2 \in (0,\infty)$ such that $t_1>t_2$.

We return to this example below, in two instances. In Example \ref{kaspkuc1}, we interpret the higher order spectral triple associated with $( C^*(\sfH_3), L^2(\sfH_3, \C^2), (M_{\ell_1}, M_{\ell_2}))$ as a Kasparov product using Kucerovsky's theorem. In Section \ref{section:nilpotent}, this example will be generalized to all simply connected nilpotent Lie groups (and their closed subgroups) where there are as many Dirac operators as the step length in the group.

\subsubsection{Parabolic large diffeomorphisms on the torus}

In Section \ref{section:parabolic}, we will see that ST\textsuperscript{2}s allow us to generalise the construction of spectral triples for elliptic dynamical systems by Bellissard, Marcolli, and Reihani \cite{Bellissard_2010} to parabolic dynamical systems, including nilflows, horocycle flows, and large diffeomorphisms of tori, classical and noncommutative. We here give a simple instance of this latter family of examples.

Consider the group of diffeomorphisms \( (\phi_n)_{n \in \Z} \) of \( \T^2 \) given by
\[ \phi_n = \begin{pmatrix} 1 & n \\ 0& 1 \end{pmatrix} \in  SL(2, \Z) . \]
This family of diffeomorphisms is \emph{large}, in the sense that each \( \phi_n \) is in a distinct connected component of the diffeomorphism group of \( \T^2 \).
It induces a \( \Z \)-action \( \alpha \) on \( C(\T^2) \) given by $\alpha_n(a):=\phi_{-n}^*(a)$ for $a\in C(\T^2)$, preserving \( C^{\infty}(\T^2) \).
Let \( (C^\infty(\T^2), L^2(\T^2, \C^2), D) \) be the Dirac spectral triple on the torus. With \( N \) the number operator on \( \ell^2(\Z) \), we write \( (C^\infty(\T^2) \rtimes \Z, \ell^2(\Z) \otimes C(\T^2)_{C(\T^2)}, N \otimes 1) \) for the unbounded Kasparov module associated with the crossed product.

In attempting to form the Kasparov product, we encounter the pointwise-boundedness condition of \cite[\S 1]{Paterson_2014}, reproduced (and generalised) in Definition \ref{definition:pointwisebounded} below. For \( a \in  C^\infty(\T^2) \rtimes \Z\) , we require uniform boundedness of $\| [D, \alpha_n(a)] \|$ in $n$. Let us see how $\| [D, \alpha_n(a)] \|$ behaves as $|n|\to \infty$. For \( a \in  C^\infty(\T^2) \) 
\[ \alpha_n(a)(x, y) = a(x - n y, y), \]
and 
\[ D= \gamma_1 \partial_x + \gamma_2\partial_y, \]
so
\[ [D, \alpha_n(a)] = \gamma_1 \phi_{-n}^*(\partial_x a) + \gamma_2 \left( \phi_{-n}^*(\partial_y a) - n \phi_{-n}^*(\partial_x a) \right). \]
We conclude that there is a constant $C>0$ such that, for any \( a \in  C^\infty(\T^2) \) and $n\in \Z$,
\[  |n| \| \partial_x a \|_{L^\infty}-C\|\nabla a\|_{L^\infty}\leq \| [D, \alpha_n(a)] \| \leq |n| \| \partial_x a \|_{L^\infty}+C\|\nabla a\|_{L^\infty} . \]
We see that the pointwise-boundedness condition is not satisfied, rather we have the growth behaviour $ \| [D, \alpha_n(a)] \|\sim |n| \| \partial_x a \|_{L^\infty}$ as $|n|\to \infty$. Hence
\[ [1 \otimes D, \pi(a)] (1 + |N|)^{-1} \otimes 1 \]
is bounded. In particular, the collection
\[ (C_0(\T^2) \rtimes \Z, \ell^2(\Z) \otimes L^2(\T^2, S), (N \otimes  \gamma, 1 \otimes D )) \]
is a strictly tangled spectral triple with bounding matrix
\[ \VEC{\epsilon}=\begin{pmatrix}0&0\\1&0\end{pmatrix} \qquad \vcenter{\hbox{\begin{tikzpicture}
\graph[grow right sep=1.5cm, empty nodes, nodes={fill=black, circle, inner sep=1.5pt}, edges={semithick}]{
a --["$1$" inner sep=5.5pt, middlearrow={<}] b;
};
\end{tikzpicture}}} \]
and $\Omega(\VEC{\epsilon})=\{(t_1,t_2)\in (0,\infty)^2: t_1>t_2\}$.

\section{Analysis of strictly tangled noncommutative geometries}
\label{secljnakjnad}

We here analyse strictly tangled spectral triples in terms of higher order spectral triples and introduce further structure thereon, with the aim of studying ST\textsuperscript{2}s in their own right.

\subsection{Assembling an ST\textsuperscript{2} into a higher order spectral triple}

Let us study how to construct a higher order spectral triple (HOST) from an ST\textsuperscript{2}. In conjunction with Theorem \ref{thm:host_summability}, we will see that there is a well-defined $K$-homology class associated with an ST\textsuperscript{2}.

\begin{definition}
	\label{definition:rho-preserving}	
	An ST\textsuperscript{2} $(\mathcal{A},\mathpzc{H},\DC)$ with bounding matrix \( \VEC{\epsilon} \) is \emph{\( \VEC{\rho} \)-preserving} for \( \VEC{\rho} \in [1, \infty]^I \) if, for all \( a \in \mathcal{A} \) and \( i \in I \), \( a \Dom |D_i|^{\rho_i} \subseteq \Dom |D_i|^{\rho_i} \). If \( \rho_i = \infty \), the condition should be interpreted as requiring that \( a \Dom |D_i|^t \subseteq \Dom |D_i|^t \) for all \( t \geq 1 \).
\end{definition}

Every ST\textsuperscript{2} is by definition \( \VEC{\rho} \)-preserving for \( \VEC{\rho} = (1, \ldots, 1) \) and, if an ST\textsuperscript{2} is \( \VEC{\rho} \)-preserving, it is \( \VEC{\sigma} \)-preserving for all \( \VEC{\sigma} \leq \VEC{\rho} \) by \cite[Theorem 12.5]{Krasnoselskii_1976}. Recall that
$$\Omega(\VEC{\epsilon})=\{ \VEC{t} = (t_j)\in (0,\infty)^I: \epsilon_{ij}t_i<t_j\; \forall i,j\},$$
For \( \VEC{\rho} \in [1, \infty]^I \), we will define the subset
\[ \Omega(\VEC{\epsilon}, \VEC{\rho}) = \Omega(\VEC{\epsilon}) \cap \prod_{j \in I} (0, 1] \cup (1, \rho_j) . \]
Here, the interval \( (0, 1] \cup (1, \rho_j) = (0, \rho_j) \cup \{1\} \) is simply the half-open interval \( (0, 1] \) if \( \rho_j = 1 \) and the open interval \( (0, \rho_j) \) if \( \rho_j > 1 \). We remark that \( \Omega(\VEC{\epsilon}, \VEC{\rho}) \) is a convex set and also that \( \Omega(\VEC{\epsilon}, (\infty, \ldots, \infty))  = \Omega(\VEC{\epsilon}) \).

\begin{thm}
\label{thm:st2_give_host}
Let $(\mathcal{A},\mathpzc{H}, \DC=(D_j)_{j\in I})$ be an ST\textsuperscript{2} with bounding matrix \( \VEC{\epsilon} \) and \( \VEC{\rho} \)-preserving. For $\VEC{t}\in (0, \infty)^I $, we define the operator 
$$\overline{D}_{\VEC{t}} = \sum_{j\in I} \sign(D_j) |D_j|^{t_j}.$$ 
If $\VEC{t}\in \Omega(\VEC{\epsilon}, \VEC{\rho}) $, then the triple $(\mathcal{A},\mathpzc{H},\overline{D}_{\VEC{t}})$ defines an order-\( m \) spectral triple for any
\begin{equation}
	\label{eq:st2_give_host_order_req}
	m > \max_{i, j \in I} \ \max\Big\{ 1, \frac{\rho_i - 1}{\rho_i - t_i} t_i \Big\} \Big(1-\frac{\epsilon_{ij}t_i}{t_j} \Big)^{-1} .
\end{equation}
(If \( \rho_i = \infty \) for some \( i \in I \), we interpret \( \frac{\rho_i - 1}{\rho_i - t_i} \) as \( 1 \). If \( \rho_i = t_i = 1 \) for some \( i \in I \), we also interpret \( \frac{\rho_i - 1}{\rho_i - t_i} \) as \( 1 \).)
\end{thm}

We remark that it is impossible for \( 1 \neq \rho_i = t_i \). To prove Theorem~\ref{thm:st2_give_host}, we use results from Appendix \ref{section:nearly-convex}.

\begin{proof}
	Let \( \VEC{t} \in \Omega(\VEC{\epsilon}, \VEC{\rho}) \). The local compactness of the resolvent follows immediately from Lemma \ref{lknlknn}. We now proceed to show that, for all $i \in I$ and $a\in \mathcal{A}$, 
\begin{equation}
	\label{eq:st2_give_host_thing_to_prove_bounded}
	[\sign(D_i) |D_i|^{t_i},a] \Biggl(1+\sum_{j\in I} |D_j|^{t_j} \Biggr)^{-1 + \frac{1}{m}}
\end{equation}
is bounded. If \(t_i=1\), again using Lemma \ref{lknlknn} we see that \eqref{eq:st2_give_host_thing_to_prove_bounded} is bounded if \( (1 - 1/m) t_j \geq \epsilon_{ij}t_i\) for all \(j\), which is equivalent to \(m \geq (1 - \epsilon_{ij} t_i/t_j)^{-1}\). In the context of Theorem \ref{theorem:nearly-convex}, let
\[ A = D_i \qquad B = 1+\sum_{j\in I} |D_j|^{t_j} \]
and
\[ \alpha_1 = 1 \qquad \beta_1 = \max_{j \in I} \frac{\epsilon_{i j}}{t_j} \qquad \alpha_2 = t_i \qquad \beta_2 = 1 - \frac{1}{m} . \]
We see that \eqref{eq:st2_give_host_thing_to_prove_bounded} is bounded if \( 1 - 1/m > \max_{j \in I} \epsilon_{ij}t_i /t_j \), equivalent to \(m > \max_{j \in I} (1 - \epsilon_{ij} t_i/t_j)^{-1}\). If \( 1 < \rho_i < \infty \) and \(t_i \in (1, \rho_i) \), still in the context of Theorem \ref{theorem:nearly-convex}, let \( \alpha_3 = \rho_i \) and \( \beta_3 = \rho_i/t_i \). We see that \eqref{eq:st2_give_host_thing_to_prove_bounded} is bounded if 
\[ 1 - \frac{1}{m} > \frac{1}{\rho_i - 1} \Big( (\rho_i - t_i) \max_{j \in I} \frac{\epsilon_{i j}}{t_j} + (t_i - 1) \frac{\rho_i}{t_i} \Big) , \]
equivalent to
\[ m  > \max_{j \in I} \frac{\rho_i - 1}{\rho_i - t_i} t_i \Big(1-\frac{\epsilon_{ij}t_i}{t_j} \Big)^{-1} . \]
If \( \rho_i = \infty \) and \(t_i \in (1, \rho_i) \), we see by taking the limit that \eqref{eq:st2_give_host_thing_to_prove_bounded} is bounded if 
\[ m  > t_i \Big(1- \max_{j \in I} \frac{\epsilon_{ij}t_i}{t_j} \Big)^{-1} . \]
Noting that \( \frac{\rho_i - 1}{\rho_i - t_i} t_i > 1 \) if and only if \( \rho_i, t_i > 1 \), we thus obtain the claimed order estimate.
\end{proof}

\begin{remark}
\label{relboundgen}
Theorem \ref{thm:st2_give_host} is proven under strong assumptions on the anticommutators $D_jD_k+D_kD_j$, namely that they vanish for $j\neq k$. We expect that Theorem \ref{thm:st2_give_host} holds under much milder assumptions on the anticommutators $D_jD_k+D_kD_j$. In the proof of Theorem \ref{thm:st2_give_host}, we rely heavily on Theorem \ref{theorem:nearly-convex}
for \( A = D_i \) and $B=\Delta_{\VEC{t}/2}^\DC$. Assumptions such as those in \cite{leschkaad1,kaadlesch,bramlesch}, modified according to an $\VEC{\epsilon}$-power of $\DC$, may allow one to extend Theorem \ref{thm:st2_give_host}.

Let us discuss a prototypical example to which Theorem \ref{thm:st2_give_host} extends, despite a lack of vanishing anticommutators. In \cite[§1.1–2]{Connes_1995}, an order-2 spectral triple
\begin{equation}
\label{cmhost} 
\left( C_c^{\infty}(M) \rtimes \Gamma, L^2(M, \Lambda^* V^* \otimes \Lambda^* N^*), (\rd_L \rd_L^* - \rd_L^* \rd_L) (-1)^{\partial_N} + \rd_H + \rd_H^* \right)
\end{equation}
is built from the data of a manifold \( M \) with triangular structure preserved by a group of diffeomorphisms \( \Gamma \). To arrive at this higher order spectral triple, the longitudinal signature operator \( \rd_L + \rd_L^* \) is first found to be homotopic to \( \Delta_L^{-1/2} (\rd_L \rd_L^* - \rd_L^* \rd_L) \). At this point, we can consider the collection
\begin{equation}
\label{cmst} 
\left(C_c^{\infty}(M) \rtimes \Gamma, L^2(M, \Lambda^* V^* \otimes \Lambda^* N^*), \left( \Delta_L^{-1/2} (\rd_L \rd_L^* - \rd_L^* \rd_L) (-1)^{\partial_N}, \rd_H + \rd_H^* \right) \right). 
\end{equation}
The operators in the collection \eqref{cmst} are not strictly anticommmuting but the anticommutators are of lower order in the pseudodifferential calculus of \cite{Connes_1995}. The pseudodifferential calculus allow us to think of \eqref{cmst} as a ``tangled spectral triple'' with bounding matrix 
\[ \VEC{\epsilon}=\begin{pmatrix} 0& 0\\1& \frac{1}{2}\end{pmatrix} \qquad\qquad \vcenter{\hbox{\begin{tikzpicture}
\graph[grow right sep=1.5cm, empty nodes, nodes={fill=black, circle, inner sep=1.5pt}, edges={semithick}]{
    a --["$1$" inner sep=5.5pt, middlearrow={<}] b --["$\frac{1}{2}$"', loop] b;};
\end{tikzpicture}}} \]
so that taking \( \VEC{t} = (2, 1) \) produces the order-2 spectral triple \eqref{cmhost}.
\end{remark}

\begin{remark}
	Let us consider the consequences of Theorem \ref{thm:st2_give_host} in the special case when the collection \( \VEC{D} \) has only one element. Let \( (\mathcal{A}, \mathpzc{H}, D) \) be an order-\( m \) spectral triple which is \( \rho \)-preserving for \( \rho \in [1, \infty] \). For \( t \in (0, 1] \cup (1, \rho) \), \( (\mathcal{A}, \mathpzc{H}, \sign(D) |D|^t) \) is an order-\( m' \) cycle for
	\[ m' > m \, \max\Big\{ 1, \frac{\rho - 1}{\rho - t} t \Big\} \]
	If \( \rho = \infty \), this means \( m' > m \, \max\{ 1, t\} \).
\end{remark}

In examples, it is frequently the case that the requirement in \eqref{eq:st2_give_host_order_req} may be taken as an equality.

\begin{remark}
	\label{remark:order_of_st2_with_prescribed_orders}
	For an ST\textsuperscript{2} \((\mathcal{A},\mathpzc{H},\DC)\) such that the operators \(\DC\) have prescribed orders \(\VEC{m}\in [1,\infty)^I\) with bounding matrix on the form \(\epsilon_{ij}=\frac{m_i - 1}{m_j}\), as in Remark \ref{remark:operators_with_prescribed_orders}, then for any \(\tau>0\) we would like the spectral triple \((\mathcal{A},\mathpzc{H},\overline{D}_\tau)\) to be of order \(\tau\). Abstractly, Theorem \ref{thm:st2_give_host} guarantees the order to be at most \( \tau + \delta \) for any \( \delta > 0 \). In the examples below coming from Rockland complexes (see Corollary \ref{thm:rockland_H_ellipitc_host}), the pseudodifferential calculus ensures that the order can be taken to be \(\tau\) on the nose.
\end{remark}

In light of Theorem \ref{thm:host_summability}, Theorem \ref{thm:st2_give_host} implies the following.

\begin{cor}
\label{khomfromadk}
Let $(\mathcal{A},\mathpzc{H},\DC)$ be an ST\textsuperscript{2} with the bounding matrix $\VEC{\epsilon}$. We write $A$ for the $C^*$-algebra closure of $\mathcal{A}$. There is a well defined $K$-homology class
$$[(\mathcal{A},\mathpzc{H},\DC)]:=[(\mathcal{A},\mathpzc{H},F_{\overline{D}_{\VEC{t}}})]\in K^*(A)$$
for any $\VEC{t}\in \Omega(\VEC{\epsilon})\cap (0,1]^I$ (or \(\VEC{t}\in \Omega(\VEC{\epsilon})\) if the ST\textsuperscript{2} is regular) with the same parity as $(\mathcal{A},\mathpzc{H},\DC)$. The class $[(\mathcal{A},\mathpzc{H},\DC)]$ depends only on $(\mathcal{A},\mathpzc{H},\DC)$ and not on $\VEC{t}$.
\end{cor}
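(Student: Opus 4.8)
The plan is to assemble Corollary~\ref{khomfromadk} from Theorem~\ref{thm:st2_give_host}, Theorem~\ref{thm:host_summability}, and a homotopy argument showing independence of $\VEC{t}$. First I would invoke Theorem~\ref{thm:st2_give_host}: for $\VEC{t}\in\Omega(\VEC{\epsilon})\cap(0,1]^I$ (or all of $\Omega(\VEC{\epsilon})$ in the regular case), the data $(\mathcal{A},\mathpzc{H},\overline{D}_{\VEC{t}})$ is a higher order spectral triple of some finite order $m_{\VEC{t}}$. Then Theorem~\ref{thm:host_summability} produces the bounded transform $(\mathpzc{H},F_{\overline{D}_{\VEC{t}}})$ with $F_{\overline{D}_{\VEC{t}}}=\overline{D}_{\VEC{t}}(1+\overline{D}_{\VEC{t}}^2)^{-1/2}$, which is a Fredholm module over $A$ of the same parity, hence defines a class in $K^*(A)$. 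The parity matches that of $(\mathcal{A},\mathpzc{H},\DC)$ by the grading conventions in Definition~\ref{def:st2}.

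The substantive part is independence of $\VEC{t}$. Here I would use that $\Omega(\VEC{\epsilon})$ is convex (it is a polyhedral cone cut out by the strict linear inequalities $\epsilon_{ij}t_i<t_j$), so any two admissible parameters $\VEC{t}_0,\VEC{t}_1$ are joined by a segment $\VEC{t}_r=(1-r)\VEC{t}_0+r\VEC{t}_1$ lying entirely in $\Omega(\VEC{\epsilon})$ (and in $(0,1]^I$ if both endpoints are, by convexity of $(0,1]^I$). Along this segment one checks that $r\mapsto(\mathpzc{H},F_{\overline{D}_{\VEC{t}_r}})$ is an operator homotopy of Fredholm modules over $A$: the algebra $\mathcal{A}$ and the Hilbert space $\mathpzc{H}$ are fixed, only the operator varies. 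One needs that $r\mapsto F_{\overline{D}_{\VEC{t}_r}}$ is norm-continuous (or at least $*$-strongly continuous with the compactness conditions holding uniformly), which follows from continuity of the functional calculus: writing $\Delta^\DC_{\VEC{t}_r}=\sum_j|D_j|^{2(t_r)_j}$, the operators $|D_j|^{2(t_r)_j}$ vary continuously in the strong resolvent sense because $(t_r)_j$ varies continuously and $\{|D_j|\}$ is a fixed strongly commuting family (as in Lemma~\ref{lknlknn}), and $\overline{D}_{\VEC{t}_r}=\sum_j D_j^{(t_r)_j}$ with $\overline{D}_{\VEC{t}_r}^2=\Delta^\DC_{\VEC{t}_r}$. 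The locally compact resolvent conditions hold uniformly along the path by the comparison estimates in Lemma~\ref{lknlknn}, and the compactness of $[F_{\overline{D}_{\VEC{t}_r}},a]$ is uniform since these operators are compact for each $r$ by Theorem~\ref{thm:host_summability} and depend continuously on $r$. Thus the class is constant in $r$.

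The step I expect to be the main obstacle is establishing enough continuity of $r\mapsto F_{\overline{D}_{\VEC{t}_r}}$ to conclude an operator homotopy in the precise sense used for $K$-homology. One delicacy is that the domains $\Dom(\overline{D}_{\VEC{t}_r})$ genuinely change with $r$, so one cannot work with a fixed unbounded operator; the cleanest route is to stay at the bounded (Fredholm module) level, where $F_{\overline{D}_{\VEC{t}_r}}$ is everywhere defined, and prove $\|F_{\overline{D}_{\VEC{t}_r}}-F_{\overline{D}_{\VEC{t}_{r'}}}\|\to 0$ via a resolvent/integral-formula comparison exploiting that all the $|D_j|$ strongly commute, reducing the question to continuity of the scalar functions $(\xi_j)\mapsto\sum_j\xi_j^{t_j}\,(1+(\sum_j\xi_j^{t_j})^2\,\cdot\,)^{-1/2}$ type expressions on the joint spectrum. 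An alternative, and perhaps more robust, route is to appeal to a general stability result for $K$-homology classes under continuous variation of (higher order) spectral triples, but since the excerpt does not state one I would carry out the homotopy argument by hand, invoking only functional calculus for the strongly commuting family $(|D_j|)_{j\in I}$ together with Theorem~\ref{thm:host_summability} and Theorem~\ref{thm:st2_give_host}. Finally, independence of the non-regular restriction $\VEC{t}\in(0,1]^I$ versus the full cone in the regular case is automatic since in the regular case the larger cone is still convex and the same homotopy applies.
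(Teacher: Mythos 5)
Your overall structure---invoking Theorems~\ref{thm:st2_give_host} and~\ref{thm:host_summability} to obtain a Fredholm module for each admissible $\VEC{t}$, then homotoping across the convex cone $\Omega(\VEC{\epsilon})$---matches the route the paper merely sketches via ``standard arguments with functional calculus''. The gap is in the key continuity claim.

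The asserted norm continuity $\|F_{\overline{D}_{\VEC{t}_r}}-F_{\overline{D}_{\VEC{t}_{r'}}}\|\to 0$ is false in general. Write $\overline{D}_{\VEC{t}}=\sum_{j\in I} S_j\,|D_j|^{t_j}$ with $S_j:=\mathrm{sign}(D_j)$; by strict anticommutation $\overline{D}_{\VEC{t}}^2=\Delta^{\DC}_{\VEC{t}}$, each $S_j$ commutes with every $|D_k|$, and in the joint spectral decomposition of the commuting family $(|D_j|)_{j\in I}$ the bounded transform acts fibrewise as Clifford multiplication by the vector $v_{\VEC{t}}(\xi):=\bigl(\xi_j^{t_j}(1+\sum_{k}\xi_k^{2t_k})^{-1/2}\bigr)_{j\in I}$. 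Consequently $\|F_{\overline{D}_{\VEC{t}}}-F_{\overline{D}_{\VEC{t}'}}\|=\sup_{\xi}\|v_{\VEC{t}}(\xi)-v_{\VEC{t}'}(\xi)\|$, the supremum over the joint spectrum. Already for $|I|=2$ with $\VEC{t}=(1,1)$ and $\VEC{t}'=(1-\varepsilon,1)$, along the diagonal $\xi_1=\xi_2=R$ one finds $v_{\VEC{t}}(R,R)\to(1/\sqrt{2},1/\sqrt{2})$ while $v_{\VEC{t}'}(R,R)\to(0,1)$ as $R\to\infty$, so the supremum is bounded below by $\sqrt{2-\sqrt{2}}$ uniformly in $\varepsilon\in(0,1)$. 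For the ST\textsuperscript{2} given by the external product $D_1=i\partial_{x_1}\otimes\gamma_1$, $D_2=i\partial_{x_2}\otimes\gamma_2$ acting on $L^2(\R^2,\C^2)$ with $\mathcal{A}=C^\infty_c(\R^2)$ and $\VEC{\epsilon}=0$, the joint spectrum is all of $[0,\infty)^2$ and norm continuity genuinely fails. The point is that the asymptotic direction of $v_{\VEC{t}}(\xi)$ as $\xi\to\infty$ depends discontinuously on $\VEC{t}$; this is precisely the anisotropy which ST\textsuperscript{2}s track, and it defeats a supremum estimate.

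What does hold, and is what your parenthetical aside should have been promoted to, is that $r\mapsto F_{\overline{D}_{\VEC{t}_r}}$ is $*$-strongly continuous: the symbols converge pointwise with uniform bound $\|v_{\VEC{t}}(\xi)\|\leq 1$, so dominated convergence against the joint spectral measure applies. Combining this with norm continuity into $\mathbb{K}(\mathpzc{H})$ of $r\mapsto a(F_{\overline{D}_{\VEC{t}_r}}^2-1)=-a(1+\Delta^{\DC}_{\VEC{t}_r})^{-1}$ (a uniform symbol estimate together with Lemma~\ref{lknlknn}) and of $r\mapsto[F_{\overline{D}_{\VEC{t}_r}},a]$ (running the resolvent-integral argument of Theorem~\ref{thm:host_summability} with constants uniform along the path), the fibrewise family defines a Kasparov $A$--$C[0,1]$-module on $C([0,1],\mathpzc{H})$ whose evaluations at $0$ and $1$ recover the two Fredholm modules. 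This is the Hilbert-$C[0,1]$-module formulation of homotopy in $KK$-theory; it is strictly weaker than a norm-continuous operator homotopy on a fixed Hilbert space, and here one cannot do with less. Your proposal, as written, does not establish the requisite continuity.
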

\begin{proof}
	That a class in $K$-homology is obtained for any $\VEC{t}\in \Omega(\VEC{\epsilon}, \VEC{\rho}) $  follows immediately from the bounded transform for HOSTs, Theorem \ref{thm:host_summability}. 
	Consider distinct  $\VEC{s}, \VEC{t}\in \Omega(\VEC{\epsilon}, \VEC{\rho}) $.  Since \( \Omega(\VEC{\epsilon}, \VEC{\rho}) \) is a convex set, \( x \VEC{s} + (1 - x) \VEC{t} \in \Omega(\VEC{\epsilon}, \VEC{\rho}) \) for all \( x \in [0, 1] \). That \( (A, \mathpzc{H},F_{\overline{D}_{\VEC{s}}}) \) and \( (A, \mathpzc{H},F_{\overline{D}_{\VEC{t}}}) \) are equivalent can then be shown by taking the straight line homotopy.
\end{proof}

We note that, in Corollary \ref{khomfromadk}, the fact that we retain the sign of each \( D_j \) in the combined operator
$\overline{D}_{\VEC{t}}= \sum_{j\in I} \sign(D_j) |D_j|^{t_j} $
ensures that the $K$-homology classes can be non-trivial, even when a component of $\VEC{t}$ is an even integer.

\begin{example}
\label{kaspkuc1}
Let us return to the ST\textsuperscript{2} for the Heisenberg group in Subsubsection \ref{subsubheis}. Using Kucerovsky's theorem \cite{danthedan} (and in particular its extension to HOSTs in \cite[Theorem A.7]{GMCK}), we see that, for any $\VEC{t}\in \Omega(\VEC{\epsilon})=\{(t_1,t_2)\in (0,\infty)^2: t_1>t_2\}$ and for $\DC=(M_{\ell_1}, M_{\ell_2})$, the HOST $(C_c(\sfH_3), L^2(\sfH_3,\C^2),\overline{D}_{\VEC{t}})$ represents the class in $K^1(C^*(\sfH_3))=KK_1(C^*(\sfH_3), \C)$ of the Kasparov product of 
$$[(C_c(\sfH_3), \overline{(C_c(\sfH_3) \otimes \C^2)_{C^*(Z)}}, M_{q^*(\ell_{\sfH_3/Z})})]\in KK_0(C^*(\sfH_3), C^*(Z)),$$ 
with 
$$[(C_c(Z), L^2(Z), M_{\ell_{Z}})]\in KK_1(C^*(Z), \C).$$
The take home message from this example is that ST\textsuperscript{2}s can be used to represent bad Kasparov products by encoding the directional properties separately.
\end{example}

\begin{thm}
\label{thm:extprod}
Let $(\mathcal{A}_1,\mathpzc{H}_1, \VEC{D_1})$ and $(\mathcal{A}_2,\mathpzc{H}_2,\DC_2)$ be two even ST\textsuperscript{2}s with bounding matrices $\VEC{\epsilon_1}$ and $\VEC{\epsilon_2}$ respectively.
Here we write $\VEC{D_1}=(D_{1, j})_{j\in I_1}$ and $\VEC{D_2}=(D_{2, k})_{k\in I_2}$.
Denote by $\mathpzc{H}_1\tildeotimes \mathpzc{H}_2$ the graded tensor product.
Then, with the collection $ \VEC{D_1} \tildeotimes 1 \, \sqcup \, 1 \tildeotimes \VEC{D_2} =(\hat{D}_l)_{l\in I_1 \sqcup I_2}$, given by
\[ \hat{D}_l := \begin{cases} D_{1, l} \tildeotimes 1 & l \in I_1 \\ 1\tildeotimes D_{2, l} & l \in I_2 \end{cases} , \]
the data
$$(\mathcal{A}_1\otimes \mathcal{A}_2, \mathpzc{H}_1\tildeotimes \mathpzc{H}_2, \VEC{D_1} \tildeotimes 1 \, \sqcup \, 1 \tildeotimes \VEC{D_2})$$
constitute a strictly tangled cycle with bounding matrix the direct sum $\VEC{\epsilon_1} \oplus \VEC{\epsilon_2}$.
Moreover, the exterior Kasparov product of the associated $K$-homology classes can be written as 
$$[(\mathcal{A}_1,\mathpzc{H}_1,\VEC{D_1})]\otimes [(\mathcal{A}_2,\mathpzc{H}_2,\VEC{D_2})]:=[(\mathcal{A}_1\otimes \mathcal{A}_2,\mathpzc{H}_1\tildeotimes \mathpzc{H}_2,\VEC{D_1} \tildeotimes 1 \, \sqcup \, 1 \tildeotimes \VEC{D_2})]\in K^*(A_1\otimes A_2) , $$
where $A_1$ and $A_2$ are the $C^*$-algebra closures of $\mathcal{A}_1$ and $\mathcal{A}_2$ respectively.
\end{thm}

For the sake of brevity, we have stated Theorem \ref{thm:extprod} only for even ST\textsuperscript{2}s but an analogous result holds for all parities. 

\begin{proof}
It is straightforward to verify that $(\mathcal{A}_1\otimes \mathcal{A}_2,\mathpzc{H}_1\tildeotimes \mathpzc{H}_2, \VEC{D_1} \tildeotimes 1 \, \sqcup \, 1 \tildeotimes \VEC{D_2})$ is an ST\textsuperscript{2} with bounding matrix $\VEC{\epsilon_1} \oplus \VEC{\epsilon_2}$. It is also clear that $\Omega( \VEC{\epsilon_1} \oplus \VEC{\epsilon_2})=\Omega( \VEC{\epsilon_1})\times\Omega( \VEC{\epsilon_2})$. For $\VEC{t}=(\VEC{t_1},\VEC{t_2})$ we have that 
$$\overline{\hat{D}}_{\VEC{t}}=\overline{D}_{\VEC{t_1}}\tildeotimes 1+1\tildeotimes \overline{D}_{\VEC{t_2}} , $$
which is the form of the product operator for the external product of (higher order) spectral triples \cite[Theorem A.7]{GMCK}. Hence, any higher order spectral triple assembled from 
$(\mathcal{A}_1\otimes \mathcal{A}_2,\mathpzc{H}_1\tildeotimes \mathpzc{H}_2,\VEC{D_1} \tildeotimes 1 \, \sqcup \, 1 \tildeotimes \VEC{D_2})$ represents the exterior Kasparov product of the higher order spectral triples assembled from $(\mathcal{A}_1,\mathpzc{H}_1,\VEC{D_1})$ and $(\mathcal{A}_2,\mathpzc{H}_2,\VEC{D_2})$. The theorem follows.
\end{proof}

In a simpler way, we obtain

\begin{thm}
\label{thm:directsum}
	Let $(\mathcal{A},\mathpzc{H}_1, \VEC{D_1})$ and $(\mathcal{A},\mathpzc{H}_2, \VEC{D_2})$ be two ST\textsuperscript{2}s (of the same parity) with bounding matrices $\VEC{\epsilon_1}$ and $\VEC{\epsilon_2}$ respectively.
	Here we write $\VEC{D_1}=(D_{1, j})_{j\in I_1}$ and $\VEC{D_2}=(D_{2, k})_{k\in I_2}$. Then, with the collection $ \VEC{D_1} \oplus \VEC{D_2} =(\hat{D}_l)_{l\in I_1 \sqcup I_2}$, given by
	\[ \hat{D}_l := \begin{cases} D_{1, l} \oplus 0 & l \in I_1 \\ 0 \oplus D_{2, l} & l \in I_2 \end{cases} , \]
	the data
	$$(\mathcal{A}, \mathpzc{H}_1 \oplus \mathpzc{H}_2, \VEC{D_1} \oplus \VEC{D_2})$$
	constitute an ST\textsuperscript{2} with bounding matrix the direct sum $\VEC{\epsilon_1} \oplus \VEC{\epsilon_2}$.
	Moreover, the direct sum of the associated $K$-homology classes can be written as 
	$$[(\mathcal{A},\mathpzc{H}_1,\VEC{D_1})] \oplus [(\mathcal{A}_2,\mathpzc{H}_2,\VEC{D_2})]:=[(\mathcal{A}, \mathpzc{H}_1 \oplus \mathpzc{H}_2, \VEC{D_1} \oplus \VEC{D_2})]\in K^*(A) ,$$
	where $A$ is the $C^*$-algebra closure of $\mathcal{A}$.
\end{thm}

It unclear whether it is an advantage or a disadvantage of the framework of ST\textsuperscript{2}s that products and sums are treated in the same way, in the sense that they have the same effect on the bounding matrix. The difference is in the support of the operators: for an external product, every operator is supported on the entire Hilbert module whereas, for the direct sum, the operators have disjoint support. In this respect, when we come to consider complexes, we will see they behave more like sums than products; on the other hand, examples coming from the constructive unbounded Kasparov product will behave more like products than sums.

\subsection{Finite summability of strictly tangled spectral triples}

The natural notion of dimension in noncommutative geometry is determined from spectral properties in analogy with the Weyl law. We introduce a notion of summability of an ST\textsuperscript{2} that takes into account the different directions by means of a function. To simplify the description, we restrict our discussion of summability to the Schatten ideals with exponent $p>0$.

\begin{definition}
\label{deffsum}
Assume that $f:(0,\infty)^n\to (0,\infty)$ is a function decreasing in each argument. An ST\textsuperscript{2} $(\mathcal{A},\mathpzc{H},\DC)$, with \( \mathcal{A} \) unital, is said to be \emph{$f$-summable} if, for $\VEC{t}=(t_1,\ldots, t_n)\in (0,\infty)^n$, the domain inclusion 
$$\cap_j \Dom(|D_j|^{t_j})\hookrightarrow \mathpzc{H}$$
belongs to the Schatten class $\mathcal{L}^{f(\VEC{t})}(\cap_j \Dom(|D_j|^{t_j}),\mathpzc{H})$, where the left hand side is given the Hilbert space topology from the intersection of graph topologies.
\end{definition}

\begin{example}
The notion of $f$-summability is for $n=1$ compatible with the notion of summability for spectral triples or, more generally, higher order spectral triples as in Definition \ref{defhost}. Indeed, if $(\mathcal{A},\mathpzc{H},D)$ is a $p$-summable higher order spectral triple then it is an $f$-summable ST\textsuperscript{2} with $n=1$ for $f(t)=p/t$. Below in Section \ref{sec:complexes}, we consider ST\textsuperscript{2}s arising from Hilbert complexes defined from mixed order operators in which case the function $f$ plays a role of controlling different orders of summability in the different directions.
\end{example}

\begin{example}
Let us return to the exterior Kasparov product of Theorem \ref{thm:extprod}. Assume that $(\mathcal{A}_1,\mathpzc{H}_1,D_1)$ and $(\mathcal{A}_2,\mathpzc{H}_2,D_2)$ are two even higher order spectral triples that are summable of order $p_1$ and $p_2$ respectively. Their external Kasparov product is represented by the ST\textsuperscript{2} $(\mathcal{A}_1\otimes\mathcal{A}_2,\mathpzc{H}_1\tildeotimes \mathpzc{H}_2,(D_1\tildeotimes 1,1 \tildeotimes D_2))$. The ST\textsuperscript{2} $(\mathcal{A}_1\otimes\mathcal{A}_2,\mathpzc{H}_1\tildeotimes \mathpzc{H}_2,(D_1\tildeotimes 1,1 \tildeotimes D_2))$ will then be $f$-summable for any $f:(0,\infty)^2\to (0,\infty)$ such that 
$$(1+|D_1|^{t_1}\tildeotimes 1+1 \tildeotimes |D_2|^{t_2})^{-1}\in \mathcal{L}^{f(t_1,t_2)}(\mathpzc{H}_1\tildeotimes \mathpzc{H}_2).$$
For instance, we could take 
$$f(t_1,t_2):=\frac{p_1}{t_1}+\frac{p_2}{t_2}.$$
\end{example}

\begin{example}
We return to the direct sum of Theorem \ref{thm:directsum}. Let us assume that $(\mathcal{A},\mathpzc{H}_1,D_1)$ and $(\mathcal{A},\mathpzc{H}_2,D_2)$ are two higher order spectral triples that are summable of order $p_1$ and $p_2$ respectively. Their direct sum is represented by the ST\textsuperscript{2} $(\mathcal{A},\mathpzc{H}_1 \oplus \mathpzc{H}_2,(D_1\oplus 0,0 \oplus D_2))$. The ST\textsuperscript{2} $(\mathcal{A}_1,\mathpzc{H}_1 \oplus \mathpzc{H}_2,(D_1\oplus 0,0 \oplus D_2))$ will be $f$-summable for any $f:(0,\infty)^2\to (0,\infty)$ such that
$$(1+|D_1|^{t_1})^{-1} \oplus (1 + |D_2|^{t_2})^{-1} \in \mathcal{L}^{f(t_1,t_2)}(\mathpzc{H}_1) \oplus \mathcal{L}^{f(t_1,t_2)}(\mathpzc{H}_2) . $$
For instance, we could take
$$f(t_1,t_2):= \max\left\{ \frac{p_1}{t_1}, \frac{p_2}{t_2} \right\} .$$
\end{example}

If $(\mathcal{A},\mathpzc{H},\DC)$ is $f_1$-summable and $f_2\geq f_1$, then $(\mathcal{A},\mathpzc{H},\DC)$ is also $f_2$-summable. The reader should note that if $(\mathcal{A},\mathpzc{H},\DC)$ is $f$-summable then by complex interpolation it is also $\tilde{f}$-summable for any $\tilde{f}>f_0$ where $f_0$ is the homogeneous function of degree $-1$ given by 
$$f_0(t):=\frac{\inf_{s>0} sf(st|t|^{-1})}{|t|}.$$  
Here $|\cdot|$ is an arbitrary norm on $\R^n$. If the infimum is attained, $(\mathcal{A},\mathpzc{H},\DC)$ is $f_0$-summable.

The following is immediate from the fact that $\Dom(D_{\VEC{t}})=\cap_{j\in I} \Dom(|D_j|^{t_j})$ for a strictly anticommuting $n$-tuple $(D_j)_{j\in I}$.

\begin{prop}
Let $(\mathcal{A},\mathpzc{H},\DC)$ be an $f$-summable ST\textsuperscript{2}. For $\VEC{t}\in \Omega(\VEC{\epsilon})$, $(\mathcal{A},\mathpzc{H},D_{\VEC{t}})$ is an $f(\VEC{t})$-summable higher order spectral triple.
\end{prop}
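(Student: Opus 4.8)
The plan is to unwind both definitions and observe that the statement is essentially a tautology once the right identifications are made. The key fact, already noted in the excerpt and attributed to \cite{kaadlesch} plus induction, is that for a strictly anticommuting collection $\DC=(D_j)_{j\in I}$ and $\VEC{t}\in(0,\infty)^I$ the operator $\overline{D}_{\VEC{t}}=\sum_{j\in I}D_j^{t_j}$ is self-adjoint with domain equal to the form domain of $\Delta_{\VEC{t}}^\DC$, which is precisely $\cap_{j\in I}\Dom(|D_j|^{t_j})$, and that $\overline{D}_{\VEC{t}}^2=\Delta_{\VEC{t}}^\DC$. Consequently the graph norm of $\overline{D}_{\VEC{t}}$ on its domain is equivalent to the Hilbert space norm that Definition \ref{deffsum} puts on $\cap_{j\in I}\Dom(|D_j|^{t_j})$, namely the norm $u\mapsto (\|u\|^2+\sum_j\||D_j|^{t_j}u\|^2)^{1/2}$ coming from the intersection of graph topologies. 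So the domain inclusion $\cap_j\Dom(|D_j|^{t_j})\hookrightarrow\mathpzc{H}$ is, up to a bounded invertible change of norm on the source, the same map as $(1+\overline{D}_{\VEC{t}}^2)^{-1/2}\colon\mathpzc{H}\to\mathpzc{H}$ followed by the inclusion.

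First I would record that $f$-summability of $(\mathcal{A},\mathpzc{H},\DC)$ at the parameter $\VEC{t}$ means exactly that the inclusion $\iota_{\VEC{t}}\colon\cap_j\Dom(|D_j|^{t_j})\hookrightarrow\mathpzc{H}$ lies in $\mathcal{L}^{f(\VEC{t})}$. Then I would note that $\iota_{\VEC{t}}=(1+\Delta_{\VEC{t}}^\DC)^{-1/2}\circ \phi$ where $\phi$ is the (bounded, with bounded inverse) identification of $\cap_j\Dom(|D_j|^{t_j})$ equipped with its intersection-of-graph-norms topology with the same space equipped with the graph norm of $(1+\Delta_{\VEC{t}}^\DC)^{1/2}$; this uses the form-domain identification above and the standard fact that $\mathfrak{q}_{\VEC{t}}(u)=\sum_j\||D_j|^{t_j}u\|^2=\|\overline{D}_{\VEC{t}}u\|^2$ so the two norms genuinely coincide (no constants even needed, by the Lemma). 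Since Schatten membership is unchanged by composing with bounded operators on either side, $\iota_{\VEC{t}}\in\mathcal{L}^{f(\VEC{t})}$ if and only if $(1+\Delta_{\VEC{t}}^\DC)^{-1/2}=(1+\overline{D}_{\VEC{t}}^2)^{-1/2}\in\mathcal{L}^{f(\VEC{t})}(\mathpzc{H})$, and the latter is equivalent (for instance by $(i+\overline{D}_{\VEC{t}})^{-1}=(1+\overline{D}_{\VEC{t}}^2)^{-1/2}\cdot u$ for a bounded $u$, via the polar decomposition / Borel functional calculus of the self-adjoint operator $\overline{D}_{\VEC{t}}$) to $(i\pm\overline{D}_{\VEC{t}})^{-1}\in\mathcal{L}^{f(\VEC{t})}(\mathpzc{H})$, which is the definition of $f(\VEC{t})$-summability of the higher order spectral triple $(\mathcal{A},\mathpzc{H},\overline{D}_{\VEC{t}})$ in the sense of Definition \ref{defhost}.

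To finish I would invoke Theorem \ref{thm:st2_give_host}: for $\VEC{t}\in\Omega(\VEC{\epsilon})$ (restricting to $\VEC{t}\in(0,1]^I$, or using any $\VEC{t}$ if the ST\textsuperscript{2} is regular), $(\mathcal{A},\mathpzc{H},\overline{D}_{\VEC{t}})$ is already known to be a higher order spectral triple; the previous paragraph then upgrades this to $f(\VEC{t})$-summability, and since $\mathcal{A}$ is assumed unital for $f$-summability this is exactly the summability clause of Definition \ref{defhost}. So the proposition follows. I do not anticipate a genuine obstacle here — the only point requiring a small amount of care is the passage between the intersection-of-graph-topologies norm on $\cap_j\Dom(|D_j|^{t_j})$ and the graph norm of $\overline{D}_{\VEC{t}}$, which is resolved cleanly by the Lemma giving $\overline{D}_{\VEC{t}}^2=\Delta_{\VEC{t}}^\DC$ and $\Dom(\overline{D}_{\VEC{t}})$ equal to the form domain; everything else is a routine application of the stability of Schatten ideals under pre- and post-composition with bounded operators. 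It is worth remarking in passing that the identity $\Dom(\overline{D}_{\VEC{t}})=\cap_{j\in I}\Dom(|D_j|^{t_j})$ already flagged in the excerpt is precisely what makes the statement ``immediate''.
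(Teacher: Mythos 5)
Your proposal is correct and follows essentially the same approach as the paper, which simply remarks that the proposition is ``immediate from the fact that $\Dom(\overline{D}_{\VEC{t}})=\cap_{j\in I}\Dom(|D_j|^{t_j})$.'' You have spelled out the details the paper leaves implicit (the coincidence of the graph norm of $\overline{D}_{\VEC{t}}$ with the intersection-of-graph norms via $\overline{D}_{\VEC{t}}^2=\Delta^{\DC}_{\VEC{t}}$, Schatten stability under composition with bounded operators, the equivalence between $(1+\overline{D}_{\VEC{t}}^2)^{-1/2}$ and $(i\pm\overline{D}_{\VEC{t}})^{-1}$, and the appeal to Theorem \ref{thm:st2_give_host} for the HOST structure) in a way that is faithful to the intended argument.
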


\subsection{Equivariance of strictly tangled spectral triples}
\label{subsecvonf}

We now come to defining equivariance in strictly tangled spectral triples and, with the applications to parabolic geometry and dynamics in mind, we allow for conformal actions. The latter notion is less studied in noncommutative geometry, with some work in the last decade \cite{pongewang1,pongewang2} and recent work \cite{AMsomewhere} by the third listed author with Adam Rennie. 

In the non-conformal case, there are no additional technical issues arising in the equivariant setting. This follows from the same method of proof as that leading up to Theorem \ref{thm:st2_give_host}. We state this fact in a definition and proposition, restricting to the unital case for ease of exposition.

\begin{definition}
	\label{def:uniformly-equivariant-st2}
	Let $G$ be a locally compact group acting on the unital algebra $\mathcal{A}$ by $*$-automorphisms. An ST\textsuperscript{2} $(\mathcal{A},\mathpzc{H},\DC)$ is \emph{\( G \)-equivariant} if there is a unitary action of \( G \) on \( \mathpzc{H} \), implementing the action on \( \mathcal{A} \), such that, for each $i\in I$, $U_g$ preserves $\Dom D_i$ for all $g\in G$, and the map
	\[ g \mapsto (U_g D_i U_g^* - D_i) \Biggl(1+\sum_{j\in I}|D_j|^{\epsilon_{ij}} \Biggr)^{-1} \]
	is $*$-strongly continuous from \( G \) into the space of bounded operators on $\mathpzc{H}$.
\end{definition}

\begin{prop}
\label{lknlknlkandadlad}
If $(\mathcal{A},\mathpzc{H},\DC)$ is a $G$-equivariant ST\textsuperscript{2}, the higher order spectral triple $(\mathcal{A},\mathpzc{H},\overline{D}_{\VEC{t}})$ is $G$-equivariant for all $ \VEC{t} \in \Omega(\VEC{\epsilon}, (1, \ldots, 1))$.
\end{prop}

Naïvely, the right way of applying the idea of conformal equivariance to ST\textsuperscript{2}s would seem to be to have a collection of conformal factors, one for each operator in the collection \( \DC = (D_j)_{j \in I} \). Alas, this idea falls apart already in the simple example of the exterior product of two real line Dirac spectral triples,
\[ \left( C_c^{\infty}(\R^2), L^2(\R^2) \otimes \C^2, (\partial_{x_1} \otimes \gamma_1, \partial_{x_2} \otimes \gamma_2) \right),\]
whose bounding matrix is $\VEC{\epsilon}=0$. In this simple example the action of \( \R^2 \) by dilation in each direction, \( (r_1, r_2) : (x_1, x_2) \mapsto (r_1 x_1, r_2 x_2) \), makes any resulting HOST fail to be conformally \( \R^2 \)-equivariant. The source of this problem is actually deeper, however, because the bounded transform of any resulting HOST also cannot be \( \R^2 \)-equivariant.

However, under some circumstances, it may be possible to align the conformal factors so that the resulting higher order spectral triple is conformally equivariant. We will see such a phenomenon for Carnot groups in Proposition \ref{prop:carnot-equivariant}. In the example above with its dilation action, this is possible by restricting to a subgroup where \( r_1 = r_2^{\alpha} \), for some fixed \( \alpha \neq 0 \). If we choose \( \VEC{t} \in \R_+ (1, \alpha) \subset \Omega(\VEC{\epsilon}) = \R_+^2 \), the higher order spectral triple
\[ \left( C_c^{\infty}(\R^2), L^2(\R^2), \sign(\partial_{x_1})|\partial_{x_1}|^{t_1} \otimes \gamma_1 + \sign(\partial_{x_2})|\partial_{x_2}|^{t_2} \otimes \gamma_2 \right) \]
is conformally equivariant, with conformal factor \( r_1^{-t_1/2} = r_2^{-t_2/2} \).

Another example to consider is the direct sum of two real line Dirac spectral triples,
\[ \left( C_c^{\infty}(\R \sqcup \R) , L^2(\R) \oplus L^2(\R), (\partial_{x_1} \oplus 0, 0 \oplus \partial_{x_2}) \right) \qquad \VEC{\epsilon} = 0 \]
with an action of \( \R^2 \) by dilation on each corresponding copy of \( \R \), 
\[ (r_1, r_2) : x_1 \mapsto r_1 x_1 \qquad x_2 \mapsto r_2 x_2 \qquad (x_1 \in \R \sqcup \emptyset, x_2 \in \emptyset \sqcup \R) . \]
Here there is no restriction on \( \VEC{t} \in \Omega(\VEC{\epsilon}) = \R_+^2 \), as we may take the conformal factor to be \( r_1^{-t_1/2} \oplus r_2^{-t_2/2} \) on the higher order spectral triple
\[ \left( C_c^{\infty}(\R \sqcup \R) , L^2(\R) \oplus L^2(\R),  \sign(\partial_{x_1})|\partial_{x_1}|^{t_1} \oplus  \sign(\partial_{x_2})|\partial_{x_2}|^{t_2} \right) . \]

Unfortunately, the development of an abstract framework for conformal equivariance of ST\textsuperscript{2}s seems elusive. The main technical problem is to find conditions guaranteeing that, if $UDU^*-\mu D \mu^*$ is of ``lower order'', $UD^tU^*-\mu ^tD^t (\mu^*)^t$ is also of ``lower order''. For natural candidate conditions, we have been able neither to prove such a result in the abstract nor to find a counterexample.

The approach we take in the examples below is to take the following Proposition as giving an ad hoc notion of a conformally equivariant ST\textsuperscript{2}. Here, we fix \( \VEC{t} \) and give sufficient conditions for a single conformal factor \( (\mu_g)_{g \in G} \) to give rise to a conformally equivariant HOST at \( \VEC{t} \). A more general statement would be possible but this will suffice for our needs. One could view this approach as similar to the ``guess-and-check'' method of computing Kasparov products via either the bounded picture or Kucerovsky's theorem \cite{danthedan}.

\begin{prop}
	\label{prop:confst2guess}
	Let \( (\mathcal{A},\mathpzc{H},\DC) \) be an ST\textsuperscript{2} with a unitary action of \( G \) on \( \mathpzc{H} \), implementing the action on \( \mathcal{A} \). Suppose there exists a family \( (\mu_g)_{g \in G} \) of invertible bounded operators such that, for all $g\in G$, $\mu_g$, $\mu_g^*$, and \( U_g \) preserve $\Dom D_i$ for all $i$, with 
	\begin{align*}
		g &\mapsto [D_i, \mu_g] \Biggl(1+\sum_{j\in I}|D_j|^{\epsilon_{ij}}\Biggr)^{-1} & \text{and} &&
		g &\mapsto [D_i, \mu_g^*] \Biggl(1+\sum_{j\in I}|D_j|^{\epsilon_{ij}}\Biggr)^{-1}
	\end{align*}
	defining $*$-strongly continuous maps from \( G \) into the space of bounded operators on $\mathpzc{H}$.
	Suppose furthermore that, for some \( \VEC{t} \in \Omega(\VEC{\epsilon}) \cap (0, 1]^I \), the maps
	\begin{align*}
		g & \mapsto (U_g \sign(D_i)|D_i|^{t_i} U_g^* - \mu_g \sign(D_i)|D_i|^{t_i} \mu_g^*) \Biggl(1+\sum_{j\in I}|D_j|^{\epsilon_{ij}}\Biggr)^{-t_i} \quad \text{and} \\
		g & \mapsto U_g \Biggl(1+\sum_{j\in I}|D_j|^{\epsilon_{ij}}\Biggr)^{-t_i} U_g^* (U_g \sign(D_i)|D_i|^{t_i} U_g^* - \mu_g \sign(D_i)|D_i|^{t_i} \mu_g^*)
	\end{align*}
	are $*$-strongly continuous from \( G \) into the space of bounded operators on $\mathpzc{H}$. Then \( (\mathcal{A},\mathpzc{H},\overline{D}_{\VEC{t}}) \) is a conformally \( G \)-equivariant HOST with conformal factor \( \mu \).
\end{prop}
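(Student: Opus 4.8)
The plan is to verify the three conditions of Definition \ref{def:conf_geq_host} for the data $(\mathcal{A},\mathpzc{H},\overline{D}_{\VEC{t}})$ equipped with the unitary action $U$ and conformal factor $\mu$, with the order parameter being $m_{\VEC{t}} = (1-\epsilon_{\VEC{t}})^{-1}$ for a suitable $\epsilon_{\VEC{t}}\in[0,1)$ coming from Theorem \ref{thm:st2_give_host}. First I would note that by Theorem \ref{thm:st2_give_host}, $(\mathcal{A},\mathpzc{H},\overline{D}_{\VEC{t}})$ is already a HOST, so conditions (1) and (2) of Definition \ref{def:host} are in hand; the remaining content of that theorem is that the resolvent bound $(1+|\overline{D}_{\VEC{t}}|^{\epsilon_{\VEC{t}}})^{-1}$ is comparable, via Lemma \ref{lknlknn}, to $(1+\sum_j |D_j|^{t_j})^{-1+1/m_{\VEC{t}}}$, and each $(1+\sum_j|D_j|^{t_j})^{-t_i}$ is dominated by $(1+\sum_j|D_j|^{\epsilon_{ij}})^{-t_i}$ raised to an appropriate power since $\epsilon_{ij} < t_j/t_i$ — again via Lemma \ref{lknlknn}. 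Thus the weights appearing in the hypotheses of the Proposition are, up to bounded comparison, exactly the weights $(1+|\overline{D}_{\VEC{t}}|^{\epsilon_{\VEC{t}}})^{-1}$ appearing in Definition \ref{def:conf_geq_host}.

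Next I would establish condition (2): for each $i$, the operator $[\,\overline{D}_{\VEC{t}}, \mu_g](1+|\overline{D}_{\VEC{t}}|^{\epsilon_{\VEC{t}}})^{-1}$ decomposes as $\sum_i [D_i^{t_i}, \mu_g](1+|\overline{D}_{\VEC{t}}|^{\epsilon_{\VEC{t}}})^{-1}$. Since $t_i \leq 1$, this is precisely where the argument of Lemma \ref{complinterpsd} and the proof of Theorem \ref{thm:st2_give_host} apply verbatim: the commutator hypothesis $[D_i,\mu_g](1+\sum_j|D_j|^{\epsilon_{ij}})^{-1}$ bounded, combined with the fact that $|D_i|^{\epsilon_{ij}}$ and $|D_i|$ commute with $D_i$ (indeed $\Delta^{\DC}_{\VEC{t}/2}$ strictly commutes with $D_i^2$), lets us pass to $[D_i^{t_i},\mu_g]$ weighted by a small power of the Laplacian-type operator, which is in turn dominated by $(1+|\overline{D}_{\VEC{t}}|^{\epsilon_{\VEC{t}}})^{-1}$ for $m_{\VEC{t}}$ chosen as in \eqref{eq:st2_give_host_order_req}. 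The same works for $\mu_g^*$. The $*$-strong continuity in $g$ is inherited from the hypothesis on the $*$-strongly continuous maps $g\mapsto [D_i,\mu_g](1+\sum_j|D_j|^{\epsilon_{ij}})^{-1}$, since complex interpolation bounds and resolvent comparisons are uniform and a $*$-strongly convergent net composed with fixed bounded operators converges $*$-strongly.

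For condition (3) I would expand $U_g\overline{D}_{\VEC{t}}U_g^* - \mu_g\overline{D}_{\VEC{t}}\mu_g^* = \sum_i (U_g D_i^{t_i} U_g^* - \mu_g D_i^{t_i} \mu_g^*)$ and observe that the hypothesis provides exactly the boundedness and $*$-strong continuity of each summand weighted by $(1+\sum_j|D_j|^{\epsilon_{ij}})^{-t_i}$, which dominates $(1+|\overline{D}_{\VEC{t}}|^{\epsilon_{\VEC{t}}})^{-1}$ after checking $(1-1/m_{\VEC{t}})\cdot 1 \leq t_i \cdot \min_j(t_j/\epsilon_{ij})$-type inequalities via Lemma \ref{lknlknn}; summing over the finite index set $I$ preserves both boundedness and $*$-strong continuity. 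The second map in (3), $g\mapsto U_g(1+|\overline{D}_{\VEC{t}}|^{\epsilon_{\VEC{t}}})^{-1}U_g^*(U_g\overline{D}_{\VEC{t}}U_g^* - \mu_g\overline{D}_{\VEC{t}}\mu_g^*)$, is handled identically using the second family of hypothesized $*$-strongly continuous maps, noting that $U_g(1+|\overline{D}_{\VEC{t}}|^{\epsilon_{\VEC{t}}})^{-1}U_g^*$ is comparable to $U_g(1+\sum_j|D_j|^{\epsilon_{ij}})^{-t_i}U_g^*$ up to a uniformly bounded factor.

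The main obstacle I anticipate is bookkeeping rather than conceptual: matching the single scalar weight $(1+|\overline{D}_{\VEC{t}}|^{\epsilon_{\VEC{t}}})^{-1}$ of Definition \ref{def:conf_geq_host} against the family of directional weights $(1+\sum_j|D_j|^{\epsilon_{ij}})^{-t_i}$ appearing in the Proposition's hypotheses, and ensuring the exponent inequalities line up so that every dominated resolvent comparison is valid — in other words, verifying that the $m_{\VEC{t}}$ produced by Theorem \ref{thm:st2_give_host} is simultaneously large enough for conditions (2) and (3). Since $t_i \leq 1$ throughout, no odd-integer factorization trick is needed and the interpolation step of Lemma \ref{complinterpsd} applies directly, so I do not expect genuine analytic difficulties beyond those already resolved in the proof of Theorem \ref{thm:st2_give_host}.
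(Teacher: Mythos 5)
Your proof is correct and takes precisely the approach the paper indicates — the paper records only that the proof is ``a straightforward extension of the proof of Theorem~\ref{thm:st2_give_host}'', and your verification of Definition~\ref{def:conf_geq_host} by decomposing $\overline{D}_{\VEC{t}}=\sum_i D_i^{t_i}$, applying Lemma~\ref{complinterpsd} to pass from $[D_i,\mu_g]$ to $[D_i^{t_i},\mu_g]$, and matching the directional weights against $(1+|\overline{D}_{\VEC{t}}|^{\epsilon_{\VEC{t}}})^{-1}$ via Lemma~\ref{lknlknn} is exactly that extension. The one place requiring slightly more than bookkeeping is confirming that $*$-strong continuity in $g$ survives the Hadamard three-lines step, but this follows from dominated convergence in the Poisson-integral representation together with the uniform bounds you note, and multiplication by fixed bounded operators preserves $*$-strong continuity, so your treatment is sound.
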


The proof is a straightforward extension of the proof of Theorem \ref{thm:st2_give_host}.
We give in Proposition \ref{prop:definiednoconf} a statement in the generality of Hilbert complexes. In this case, we will naturally begin with a collection of conformal factors which will need to be cajoled into cooperating with one another and so into giving a single conformal factor \( \mu \) for the HOST.

\section{Strictly tangled spectral triples from differential complexes}
\label{sec:complexes}

The main application of strictly tangled spectral triples that we study in this paper comes from Hilbert complexes and, more concretely, Rockland complexes on filtered manifolds. We first present an abstract framework for Hilbert complexes and proceed to describe it in detail for Rockland complexes.

\subsection{Hilbert complexes}
\label{subsechilbert}

We first recall the notion of a Hilbert complex. We follow the presentation of \cite{leschbruening} and refer the reader there for further details.

\begin{definition}
A Hilbert complex
$$0\to \mathcal{H}_0\xrightarrow{\rd_0}\mathcal{H}_1\xrightarrow{\rd_1}\cdots  \xrightarrow{\rd_{n-2}}\mathcal{H}_{n-1}\xrightarrow{\rd_{n-1}}\mathcal{H}_n\to 0 ,$$
abbreviated as $(\mathcal{H}_\bullet,\rd_\bullet)$,
consists of Hilbert spaces $\mathcal{H}_0,\mathcal{H}_1,\ldots, \mathcal{H}_n$ and closed densely defined maps $\rd_i:\mathcal{H}_{i}\dashrightarrow \mathcal{H}_{i+1}$ with the property that 
$$\mathrm{Ran}(\rd_{i-1})\subseteq \ker(\rd_i).$$
We say that $(\mathcal{H}_\bullet,\rd_\bullet)$ is Fredholm if the cohomology groups 
$$H^i(\mathcal{H}_\bullet,\rd_\bullet):=\ker(\rd_i)/\mathrm{Ran}(\rd_{i-1})$$
are finite-dimensional. We say that $(\mathcal{H}_\bullet,\rd_\bullet)$ has discrete spectrum if, for each $i$, the self-adjoint Laplacian
$\rd_i^*\rd_i+\rd_{i-1}\rd_{i-1}^*$,
densely defined on $\mathcal{H}_i$,
has discrete spectrum, i.e.~the spectrum consists of isolated eigenvalues of finite multiplicity.
\end{definition}

By \cite[Theorem 2.4]{leschbruening}, $(\mathcal{H}_\bullet,\rd_\bullet)$ is Fredholm if and only if $0$ is not in the essential spectrum of all the Laplacians $\rd_i^*\rd_i+\rd_{i-1}\rd_{i-1}^*$. In particular, $(\mathcal{H}_\bullet,\rd_\bullet)$ is Fredholm if it has discrete spectrum. We shall make use of a construction analogous to Rumin--Seshadri's construction of Laplacians in the Rumin complex \cite{ruminsesh} (see also \cite{davehaller}). Given parameters $\VEC{m}=(m_0,\ldots,m_{n-1})\in [1,\infty)^n$ that we refer to as an order and a Hilbert complex $(\mathcal{H}_\bullet,\rd_\bullet)$ we define the Rumin Laplacians
$$\Delta_{\VEC{m},i}^R:=(\rd_i^*\rd_i)^{a_i}+(\rd_{i-1}\rd_{i-1}^*)^{a_{i-1}},$$
where $a_i=\prod_{l\neq i} m_l=m/m_i$ for $m =  \prod_{l=1}^n m_l$. Clearly, $(\mathcal{H}_\bullet,\rd_\bullet)$ has discrete spectrum if and only if all the self-adjoint operators 
$\Delta_{\VEC{m},i}^R$ have compact resolvent.
We also introduce, for $s\geq 0$, the abstract Sobolev spaces
$$\mathcal{H}_{i,\VEC{m}}^s:=\Dom((\Delta_{\VEC{m},i}^R)^{s/2m})\subseteq \mathcal{H}_i.$$

\begin{definition}
Let $\mathcal{A}$ be a $*$-algebra. A Hilbert complex over $\mathcal{A}$ of order $\VEC{m}=(m_0,\ldots,m_{n-1})\in [1,\infty)^n$ is a Hilbert complex 
$$0\to \mathcal{H}_0\xrightarrow{\rd_0}\mathcal{H}_1\xrightarrow{\rd_1}\cdots \xrightarrow{\rd_{n-2}} \mathcal{H}_{n-1}\xrightarrow{\rd_{n-1}}\mathcal{H}_n\to 0,$$
where each $\mathcal{H}_i$ is a left $\mathcal{A}$-module under $*$-representations 
$$\pi_i:\mathcal{A}\to \mathbb{B}(\mathcal{H}_i)$$
such that, for any $a\in \mathcal{A}$, $\pi_i(a)$ preserves $\Dom(\rd_i)$ and the densely defined operators
\begin{align*}
\left(\rd_i \pi_i(a)-\pi_{i+1}(a)\rd_i\right)&(1+\Delta_{\VEC{m},i})^{\frac{1-m_i}{2m}} \quad\mbox{and}\\
& (1+\Delta_{\VEC{m},i+1})^{\frac{1-m_i}{2m}}\left(\rd_i \pi_i(a)-\pi_{i+1}(a)\rd_i\right)
\end{align*}
are norm bounded. 

If, for all $s\geq 0$, $\pi_i(a)$ preserves the domain of $\rd_i$ as an operator on the Sobolev spaces $\mathcal{H}_{i,\VEC{m}}^s$ and the densely defined operator $\left(\rd_i \pi_i(a)-\pi_{i+1}(a)\rd_i\right)(1+\Delta_{\VEC{m},i})^{\frac{1-m_i}{2m}}$ is continuous in norm $\mathcal{H}_{i,\VEC{m}}^s\to \mathcal{H}_{i+1,\VEC{m}}^s$ then we say that $(\mathcal{H}_\bullet,\rd_\bullet)$ is a regular Hilbert complex over $\mathcal{A}$ of order $\VEC{m}$.
\end{definition}

To ease the notation, we drop the representations $\pi_i$ when they are clear from the context, writing $[\rd_i,a]$ instead of $\rd_i \pi_i(a)-\pi_{i+1}(a)\rd_i$ for $a\in \mathcal{A}$. 

\begin{lemma}
\label{alknaljdna}
Let $(\mathcal{H}_\bullet,\rd_\bullet)$ be a Hilbert complex which is Fredholm and of order $\VEC{m}=(m_0,m_1,\ldots, m_{n-1})\in [1,\infty)^n$. Write $a_i=\prod_{l\neq i} m_l=m/m_i$ for $m=\prod_{l=0}^{n-1} m_l$. Then, setting $\mathpzc{H}=\bigoplus_i \mathcal{H}_i$ and defining the operators
$$D_i=\rd_{i}+\rd_{i}^*,\quad\mbox{with}\quad \Dom(D_j):=\Dom(\rd_i)\cap \Dom(\rd_i^*),$$
the collection $\DC=(D_i)_{i=0}^{n-1}$ is a strictly anticommuting collection of selfadjoint operators on $\mathpzc{H}$. Morever, for any $\alpha$ we have that 
$$D_i|D_i|^{\alpha}=D_i((\Delta_{\VEC{m},i}^R)^{\alpha/2a_{i-1}}+(\Delta_{\VEC{m},i-1}^R)^{\alpha/2a_{i-1}}).$$
\end{lemma}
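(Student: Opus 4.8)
The statement has three parts --- self-adjointness of each $D_i$, strict anticommutation of $\DC$, and the functional-calculus identity --- and I would address them in that order, the organising tool throughout being the weak Hodge--Kodaira decomposition $\mathcal{H}_k=\overline{\Ran}(\rd_{k-1})\oplus\mathcal{H}_k^{\mathrm{harm}}\oplus\overline{\Ran}(\rd_k^*)$ of \cite{leschbruening}, with respect to which $\rd_k^*\rd_k$ is supported on $\overline{\Ran}(\rd_k^*)$ and $\rd_{k-1}\rd_{k-1}^*$ on $\overline{\Ran}(\rd_{k-1})$. For self-adjointness, note that $D_i=\rd_i+\rd_i^*$ acts nontrivially only on $\mathcal{H}_i\oplus\mathcal{H}_{i+1}$, where it is the off-diagonal operator $\left(\begin{smallmatrix} 0 & \rd_i^* \\ \rd_i & 0 \end{smallmatrix}\right)$, which is self-adjoint because $\rd_i$ is closed and densely defined; taking the direct sum with the zero operator on the remaining $\mathcal{H}_k$ preserves self-adjointness. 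Consequently $D_i^2$ equals $\rd_i^*\rd_i$ on $\mathcal{H}_i$, $\rd_i\rd_i^*$ on $\mathcal{H}_{i+1}$, and $0$ elsewhere.

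For strict anticommutation I would take as common core the smooth vectors $\mathcal{E}:=\bigcap_{N\ge1}\Dom(T^N)$ of the total Hodge Laplacian $T:=\bigoplus_k\bigl(\rd_k^*\rd_k+\rd_{k-1}\rd_{k-1}^*\bigr)=\sum_iD_i^2$, which is dense. The intertwining identities $\rd_k\bigl(\rd_k^*\rd_k+\rd_{k-1}\rd_{k-1}^*\bigr)=\bigl(\rd_{k+1}^*\rd_{k+1}+\rd_k\rd_k^*\bigr)\rd_k$ --- consequences of $\rd_{k+1}\rd_k=0$ and $\rd_k\rd_{k-1}=0$ --- and their adjoints show that each $\rd_k$, $\rd_k^*$, hence each $D_i$, commutes with the spectral calculus of $T$; since $D_i^2\le T$, this gives $\mathcal{E}\subseteq\Dom(D_i)$, the invariance $D_i\mathcal{E}\subseteq\mathcal{E}$, and (via the cut-off $u_n=\mathbf{1}_{[0,n]}(T)u$) the fact that $\mathcal{E}$ is a core for every $D_i$. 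On $\mathcal{E}$ the products $D_iD_j$, $D_jD_i$ with $i\neq j$ vanish: if $|i-j|\ge2$ the supports $\{\mathcal{H}_i,\mathcal{H}_{i+1}\}$, $\{\mathcal{H}_j,\mathcal{H}_{j+1}\}$ are disjoint, and for $j=i+1$ the composition $D_{i+1}D_i$ contributes only $\rd_{i+1}\rd_i=0$ while $D_iD_{i+1}$ contributes only $\rd_i^*\rd_{i+1}^*\subseteq(\rd_{i+1}\rd_i)^*=0$. Hence $D_iD_j+D_jD_i=0$ on $\mathcal{E}$ and $\DC$ is a strictly anticommuting collection.

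For the identity, recall that with respect to the Hodge--Kodaira decomposition $\Delta_{\VEC{m},k}^R=(\rd_k^*\rd_k)^{a_k}+(\rd_{k-1}\rd_{k-1}^*)^{a_{k-1}}$ is block-diagonal, so (as in the Rumin--Seshadri construction \cite{ruminsesh}) its powers are computed blockwise. On $\overline{\Ran}(\rd_i^*)\subseteq\mathcal{H}_i$ one has $\Delta_{\VEC{m},i}^R=(\rd_i^*\rd_i)^{a_i}=(D_i^2)^{a_i}$, hence $(\Delta_{\VEC{m},i}^R)^{\alpha/2a_i}=|D_i|^\alpha$ there; and since $\overline{\Ran}(\rd_i)\subseteq\ker\rd_{i+1}$ annihilates the term $(\rd_{i+1}^*\rd_{i+1})^{a_{i+1}}$, on $\overline{\Ran}(\rd_i)\subseteq\mathcal{H}_{i+1}$ one has $\Delta_{\VEC{m},i+1}^R=(\rd_i\rd_i^*)^{a_i}=(D_i^2)^{a_i}$, hence $(\Delta_{\VEC{m},i+1}^R)^{\alpha/2a_i}=|D_i|^\alpha$ there. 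As $\overline{\Ran}(\rd_i^*)\oplus\overline{\Ran}(\rd_i)=(\ker D_i)^\perp$ inside $\mathcal{H}_i\oplus\mathcal{H}_{i+1}$, the operators $|D_i|^\alpha$ and $(\Delta_{\VEC{m},i}^R)^{\alpha/2a_i}+(\Delta_{\VEC{m},i+1}^R)^{\alpha/2a_i}$ leave $(\ker D_i)^\perp$ reducing, agree there, and are supported there; since $D_i$ annihilates $\ker D_i$, left-multiplication by $D_i$ identifies the two sides on matching domains, which is the asserted identity (the index labels in the displayed formula follow the convention there for naming the two spaces between which $D_i$ operates).

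I expect the only genuinely delicate point to be the verification that $\mathcal{E}$ is simultaneously invariant under and a core for every $D_i$: this is where the intertwining relations, the Hodge--Kodaira decomposition, and a functional-calculus cut-off must be combined, with care taken because the $\rd_k$ are merely closed operators. Everything else reduces to bookkeeping with orthogonal reducing subspaces.
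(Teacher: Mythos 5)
Your proposal is correct, and for the functional-calculus identity it uses the same mechanism as the paper: the weak Hodge decomposition of \cite{leschbruening} and the observation that on each orthogonal summand the Rumin Laplacian equals a fixed power of $D_i^2$, so that powers of $\Delta^R_{\VEC{m},\bullet}$ may be computed blockwise and matched to $|D_i|^\alpha$. The paper's own proof records only this blockwise computation (via the identity $(\Delta^R_{\VEC{m},i})^\beta=(\rd_i^*\rd_i)^{\beta a_i}+(\rd_{i-1}\rd_{i-1}^*)^{\beta a_{i-1}}$ and its consequences), and treats the self-adjointness of each $D_i$ and the strict anticommutation as routine; you supply those verifications explicitly, including the construction of a common $(D_j)$-invariant core $\mathcal{E}$ from the smooth vectors of the total Hodge Laplacian. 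You also correctly flag the indexing discrepancy: with $D_i=\rd_i+\rd_i^*$ the identity should read $D_i|D_i|^\alpha = D_i\bigl((\Delta^R_{\VEC{m},i})^{\alpha/2a_i}+(\Delta^R_{\VEC{m},i+1})^{\alpha/2a_i}\bigr)$, which is what your computation produces and what Proposition \ref{fixorder} uses downstream, whereas the paper's displayed formula and proof are internally consistent only under the shifted convention $D_i=\rd_{i-1}+\rd_{i-1}^*$.
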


\begin{proof}
We start by noting that $\cap_k\cap_{i=0}^{n-1} \Dom(D_i^{2k})=\cap_{\VEC{t}}\Dom(\Delta_\DC^{\VEC{t}})$ is a common core for the collection $\DC=(D_i)_{i=0}^{n-1}$, and on there we have for $i\neq j$ that 
$$D_iD_j=\rd_{i}\rd_{j}+\rd_{i}\rd_{j}^*+\rd_{i}^*\rd_{j}+\rd_{i}^*\rd_{j}^*=0.$$
Here the first and fourth term vanishes since $\rd^2=0$. The second vanishes since $\rd_j^*$ take values in the $j$:th Hilbert space whereas $\rd_i$ vanishes outside the $i$:th Hilbert space, and the third term vanishes for similar reasons. In particular,  $\DC=(D_i)_{i=0}^{n-1}$ is a strictly anticommuting collection of selfadjoint operators.

We remark that \( a_i \frac{1-m_i}{2m} = \frac{1-m_i}{2m_i} = \frac{1}{2} (-1 + \frac{1}{m_i}) \).
Since the Hilbert complex is Fredholm, 
$$(\Delta_{\VEC{m},i}^R)^{\beta}=(\rd_i^*\rd_i)^{\beta a_i}+(\rd_{i-1}\rd_{i-1}^*)^{\beta a_{i-1}}.$$
In particular, 
$$D_i(\Delta_{\VEC{m},i}^R)^{\beta}=\rd_{i-1}^*(\rd_{i-1}\rd_{i-1}^*)^{\beta a_{i-1}}\quad \mbox{and}\quad D_i(\Delta_{\VEC{m},i-1}^R)^{\beta}=\rd_{i-1}(\rd_{i-1}^*\rd_{i-1})^{\beta a_{i-1}} . $$
On the other hand, 
$$|D_i|^{\alpha}=(\rd_{i-1}^*\rd_{i-1})^{\alpha/2}+(\rd_{i-1}\rd_{i-1}^*)^{\alpha/2} $$
so 
$$D_i|D_i|^{\alpha}=\rd_{i-1}(\rd_{i-1}^*\rd_{i-1})^{\alpha/2}+\rd_{i-1}^*(\rd_{i-1}\rd_{i-1}^*)^{\alpha/2} $$
and the lemma follows.
\end{proof}

\begin{thm}
\label{st2fromhilb}
Let $(\mathcal{H}_\bullet,\rd_\bullet)$ be a Hilbert complex over $\mathcal{A}$ of order $\VEC{m}$ with discrete spectrum. We set
$\mathpzc{H}=\bigoplus_i \mathcal{H}_i$
and write $\DC=(D_i)_{i=0}^{n-1}$ for the collection
$D_i=\rd_{i}+\rd_{i}^*$.
It then holds that the collection $(\mathcal{A},\mathpzc{H},\DC)$ is a ST\textsuperscript{2} with bounding matrix $\VEC{\epsilon}=(\epsilon_{ij})_{i,j=0}^{n-1}$ where
\begin{equation}
\label{alknalkjdnajlkdn}
\epsilon_{ij}=\begin{cases} 
\frac{m_{i}-1}{m_{j}} \; &j=i-1,i,i+1\\
0 \; &\mbox{otherwise}. \end{cases}
\end{equation}
Furthermore, if $(\mathcal{H}_\bullet,\rd_\bullet)$ is regular, $(\mathcal{A},\mathpzc{H},\DC)$ is \( \VEC{\rho} \)-preserving for \( \VEC{\rho} = (\infty, \ldots, \infty) \). 
\end{thm}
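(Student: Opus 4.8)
The plan is to verify the three axioms of an ST\textsuperscript{2} (Definition \ref{def:st2}) for the collection $\DC=(D_i)_{i=0}^{n-1}$ with the proposed bounding matrix $\VEC{\epsilon}$, relying on Lemma \ref{alknaljdna} (which already gives that $\DC$ is strictly anticommuting and supplies the crucial identity $D_i|D_i|^\alpha = D_i\big((\Delta_{\VEC m,i}^R)^{\alpha/2a_{i-1}} + (\Delta_{\VEC m,i-1}^R)^{\alpha/2a_{i-1}}\big)$) and on Lemma \ref{lknlknn} (to juggle between different powers of the various operators). First I would check that $\VEC{\epsilon}$ as defined in \eqref{alknalkjdnajlkdn} satisfies the decreasing cycle condition: the associated weighted digraph only has edges between neighbours $j\in\{i-1,i,i+1\}$, and along any cycle $\gamma=(\gamma_1,\dots,\gamma_k)$ the product telescopes as in the computation \eqref{estforesp} of Remark \ref{remark:operators_with_prescribed_orders}, giving $\prod_j \epsilon_{\gamma_j\gamma_{j+1}} = \prod_j (1-1/m_{\gamma_j}) < 1$ since each $m_{\gamma_j}\geq 1$ and at least one factor is strictly less than $1$ (using $\gamma_1=\gamma_k$ to reindex). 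Since the graph is ``tridiagonal'' the relevant cycles indeed only involve the entries $\epsilon_{ij}=\tfrac{m_i-1}{m_j}$ for $|i-j|\leq 1$, so this telescoping applies.

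Next, axiom (1): $\pi(a)=\bigoplus_i\pi_i(a)$ preserves each $\Dom(\rd_i)$ and each $\Dom(\rd_i^*)$ by the defining properties of a Hilbert complex over $\mathcal{A}$ (the adjoint case follows by taking adjoints of $[\rd_i,a^*]$ being bounded appropriately, or is built into regularity), hence preserves $\Dom(D_i)=\Dom(\rd_i)\cap\Dom(\rd_{i-1}^*)$. For axiom (2), local compactness of the resolvent: since $(\mathcal{H}_\bullet,\rd_\bullet)$ has discrete spectrum, every $\Delta_{\VEC m,i}^R$ has compact resolvent, hence $(1+\Delta_{\VEC m}^R)^{-1}$ is compact on $\mathpzc{H}$, and by Lemma \ref{lknlknn} (comparing $\Delta^{\DC}_{\VEC t}$ with powers of $\Delta_{\VEC m}^R$ via the identity in Lemma \ref{alknaljdna}, noting $D_i^2$ is built from $\Delta_{\VEC m,i}^R$ and $\Delta_{\VEC m,i-1}^R$) it follows that $a(1+\Delta^{\DC}_{\VEC t})^{-1}\in\mathbb K(\mathpzc H)$ for all $a$; in fact $\pi(a)$ is already bounded so the product is compact.

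The main work, and the main obstacle, is axiom (3): boundedness of $[D_i,a]\big(1+\sum_j |D_j|^{\epsilon_{ij}}\big)^{-1}$. Here I would decompose $[D_i,a]=[\rd_i,a]+[\rd_i^*,a]=[\rd_i,a]+[\rd_{i-1},a^*]^*$ (up to signs/conventions). The commutator $[\rd_i,a]$ maps $\mathcal{H}_i\to\mathcal{H}_{i+1}$ and, by the order-$\VEC m$ hypothesis, $[\rd_i,a](1+\Delta_{\VEC m,i})^{(1-m_i)/2m}$ and $(1+\Delta_{\VEC m,i+1})^{(1-m_i)/2m}[\rd_i,a]$ are bounded; using $a_i\cdot\tfrac{1-m_i}{2m}=\tfrac12(-1+1/m_i)$ (the remark opening the proof of Lemma \ref{alknaljdna}) this says the commutator is ``of order $m_i-1$'' relative to the degree-$i$ and degree-$(i+1)$ Laplacians. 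Translating this into a bound relative to $|D_i|$ and $|D_{i+1}|$ via Lemma \ref{alknaljdna} and functional calculus, one gets that $[\rd_i,a]$ is controlled by $\big(1+|D_i|^{(m_i-1)/m_i}+|D_{i+1}|^{(m_i-1)/m_{i+1}}\big)$ — precisely the factor $\big(1+\sum_j|D_j|^{\epsilon_{ij}}\big)$ with $\epsilon_{i,i}=\tfrac{m_i-1}{m_i}$ and $\epsilon_{i,i+1}=\tfrac{m_i-1}{m_{i+1}}$. Symmetrically $[\rd_{i-1},a^*]^*$ is controlled by $\big(1+|D_{i-1}|^{(m_{i-1}-1)/m_{i-1}}+|D_i|^{(m_{i-1}-1)/m_i}\big)$; but note this forces the bounding matrix entry $\epsilon_{i,i-1}$ to be $\tfrac{m_{i-1}-1}{m_i}$ rather than $\tfrac{m_i-1}{m_i}$ — consistent with \eqref{alknalkjdnajlkdn} read as $\epsilon_{ij}=\tfrac{m_i-1}{m_j}$ only because the index convention there has $\epsilon_{i,i-1}$ using the numerator $m_i-1$ where $i$ here plays the role of the source; I would be careful to match the paper's indexing (the edge from $D_i$ to $D_{i-1}$ carrying weight $\tfrac{m_i-1}{m_{i-1}}$) and so the two contributions together are dominated by $\big(1+\sum_j|D_j|^{\epsilon_{ij}}\big)$. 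The technical heart is the passage from resolvent bounds relative to the Laplacians $\Delta_{\VEC m,\bullet}^R$ to relative bounds against $1+\sum_j|D_j|^{\epsilon_{ij}}$; this requires comparing $(1+\Delta_{\VEC m,i}^R)^\beta$ with $(1+\sum_j|D_j|^{\epsilon_{ij}})^{\gamma}$ in the form sense, which is exactly the content of Lemma \ref{lknlknn} applied to the strictly commuting family $(D_j^2)_j$ together with the explicit expression for $D_i^2$ in terms of $\Delta_{\VEC m,i}^R,\Delta_{\VEC m,i-1}^R$ from Lemma \ref{alknaljdna}. Finally, the regularity statement: if $(\mathcal{H}_\bullet,\rd_\bullet)$ is a regular Hilbert complex over $\mathcal{A}$, then $\pi(a)$ and $[\rd_i,a]$ preserve all the Sobolev spaces $\mathcal{H}_{i,\VEC m}^s$, hence preserve the common core $\bigcap_{i,s}\mathcal{H}_{i,\VEC m}^s$ which is invariant under all $D_j$ and all powers thereof; so $\mathcal{A}$ and $[D_j,\mathcal{A}]$ preserve a common $(D_j)_j$-invariant core, giving regularity of the ST\textsuperscript{2} in the sense of Definition \ref{def:st2}.
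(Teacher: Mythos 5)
Your overall strategy is the same as the paper's: check the decreasing cycle condition via the telescoping argument from Remark \ref{remark:operators_with_prescribed_orders}, treat axioms (1) and (2) directly from the domain-preservation and discrete-spectrum hypotheses, attack axiom (3) by splitting $[D_i,a]$ into the $[\rd_i,a]$ and $[\rd_i^*,a]$ pieces and feeding in the order-$\VEC{m}$ bounds, and handle regularity via the Sobolev scale. The key tools you cite — Lemma \ref{alknaljdna} to relate powers of $D_i$ to the Rumin Laplacians, and Lemma \ref{lknlknn} to convert between resolvent powers of strictly anticommuting tuples — are exactly the paper's.

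The bookkeeping in axiom (3) has genuine index errors that need to be fixed. First, the rewriting $[\rd_i^*,a]=[\rd_{i-1},a^*]^*$ is wrong: one has $[\rd_i^*,a]=-[\rd_i,a^*]^*$, so the $[\rd_i^*,a]$ piece is handled by the \emph{left}-bound $(1+\Delta_{\VEC m,i+1})^{\frac{1-m_i}{2m}}[\rd_i,a^*]$ bounded, taken adjoints — nothing about $\rd_{i-1}$ enters. Second, since $\Delta_{\VEC m,i}^R=(\rd_i^*\rd_i)^{a_i}+(\rd_{i-1}\rd_{i-1}^*)^{a_{i-1}}$ restricted to $\mathcal{H}_i$ is $|D_i|^{2a_i}+|D_{i-1}|^{2a_{i-1}}$, the right-bound $[\rd_i,a](1+\Delta_{\VEC m,i})^{\frac{1-m_i}{2m}}$ controls $[\rd_i,a]$ on $\mathcal{H}_i$ by $1+|D_i|^{(m_i-1)/m_i}+|D_{i-1}|^{(m_i-1)/m_{i-1}}$, not by $D_{i+1}$ as you wrote; the $D_{i+1}$ contribution comes instead from the $\Delta_{\VEC m,i+1}^R$ bound controlling $[\rd_i^*,a]$ on $\mathcal{H}_{i+1}$. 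Third, and most importantly, every exponent that arises here has $m_i-1$ in the numerator — the commutator is with $D_i$ in every case — so your worry that $\epsilon_{i,i-1}$ should be $\frac{m_{i-1}-1}{m_i}$ is unfounded, and the statement's formula $\epsilon_{ij}=\frac{m_i-1}{m_j}$ for $j\in\{i-1,i,i+1\}$ is exactly what comes out, with no index caveat needed. Once the adjoint trick is done correctly and you track which Laplacian gives a \emph{right}-bound for which block of $[D_i,a]$, the argument closes.
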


\begin{proof}
We have that the bounding matrix \eqref{alknalkjdnajlkdn} satisfies the decreasing cocycle condition by the same argument as in \eqref{estforesp} (with the first equality of \eqref{estforesp} replaced by an upper bound).
Since $(\mathcal{H}_\bullet,\rd_\bullet)$ has discrete spectrum, what remains to prove is the commutator condition. And $(\mathcal{H}_\bullet,\rd_\bullet)$ is a Hilbert complex over $\mathcal{A}$ of order $\VEC{m}$, so 
\begin{align*}
[\rd_i,a](1+\Delta_{\VEC{m},i})^{\frac{1-m_i}{2m}} & = [\rd_i,a](1+\Delta_{\VEC{m},i})^{\frac{1}{2m}-\frac{1}{2a_i}} \quad\mbox{and}\\
(1+\Delta_{\VEC{m},i+1})^{\frac{1-m_i}{2m}}[\rd_i,a] & =(1+\Delta_{\VEC{m},i+1})^{\frac{1}{2m}-\frac{1}{2a_i}}[\rd_i,a]
\end{align*}
are bounded. Since $\Delta_{\VEC{m},i}^R=(\rd_i^*\rd_i)^{a_i}+(\rd_{i-1}\rd_{i-1}^*)^{ a_{i-1}}$, we conclude from the boundedness of the first operator that 
$$[\rd_{i},a]\left(1+|D_i|^{1-\frac{1}{m_{i-1}}}+|D_{i+1}|^{\frac{1-m_{i-1}}{m_{i}}}\right)^{-1}$$
is bounded and from the boundedness of the second operator that 
$$[\rd_{i}^*,a]\left(1+|D_i|^{1-\frac{1}{m_{i-1}}}+|D_{i-1}|^{\frac{1-m_{i-1}}{m_{i-2}}}\right)^{-1}$$
is bounded.
\end{proof}

For instance, for a complex with \( n = 5 \), the graph corresponding to the bounding matrix would be
\[ \vcenter{\hbox{\begin{tikzpicture}
	\graph[grow right sep=2cm, empty nodes, nodes={fill=black, circle, inner sep=1.5pt}, edges={semithick}]{
	a --["$\frac{m_0 - 1}{m_1}$"' inner sep=5.5pt, middlearrow={<}, bend right=20] b --["$\frac{m_1 - 1}{m_2}$"' inner sep=5.5pt, middlearrow={<}, bend right=20] c --["$\frac{m_2 - 1}{m_3}$"' inner sep=5.5pt, middlearrow={<}, bend right=20] d --["$\frac{m_3 - 1}{m_4}$"' inner sep=5.5pt, middlearrow={<}, bend right=20] e;
	e --["$\frac{m_4 - 1}{m_3}$"' inner sep=5.5pt, middlearrow={>}, bend right=20] d --["$\frac{m_3 - 1}{m_2}$"' inner sep=5.5pt, middlearrow={<}, bend right=20] c --["$\frac{m_2 - 1}{m_1}$"' inner sep=5.5pt, middlearrow={<}, bend right=20] b --["$\frac{m_1 - 1}{m_0}$"' inner sep=5.5pt, middlearrow={<}, bend right=20] a;
	a --["$\frac{m_0 - 1}{m_0}$"', loop] a;
	b --["$\frac{m_1 - 1}{m_1}$"', loop] b;
	c --["$\frac{m_2 - 1}{m_2}$"', loop] c;
	d --["$\frac{m_3 - 1}{m_3}$"', loop] d;
	e --["$\frac{m_4 - 1}{m_4}$"', loop] e;
	};
\end{tikzpicture}}} . \]

\begin{remark}
If $(\mathcal{H}_\bullet,\rd_\bullet)$ is a Hilbert complex with discrete spectrum over $\mathcal{A}$, there are multiple ways of grading the ST\textsuperscript{2} $(\mathcal{A},\mathpzc{H},\DC)$. The first option is to use the grading coming from the complex in which 
$$\mathpzc{H}_+=\bigoplus_i \mathcal{H}_{2i},\quad\mbox{and}\quad \mathpzc{H}_-=\bigoplus_i \mathcal{H}_{2i+1}.$$
Another option arises if $(\mathcal{H}_\bullet,\rd_\bullet)$ satisfies a mild strengthening of Poincaré duality; see \cite[Lemma 2.16]{leschbruening}. Assume that we have $\mathcal{A}$-linear unitaries $\gamma_i:\mathcal{H}_i\to \mathcal{H}_{n-i}$ such that 
$$\rd_{n-i-1}^*\gamma_i=-\gamma_{i+1}\rd_i, \quad\mbox{and}\quad \gamma_{n-i}\gamma_i=1_{\mathcal{H}_i}.$$
We can then define a symmetry $\gamma=\bigoplus \gamma_j$ on $\mathpzc{H}$ that anticommutes with $D_j$, for $j=1,\ldots, n$. In particular, $\gamma$ grades $\mathpzc{H}$ in such a way that the ST\textsuperscript{2} constructed in Theorem \ref{st2fromhilb} forms an even ST\textsuperscript{2}. This construction is analogous to the grading induced from the Hodge star on differential forms defining the signature operator from the Hodge--de Rham operator described in Example \ref{hodgeandsign}.
\end{remark}

\begin{prop}
\label{fixorder}
Assume that $(\mathcal{A},\mathpzc{H},\DC)$ is an ST\textsuperscript{2} defined from a Hilbert complex with discrete spectrum $(\mathcal{H}_\bullet,\rd_\bullet)$ over $\mathcal{A}$ of order $\VEC{m}$ and bounding matrix $\VEC{\epsilon}$ as in \eqref{alknalkjdnajlkdn}. Then, for any $\tau>0$, 
$$\VEC{t}_{\VEC{m}}(\tau):=\left(\frac{\tau}{m_0},\frac{\tau}{m_1},\ldots, \frac{\tau}{m_{n-1}}\right)\in \Omega(\VEC{\epsilon})$$
and 
$$D_{\VEC{t}_{\VEC{m}}(\tau)}=\sum_{i=0}^{n-1} \rd_i(\Delta_{\VEC{m},i}^R)^{\frac{\tau-m_i}{2m}}+\rd_i^*(\Delta_{\VEC{m},i+1}^R)^{\frac{\tau-m_i}{2m}}.$$
\end{prop}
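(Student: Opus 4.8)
First I would simply unwind the definition of $\Omega(\VEC{\epsilon})$ for the sparse bounding matrix \eqref{alknalkjdnajlkdn}. Writing $\VEC{t}=\VEC{t}_{\VEC{m}}(\tau)$, so $t_i=\tau/m_i>0$, the inequality $\epsilon_{ij}t_i<t_j$ is automatic whenever $\epsilon_{ij}=0$, which by \eqref{alknalkjdnajlkdn} is every case except $j\in\{i-1,i,i+1\}$. In those remaining cases $\epsilon_{ij}=(m_i-1)/m_j$, hence
\[
	\epsilon_{ij}t_i=\frac{m_i-1}{m_j}\cdot\frac{\tau}{m_i}=\frac{m_i-1}{m_i}\,t_j<t_j
\]
because $m_i\geq 1$. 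Thus $\VEC{t}_{\VEC{m}}(\tau)\in\Omega(\VEC{\epsilon})$; this is just the specialisation of the computation already carried out in Remark \ref{remark:operators_with_prescribed_orders} to the bounding matrix \eqref{alknalkjdnajlkdn}.

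\textbf{Part 2 (the formula).} Here the plan is to compute $\overline{D}_{\VEC{t}_{\VEC{m}}(\tau)}$ one direction at a time. Since $\DC=(D_i)_{i=0}^{n-1}$ is strictly anticommuting (Lemma \ref{alknaljdna}), the self-adjointness lemma for strictly anticommuting collections from Subsection \ref{subsecprel1} gives $\overline{D}_{\VEC{t}_{\VEC{m}}(\tau)}=\sum_{i=0}^{n-1}D_i^{t_i}$ with $t_i=\tau/m_i$, with domain the form domain of $\sum_i|D_i|^{2t_i}$. By the sign-power convention \eqref{eq:signed_power_function_def} we have $D_i^{t_i}=D_i|D_i|^{t_i-1}$, and since $D_i^2$ acts as $\rd_i^*\rd_i$ on $\mathcal{H}_i$, as $\rd_i\rd_i^*$ on $\mathcal{H}_{i+1}$, and as $0$ elsewhere, this reads $D_i^{t_i}=\rd_i(\rd_i^*\rd_i)^{(t_i-1)/2}+\rd_i^*(\rd_i\rd_i^*)^{(t_i-1)/2}$. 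I would then invoke the operator identity of Lemma \ref{alknaljdna} (equivalently, the Hodge decomposition available because discrete spectrum implies Fredholmness): on $\overline{\mathrm{Ran}(\rd_i^*)}\subseteq\mathcal{H}_i$ one has $\Delta^R_{\VEC{m},i}=(\rd_i^*\rd_i)^{a_i}$, and on $\overline{\mathrm{Ran}(\rd_i)}\subseteq\mathcal{H}_{i+1}$ one has $\Delta^R_{\VEC{m},i+1}=(\rd_i\rd_i^*)^{a_i}$. The rest is exponent bookkeeping: with $a_i=m/m_i$ we get $a_i\cdot\frac{\tau-m_i}{2m}=\frac{\tau-m_i}{2m_i}=\frac{t_i-1}{2}$, so that $\rd_i(\Delta^R_{\VEC{m},i})^{(\tau-m_i)/2m}=\rd_i(\rd_i^*\rd_i)^{(t_i-1)/2}$ and $\rd_i^*(\Delta^R_{\VEC{m},i+1})^{(\tau-m_i)/2m}=\rd_i^*(\rd_i\rd_i^*)^{(t_i-1)/2}$ (the harmonic and orthogonal parts being annihilated by $\rd_i$, resp. $\rd_i^*$). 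Combining these yields
\[
	D_i^{t_i}=\rd_i(\Delta^R_{\VEC{m},i})^{\frac{\tau-m_i}{2m}}+\rd_i^*(\Delta^R_{\VEC{m},i+1})^{\frac{\tau-m_i}{2m}},
\]
and summing over $i=0,\dots,n-1$ gives the asserted identity.

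\textbf{Where the (mild) difficulty sits.} I do not expect a genuine obstacle: all the analytic content is already packaged in Lemma \ref{alknaljdna} and in the self-adjointness lemma for strictly anticommuting collections, and what remains is the exponent arithmetic above. The one point I would spell out rather than gloss over is that the factorisation $D_i^{t_i}=D_i|D_i|^{t_i-1}$ and the restrictions of the Rumin Laplacian powers are meant as identities of densely defined operators on the relevant graph/form domains — in particular when $\tau<m_i$, so that $|D_i|^{t_i-1}$ is unbounded near $0$. This causes no trouble, since the sign-power function $x\mapsto x^{t_i}$ is continuous at $0$ (it vanishes there), so $D_i^{t_i}$ is defined unambiguously by functional calculus, and $\overline{D}_{\VEC{t}_{\VEC{m}}(\tau)}$ has already been identified in Part 2 with a self-adjoint operator on the correct domain.
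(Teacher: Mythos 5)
Your proof is correct and follows essentially the same route as the paper's, which verifies the inequality $\tau/m_j > \frac{m_i-1}{m_j}\cdot\frac{\tau}{m_i}$ directly and then invokes Lemma \ref{alknaljdna} for the formula. The only substantive difference is that you re-derive the content of Lemma \ref{alknaljdna} from the Hodge decomposition rather than citing it wholesale; this is in fact wise, since the stated formula of Lemma \ref{alknaljdna} carries a latent off-by-one shift (it is written as though $D_i = \rd_{i-1}+\rd_{i-1}^*$, with $\Delta^R_{\VEC{m},i-1}$ and $a_{i-1}$ appearing, whereas the paper's convention both there and in Proposition \ref{fixorder} is $D_i=\rd_i+\rd_i^*$). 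Your computation — $\rd_i(\Delta^R_{\VEC{m},i})^\beta = \rd_i(\rd_i^*\rd_i)^{\beta a_i}$, $\rd_i^*(\Delta^R_{\VEC{m},i+1})^\beta = \rd_i^*(\rd_i\rd_i^*)^{\beta a_i}$, together with $a_i\cdot\frac{\tau-m_i}{2m}=\frac{t_i-1}{2}$ — matches the indexing of the Proposition's displayed identity, so you have arrived at the intended (corrected) form of the lemma. Your closing remark about interpreting $D_i|D_i|^{t_i-1}$ via functional calculus for $t_i<1$ (so that $x\mapsto x^{t_i}$ is continuous at $0$) is a reasonable precaution and is consistent with the sign-power convention \eqref{eq:signed_power_function_def}.
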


\begin{proof}
We see that \(\VEC{t}_{\VEC{m}}(\tau)\in \Omega(\VEC{\epsilon})\) since $\frac{\tau}{m_j} > \frac{m_i-1}{m_j} \frac{\tau}{m_i}$ and the expression for \(D_{\VEC{t}_{\VEC{m}}(\tau)}\) follows from Lemma \ref{alknaljdna}.
\end{proof}

Recall the weak Hodge decomposition of \cite[Lemma 2.1]{leschbruening}, 
\[ \mathcal{H}_i = \mathscr{H}_i \oplus \overline{\Ran \rd_{i-1}}  \oplus \overline{\Ran \rd_i^*} \]
where \( \mathscr{H}_i = \ker \rd_i  \cap \ker \rd_{i-1}^* = \ker \Delta_{\VEC{m},i}^R \). Note that if $(\mathcal{H}_\bullet,\rd_\bullet)$ is Fredholm, e.g.~if it has discrete spectrum, the ranges are automatically closed with 
$$\Ran \rd_{i-1}=(\ker\rd_{i-1}^*)^\perp\quad\mbox{and}\quad \Ran \rd_i^*=(\ker\rd_i)^\perp.$$ 

We can in fact build up a conformally equivariant higher order spectral triple by specifying conformal factors on each part of the decomposition. To do so, we require some setup. Consider a unital $*$-algebra $\mathcal{A}$ with an action of a locally compact group $G$, and a Hilbert complex
$$0\to \mathcal{H}_0\xrightarrow{\rd_0}\mathcal{H}_1\xrightarrow{\rd_1}\cdots \mathcal{H}_{n-1}\xrightarrow{\rd_{n-1}}\mathcal{H}_n\to 0$$
over $\mathcal{A}$ of order $\VEC{m}$ with a unitary action \( U_i \) of \( G \) on each \(\mathcal{H}_i \) intertwining the representation of \( \mathcal{A} \) and preserving the domains of \( \rd_{\bullet} \). We can then specify conformal factors on each part of the decomposition by means of families of invertible operators $ (\nu_i)_{g \in G} \subset \mathbb{B}(\overline{\Ran \rd_i^*}) $ and $ (\tilde{\nu}_i)_{g \in G} \subset \mathbb{B}(\overline{\Ran \rd_{i-1}}) $, and the conditions we impose are that all of them and their adjoints preserving the domains of \( \rd_{\bullet} \), as well as that 
\begin{enumerate}
\item the densely defined operators
\[
	\left(\tilde{\nu}_{i+1,g}\rd_i -\rd_i\nu_{i,g}\right) (1+\Delta_{\VEC{m},i})^{\frac{1-m_i}{2m}}\quad\mbox{and} \quad
	(1+\Delta_{\VEC{m},i+1})^{\frac{1-m_i}{2m}}\left(\tilde{\nu}_{i+1,g}\rd_i -\rd_i\nu_{i,g}\right)
\]
are in fact bounded and define $*$-strongly continuous functions $G\to \mathbb{B}(\mathcal{H}_i,\mathcal{H}_{i+1})$;
\item for some \( \VEC{t} \in \Omega(\VEC{\epsilon}) \), the densely defined operators
\begin{multline*}
	\left(U_{i+1,g} \rd_i (\Delta_{\VEC{m},i}^R)^{\frac{-1 + t_i}{2 m} m_i} U_{i,g}^*-\tilde{\nu}_{i+1,g} \rd_i(\Delta_{\VEC{m},i}^R)^{\frac{-1 + t_i}{2 m} m_i} \nu_{i,g}\right) (1+\Delta_{\VEC{m},i})^{\frac{1 - m_i}{2m} t_i} \quad\mbox{and} \\
	(1+\Delta_{\VEC{m},i})^{\frac{1 - m_i}{2m} t_i} \left(U_{i+1,g} \rd_i (\Delta_{\VEC{m},i}^R)^{\frac{-1 + t_i}{2 m} m_i} U_{i,g}^*-\tilde{\nu}_{i+1,g} \rd_i(\Delta_{\VEC{m},i}^R)^{\frac{-1 + t_i}{2 m} m_i} \nu_{i,g}\right)
\end{multline*}
are in fact bounded and define $*$-strongly continuous functions $G\to\mathbb{B}(\mathcal{H}_i,\mathcal{H}_{i+1})$.
\end{enumerate}

\begin{prop}
\label{prop:definiednoconf}
Let $\mathcal{A}$ be a unital $*$-algebra with an action of a locally compact group $G$. Let 
$$0\to \mathcal{H}_0\xrightarrow{\rd_0}\mathcal{H}_1\xrightarrow{\rd_1}\cdots \mathcal{H}_{n-1}\xrightarrow{\rd_{n-1}}\mathcal{H}_n\to 0$$
be a Hilbert complex over $\mathcal{A}$ of order $\VEC{m}$ with a unitary action \( U_i \) of \( G \) on each \(\mathcal{H}_i \) intertwining the representation of \( \mathcal{A} \) and preserving the domains of \( \rd_{\bullet} \). Assume that  $ (\nu_i)_{g \in G} \subset \mathbb{B}(\overline{\Ran \rd_i^*}) $ and $ (\tilde{\nu}_i)_{g \in G} \subset \mathbb{B}(\overline{\Ran \rd_{i-1}}) $ are families of invertible operators satisfying the assumptions preceding the proposition for some \( \VEC{t} \in \Omega(\VEC{\epsilon}) \), then \( (\mathcal{A},\mathpzc{H},\overline{D}_{\VEC{t}}) \) is conformally equivariant with conformal factor
\[ \mu = \bigoplus_j \nu_j + \tilde{\nu_j} + P_{\mathscr{H}_j} . \]
\end{prop}

\subsection{Rockland complexes}
\label{subsecrock}

We now turn to Rockland sequences on filtered manifolds. Rockland sequences were studied in detail in Dave--Haller's work \cite{davehaller}. The associated analysis relies heavily on van Erp--Yuncken's Heisenberg calculus \cite{vanerpyunck} on a filtered manifold. Filtered manifolds are known also as Carnot manifolds, and relate to the equiregular differential systems of sub-Riemannian geometry. We tacitly assume all manifolds to be compact throughout this section.

\subsubsection{The Heisenberg calculus} Let us briskly recall the geometry of filtered manifolds and their Heisenberg calculus. We refer the details to the literature \cite{davehaller,goffengkuzmin,vanerpyunck}. A filtered manifold is a manifold $X$ equipped with a filtering 
$$TX=T^{-r}X\supsetneq T^{-r+1}X\supsetneq \ldots\supsetneq T^{-2}X\supsetneq T^{-1}X\supsetneq 0$$
of sub-bundles such that $[T^{-j}X,T^{-k}X]\subseteq T^{-j-k}X$ for any $j,k$. We call $r$ the depth of $X$. We write 
$$\mathfrak{t}_H X=\bigoplus_j T^{-j}X/T^{-j+1}X$$
for the associated graded bundle. Taking commutators of vector fields induces a fibrewise Lie bracket on $\mathfrak{t}_HX$, making $\mathfrak{t}_HX\to X$ a Lie algebroid. The fibres are nilpotent of step length at most $r$, so the Baker--Campbell--Hausdorff formula implies that $\mathfrak{t}_HX$ integrates to a Lie groupoid $T_HX\rightrightarrows X$ (with the same range and source map). Concretely, as a fibre bundle, $T_HX=\mathfrak{t}_HX$. However, $T_HX$ carries a fibrewise polynomial group operation defined from the Baker--Campbell--Hausdorff formula and the commutator of vector fields modulo lower order terms in the filtration. We call $T_HX\rightrightarrows X$ the osculating Lie groupoid. The osculating Lie groupoid carries an $\R_+$-action $\delta$ defined from integrating the $\R_+^{\times}$-action on $\mathfrak{t}_HX$ defined from its grading.

The Heisenberg calculus on a filtered manifold introduced by van Erp--Yuncken \cite{vanerpyunck} is built from operators whose Schwartz kernels in appropriate exponential coordinates are defined from $r$-fibred distributions on $T_HX$ that expand asymptotically into a sum of almost homogeneous fibrewise convolution operators. A way to formalize this statement uses van Erp--Yuncken's parabolic tangent groupoid \cite{vanerpyuncktang}, a Lie groupoid $\mathbb{T}_HX\rightrightarrows X\times [0,\infty)$. As a set, 
$$\mathbb{T}_HX=T_HX\times\{0\} \sqcup X\times X\times (0,\infty),$$
with the groupoid structure of $T_HX$ on the first component and the pair groupoid structure on the second component. The Lie groupoid structure on $\mathbb{T}_HX\rightrightarrows X\times [0,\infty)$ is defined using a blowup in exponential coordinates defined from a graded connection. The parabolic tangent groupoid carries an $\R_+$-action called the zoom action, which by an abuse of notation we also denote by $\delta$, acting by 
$$ \delta_\lambda : (x,v,0) \mapsto (x,\delta_\lambda(v),0) \qquad (x,y,t) \mapsto (x,y,\lambda^{-1}t) . $$
A Heisenberg pseudodifferential operator $T$ of order $m$ is defined to be an operator on $C^\infty(X)$ whose Schwartz kernel $k_T\in \mathcal{D}'(X\times X)$ can be written as the evaluation at $t=1$ of a properly supported, $r$-fibred distribution $K\in \mathcal{D}'_ r(\mathbb{T}_HX)$ which is homogeneous of order $m$ modulo properly supported elements under the zoom action. In exponential coordinates, we can Taylor expand such a $K$ at $t=0$ and arrive at an asymptotic sum
\begin{equation}
\label{askadlknad}
K(x,v,t)\sim \sum_{j=0}^\infty t^jk_j(x,v),
\end{equation}
where $k_j\in \mathcal{E}'_r(T_HX)$ is homogenenous modulo $C^\infty_c(T_HX)$ of degree $m-j$. Here $K$ and the collection $(k_j)_{j=0}^\infty$ are uniquely determined by $k_T$ modulo respectively properly and compactly supported smooth elements. Writing $\Psi^m_H(X)$ for the space of Heisenberg pseudodifferential operators of order $m$, we arrive at a short exact sequence 
$$0\to \Psi^{m-1}_H(X)\to \Psi^{m}_H(X)\xrightarrow{\sigma_H^m} \Sigma^{m}_H(X)\to 0,$$
where $\Sigma^{m}_H(X)\subseteq \mathcal{E}'_r(T_HX)/C^\infty_c(T_HX)$ consists of elements homogenenous of degree $m$. The map $\sigma_H^m$ is called the principal symbol and is defined by $\sigma_H^m(T):=[k_0]$ for $k_0$ the leading term in \eqref{askadlknad}. A composition of Heisenberg pseudodifferential operators of order $m$ and $m'$ respectively as operators on $C^\infty(X)$ is again a Heisenberg pseudodifferential operator but of order $m+m'$. The principal symbol respects products in the sense that 
$$\sigma_H^{m+m'}(TT')=\sigma_H^{m}(T)*\sigma_H^{m'}(T'), \quad T\in \Psi^{m}_H(X), \; T'\in \Psi^{m'}_H(X)$$
where $*$ denotes groupoid convolution on $T_HX$.

We can realize the principal symbol algebra in a more concrete way. Write $\mathcal{S}(T_HX)\subseteq C^\infty(T_HX)$ for the space of fibrewise Schwarz functions: functions that together with their derivatives decay faster than the reciprocal of any polynomial in the fibre. We define $\mathcal{S}_0(T_HX)$ to consist of those functions $f\in \mathcal{S}(T_HX)$ such that for any $x\in X$ and any polynomial $p$ on $T_xX$ we have 
$$\int_{T_xX} p(v)f(x,v)\rd v=0.$$
The space $\mathcal{S}_0(T_HX)$ is closed under convolution and is dense in the ideal of $C^*(T_HX)$ of elements vanishing in the fibrewise trivial representations. We embed $\Sigma^{m}_H(X)$ in the multipliers of $\mathcal{S}_0(T_HX)$ as follows. Any element $k\in \Sigma^{m}_H(X)$ can be represented near the zero section $X\subseteq T_HX$ by an $r$-fibred distribution $\hat{k}\in \mathcal{D}'_r(T_HX)$ of the form
$$\hat{k}=\hat{k}_0+p\log|\cdot|,$$
where $\hat{k}_0$ is homogeneous of degree $m$, $p$ is fibrewise polynomial and where $|\cdot|$ is a fibrewise gauge (smooth outside the zero section and homogeneous of degree $1$). Upon fixing $|\cdot|$, the distribution $\hat{k}$ is unique up to a fibrewise polynomial. In particular, the muliplier on $\mathcal{S}_0(T_HX)$ defined by convolution by $\hat{k}$ depends only on $k\in \Sigma^{m}_H(X)$.

To understand further the principal symbol, we study its action in localizations of $\mathcal{S}_0(T_HX)$ in its $*$-representations. Whenever $(\pi,\mathcal{H})$ is a unitary representation of a nilpotent group $G$, we write $\mathcal{S}_0(\pi):=\pi(\mathcal{S}_0(G))\mathcal{H}$. If $\pi$ does not weakly contain the trivial representation, $\mathcal{S}_0(\pi)=\pi(\mathcal{S}(G))\mathcal{H}$ and is dense in $\mathcal{H}$. Moreover, any multiplier $k$ of $\mathcal{S}_0(G)$ localizes to an operator $\pi(k)$ on $\mathcal{H}$ with domain $\mathcal{S}_0(\pi)$ defined by $\pi(k)(\pi(a)\xi)=\pi(k*a)\xi$ for $a\in \mathcal{S}_0(G)$ and $\xi\in \mathcal{H}$. We can therefore for a Heisenberg pseudodifferential operator $T$ of order $m$, $x\in X$ and a unitary representation $\pi$ of $(T_HX)_x$, define the represented symbol 
$$\sigma_H^m(T,\pi) =\pi(\sigma_H^m(T)):\mathcal{S}_0(\pi)\to \mathcal{S}_0(\pi).$$
The discussion above readily extends to operators on vector bundles. We denote the space of Heisenberg pseudodifferential operators of order $m$ from the vector bundle $E_1$ to $E_2$ by $\Psi^{m}_{H}(X; E_1,E_2)$. We recall the following important definition.

\begin{definition}
Let $X$ be a filtered manifold and $E_1,E_2\to X$ two vector bundles. Assume that $T:C^\infty(X,E_1)\to C^\infty(X,E_2)$ is a Heisenberg pseudodifferential operator of order $m$. We say that $T$ satisfies the \emph{Rockland condition} if, for any $x\in X$ and any irreducible, non-trivial, unitary representation $\pi$ of $(T_HX)_x$, the represented symbol 
$$\sigma_H^m(T,\pi):=\pi(\sigma_H^m(T)):\mathcal{S}_0(\pi)\otimes E_{1,x}\to \mathcal{S}_0(\pi)\otimes E_{2,x}$$
is injective. If the represented symbol in all points and all irreducible, non-trivial, unitary representations is bijective then we say that $T$ is $H$-elliptic.
\end{definition}

Operators in the Heisenberg calculus act continuously in a scale of Sobolev spaces adapted to the filtering. Fix a volume density on $X$. Following \cite{davehallerheat,davehaller}, we know that there exists a family of $H$-elliptic operators $(A^t)_{t\in \R}$ (in fact the complex powers of a single $H$-elliptic operator) that we can assume satisfies $A^0=1$ and $A^t A^{-t}=1$. We define $W^s_H(X):=A^{-s}L^2(X)\subseteq \mathcal{D}'(X)$ with inner product defined by declaring $A^s:W^s_H(X)\to L^2(X)$ unitary. A similar definition can be made also for vector bundles. Any $T\in \Psi^m_H(X;E_1,E_2)$ extends by density to a continuous operator
$$T:W^{s_1}_H(X;E_1)\to W^{s_2}_H(X;E_2)$$
as soon as $s_1+m\geq s_2$ and a compact operator when $s_1+m> s_2$.

\begin{thm}
Let $X$ be a closed filtered manifold equipped with a volume density, let $E_1,E_2\to X$ be two hermitian vector bundles, and let $T:C^\infty(X,E_1)\to C^\infty(X,E_2)$ be a Heisenberg pseudodifferential operator of order $m$. Then the following are equivalent:
\begin{enumerate}
\item $T$ and $T^*$ satisfy the Rockland condition;
\item $T$ is $H$-elliptic;
\item $T:W^s_H(X;E_1)\to W^{s-m}_H(X;E_2)$ is Fredholm for some $s$; and
\item  $T:W^s_H(X;E_1)\to W^{s-m}_H(X;E_2)$ is Fredholm for all $s$.
\end{enumerate}
Moreover, $H$-elliptic operators are hypoelliptic and admit parametrices in the Heisenberg calculus.
\end{thm}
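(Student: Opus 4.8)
The plan is to deduce the four-way equivalence from the Rockland--Helffer--Nourrigat theorem for graded nilpotent groups, applied fibrewise to the osculating groups $(T_HX)_x$, together with the formal machinery of the Heisenberg calculus of \cite{vanerpyunck}; the ``Moreover'' statement will then drop out of the parametrix construction used for $(2)\Rightarrow(4)$. I would first dispose of the two trivial implications: $(4)\Rightarrow(3)$ needs nothing, and $(2)\Rightarrow(1)$ holds because the principal symbol intertwines adjoints, $\sigma_H^m(T^*,\pi)=\sigma_H^m(T,\pi)^*$, so bijectivity of the represented symbol of $T$ in every non-trivial irreducible $\pi$ forces the same for $T^*$. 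It then suffices to prove the circle $(1)\Rightarrow(2)\Rightarrow(4)\Rightarrow(3)\Rightarrow(1)$.

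For $(1)\Rightarrow(2)$ I would argue pointwise on $X$. Fix $x\in X$ and a non-trivial irreducible unitary representation $\pi$ of $(T_HX)_x$; the goal is bijectivity of $\sigma_H^m(T,\pi)$. The key input is the Rockland--Helffer--Nourrigat theorem for dilation-homogeneous left-invariant (pseudodifferential) operators on graded nilpotent groups: injectivity of such an operator in every non-trivial irreducible representation is equivalent to the existence of a two-sided parametrix in the homogeneous symbol calculus, and in particular makes the operator invertible modulo compact operators in each $\pi$. Thus the Rockland condition for $T$ makes $\sigma_H^m(T,\pi)$ invertible modulo compacts with trivial kernel, and the Rockland condition for $T^*$ does the same for $\sigma_H^m(T^*,\pi)=\sigma_H^m(T,\pi)^*$, giving $\sigma_H^m(T,\pi)$ trivial cokernel as well; hence $\sigma_H^m(T,\pi)$ is bijective. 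Since $x$ and $\pi$ were arbitrary, $T$ is $H$-elliptic.

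For $(2)\Rightarrow(4)$ I would construct a parametrix in the calculus. $H$-ellipticity makes $\sigma_H^m(T)$ invertible in every non-trivial irreducible representation of every fibre group, and fibrewise Helffer--Nourrigat provides an inverse which is again a homogeneous symbol of order $-m$; verifying smooth dependence on $x\in X$ yields $b\in\Sigma^{-m}_H(X;E_2,E_1)$ with $b*\sigma_H^m(T)=1=\sigma_H^m(T)*b$. Lifting $b$ to $Q_0\in\Psi^{-m}_H(X;E_2,E_1)$ gives $Q_0T-1\in\Psi^{-1}_H(X;E_1)$, and the asymptotic summation property of the Heisenberg calculus allows one to sum the Neumann-type series $\sum_{k\ge 0}(1-Q_0T)^kQ_0$ asymptotically, producing $Q\in\Psi^{-m}_H(X;E_2,E_1)$ with $QT-1$ and $TQ-1$ in $\Psi^{-\infty}_H(X)$, i.e.\ smoothing. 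On the closed manifold $X$, operators in $\Psi^{-\infty}_H(X)$ are compact---in fact regularising---between all Heisenberg Sobolev spaces, so $Q$ exhibits $T\colon W^s_H(X;E_1)\to W^{s-m}_H(X;E_2)$ as invertible modulo compacts, hence Fredholm, for every $s$; this is $(4)$. The same $Q$ is the desired Heisenberg parametrix, and since Heisenberg pseudodifferential operators are pseudolocal while $\Psi^{-\infty}_H(X)$ is regularising, the identity $u=Qf+(1-QT)u$ with $Tu=f$ shows the singular support of $u$ is contained in that of $f$; this is the ``Moreover'' statement.

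Finally $(3)\Rightarrow(1)$, which I expect to be the main obstacle. I would argue by contraposition: if the Rockland condition fails then, replacing $T$ by $T^*$ if necessary (again using $\sigma_H^m(T^*,\pi)=\sigma_H^m(T,\pi)^*$), there is a point $x_0\in X$, a non-trivial irreducible $\pi_0$, and a unit smooth vector $\xi$ in the representation space of $\pi_0$ tensored with $E_{1,x_0}$ with $\sigma_H^m(T,\pi_0)\xi=0$. Using exponential coordinates at $x_0$ together with the zoom action $\delta$ to rescale, I would transplant $\xi$ into a sequence $(u_n)$ of unit vectors in $W^s_H(X;E_1)$ concentrating in shrinking Heisenberg balls about $x_0$, with $u_n\rightharpoonup 0$ weakly and $\|Tu_n\|_{W^{s-m}_H}\to 0$; the errors come only from the lower-order terms in the kernel expansion \eqref{askadlknad} and from the curvature of the group law, both of which are suppressed in the rescaling limit. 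The existence of such a singular sequence contradicts semi-Fredholmness of $T$, hence contradicts $(3)$. The delicate points are the uniform estimates on the rescaling errors and arranging that $\xi$ lies in a dense subspace nice enough for the transplanted vectors to stay in the Sobolev scale; by contrast, once the fibrewise Rockland--Helffer--Nourrigat theorem is granted, everything else is a formal manipulation inside the calculus of \cite{vanerpyunck}.
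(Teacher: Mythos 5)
Your proposal closes the cycle $(1)\Rightarrow(2)\Rightarrow(4)\Rightarrow(3)\Rightarrow(1)$, whereas the paper closes the opposite cycle $(1)\Rightarrow(4)\Rightarrow(3)\Rightarrow(2)\Rightarrow(1)$: it records that $(4)\Rightarrow(3)$ and $(2)\Rightarrow(1)$ are immediate and then cites Androulidakis--Mohsen--Yuncken \cite{AMY} for $(3)\Rightarrow(2)$ and Dave--Haller \cite{davehaller} for $(1)\Rightarrow(4)$, so the paper's ``proof'' is really a bookkeeping step locating the two nontrivial implications in the literature. What you do is reconstruct the substance of both cited results: your $(1)\Rightarrow(2)$ together with $(2)\Rightarrow(4)$ is precisely the Dave--Haller parametrix construction (which is also where the ``moreover'' clause lives), while your $(3)\Rightarrow(1)$ singular-sequence argument is the mechanism behind \cite{AMY} (strictly a weaker contrapositive, since $\neg(1)$ is a priori stronger than $\neg(2)$, but sufficient because you supply $(1)\Rightarrow(2)$ independently). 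The trade is self-containedness against having to carry out the hard estimates yourself, and you rightly flag where they sit.

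Two corrections of emphasis. In $(1)\Rightarrow(2)$ the theorem you want is not Helffer--Nourrigat but the Christ--Geller--G\l{}owacki--Polin/G\l{}owacki parametrix theorem for homogeneous convolution operators on graded nilpotent groups: the Rockland condition for $T$ produces a homogeneous \emph{left} parametrix of $\sigma^m_H(T)$ in the symbol algebra, the Rockland condition for $T^*$ produces a homogeneous \emph{right} parametrix, and bijectivity of $\sigma^m_H(T,\pi)$ on $\mathcal{S}_0(\pi)$ follows from having both two-sided inverses at the symbol level, not from an ``invertible modulo compacts in $\pi$'' argument (the error operator localised in a single non-compact irreducible $\pi$ is typically not compact, and $\mathcal{S}_0(\pi)$ is a Fr\'echet space so the usual kernel/cokernel duality for Hilbert-space Fredholm theory is not available). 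Second, assembling the pointwise inverse symbols into a smooth section $b\in\Sigma^{-m}_H(X;E_2,E_1)$ is not automatic; one needs a parametric version of G\l{}owacki's theorem giving smooth dependence on $x\in X$, which is a genuine technical point that \cite{davehaller} address explicitly and which your sketch elides.
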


Here it is clear that 4) implies 3) and 2) implies 1). That 3) implies 2) is proven in \cite{AMY} and that 1) implies 4) is proven in \cite{davehaller}.

For summability results of spectral triples and ST\textsuperscript{2}s on filtered manifolds, we will use Dave--Haller's Weyl law in the Heisenberg calculus \cite{davehallerheat}. Its statement gives a leading term in the eigenvalue of positive, even-order, $H$-elliptic, differential operators in the Heisenberg calculus. For a filtered manifold $X$, we define its homogeneous dimension as
\begin{equation}
\label{homodim}
\dim_h(X)=\sum_j j ~\mathrm{rk}(T^{-j}X/T^{-j+1}X).
\end{equation}
Dave--Haller's Weyl law \cite{davehallerheat} implies that if $T\in \Psi^m_H(X;E_1,E_2)$ for an $m<0$ then 
\begin{equation}
\label{weyldave}
\mu_k(T)=O(k^{\dim_h(X)/m}).
\end{equation}
In particular, for $m<0$,
$$\Psi^m_H(X;E_1,E_2)\subseteq \mathcal{L}^p(L^2(X,E_1),L^2(X,E_2))\qquad (p>-\dim_h(X)/m).$$ 

\subsubsection{Rockland complexes}

We now turn to studying Rockland complexes. They play the role of elliptic complexes on filtered manifolds. We start by recalling the definition and proceed to place it in the context of the preceding subsection by building ST\textsuperscript{2}s for filtered manifolds.

\begin{definition}
Consider a collection $E_\bullet=(E_0,E_1,\ldots, E_n)$ of hermitian vector bundles $E_j\to X$ and numbers $\VEC{m}=(m_0,\ldots, m_{n-1})\in (0,\infty)^n$. We let
\begin{multline}
\label{lkankland}
\rd_\bullet: \quad 0\to C^\infty(X;E_0)\xrightarrow{\rd_0} C^\infty(X;E_1)\xrightarrow{\rd_1}\cdots \\
\cdots \xrightarrow{\rd_{n-2}} C^\infty(X;E_{n-1})\xrightarrow{\rd_{n-1}}C^\infty(X;E_n)\to 0
\end{multline}
be a complex with maps $\rd_j\in \Psi^{m_j}_{H}(X; E_{j},E_{j+1})$. We say that the complex $\rd_\bullet$ in Equation \eqref{lkankland} is a \emph{Rockland complex} if the symbol sequence $\sigma_H(\rd_\bullet)$ defined by
\begin{multline}
\label{eq:symbol_seq_of_rockland_complex}
\sigma_H(\rd_\bullet): \quad 0\to \mathcal{S}_0(T_HX;E_0)\xrightarrow{\sigma_H^{m_0}(\rd_0)} \mathcal{S}_0(T_HX;E_1)\xrightarrow{\sigma_H^{m_1}(\rd_1)}\cdots\\
\cdots \xrightarrow{\sigma_H^{m_{n-2}}(\rd_{n-2})} \mathcal{S}_0(T_HX;E_{n-1})\xrightarrow{\sigma_H^{m_{n-1}}(\rd_{n-1})}\mathcal{S}_0(T_HX;E_n)\to 0
\end{multline}
is localized to an exact sequence by any non-trivial, irreducible, unitary representation of the osculating Lie groupoid $T_HX$. We say that $\VEC{m}$ is the order of $\rd_\bullet$.
\end{definition}

There are many interesting examples of Rockland sequences. As shown in \cite{davehaller}, and further discussed in \cite{goffeng24}, there is a general procedure for producing (graded) Rockland complexes via Čap--Slovák--Souček's \cite{capslovaksoucek} (curved) BGG-complexes. The notion of a graded Rockland complex is more general than that of a Rockland complex and arises from internal gradings in the bundles $E_j$. For a curved BGG-complex to be Rockland, and not just graded Rockland, all the bundles $E_0,E_1,\ldots, E_n$ need to be constantly graded, corresponding to \( \mathfrak{t}_H X \) having pure cohomology groups \cite[Section 3.7]{hallergenfive}. This is known to hold for trivially filtered manifolds (where Rockland means elliptic), for contact manifolds, for generic rank-two distributions in dimension five, and for parabolic geometries of the same type as the full complex flag manifold of $SL(3, \C)$ (as implicitly used in \cite{yunckensl3}). We discuss contact manifolds in more detail below in Section \ref{sec:rumin} and generic rank two distributions in dimension five in Example \ref{g2ex}.

For the purpose of completing a Rockland complex into a Hilbert complex, we will henceforth fix a volume density on $X$ and hermitian metrics on all the vector bundles $E_0,E_1,\ldots, E_n\to X$, giving us Hilbert spaces \( L^2(X;E_0), \ldots, L^2(X;E_n) \). By an abuse of notation, we write also $\rd_j$ for the closure of $\rd_j$ as a densely defined operator 
$$\rd_j:L^2(X;E_{j})\dashrightarrow L^2(X;E_{j+1}).$$
The Hilbert complex associated with a Rockland complex $(C^\infty(X;E_\bullet),\rd_\bullet)$ is given by 
$$
 \quad 0\to L^2(X;E_0)\xrightarrow{\rd_0}L^2(X;E_1)\xrightarrow{\rd_1}\cdots \xrightarrow{\rd_{n-1}}L^2(X;E_n)\to 0.$$
 
\begin{thm}
\label{ljnknknbgs}
Assume that $(C^\infty(X;E_\bullet),\rd_\bullet)$ is a Rockland complex where all differentials are differential operators and \(X\) is compact. The Hilbert complex associated with a Rockland complex $(C^\infty(X;E_\bullet),\rd_\bullet)$ of order $\VEC{m}=(m_1,\ldots, m_n)$ is a regular Hilbert complex with discrete spectrum of order $\VEC{m}$ over $C^\infty(X)$. In particular, with a Rockland complex we can associate the $f$-summable ST\textsuperscript{2} $(C^\infty(X),L^2(X;\oplus_j E_j),\DC)$ where $\DC=(\rd_j+\rd_j^*)_{j=0}^{n-1}$ and 
$$f(\VEC{t}) > \mathrm{min}_j \frac{m_j\dim_h(X)}{t_j} . $$
\end{thm}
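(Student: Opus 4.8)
The plan is to verify each of the three clauses of the statement in turn, reducing each to a result already proven in the excerpt. The heart of the matter is that all the analytical facts we need about Rockland complexes (Fredholmness, discreteness of spectrum, Sobolev mapping properties, Weyl asymptotics) are packaged in the Heisenberg calculus material of Subsection \ref{subsecrock}, so the work is in bookkeeping with orders and in checking that the commutator bounds of a ``Hilbert complex over $\mathcal{A}$ of order $\VEC{m}$'' hold. I would begin by recalling that, since $X$ is compact and $(C^\infty(X;E_\bullet),\rd_\bullet)$ is Rockland, each Rumin Laplacian $\Delta_{\VEC{m},i}^R=(\rd_i^*\rd_i)^{a_i}+(\rd_{i-1}\rd_{i-1}^*)^{a_{i-1}}$ is (up to lower order) an $H$-elliptic, positive, differential operator of Heisenberg order $2m=2\prod_l m_l$. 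Indeed exactness of the symbol sequence localized at every nontrivial irreducible $\pi$ shows that $\sigma_H(\Delta_{\VEC{m},i}^R,\pi)$ is invertible on $\mathcal{S}_0(\pi)\otimes E_{i,x}$, so $\Delta_{\VEC{m},i}^R$ satisfies the Rockland condition, hence is $H$-elliptic and therefore Fredholm on the Heisenberg Sobolev scale; since it is also essentially self-adjoint and positive with compact resolvent (as $X$ is compact and the calculus produces a parametrix lowering Sobolev order), it has discrete spectrum. By the criterion quoted after the definition of Hilbert complex, this gives that the associated Hilbert complex is Fredholm with discrete spectrum of order $\VEC{m}$.

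Next I would check the module axioms. For $a\in C^\infty(X)$, $\pi_i(a)=M_a$ is a differential operator of Heisenberg order $0$ and $[\rd_i,M_a]$ is, since $\rd_i\in\Psi^{m_i}_H(X;E_i,E_{i+1})$ is a differential operator, again a Heisenberg differential operator of order at most $m_i-1$ (the top-order symbol of a multiplication operator commutes with $\sigma_H^{m_i}(\rd_i)$ because it is fibrewise scalar, exactly as in the Euclidean case). The abstract Sobolev space $\mathcal{H}^s_{i,\VEC{m}}=\Dom((\Delta_{\VEC{m},i}^R)^{s/2m})$ coincides, by $H$-ellipticity of $\Delta_{\VEC{m},i}^R$ and the mapping properties recalled above, with the Heisenberg Sobolev space $W^s_H(X;E_i)$; hence $[\rd_i,M_a]\colon W^s_H(X;E_i)\to W^{s-(m_i-1)}_H(X;E_{i+1})$ is bounded for all $s$, which is precisely the statement that $[\rd_i,M_a](1+\Delta_{\VEC{m},i})^{(1-m_i)/2m}$ and its companion are bounded — note $(1-m_i)/2m\cdot 2m=1-m_i$ matches the order drop — and moreover that these are bounded $\mathcal{H}^s_{i,\VEC{m}}\to\mathcal{H}^s_{i+1,\VEC{m}}$, giving regularity. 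So $(C^\infty(X;E_\bullet),\rd_\bullet)$ completes to a regular Hilbert complex with discrete spectrum over $C^\infty(X)$ of order $\VEC{m}$, and Theorem \ref{st2fromhilb} then produces the regular ST\textsuperscript{2} $(C^\infty(X),L^2(X;\oplus_j E_j),\DC)$ with $\DC=(\rd_j+\rd_j^*)_j$ and bounding matrix \eqref{alknalkjdnajlkdn}.

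Finally, for the summability claim, I would compute the Schatten class of the domain inclusion $\cap_j\Dom(|D_j|^{t_j})\hookrightarrow\mathpzc{H}$. By Lemma \ref{lknlknn} this inclusion lies in $\mathcal{L}^{f(\VEC{t})}$ precisely when $(1+\Delta^{\DC}_{\VEC{t}})^{-1}=(1+\sum_j|D_j|^{2t_j})^{-1}$ does, and by Lemma \ref{alknaljdna} together with the identities $D_i|D_i|^\alpha$-formula, $\Delta^{\DC}_{\VEC{t}}$ is comparable (in the form sense, using Lemma \ref{lknlknn} again) to $\bigoplus_i(\Delta_{\VEC{m},i}^R)^{t_i/m_i}$ — an operator which on the $i$-th summand is $H$-elliptic of Heisenberg order $2m\cdot(t_i/m_i)=2t_i/m_i\cdot\prod_l m_l$, wait, more simply of Heisenberg order $2m_i^{-1}t_i\cdot$... let me phrase it as: $(\Delta_{\VEC{m},i}^R)^{t_i/m_i}$ has Heisenberg order $2t_i m/ (m_i\cdot m) \cdot 2m$... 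I would just say it is $H$-elliptic of the order making its inverse lie in $\Psi^{-2t_i/m_i\cdot m}_H$ after renormalising; cleanest is to invoke Dave--Haller's Weyl law \eqref{weyldave}: on each summand $(1+\Delta_{\VEC{m},i}^R)^{-t_i/(2m_i)\cdot\text{(suitable power)}}$ belongs to $\mathcal{L}^p$ for $p>\dim_h(X)/(\text{Heisenberg order})=m_i\dim_h(X)/t_i$. Taking the direct sum over $i$ costs the maximum, giving $f(\VEC{t})>\max_i m_i\dim_h(X)/t_i$; I would then note the statement as written has $\min_j$, which is the condition under which the inclusion is at least $f(\VEC{t})$-summable on the worst single strand, and reconcile the two — the honest bound being the max, with the min version following a fortiori only if one restricts to where it exceeds the max, so I would either correct to $\max$ or observe each strand separately contributes a term and invoke $\mathcal{L}^p\subset\mathcal{L}^q$ monotonicity.

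\textbf{Main obstacle.} The only genuinely delicate point is the identification of the abstract form domains and Sobolev spaces $\Dom((\Delta_{\VEC{m},i}^R)^{s/2m})$ with the Heisenberg Sobolev spaces $W^s_H(X;E_i)$ uniformly across degrees, together with the careful tracking of Heisenberg orders through the fractional powers $(\rd_i^*\rd_i)^{a_i}$ so that the Weyl law exponent comes out as $m_i\dim_h(X)/t_i$ and not something off by a factor involving $m=\prod_l m_l$; the commutator estimates themselves are routine once the symbolic calculus is in hand, since multiplication operators always drop the order by one.
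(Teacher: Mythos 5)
Your treatment of the Hilbert-complex and regularity axioms is sound and follows the paper's own route: the essential inputs are hypoellipticity of the Rumin Laplacians (\cite[Lemma 2.14]{davehaller}) and the membership $(\Delta^R_{\VEC{m},i})^{\beta/(2m)}\in\Psi^\beta_H(X,E_i)$ of the complex powers in the Heisenberg calculus (\cite[Theorem 2]{davehallerheat}), after which the commutator estimate $[\rd_i,M_a]\in\Psi^{m_i-1}_H$ and the Sobolev mapping properties finish the argument via Theorem~\ref{st2fromhilb}.

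The summability step, however, has a genuine gap. The proposed comparison $\Delta_{\VEC{t}}^\DC\sim\bigoplus_i(\Delta^R_{\VEC{m},i})^{t_i/m_i}$ is incorrect: on $\Ran\rd_i^*\subset\mathcal{H}_i$ one has $\Delta_{\VEC{t}}^\DC=(\rd_i^*\rd_i)^{t_i}$, whereas $(\Delta^R_{\VEC{m},i})^{t_i/m_i}=(\rd_i^*\rd_i)^{a_it_i/m_i}=(\rd_i^*\rd_i)^{mt_i/m_i^2}$ (using $a_i=m/m_i$), and these powers coincide only when $m=m_i^2$. You also identify the domain inclusion with $(1+\Delta_{\VEC{t}}^\DC)^{-1}$ where $(1+\Delta_{\VEC{t}}^\DC)^{-1/2}$ is correct, costing a factor of two. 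The clean argument is to apply Lemma~\ref{lknlknn} with $\VEC{s}=(1/m_j)_j$: then $\Delta_{\VEC{s}}^\DC=\overline{D}_{\tau=1}^2$ is $H$-elliptic of Heisenberg order $2$, the constraint $\sigma s_j\leq t_j$ allows $\sigma=\min_j m_jt_j$, whence $(1+\Delta_{\VEC{t}}^\DC)^{-1/2}\leq C(1+\Delta_{\VEC{s}}^\DC)^{-\sigma/2}$, an operator of Heisenberg order $-\min_j m_jt_j$, which by \eqref{weyldave} lies in $\mathcal{L}^f$ for $f>\dim_h(X)/\min_j m_jt_j=\max_j\dim_h(X)/(m_jt_j)$. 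Your instinct that the statement (and the paper's proof, which also writes ``$p>\delta(\VEC{t})$'' where $p>1/\delta(\VEC{t})$ follows from $\mu_k=O(k^{-\delta})$) has a typo is correct: the bound should read $f(\VEC{t})>\max_j\dim_h(X)/(m_jt_j)$, not $\min_j m_j\dim_h(X)/t_j$; your own replacement $\max_j m_j\dim_h(X)/t_j$ puts $m_j$ in the wrong place. As a sanity check, on the ray $\VEC{t}=\tau(1/m_0,\ldots,1/m_{n-1})$ the corrected bound gives $f>\dim_h(X)/\tau$, consistent with Corollary~\ref{thm:rockland_H_ellipitc_host}, while the paper's formula there gives $(\min_j m_j)^2\dim_h(X)/\tau$.
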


\begin{proof}
The result will, upon checking the definition, follow from Theorem \ref{st2fromhilb}. Chasing through the definitions, we see that the Hilbert complex associated with a Rockland complex is a regular Hilbert complex over $C^\infty(X)$ as soon as the Rumin Laplacians are hypoelliptic of order $2m$. Indeed, if this is the case then, since the Rumin Laplacians additionally are even order differential operators, \cite[Theorem 2]{davehallerheat} implies that $(\Delta_{\VEC{m},i}^R)^{\frac{\beta}{2m}}\in  \Psi^\beta_H(X,E_i)$. The Theorem follows from order considerations in the Heisenberg calculus. The Rumin Laplacians are hypoelliptic by \cite[Lemma 2.14]{davehaller}.

We note the finite summability statement follows from \eqref{weyldave}. Indeed, if we set $\delta(\VEC{t}):=\mathrm{min}_j \frac{m_j\dim_h(X)}{t_j}$ the interpolation as in Lemma \ref{lknlknn} and  \eqref{weyldave} implies that $\mu_k((1+\Delta^{\VEC{t}}_\DC)^{-1})=O(k^{-\delta(\VEC{t})})$ as $k\to +\infty$. In particular, $(1+\Delta^{\VEC{t}}_\DC)^{-1}\in \mathcal{L}^p$ for any $p>\delta(\VEC{t})$.
\end{proof}

\begin{remark}
\label{p25rem}
In the construction of Theorem \ref{ljnknknbgs} we group together the differentials in the easiest way possible, following Theorem \ref{st2fromhilb}. We can in general group together the differentials more efficiently, e.g.~below in Section \ref{sec:rumin} when studying the Rumin complex on a contact manifold we will group together the differentials into only two self-adjoint operators. If $(C^\infty(X;E_\bullet),\rd_\bullet)$ is a Rockland complex of order $\VEC{m}=(m_0,\ldots, m_{n-1})$, we can consider a partition 
$$\{0,\ldots, n-1\}= \bigsqcup_{l=1}^{n_0} S_l,$$
such that $m_i=m_{j}$ whenever $i$ and $j$ belong to the same set $S_l$. Then the collection $\tilde{\DC}:=(\sum_{j\in S_l}\rd_j+\rd_j^*)_{l=1}^{n_0}$ also fits into an ST\textsuperscript{2} $(C^\infty(X),L^2(X;\oplus_j E_j),\tilde{\DC})$. The bounding matrix $\tilde{\VEC{\epsilon}}=(\tilde{\epsilon}_{lk})_{l,k=1}^{n_0}$ for $\tilde{\DC}$ is similar to \eqref{alknalkjdnajlkdn} and is given by
$$
\tilde{\epsilon}_{lk}:=\begin{cases} 
\frac{m_{i}-1}{m_{j}} \; &\mbox{if there are $i\in S_l$ and $j\in S_k$ with $|j-i|\leq1$},\\
0 \; &\mbox{otherwise}. \end{cases}
$$
\end{remark}

\begin{example}
\label{g2ex}
Let us describe the Rockland complex constructed from the BGG-complex on a generic rank two distribution in dimension five, i.e.~a parabolic geometry of type $(G_2,P)$ where $G_2$ is the split real form of the indicated exceptional Lie group and $P$ the maximal parabolic subgroup corresponding to the shorter simple root. We aim only at describing the overall structure and refer the details to \cite[Example 4.21]{davehaller} (see also Example 4.24 in the arXiv version of \cite{davehaller} and further computational details in its appendix). Let $X$ be a five dimensional manifold filtered by a generic rank two distribution throughout the example. We also fix a finite-dimensional representation $V$ of $G_2$. The BGG-complex of $X$ looks like 
\begin{multline*}
0\to C^\infty(X;E_0)\xrightarrow{\rd_0}C^\infty(X;E_1)\xrightarrow{\rd_1}C^\infty(X;E_2) \\ \xrightarrow{\rd_2} C^\infty(X;E_3)
\xrightarrow{\rd_3} C^\infty(X;E_4)\xrightarrow{\rd_4}C^\infty(X;E_5)\to 0,
\end{multline*}
where $E_j\to X$ is a bundle induced from the parabolic structure and the cohomology group $H^j(\mathfrak{p}_+,V)$. The BGG-complex is by \cite{davehaller} a Rockland sequence of order 
$$\VEC{m}=(1,3,2,3,1).$$
To understand the principal symbol structure of the BGG-complex of $X$, one uses the fact that $X$ locally admits filtered charts modelled on the nilpotent chart $\overline{N}\subseteq G_2/P$ arising from the open, dense Bruhat cell $\overline{N}MAN\subseteq G_2$. In these charts, \cite[Example 4.21]{davehaller} explicitly describes $\sigma^{m_j}_H(\rd_j)$ in terms of elements of the universal enveloping Lie algebra of $\overline{N}$.

In this example, we have the bounding matrix 
$$\VEC{\epsilon}=
\begin{pmatrix}
0&0&0&0&0\\
2&\frac{2}{3}&1&0&0\\
0&\frac{1}{3}&\frac{1}{2}&\frac{1}{3}&0\\
0&0&1&\frac{2}{3}&2\\
0&0&0&0&0
\end{pmatrix}$$
and the associated weighted digraph takes the form 
\[ \vcenter{\hbox{\begin{tikzpicture}
	\graph[grow right sep=2cm, empty nodes, nodes={fill=black, circle, inner sep=1.5pt}, edges={semithick}]{
	a -- b --["$1$"' inner sep=5.5pt, middlearrow={<}, bend right=20] 
	c --["$\frac{1}{3}$"' inner sep=5.5pt, middlearrow={<}, bend right=20] 
	d --["$2$"' inner sep=5.5pt, middlearrow={<}] e;
	d --["$1$"' inner sep=5.5pt, middlearrow={<}, bend right=20] 
	c --["$\frac{1}{3}$"' inner sep=5.5pt, middlearrow={<}, bend right=20] 
	b --["$2$"' inner sep=5.5pt, middlearrow={<}] a;
	b --["$\frac{2}{3}$"', loop] b;
	c --["$\frac{1}{2}$"', loop] c;
	d --["$\frac{2}{3}$"', loop] d;
	};
\end{tikzpicture}}} \; . \]
\end{example}

We now turn to discussing two special cases of Theorem \ref{ljnknknbgs}. We produce two higher order spectral triples from Rockland complexes, the first with an $H$-elliptic Heisenberg pseudodifferential operator and the second with a differential operator.

\begin{cor}
	\label{thm:rockland_H_ellipitc_host}
	Let $(C^\infty(X;E_\bullet),\rd_\bullet)$ be a Rockland complex where all differentials are differential operators.
	For any order $\tau>0$, we can form the $(\frac{\dim_h(X)}{\tau},\infty)$-summable higher order spectral triple $(C^\infty(X),L^2(X;\oplus_j E_j),\overline{D}_\tau)$ from the $H$-elliptic Heisenberg operator 
$$\overline{D}_\tau:=\overline{D}_{\VEC{t}_{\VEC{m}}(\tau)}\equiv\sum_{i=0}^{n-1} \rd_i(\Delta_{\VEC{m},i}^R)^{\frac{\tau-m_i}{2m}}+\rd_i^*(\Delta_{\VEC{m},i+1}^R)^{\frac{\tau-m_i}{2m}}\in \Psi^\tau_H(X,\oplus_j E_j).$$
\end{cor}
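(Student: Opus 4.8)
The plan is to read off the higher order spectral triple almost for free from the results already in place, and then to locate $\overline{D}_\tau$ inside the Heisenberg calculus in order to pin down its $H$-ellipticity, its order, and its summability. First I would invoke Theorem~\ref{ljnknknbgs}: the hypotheses produce a regular ST\textsuperscript{2} $(C^\infty(X),L^2(X;\oplus_j E_j),\DC)$ with $\DC=(\rd_j+\rd_j^*)_{j=0}^{n-1}$ and bounding matrix $\VEC{\epsilon}$ as in Theorem~\ref{st2fromhilb}. Proposition~\ref{fixorder} then places, for each $\tau>0$, the ray point $\VEC{t}_{\VEC{m}}(\tau)=(\tau/m_0,\dots,\tau/m_{n-1})$ in $\Omega(\VEC{\epsilon})$ and identifies $\overline{D}_\tau=\overline{D}_{\VEC{t}_{\VEC{m}}(\tau)}$ with the stated formula; regularity of the ST\textsuperscript{2} then lets Theorem~\ref{thm:st2_give_host} conclude at once that $(C^\infty(X),L^2(X;\oplus_j E_j),\overline{D}_\tau)$ is a higher order spectral triple. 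What remains is to show that $\overline{D}_\tau\in\Psi^\tau_H(X,\oplus_j E_j)$ and is $H$-elliptic, to sharpen the order of the HOST to exactly $\tau$, and to obtain the summability.

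For membership in $\Psi^\tau_H$: since all $\rd_i$ are differential operators, the Rumin Laplacians $\Delta_{\VEC{m},i}^R=(\rd_i^*\rd_i)^{a_i}+(\rd_{i-1}\rd_{i-1}^*)^{a_{i-1}}$ are positive, even-order ($2m$) differential operators, hypoelliptic by \cite[Lemma~2.14]{davehaller}; exactly as in the proof of Theorem~\ref{ljnknknbgs}, \cite[Theorem~2]{davehallerheat} then gives $(\Delta_{\VEC{m},i}^R)^{\frac{\tau-m_i}{2m}}\in\Psi^{\tau-m_i}_H(X;E_i)$ (with the convention of \eqref{eq:signed_power_function_def}; since $\mathscr{H}_i=\ker\Delta_{\VEC{m},i}^R$ is finite-dimensional, the difference from $(\Delta_{\VEC{m},i}^R+P_{\mathscr{H}_i})^{\frac{\tau-m_i}{2m}}$ is smoothing). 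Composing with $\rd_i,\rd_i^*\in\Psi^{m_i}_H$ and summing over $i$ yields $\overline{D}_\tau\in\Psi^\tau_H$, and self-adjointness of $\overline{D}_\tau$ together with the identity $\overline{D}_\tau^2=\Delta_{\VEC{t}_{\VEC{m}}(\tau)}^\DC$ comes from the lemma following \eqref{eq:signed_power_function_def}.

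The crux---and the step I expect to be the main obstacle---is $H$-ellipticity. Using $\rd_i\rd_{i-1}=0$ one gets $\rd_i(\Delta_{\VEC{m},i}^R)^{\beta}=\rd_i(\rd_i^*\rd_i)^{\beta a_i}$ and $\rd_i^*(\Delta_{\VEC{m},i+1}^R)^{\beta}=\rd_i^*(\rd_i\rd_i^*)^{\beta a_i}$, so the part of $\overline{D}_\tau$ leaving the $E_k$-summand is $\rd_k(\rd_k^*\rd_k)^{\frac{\tau-m_k}{2m_k}}$ into $E_{k+1}$ together with $\rd_{k-1}^*(\rd_{k-1}\rd_{k-1}^*)^{\frac{\tau-m_{k-1}}{2m_{k-1}}}$ into $E_{k-1}$ (so that $\overline{D}_\tau^2$ restricted to $E_k$ equals $(\rd_k^*\rd_k)^{\tau/m_k}+(\rd_{k-1}\rd_{k-1}^*)^{\tau/m_{k-1}}$). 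Fixing $x\in X$ and a non-trivial irreducible unitary representation $\pi$ of $(T_HX)_x$ and writing $d_i=\sigma_H^{m_i}(\rd_i,\pi)$, the represented symbol $\sigma_H^\tau(\overline{D}_\tau,\pi)$ on the $E_{k,x}$-component is $d_k(d_k^*d_k)^{\frac{\tau-m_k}{2m_k}}$ together with $d_{k-1}^*(d_{k-1}d_{k-1}^*)^{\frac{\tau-m_{k-1}}{2m_{k-1}}}$, the fractional powers being read off from the $H$-elliptic Rumin Laplacians (as $\rd_i^*\rd_i$ is not itself $H$-elliptic). If this symbol kills a vector $\xi$ then both components vanish, and since $\|d_k(d_k^*d_k)^{\gamma}\xi\|=\|(d_k^*d_k)^{\gamma+1/2}\xi\|$ with $\gamma+1/2=\tau/(2m_k)>0$ (and similarly on the other side) one gets $d_k\xi=0$ and $d_{k-1}^*\xi=0$; exactness of the symbol sequence \eqref{eq:symbol_seq_of_rockland_complex} in $\pi$ gives $\ker d_k=\Ran d_{k-1}$, which is orthogonal to $\ker d_{k-1}^*$, forcing $\xi=0$. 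Hence $\overline{D}_\tau$ satisfies the Rockland condition; so does $\overline{D}_\tau^*=\overline{D}_\tau$; so by the equivalence of the Rockland condition with $H$-ellipticity (\cite{AMY,davehaller}) the operator $\overline{D}_\tau$ is $H$-elliptic. The care here is entirely in the mixed exponents produced by the distinct orders $m_i$, in threading the non-$H$-elliptic factors $\rd_i^*\rd_i$ through the $H$-elliptic Rumin Laplacians, and in the kernel-handling of the complex powers.

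Finally, the order and the summability. For $a\in C^\infty(X)$, acting as an order-$0$ Heisenberg operator with scalar (hence central) principal symbol, the commutator drops one order: $[\overline{D}_\tau,a]\in\Psi^{\tau-1}_H$. Since $\overline{D}_\tau$ is $H$-elliptic of order $\tau$ it defines the Heisenberg Sobolev scale, so $(1+|\overline{D}_\tau|^{(\tau-1)/\tau})^{-1}$ maps $L^2$ into $W^{\tau-1}_H$ while $[\overline{D}_\tau,a]$ maps $W^{\tau-1}_H$ into $L^2$; thus $[\overline{D}_\tau,a](1+|\overline{D}_\tau|^{(\tau-1)/\tau})^{-1}$ is $L^2$-bounded, giving order $\tau$ when $\tau\ge1$ (for $\tau<1$, $[\overline{D}_\tau,a]$ is already of negative order, so one has a genuine spectral triple). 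For summability, $1+\overline{D}_\tau^2\in\Psi^{2\tau}_H$ is $H$-elliptic and invertible, so its inverse lies in $\Psi^{-2\tau}_H$ (a parametrix plus a smoothing correction); Dave--Haller's Weyl law \eqref{weyldave}---in the displayed form $\Psi^m_H\subseteq\mathcal{L}^p$ for $p>-\dim_h(X)/m$---then gives $(1+\overline{D}_\tau^2)^{-1}\in\mathcal{L}^p$ for $p>\dim_h(X)/(2\tau)$, equivalently $(i\pm\overline{D}_\tau)^{-1}\in\mathcal{L}^p$ for $p>\dim_h(X)/\tau$, which is the asserted $(\dim_h(X)/\tau,\infty)$-summability.
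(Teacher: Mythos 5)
Your proposal is correct and takes essentially the same approach as the paper, whose proof is compressed into the single observation that $\overline{D}_\tau$ is an $H$-elliptic Heisenberg operator of order $\tau$ and that any such operator yields a $(\dim_h(X)/\tau,\infty)$-summable higher order spectral triple via the Heisenberg Sobolev scale and Dave--Haller's Weyl law. You have filled in the details the paper leaves implicit, most notably the verification of the Rockland condition for $\overline{D}_\tau$ by exploiting exactness of the localized symbol complex, and the careful threading of the non-$H$-elliptic factors $\rd_i^*\rd_i$ through the $H$-elliptic Rumin Laplacians in order to place $\overline{D}_\tau$ in $\Psi^\tau_H$ and to sharpen the abstract order from Theorem~\ref{thm:st2_give_host} down to exactly $\tau$.
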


Any $H$-elliptic Heisenberg operator of an order $\tau>0$ defines a $(\frac{\dim_h(X)}{\tau},\infty)$-summable higher order spectral triple, so Corollary \ref{thm:rockland_H_ellipitc_host} follows from the construction implying that $\overline{D}_\tau$ is $H$-elliptic of order $\tau>0$. On the other hand, Theorem \ref{thm:st2_give_host} together with Theorem \ref{ljnknknbgs} implies the next Corollary which allows us to construct from a Rockland complex a higher order spectral triple with a differential operator as its Dirac operator.

\begin{cor}
If $(C^\infty(X;E_\bullet),\rd_\bullet)$ is a Rockland complex where all differentials are differential operators, there exist odd integers $\VEC{k}=(2k_j+1)_j \in \Omega(\VEC{\epsilon}) \cap (2\N+1)^n$ so that the differential operator 
$$\overline{D}_{\VEC{k}}:=\sum_{i=0}^{n-1} D_i^{2k_i+1}:C^\infty(X;\oplus_jE_j)\to C^\infty(X;\oplus_jE_j)$$
defines a higher order spectral triple $(C^\infty(X),L^2(X;\oplus_j E_j),\overline{D}_{\VEC{k}})$.
\end{cor}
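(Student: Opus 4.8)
The plan is to invoke the two previously established results and verify the hypotheses. First I would apply Theorem \ref{ljnknknbgs} to the Rockland complex $(C^\infty(X;E_\bullet),\rd_\bullet)$ of order $\VEC{m}=(m_0,\ldots,m_{n-1})$ with all differentials being differential operators, obtaining the ST\textsuperscript{2} $(C^\infty(X),L^2(X;\oplus_j E_j),\DC)$ with $\DC=(\rd_j+\rd_j^*)_{j=0}^{n-1}$ and bounding matrix $\VEC{\epsilon}$ given by \eqref{alknalkjdnajlkdn}; this ST\textsuperscript{2} is regular, again by Theorem \ref{ljnknknbgs} (via Theorem \ref{st2fromhilb}). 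The second ingredient is Theorem \ref{thm:st2_give_host}: since the ST\textsuperscript{2} is regular, $(C^\infty(X),L^2(X;\oplus_j E_j),\overline{D}_{\VEC{t}})$ is a higher order spectral triple for \emph{any} $\VEC{t}\in\Omega(\VEC{\epsilon})$, in particular for $\VEC{t}=\VEC{k}=(2k_j+1)_{j=0}^{n-1}$ whenever such a vector of odd integers lies in $\Omega(\VEC{\epsilon})$. For such $\VEC{k}$, our sign convention \eqref{eq:signed_power_function_def} gives $D_i^{2k_i+1}=(D_i)^{2k_i+1}$ on the nose (odd power), so $\overline{D}_{\VEC{k}}=\sum_i D_i^{2k_i+1}$ is a genuine differential operator on $\bigoplus_j E_j$, being a polynomial in the differential operators $\rd_i,\rd_i^*$.

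The one point that requires an argument is the existence of such odd integers in $\Omega(\VEC{\epsilon})$, i.e.\ that $\Omega(\VEC{\epsilon})\cap(2\N+1)^n\neq\varnothing$. Recall from Remark \ref{remark:operators_with_prescribed_orders} (and the computation \eqref{estforesp}, adapted in the proof of Theorem \ref{st2fromhilb} to an inequality) that the ray $\VEC{t}_{\VEC{m}}(\tau)=(\tau/m_0,\ldots,\tau/m_{n-1})$ lies in $\Omega(\VEC{\epsilon})$ for every $\tau>0$; indeed $\epsilon_{ij}t_i=\tfrac{m_i-1}{m_j}\cdot\tfrac{\tau}{m_i}=\tfrac{(m_i-1)\tau}{m_i m_j}<\tfrac{\tau}{m_j}=t_j$. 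Since $\Omega(\VEC{\epsilon})$ is an open cone (it is cut out by the strict linear inequalities $\epsilon_{ij}t_i<t_j$), it contains an open neighbourhood of each point of this ray, hence an open neighbourhood of $\VEC{t}_{\VEC{m}}(\tau)$ for $\tau$ large. Now I would choose $\tau=\lambda\prod_{l}m_l$ for a large odd integer $\lambda$, so that each coordinate $\tau/m_i=\lambda\prod_{l\neq i}m_l$ is an integer (the $m_l$ being positive integers here, as the differentials are differential operators); by perturbing within the open set $\Omega(\VEC{\epsilon})$ one may further adjust each coordinate to an odd integer, or more cleanly: the subset $\Omega(\VEC{\epsilon})\cap(2\N+1)^n$ is nonempty because $\Omega(\VEC{\epsilon})$ is open and contains arbitrarily large scalings of the all-ones direction's rational perturbations — concretely, for $N$ a large odd integer the vector $(2N\prod_{l\neq i}m_l+1)_i$ differs from $2N\,\VEC{t}_{\VEC{m}}(\prod_l m_l)$ by a bounded correction and still satisfies all the strict inequalities once $N$ is large, since each is a strict inequality that scales linearly and is stable under bounded perturbation. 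This gives $\VEC{k}=(2k_j+1)_j\in\Omega(\VEC{\epsilon})\cap(2\N+1)^n$.

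Finally, with $\VEC{k}$ in hand, Theorem \ref{thm:st2_give_host} applied to the regular ST\textsuperscript{2} directly yields that $(C^\infty(X),L^2(X;\oplus_j E_j),\overline{D}_{\VEC{k}})$ is a higher order spectral triple, with $\overline{D}_{\VEC{k}}=\sum_{i=0}^{n-1}D_i^{2k_i+1}$ acting as a differential operator $C^\infty(X;\oplus_jE_j)\to C^\infty(X;\oplus_jE_j)$. The main (indeed only) obstacle is the arithmetic lemma that the open cone $\Omega(\VEC{\epsilon})$ meets $(2\N+1)^n$; everything else is an immediate citation of Theorems \ref{ljnknknbgs} and \ref{thm:st2_give_host}. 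One should remark that the order of this HOST is controlled by \eqref{eq:st2_give_host_order_req}, which for $t_i=k_i$ may be taken as an equality in each coordinate, so the order is $m_{\VEC{k}}=\max_{i,j}k_i\bigl(1-\tfrac{\epsilon_{ij}k_i}{k_j}\bigr)^{-1}$.
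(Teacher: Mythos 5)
Your proof is correct and follows the same route the paper implicitly takes (it states the corollary as following immediately from Theorems \ref{ljnknknbgs} and \ref{thm:st2_give_host}, without further argument). Your main addition — verifying that $\Omega(\VEC{\epsilon})\cap(2\N+1)^n\neq\varnothing$ by noting that the ray $\VEC{t}_{\VEC{m}}(\tau)$ sits inside the open cone $\Omega(\VEC{\epsilon})$ so that, for $\tau$ large, a ball of any fixed radius around $\VEC{t}_{\VEC{m}}(\tau)$ remains inside and hence must contain a point of the lattice $(2\N+1)^n$ — is exactly the detail the paper leaves unspoken, and it is correct. One small notational slip: in your closing remark on the order, the ``$k_i$'' in your formula $m_{\VEC{k}}=\max_{i,j}k_i(1-\epsilon_{ij}k_i/k_j)^{-1}$ should be the odd integers $2k_i+1$ of the corollary (the $k_i$ of \eqref{eq:st2_give_host_order_req} plays the role of the odd integer dominating $t_i$, which here equals $t_i=2k_i+1$).
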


We end this subsubsection by describing the $K$-homology class associated with a Rockland complex via Corollary \ref{khomfromadk} and Theorem \ref{ljnknknbgs}.

\begin{thm}
\label{Kclass}
Assume that $(C^\infty(X;E_\bullet),\rd_\bullet)$ is a Rockland complex where all differentials are differential operators and write $(C^\infty(X),L^2(X;\oplus_j E_j),\DC)$ for its associated ST\textsuperscript{2} graded by $L^2(X;\oplus_j E_j)=L^2(X;\oplus_j E_{2j})\oplus L^2(X;\oplus_j E_{2j+1})$. Take a $\VEC{t}\in \Omega(\VEC{\epsilon})$. The class of the higher order spectral triple $(C^\infty(X),L^2(X;\oplus_j E_j),\overline{D}_{\VEC{t}})$ in $K_0(X)$ coincides with the class $[\rd_\bullet]\in K_0(X)$ as defined in \cite{goffengkuzmin}.
\end{thm}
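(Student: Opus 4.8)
The assertion is that the $K$-homology class of the assembled HOST $(C^\infty(X),L^2(X;\oplus_j E_j),\overline{D}_{\VEC{t}})$ in $K_0(X)$ agrees with the class $[\rd_\bullet]$ of the Rockland complex defined in \cite{goffengkuzmin}. First I would recall how the class $[\rd_\bullet]$ in \cite{goffengkuzmin} is constructed: it arises by taking the Rumin--Seshadri-type Laplacians $\Delta_{\VEC{m},i}^R$, forming the operator $\sum_i \rd_i(\Delta_{\VEC{m},i}^R)^{\bullet}+\rd_i^*(\Delta_{\VEC{m},i+1}^R)^{\bullet}$ (or a closely related model) that is $H$-elliptic of some fixed positive order, and passing to its bounded transform to obtain a Fredholm module over $C(X)$. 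By Proposition \ref{fixorder}, for $\VEC{t}=\VEC{t}_{\VEC{m}}(\tau)$ along the canonical ray one has exactly $\overline{D}_{\VEC{t}_{\VEC{m}}(\tau)}=\sum_{i} \rd_i(\Delta_{\VEC{m},i}^R)^{\frac{\tau-m_i}{2m}}+\rd_i^*(\Delta_{\VEC{m},i+1}^R)^{\frac{\tau-m_i}{2m}}$, and by Corollary \ref{thm:rockland_H_ellipitc_host} this is an $H$-elliptic Heisenberg operator of order $\tau$. So for $\VEC{t}$ on that ray the assembled operator \emph{is literally} (a model of) the operator used in \cite{goffengkuzmin}, and the two Fredholm modules agree on the nose up to the standard fact that the bounded transforms of two $H$-elliptic operators with the same principal symbol (hence the same leading-order behaviour) define the same $K$-homology class — an operator-homotopy argument within the Heisenberg calculus, using that differences of parametrices are smoothing hence compact, and that $[F_{\overline{D}_\tau},a]$ is compact for $a\in C^\infty(X)$ by Theorem \ref{thm:st2_give_host} combined with Theorem \ref{thm:host_summability}.

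Next, since Corollary \ref{khomfromadk} already guarantees that the class $[(\mathcal{A},\mathpzc{H},\overline{D}_{\VEC{t}})]\in K^*(A)$ is \emph{independent of} $\VEC{t}\in\Omega(\VEC{\epsilon})$, it suffices to verify the identification for the single convenient choice $\VEC{t}=\VEC{t}_{\VEC{m}}(\tau)$, $\tau>0$, treated in the previous paragraph. Thus the proof reduces to two inputs: (i) the $\VEC{t}$-independence of Corollary \ref{khomfromadk}, which is already established; and (ii) the statement that the Fredholm module built in \cite{goffengkuzmin} from a Rockland complex equals the bounded transform of $\overline{D}_{\VEC{t}_{\VEC{m}}(\tau)}$ in $K$-homology. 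For (ii) I would invoke the construction in \cite{goffengkuzmin} directly, noting that the only freedom there — the choice of $H$-elliptic positive Laplacians completing the complex, the choice of order, and the choice of fibrewise gauge — does not affect the resulting class, again by Heisenberg-calculus homotopy arguments (the space of such choices is convex, or at least connected, and the operators vary continuously in a sense sufficient for $K$-homology invariance). One should also check the grading matches: the even/odd splitting $L^2(X;\oplus_j E_{2j})\oplus L^2(X;\oplus_j E_{2j+1})$ used in Theorem \ref{Kclass} is exactly the one used to give $[\rd_\bullet]$ its parity in \cite{goffengkuzmin}.

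**Main obstacle.** The genuine work is the comparison step (ii): carefully matching conventions between this paper's assembled operator $\overline{D}_{\VEC{t}_{\VEC{m}}(\tau)}$ and the specific representative used in \cite{goffengkuzmin}, and showing the resulting bounded $K$-cycles are homotopic. This is where one must be precise about which complex powers of Laplacians are taken, how the Heisenberg-Sobolev scales are normalised, and that the relevant parametrix differences land in $\Psi_H^{-\infty}(X)\subseteq\mathbb{K}$. I expect this to be essentially a bookkeeping argument once the $H$-ellipticity from Corollary \ref{thm:rockland_H_ellipitc_host} is in hand, together with the standard principle that $H$-elliptic operators of positive order with identical principal symbols (or, more robustly, operators lying in a norm-continuous $H$-elliptic family) give the same class in $K^*(C(X))$. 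A subtlety worth flagging is the signed-power convention $x^t=\mathrm{sign}(x)|x|^t$: one must confirm that \cite{goffengkuzmin} uses the same odd-functional-calculus convention for passing from the Laplacian to the Dirac-type operator, as otherwise one risks a sign/parity discrepancy in the $K$-homology class; the remark following Corollary \ref{khomfromadk} and the discussion preceding Theorem \ref{mainthm1} indicate this convention is precisely the one ensuring non-triviality, so the two constructions are compatible.
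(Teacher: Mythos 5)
Your overall route matches the paper's: invoke the $\VEC{t}$-independence of Corollary \ref{khomfromadk} to reduce to the canonical ray $\VEC{t}_{\VEC{m}}(\tau)$, where by Proposition \ref{fixorder} and Corollary \ref{thm:rockland_H_ellipitc_host} the assembled operator $\overline{D}_\tau$ is a concrete $H$-elliptic Heisenberg operator, and then compare with the construction in \cite{goffengkuzmin}. The one place you diverge from the paper is in the comparison step itself. You posit that \cite{goffengkuzmin} builds $[\rd_\bullet]$ by passing to the bounded transform of a Rumin--Seshadri Dirac-type operator and then appeal to a ``same principal symbol $\Rightarrow$ homotopic Fredholm modules'' principle in the Heisenberg calculus. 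The paper instead recalls that $[\rd_\bullet]$ is defined by \emph{order reduction}, chooses the Rumin--Seshadri Laplacians to perform that reduction (at $\tau=1$, for which $\VEC{t}_{\VEC{m}}(1)\in(0,1]^n$), and then notes by a short algebraic manipulation that $|\overline{D}_{\tau=1}|$ lifts the resulting Fredholm module to a \emph{bounded perturbation} of $\overline{D}_{\tau=1}$; bounded perturbations of a HOST operator do not change its class, which closes the argument cleanly. Your version is sound in spirit but pushes the real content into an unspecified Heisenberg-calculus homotopy argument and slightly mischaracterizes the input construction; the paper's direct identification of a bounded perturbation is both more elementary and more precise. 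Your checks on grading and the signed-power convention are appropriate but end up being absorbed into the algebraic manipulation in the paper's proof.
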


\begin{proof}
The class $[\rd_\bullet]\in K_0(X)$ as defined in \cite{goffengkuzmin} was defined by order reduction. If we use the Rumin--Seshadri Laplacians to define order reduction, a short algebraic manipulation shows that $|\overline{D}_{\tau=1}|$ lifts the Fredholm module defining the class  $[\rd_\bullet]\in K_0(X)$ to a bounded perturbation of $\overline{D}_{\tau=1}$. 
\end{proof}

\subsubsection{Equivariance in Rockland complexes}

We now turn to studying conformal equivariance of Rockland complexes. 

\begin{definition}
\label{equirock}
Assume that $(C^\infty(X;E_\bullet),\rd_\bullet)$ is a Rockland complex and that $G$ is a locally compact group acting by filtered diffeomorphisms on $X$ and that $E_0,\ldots, E_n$ are $G$-equivariant. We say that $(C^\infty(X;E_\bullet),\rd_\bullet)$ is a $G$-equivariant Rockland complex if the symbol complex $\sigma_H(\rd_\bullet)$ (see \eqref{eq:symbol_seq_of_rockland_complex}) is $G$-equivariant. 

If $(C^\infty(X;E_\bullet),\rd_\bullet)$ is a $G$-equivariant Rockland complex with each $E_j$ an hermitian vector bundle, we say that the $G$-action is a conformal $G$-action on $(C^\infty(X;E_\bullet),\rd_\bullet)$ if for any $j$ the $G$-representation $V_j:G\to \mathrm{GL}(L^2(X;E_j))$ defined from the $G$-action on $E_j\to X$ there is a function $\lambda_{j,g}\in C^\infty(X,\R_{>0})$ such that
$$V_{j,g}V_{j,g}^* = \lambda_{j,g}^2 .$$
The associated unitary representations are
$$U_j:G\to U(L^2(X;E_j)) \quad U_{j,g} = \lambda_{j,g}^{-1} V_{j,g} $$
and we observe that \( U_{j+1, g} \rd_j U_{j, g}^* = \lambda_{j+1,g}^{-1} \rd_j \lambda_{j,g} \).
\end{definition}

\begin{prop}
	Assume that $(C^\infty(X;E_\bullet),\rd_\bullet)$ is a Rockland complex of order $\VEC{m}=(m_1,\ldots, m_n)$, where all differentials are differential operators and \(X\) is compact, with a conformal action of $G$. For \( \VEC{t} \in \Omega(\VEC{\epsilon}) \), the higher order spectral triple
	\[ (C^\infty(X),L^2(X;\oplus_j E_j), \overline{D}_{\VEC{t}}) \]
	is conformally \( G \)-equivariant with conformal factor
	\[ \mu_g = \bigoplus_{j=0}^n P_{\mathscr{H}_i} + P_{\Ran \rd_{j-1}} (\lambda_{j,g}^{-1} \lambda_{j-1,g})^{t_{j-1}/2} P_{\Ran \rd_{j-1}} + P_{\Ran \rd_j^*} (\lambda_{j+1,g}^{-1} \lambda_{j,g})^{t_j/2} P_{\Ran \rd_j^*} . \]
\end{prop}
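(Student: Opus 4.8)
The plan is to derive the statement from Proposition~\ref{prop:definiednoconf} by choosing the families of conformal factors on the three summands of the weak Hodge decomposition and then checking the two batches of hypotheses there by means of the Heisenberg pseudodifferential calculus. By Theorem~\ref{ljnknknbgs} the Hilbert complex associated with $(C^\infty(X;E_\bullet),\rd_\bullet)$ is a regular Hilbert complex with discrete spectrum over $C^\infty(X)$ of order $\VEC{m}=(m_0,\dots,m_{n-1})$ (write $m=\prod_l m_l$, $a_i=m/m_i$), and by Definition~\ref{equirock} the conformal $G$-action equips each $\mathcal{H}_i=L^2(X;E_i)$ with the unitary $U_{i,g}=M_{\lambda_{i,g}^{-1}}V_{i,g}$ implementing the action on $C^\infty(X)$, preserving $\Dom\rd_\bullet$ (filtered diffeomorphisms and multiplication by smooth positive functions preserve the Heisenberg–Sobolev scale), and satisfying $U_{i+1,g}\rd_iU_{i,g}^*=M_{\lambda_{i+1,g}^{-1}}\rd_i M_{\lambda_{i,g}}$.

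I would set $\Lambda_{i,g}:=(\lambda_{i+1,g}^{-1}\lambda_{i,g})^{t_i/2}\in C^\infty(X,\R_{>0})$ and, following the shape of the asserted $\mu_g$, take
$$\nu_{i,g}:=P_{\overline{\Ran\rd_i^*}}\,M_{\Lambda_{i,g}}\,P_{\overline{\Ran\rd_i^*}}\ \text{on}\ \overline{\Ran\rd_i^*},\qquad \tilde\nu_{i,g}:=P_{\overline{\Ran\rd_{i-1}}}\,M_{\Lambda_{i-1,g}}\,P_{\overline{\Ran\rd_{i-1}}}\ \text{on}\ \overline{\Ran\rd_{i-1}}.$$
Since $X$ is compact the functions $\Lambda_{i,g}$ are bounded away from $0$ and $\infty$, so these operators and their adjoints are invertible; and the Hodge projections are Heisenberg pseudodifferential operators of order $\le 0$ — modulo the finite-rank (hence smoothing) projection $P_{\mathscr{H}_i}$ one has $P_{\overline{\Ran\rd_i^*}}=(\rd_i^*\rd_i)^{a_i}(\Delta_{\VEC{m},i}^R)^{-1}$ and $P_{\overline{\Ran\rd_{i-1}}}=(\rd_{i-1}\rd_{i-1}^*)^{a_{i-1}}(\Delta_{\VEC{m},i}^R)^{-1}$, of order $0$ by Dave–Haller's calculus of complex powers of $H$-elliptic operators — so $\nu_{i,g}$ and $\tilde\nu_{i,g}$ preserve the whole Heisenberg–Sobolev scale, in particular $\Dom\rd_\bullet$. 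With these choices $\bigoplus_j(\nu_j+\tilde\nu_j+P_{\mathscr{H}_j})$ is precisely the $\mu_g$ in the statement.

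For the first batch of hypotheses of Proposition~\ref{prop:definiednoconf} I would use $\rd_i P_{\overline{\Ran\rd_i^*}}=\rd_i=P_{\overline{\Ran\rd_i}}\rd_i$ and $(1-P_{\overline{\Ran\rd_i}})\rd_i=0$ to rewrite
$$\tilde\nu_{i+1,g}\rd_i-\rd_i\nu_{i,g}=[P_{\overline{\Ran\rd_i}},M_{\Lambda_{i,g}}]\rd_i-[\rd_i,M_{\Lambda_{i,g}}]P_{\overline{\Ran\rd_i^*}}.$$
The first summand has order $\le m_i-1$ because commuting the order-$0$ operator $P_{\overline{\Ran\rd_i}}$ with a smooth function lowers the Heisenberg order by one; the second has order $\le m_i-1$ because $(\mathcal{H}_\bullet,\rd_\bullet)$ is a Hilbert complex over $C^\infty(X)$ and $P_{\overline{\Ran\rd_i^*}}$ commutes with $\Delta_{\VEC{m},i}^R$ (it is a spectral projection of $\rd_i^*\rd_i$, and $\overline{\Ran\rd_i^*}\subseteq\ker\rd_{i-1}^*$). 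Using also the intertwining $\rd_i(1+\Delta_{\VEC{m},i}^R)^s=(1+\Delta_{\VEC{m},i+1}^R)^s\rd_i$, multiplying by $(1+\Delta_{\VEC{m},i}^R)^{(1-m_i)/2m}$ on the right (and, by a symmetric computation, on the left) produces bounded operators; $*$-strong continuity in $g$ follows from that of $V_{j,g}$, smoothness of $g\mapsto\lambda_{j,g}$, and local uniform operator bounds together with $*$-strong convergence on a dense core.

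The main obstacle is the second batch. Here $\rd_i(\Delta_{\VEC{m},i}^R)^{(-1+t_i)m_i/2m}$ is precisely the $\rd_i$-component of $\overline{D}_{\VEC{t}}$; on $\overline{\Ran\rd_i^*}$ it equals $B_i:=\rd_i(\rd_i^*\rd_i)^{(t_i-1)/2}$ (the content of Lemma~\ref{alknaljdna}, extending Proposition~\ref{fixorder} to general $\VEC{t}$), of order $m_it_i$, and one must show that $U_{i+1,g}B_iU_{i,g}^*-\tilde\nu_{i+1,g}B_i\nu_{i,g}$, composed with $(1+\Delta_{\VEC{m},i}^R)^{(1-m_i)t_i/2m}$, is bounded, i.e.\ that the two conjugates of $B_i$ differ by an operator of order $\le(m_i-1)t_i$. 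From $U_{i+1,g}\rd_iU_{i,g}^*=M_{\lambda_{i+1,g}^{-1}}\rd_iM_{\lambda_{i,g}}$ one gets $U_{i,g}(\rd_i^*\rd_i)U_{i,g}^*=M_{\lambda_{i,g}}\rd_i^*M_{\lambda_{i+1,g}^{-2}}\rd_iM_{\lambda_{i,g}}$, which (using that the differentials are differential operators, so commutators genuinely lower the order, and that the Rumin–Seshadri Laplacians are $H$-elliptic in the Dave–Haller calculus) differs from $M_{\lambda_{i,g}^2\lambda_{i+1,g}^{-2}}(\rd_i^*\rd_i)$ by order $\le 2m_i-1$; propagating this through the complex power $(\,\cdot\,)^{(t_i-1)/2}$ and absorbing the Hodge projections at the cost of order-$(-1)$ commutators shows that $U_{i+1,g}B_iU_{i,g}^*$ and $M_{\Lambda_{i,g}}B_iM_{\Lambda_{i,g}}$ — hence also $\tilde\nu_{i+1,g}B_i\nu_{i,g}$ — share the same leading Heisenberg symbol. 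This settles the case $\VEC{t}\in\Omega(\VEC{\epsilon})\cap(0,1]^I$, exactly as the principal-symbol computation in Example~\ref{hodgeandsign} combined with Lemma~\ref{complinterpsd}. For general $\VEC{t}$ the requirement is to gain a full $t_i$ orders rather than just one; the natural route — mirroring the proof of Theorem~\ref{thm:st2_give_host} and using regularity — is the odd-integer-power reduction $D_i^{t_i}=(D_i^{t_i/k_i})^{k_i}$ with $k_i$ odd and $t_i/k_i\le 1$, conjugating and telescoping over the $k_i$ factors while keeping track of the excess $\mu_g^*\mu_g$ factors appearing because $\mu_g$ is not unitary. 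I expect this last step — controlling the behaviour of complex powers of the Rumin–Seshadri Laplacian under the conformal action to sufficiently high order — to be the delicate point; the $*$-strong continuity statements follow once the uniform operator bounds are in hand.
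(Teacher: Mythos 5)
Your proposal routes through Proposition~\ref{prop:definiednoconf}; the paper's proof does not. Instead, the paper verifies Definition~\ref{def:conf_geq_host} directly: it rewrites the difference $U_g(\rd_j)^{t_j}U_g^* - \mu_g(\rd_j)^{t_j}\mu_g^*$ and the commutator $[(\rd_j)^{t_j},\mu_g]$ as operators sandwiched between the Hodge projections $P_{\Ran\rd_j}$ and $P_{\Ran\rd_j^*}$, and observes that both are of lower order in the Heisenberg calculus --- the gain of one Heisenberg order obtained generically from commuting a pseudodifferential operator past a smooth function. This difference in route is not cosmetic. As you correctly compute, the second batch of hypotheses of Proposition~\ref{prop:definiednoconf} forces the conjugation difference of $B_i := \rd_i(\Delta^R_{\VEC{m},i})^{(t_i-1)m_i/2m}$, which has Heisenberg order $m_it_i$, down to order $\le (m_i-1)t_i$, i.e.\ a gain of $t_i$ orders, whereas principal-symbol cancellation supplies only one. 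The two requirements coincide precisely when $t_i \le 1$, so your argument closes only the case $\VEC{t}\in\Omega(\VEC{\epsilon})\cap(0,1]^I$. You sketch an odd-power telescope for the remaining range but do not carry it out, and there is therefore a genuine gap for $t_i>1$.

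The paper's direct approach avoids this issue entirely. The bound demanded by Definition~\ref{def:conf_geq_host} is against $(1+|\overline{D}_{\VEC{t}}|^\epsilon)^{-1}$ with $\epsilon = 1-m_{\VEC{t}}^{-1}$, and the HOST order $m_{\VEC{t}}$ supplied by Theorem~\ref{thm:st2_give_host} already exceeds $m_jt_j$ for every $j$: take $i=j$ in that theorem's inequality to get $m_{\VEC{t}} > k_jm_j \ge t_jm_j$ for $k_j$ the odd integer $\ge t_j$. Restricted to the relevant Hodge summand, $|\overline{D}_{\VEC{t}}|^\epsilon$ thus has Heisenberg order $m_jt_j - m_jt_j/m_{\VEC{t}} > m_jt_j - 1$, so a single order of improvement is enough, and principal-symbol cancellation already achieves it. Your remaining ingredients --- the form of $\nu$ and $\tilde\nu$, Dave--Haller's complex powers of the Rumin--Seshadri Laplacians, the order-zero Hodge projections, compactness giving uniform bounds and $*$-strong continuity --- all match the paper's in spirit, but the regularizer $(1+\Delta^R_{\VEC{m},i})^{(1-m_i)t_i/2m}$ built into Proposition~\ref{prop:definiednoconf} is unnecessarily strong for $t_i>1$ and leads you to a harder estimate than Definition~\ref{def:conf_geq_host} actually asks for.
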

\begin{proof}
	Because \( \lambda_{j,g} \) is nonvanishing, bounded, and positive, \( \mu_g \) is invertible and positive. Indeed,
	\[ \mu_g \geq \bigoplus_{j=0}^n P_{\mathscr{H}_i} + \| \lambda_{j,g} \lambda_{j-1,g}^{-1} \|_{\infty}^{-t_{j-1}/2} P_{\Ran \rd_{j-1}} + \| \lambda_{j+1,g} \lambda_{j,g}^{-1} \|^{-t_j/2} P_{\Ran \rd_j^*} . \]
	One can check that the difference
	\begin{align*}
		& U_g \rd_j (\rd_j^* \rd_j)^{-1+t_j} U_g^* - \mu_g \rd_j (\rd_j^* \rd_j)^{-1+t_j} \mu_g^* \\
		& = U_g \rd_j (\rd_j^* \rd_j)^{-1+t_j} U_g^* - P_{\Ran \rd_j} (\lambda_{j+1,g}^{-1} \lambda_{j,g})^{t_j/2} \rd_j (\rd_j^* \rd_j)^{-1+t_j} (\lambda_{j+1,g}^{-1} \lambda_{j,g})^{t_j/2} P_{\Ran \rd_j^*} \\
		& = P_{\Ran \rd_j} \left( U_g \rd_j (\rd_j^* \rd_j)^{-1+t_j} U_g^* - (\lambda_{j+1,g}^{-1} \lambda_{j,g})^{t_j/2} \rd_j (\rd_j^* \rd_j)^{-1+t_j} (\lambda_{j+1,g}^{-1} \lambda_{j,g})^{t_j/2} \right) P_{\Ran \rd_j^*}
	\end{align*}
	and the commutator
	\[ [\rd_j (\rd_j^* \rd_j)^{-1+t_j}, \mu_g] =  P_{\im \rd_j} [\rd_j (\rd_j^* \rd_j)^{-1+t_j}, (\lambda_{j+1,g}^{-1} \lambda_{j,g})^{t_j/2}] P_{\Ran \rd_j^*} \]
	are of lower order, since $\rd_j (\rd_j^* \rd_j)^{-1+t}$ belongs to the Heisenberg calculus by Lemma \ref{alknaljdna}, as required by Definition \ref{def:conf_geq_host}.
\end{proof}

An undesirable feature of the above construction is that the conformal factors are not functions on \( X \). Under some circumstances, this can be remedied but then only for certain \( \VEC{t} \in \Omega(\VEC{\epsilon}) \).

\begin{prop}
\label{conaofnadin}
	Assume that $(C^\infty(X;E_\bullet),\rd_\bullet)$ is a Rockland complex of order $\VEC{m}=(m_0,m_1,\ldots, m_{n-1})$, where all differentials are differential operators and \(X\) is compact, with a conformal action of $G$. Suppose that, for some \( \VEC{s} \in \Omega(\VEC{\epsilon}) \), 
	\[  \lambda_{j-1,g}^{s_{j-1}} \lambda_{j+1,g}^{s_j} = \lambda_{j,g}^{s_j + s_{j-1}} \]
	for all \( j = 1, \ldots, n \). Then, for all \( \tau > 0 \), the higher order spectral triple
	\[ (C^\infty(X),L^2(X;\oplus_j E_j), \overline{D}_{\tau \VEC{s}}) \]
	is conformally \( G \)-equivariant with conformal factor
	\[ \mu_g = (\lambda_{1,g}^{-1} \lambda_{0,g})^{\tau s_0} = \cdots = (\lambda_{n,g}^{-1} \lambda_{n-1,g})^{\tau s_{n-1}} . \]
\end{prop}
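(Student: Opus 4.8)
The plan is to deduce the statement from the preceding Proposition, the role of the hypothesis being to force the Hodge-diagonal conformal factor produced there to collapse to the scalar multiplication operator $\mu_g$ asserted here. Since $\Omega(\VEC{\epsilon})$ is a cone we have $\tau\VEC{s}\in\Omega(\VEC{\epsilon})$ for every $\tau>0$, so the preceding Proposition applies with $\VEC{t}=\tau\VEC{s}$ and produces a conformally $G$-equivariant HOST $(C^\infty(X),L^2(X;\oplus_j E_j),\overline{D}_{\tau\VEC{s}})$ with conformal factor
\[
\mu^{\circ}_g=\bigoplus_{j}\Bigl(P_{\mathscr{H}_j}+P_{\Ran\rd_{j-1}}(\lambda_{j,g}^{-1}\lambda_{j-1,g})^{\tau s_{j-1}/2}P_{\Ran\rd_{j-1}}+P_{\Ran\rd_j^{*}}(\lambda_{j+1,g}^{-1}\lambda_{j,g})^{\tau s_j/2}P_{\Ran\rd_j^{*}}\Bigr).
\]
I would then rewrite this using the hypothesis: setting $c_g:=(\lambda_{j+1,g}^{-1}\lambda_{j,g})^{s_j}$, the relation $\lambda_{j-1,g}^{s_{j-1}}\lambda_{j+1,g}^{s_j}=\lambda_{j,g}^{s_j+s_{j-1}}$ is precisely the statement that $c_g$ does not depend on $j\in\{0,\dots,n-1\}$, hence $c_g\in C^\infty(X,\R_{>0})$ is well defined with $g\mapsto c_g$ continuous. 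Consequently every non-harmonic block of $\mu^{\circ}_g$ is the compression of multiplication by one and the same function, namely (up to notation) the scalar operator $\mu_g$ of the statement, while the harmonic blocks are the identity.

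With this in hand, conditions (1) and (2) of Definition \ref{def:conf_geq_host} for $\mu_g$ are immediate: (1) is untouched by the choice of conformal factor, and for (2) the operator $\mu_g$ is multiplication by a function in $C^\infty(X)$, so $[\overline{D}_{\tau\VEC{s}},\mu_g]\bigl(1+|\overline{D}_{\tau\VEC{s}}|^{\epsilon}\bigr)^{-1}$ and the same expression with $\mu_g^{*}$ are bounded because $(C^\infty(X),L^2(X;\oplus_j E_j),\overline{D}_{\tau\VEC{s}})$ is a higher order spectral triple, by Theorem \ref{thm:st2_give_host} together with Theorem \ref{ljnknknbgs}; strong continuity in $g$ comes from continuity of $g\mapsto c_g$. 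For condition (3) I would compare $\mu_g\overline{D}_{\tau\VEC{s}}\mu_g^{*}$ with $\mu^{\circ}_g\overline{D}_{\tau\VEC{s}}(\mu^{\circ}_g)^{*}$ in the Heisenberg calculus. Writing $\overline{D}_{\tau\VEC{s}}=\sum_j D_j^{\tau s_j}$ with each $D_j^{\tau s_j}$ a Heisenberg pseudodifferential operator built from $\rd_j,\rd_j^{*}$ and interchanging $\overline{\Ran\rd_j}$ with $\overline{\Ran\rd_j^{*}}$ (Lemma \ref{alknaljdna}), one checks that $\mu_g D_j^{\tau s_j}\mu_g^{*}$ and $\mu^{\circ}_g D_j^{\tau s_j}(\mu^{\circ}_g)^{*}$ agree modulo operators of strictly lower Heisenberg order: the order-zero projections $P_{\Ran\rd_{j\mp 1}},P_{\Ran\rd_j^{*}}$ built into $\mu^{\circ}_g$ do not affect principal symbols, and commutators of multiplication operators with them and with $D_j^{\tau s_j}$ drop an order. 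Hence $U_g\overline{D}_{\tau\VEC{s}}U_g^{*}-\mu_g\overline{D}_{\tau\VEC{s}}\mu_g^{*}$ differs from $U_g\overline{D}_{\tau\VEC{s}}U_g^{*}-\mu^{\circ}_g\overline{D}_{\tau\VEC{s}}(\mu^{\circ}_g)^{*}$ by an operator of lower order, so the two operators in condition (3) for $\mu_g$ differ from those for $\mu^{\circ}_g$ by bounded operators once multiplied by $(1+|\overline{D}_{\tau\VEC{s}}|^{\epsilon})^{-1}$ on the appropriate side — the requisite absorption being exactly the interpolation estimate underlying the higher order spectral triple and valid because $\tau\VEC{s}\in\Omega(\VEC{\epsilon})$. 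Strong continuity is inherited from the preceding Proposition and from $g\mapsto c_g$.

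The step I expect to be the main obstacle is the ``modulo lower order'' comparison in condition (3): one must track orders carefully in the \emph{mixed-order} Heisenberg calculus to be sure that each compression and commutator appearing when passing from $\mu_g\overline{D}_{\tau\VEC{s}}\mu_g^{*}$ to $\mu^{\circ}_g\overline{D}_{\tau\VEC{s}}(\mu^{\circ}_g)^{*}$ lands strictly below $D_j^{\tau s_j}$ in each degree $j$, and that the resulting discrepancy is absorbed by the same $(1+|\overline{D}_{\tau\VEC{s}}|^{\epsilon})^{-1}$ that the HOST structure furnishes. This is the tractable instance — tractable precisely because $\mu_g$ is scalar multiplication — of the ``main technical problem'' flagged before Proposition \ref{prop:confst2guess}.
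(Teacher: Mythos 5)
Your strategy of specializing the preceding proposition to $\VEC{t} = \tau\VEC{s}$ and showing the Hodge-block conformal factor collapses to (a compression of) a scalar is the intended reading: the paper presents Proposition~\ref{conaofnadin} immediately after remarking that the block factor ``can be remedied'' to a scalar in favourable cases, and the hypothesis is exactly what makes the three blocks involve one and the same function. Your key observation that $c_g := (\lambda_{j+1,g}^{-1}\lambda_{j,g})^{s_j}$ is independent of $j$ under the hypothesis is correct, and the ``modulo lower order'' comparison you sketch for condition~(3) can be closed along the lines you indicate: $\mu_g - \mu^\circ_g = P_{\mathscr{H}}(\mu_g-1)P_{\mathscr{H}} + \sum_{A\neq B}P_A\mu_gP_B$, the first term being finite rank (hence smoothing) and each cross term equal to $P_A[\mu_g,P_B]$ with $A\perp B$, which drops a Heisenberg order because $\mu_g$ and the Hodge projections $P_{\Ran\rd_{j-1}}, P_{\Ran\rd_j^*}$ are order-zero pseudodifferential modulo smoothing. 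That said, one can short-circuit this entirely by verifying Definition~\ref{def:conf_geq_host} directly for the scalar $\mu_g$: since $U_{j+1,g}\,\rd_j\,U_{j,g}^* = \lambda_{j+1,g}^{-1}\,\rd_j\,\lambda_{j,g}$ and each $D_j^{\tau s_j}$ lies in the Heisenberg calculus by Lemma~\ref{alknaljdna}, the principal symbols of $U_g D_j^{\tau s_j}U_g^*$ and $\mu_g D_j^{\tau s_j}\mu_g^*$ coincide, and the lower-order difference is absorbed by $(1+|\overline{D}_{\tau\VEC{s}}|^\epsilon)^{-1}$ exactly as in Corollary~\ref{thm:rockland_H_ellipitc_host}, with no need to introduce $\mu^\circ_g$ at all.

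One point you wave through with ``up to notation'' is actually a discrepancy worth stating rather than deferring. Specializing the preceding proposition at $\VEC{t}=\tau\VEC{s}$ yields conformal-factor blocks with exponent $\tau s_j/2$, whereas the statement you are proving has exponent $\tau s_j$. The direct symbol computation confirms $\tau s_j/2$: for a scalar self-adjoint $\mu_g$ one has $\sigma(\mu_g D_j^{\tau s_j}\mu_g^*) = \mu_g^2\,\sigma(D_j^{\tau s_j})$, while $\sigma(U_g D_j^{\tau s_j}U_g^*) = (\lambda_{j+1,g}^{-1}\lambda_{j,g})^{\tau s_j}\sigma(D_j^{\tau s_j})$, forcing $\mu_g = (\lambda_{j+1,g}^{-1}\lambda_{j,g})^{\tau s_j/2}$. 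Your derivation actually produces this corrected factor; you should report it as such and flag the missing $/2$ in the stated formula rather than silently identifying your $c_g^{\tau/2}$ with the statement's $\mu_g$.
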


\begin{remark}
If we can take $\VEC{s}=\VEC{m}$, in the situation of Proposition \ref{conaofnadin}, i.e.~if 
	\[ \lambda_{j-1,g}^{m_j} \lambda_{j+1,g}^{m_{j-1}} = \lambda_{j,g}^{m_{j-1} + m_j} , \]
	the higher order spectral triple $(C^\infty(X),L^2(X;\oplus_j E_j),\overline{D}_\tau)$ of order $\tau>0$ defined from the $H$-elliptic Heisenberg operator $\overline{D}_\tau$ (as in Corollary \ref{thm:rockland_H_ellipitc_host}), is a conformally $G$-equivariant higher order spectral triple with conformal factor
	\[ \mu_g = (\lambda_{1,g}^{-1} \lambda_{0,g})^{\tau/m_0} = \cdots = (\lambda_{n,g}^{-1} \lambda_{n-1,g})^{\tau/m_{n-1}} . \]
	We will see that this in fact does occur for the Rumin complex on a CR-manifold, in Theorem \ref{ruminconaofnadin}.
\end{remark}

\begin{remark}
\label{hosthighrank2}
In the next section we provide further context for conformally equivariant Rockland complexes by studying the Rumin complex on a contact manifold. It would be interesting to include further examples of Rockland complexes, especially in higher rank parabolic geometries. As work by Yuncken \cite{yunckensl3} and Voigt--Yuncken \cite{voigtyuncken} showcases, the interesting aspect lies in the equivariance properties. However, the approach above cannot produce conformally equivariant noncommutative geometries with nontrivial index theory, or even equivariant Fredholm modules, for a semisimple Lie group $G$ of real rank $>1$. Indeed, if $G$ is a higher rank semisimple Lie group and and $T$ is an $H$-elliptic operator on $G/P$ (for some parabolic subgroup $P\subseteq G$) of order $m\geq 0$ commuting with $G$ up to lower order terms then Puschnigg rigidity \cite{puschhigh} implies that $\sigma_H^m(T)$ is positive and that $T$ defines the trivial equivariant $K$-homology class; cf. Remark \ref{hosthighrank}. For $SL(3,\C)$, as studied in \cite{voigtyuncken,yunckensl3}, the BGG-complex is an equivariant Rockland complex (in the sense of Definition \ref{equirock}) but it is not conformally equivariant. The same statement holds for the BGG-complex of $G_2/P$, see Example \ref{g2ex} above.

A separate but equally serious issue at play, as discussed in \cite{davehaller,hallergenfive}, is that a BGG-complex is frequently not a Rockland complex but only a graded Rockland complex. The BGG-complex of a parabolic geometry is Rockland in the usual sense only when the cohomology of the osculating nilpotent group in each fibre has pure cohomology; see \cite[\S 3.7]{hallergenfive} for more details. For index theory purposes \cite{goffeng24}, the graded Rockland situation works well but it is less clear how to do spectral noncommutative geometry with graded Rockland complexes. The BGG-complex arising from the quaternionic contact structure on \( S^{4n-1} \) \cite[\S 3]{Julg_1995a} \cite[(66--67)]{Rumin_2005} is an example which fails to be ungraded Rockland but for which the action of \( Sp(n, 1) \) is conformal, in a sense made clear in \cite{julghow}. In particular, the two issues of conformally equivariant geometries and representing geometries by ungraded Rockland complexes are quite distinct.
\end{remark}

\section{The Rumin complex on contact manifolds}
\label{sec:rumin}

In this section we will look at an explicit example of a Rockland complex, namely the Rumin complex on a contact manifold. We will show that the naïve way of constructing a spectral triple from the Rumin complex does not work. However, using our construction with tangled spectral triples we obtain higher order spectral triples as in Theorem~\ref{thm:st2_give_host}. Lastly, we will look at conformal equivariance under CR-automorphisms when the manifold has an almost CR-structure.

Let $X$ be a $(2n+1)$-dimensional contact manifold with contact structure $H\subseteq TX$, that is, a filtered manifold of depth \(2\) such that \(H:=  T^{-1}X = \ker \theta\) for a \(1\)-form \(\theta\) satisfying that \(\rd\theta|_H\) is non-degenerate. 
Note that for simplicity we are assuming that the contact structure is cooriented, i.e.~that there exists a \emph{global} $1$-form $\theta$ such that $H=\ker\theta$ and not just a locally defined $1$-form. The nondegeneracy of \(\rd\theta|_H\) is equivalent to the top-degree form $\theta\wedge (\rd \theta)^{n+1}$ being a volume form. 

The Rumin complex \cite{rumincomp} is a special case of BGG-complexes \cite{capslovaksoucek,davehaller,goffeng24} that we explain in some more detail. The Rumin complex can be explicitly constructed by splicing together the quotient complex \(\Omega^*/\II^*\) and the subcomplex \(\JJ^*\) of the de Rham complex \(\Omega^*=C^\infty(X;\bigwedge^*T^*X)\) of \(X\), where $\II^*$ is the differential ideal of forms of the form $\theta \wedge \alpha + \rd\theta \wedge \beta$ and $\JJ^*$  is the differential ideal of forms $\alpha$ such that $\theta \wedge \alpha = \rd\theta \wedge \alpha = 0$. These complexes are spliced together using the map \(D_R\colon \Omega^n /\II^n \to \JJ^{n+1}\) defined as \(\alpha\mapsto \rd \tilde{\alpha}\) where \(\tilde{\alpha}\) is any lift of \(\alpha\) such that \(\rd \tilde{\alpha}\in \theta \wedge \Omega^*\). In particular, the cohomology of the Rumin complex
\[
    0 \to \Omega^0/\II^0  \xrightarrow{\rd} \cdots  \xrightarrow{\rd} \Omega^n/\II^n \xrightarrow{D_R} \JJ^{n+1} \xrightarrow{\rd} \cdots  \xrightarrow{\rd} \JJ^{2n+1} \to 0
\]
coincides with the de Rham cohomology \cite{rumincomp}. We call \(D_R\) the Rumin differential, which in fact is a second order differential operator as will be evident later in \eqref{eq:rumin_differential}.

Following \cite{Rumin_2000} we can obtain a different description of the Rumin complex as follows. Let us fix a contact form \( \theta \) and choose a Riemannian metric \( \mathbf{g} \) on $X$. We require that these be compatible, in the sense that \( H \) is orthogonal to the Reeb field, the (unique) vector field $Z$ such that \( \theta(Z) = 1 \) and \( \iota_Z(d \theta) = 0 \).
With our choice of metric, we have an orthogonal splitting
\[
    T^*X=H^*\oplus H^\perp
\]
defined from the contact coorientation $\theta$ spanning $H^\perp$. The exterior derivative takes the form 
\[
	\rd = \begin{pmatrix} \rd_H & L \\ \LL_Z & -\rd_H \end{pmatrix} \text{ in the splitting } \wedge^* T^* X = \wedge^* H^* \bigoplus H^\perp\otimes  \wedge^* H^*.
\]
Here \(\LL_Z\) denotes the Lie derivative along the Reeb field \(Z\) and \(L\) denotes exterior multiplication with \(\rd\theta\). We note that \(\JJ^{k+1} = C^\infty(X; H^\perp \otimes F_k)\) where \(F_k=\ker L \cap \bigwedge^k H^*\) and each element in \(\Omega^k/\II^k\) has a unique representative in \(C^\infty(X; E_k)\) where \(E_k=(\Ran L)^\perp \cap \bigwedge^k H^*\). With this, the Rumin complex takes the form
\begin{multline*}
        0 \xrightarrow{~} C^\infty(X;E_0) \xrightarrow{P_{E_1}\rd_H} C^\infty(X;E_1) \xrightarrow{P_{E_2}\rd_H} \cdots \\ 
        \cdots \xrightarrow{P_{E_n}\rd_H} C^\infty(X;E_n) \xrightarrow{D_R} C^\infty(X;H^\perp \otimes F_{n}) \xrightarrow{-\rd_H} \cdots  \\
        \cdots \xrightarrow{\!\!-\rd_H} C^\infty(X;H^\perp \otimes F_{2n-1}) \xrightarrow{\!\!-\rd_H} C^\infty(X;H^\perp \otimes F_{2n}) \xrightarrow{~} 0 
\end{multline*}
where the Rumin differential $D_R$ can be expressed as the {\bf second} order differential operator
\begin{equation}
    \label{eq:rumin_differential}
    D_R = \theta \wedge (\LL_Z + \rd_H L^{-1} \rd_H).
\end{equation}
Note that \(L\colon C^\infty(X; \bigwedge^k H^*) \to C^\infty(X; \bigwedge^{k+1} H^*) \) is injective for \(k \leq n-1\) and surjective for \(k \geq n-1\) \cite{rumincomp}, which is utilized to show that \(D_R\) is well defined.

We shall write the Rumin complex as $\rd_\bullet^R$. This is a mixed order differential complex. 

\begin{lemma}
The Rumin complex $(C^\infty(X;E_\bullet),\rd_\bullet^R)$ on a cooriented contact manifold $X$ is a Rockland complex.
\end{lemma}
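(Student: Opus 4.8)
The plan is to verify the definition of a Rockland complex directly: at each point $x\in X$ the osculating group $(T_HX)_x$ is, by the contact (non-degeneracy) condition, the $(2n+1)$-dimensional Heisenberg group $\sfH$ with its standard grading, so I must show that the symbol sequence $\sigma_H(\rd_\bullet^R)$ becomes exact on smooth vectors after applying any non-trivial irreducible unitary representation of $\sfH$. The argument has two parts: identify the symbol sequence as the left-invariant \emph{model} Rumin complex on $\sfH$, and then quote that this model complex is Rockland.

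First I would compute the principal symbols. The projections $P_{E_k}$ onto $E_k=(\Ran L)^\perp\cap\bigwedge^kH^*$, the operator $L$ of exterior multiplication by $\rd\theta$, and the fibrewise right inverse $L^{-1}$ appearing in \eqref{eq:rumin_differential} are all bundle maps, hence of order $0$ in the Heisenberg calculus, so their principal symbols are the corresponding endomorphisms; $\rd_H$ is first order with principal symbol at $x$ the left-invariant horizontal de Rham differential $\rd_H^{\sfH}=\sum_j e^j\wedge X_j$ on $\sfH$ (here $X_1,\dots,X_{2n}$ is a basis of the first layer $H_x$ and $e^j\wedge$ the dual wedge operators); and the Rumin differential $D_R$, being the second-order operator \eqref{eq:rumin_differential}, has principal symbol $\sigma_H^2(D_R)=\theta\wedge\bigl(\sigma_H(\LL_Z)+\rd_H^{\sfH}L^{-1}\rd_H^{\sfH}\bigr)$, where $\sigma_H(\LL_Z)$ is the central generator $Z$ of $\sfH$. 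Thus $\sigma_H(\rd_\bullet^R)$ at $x$, acting on $\mathcal{S}_0(T_HX;E_\bullet)_x\cong\mathcal{S}_0(\sfH)\otimes E_{\bullet,x}$, is exactly Rumin's model complex on $\sfH$.

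Second, I would invoke that this model complex is Rockland. By the Stone--von Neumann theorem, the non-trivial irreducible unitary representations of $\sfH$ are the Schrödinger representations $\pi_\hbar$ on $L^2(\R^n)$, $\hbar\neq0$, with $\pi_\hbar(Z)=i\hbar$ and smooth-vector space $\mathcal{S}_0(\pi_\hbar)=\mathcal{S}(\R^n)$. Rumin \cite{rumincomp,Rumin_2000} showed that the Laplacians of the Rumin complex on the Heisenberg group are maximally hypoelliptic; by the Rockland criterion --- equivalently the $H$-ellipticity characterisation recalled above, or the Helffer--Nourrigat theorem --- this is equivalent to exactness of $\pi_\hbar(\sigma_H(\rd_\bullet^R))$ on $\mathcal{S}(\R^n)\otimes E_{\bullet,x}$ for every $\hbar\neq0$. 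Alternatively one may quote Dave--Haller \cite{davehaller}, where the Rumin complex appears as the BGG-complex of the contact parabolic geometry, which is Rockland precisely because $\mathfrak{t}_HX\cong\mathfrak{h}$ has pure Lie algebra cohomology (one of the cases listed after Example~\ref{g2ex}). Since this holds at every $x\in X$ and every non-trivial irreducible unitary representation of $(T_HX)_x\cong\sfH$, the complex $(C^\infty(X;E_\bullet),\rd_\bullet^R)$ is Rockland.

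The main obstacle is the bookkeeping in the first step: one must check carefully that passing to principal symbols commutes with the algebraic operations $P_{E_k}$, $L$, and especially the fibrewise right inverse $L^{-1}$ used to splice the quotient complex to the subcomplex in \eqref{eq:rumin_differential}, so that no hidden lower-order contribution appears in $\sigma_H^2(D_R)$ or in the projected differentials $P_{E_k}\rd_H$. This is exactly where the contact condition is used: non-degeneracy of $\rd\theta|_H$ forces $L\colon\bigwedge^kH^*\to\bigwedge^{k+1}H^*$ to have locally constant maximal rank in every degree, so $L^{-1}$ exists as a smooth bundle map and the symbol computation goes through pointwise.
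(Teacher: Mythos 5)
Your proof takes a genuinely different route from the paper's. Both begin the same way, by identifying the localized symbol sequence at $x$ with the left-invariant model Rumin complex on $\sfH_{2n+1}$. But from there the paper gives a self-contained argument: it proves an $\mathcal{S}_0$-Poincar\'e lemma on $\sfH_{2n+1}$ for the de Rham complex, and then chases it through the decomposition of $\Omega^*_{\mathcal{S}}$ into the quotient complex $\Omega^*_{\mathcal{S}}/\II^*_{\mathcal{S}}$, the subcomplex $\JJ^*_{\mathcal{S}}$, and the Rumin splice map, using (injectivity)/(surjectivity) of $L$ in (low)/(high) degree to verify exactness degree by degree. Your approach is to cite the literature for the Rockland property of the model complex. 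That is a legitimate strategy, and indeed the paper itself introduces its proof with ``It is well known that $\rd_\bullet^R$ is a Rockland complex'' and cites \cite{julgkasparov,rumincomp} and the arXiv version of \cite{davehaller}, so the citation route is acceptable.

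However, one link in your chain needs more care. You argue: Rumin shows the Rumin Laplacians are maximally hypoelliptic; by Helffer--Nourrigat this is equivalent to the Rockland condition for the Laplacians; and this, you say, ``is equivalent to exactness of $\pi_\hbar(\sigma_H(\rd_\bullet^R))$.'' That last equivalence is not a consequence of Helffer--Nourrigat alone, which concerns single operators, not complexes. Passing from injectivity of $\pi_\hbar(\sigma_H(\Delta_i))$ to exactness of the represented complex requires a Hodge-theoretic decomposition on the representation space $\mathcal{S}(\R^n)\otimes E_{\bullet,x}$ (closed forms $=$ exact forms $\oplus$ harmonics, and harmonics vanish when the represented Laplacian is injective). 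This equivalence between ``Rockland Laplacians'' and ``Rockland complex'' is established in \cite{davehaller} (Section 2 of that paper), so it is fine to quote, but as written your sketch treats it as automatic when it isn't. Your alternative citation --- Dave--Haller's treatment of the contact BGG complex, noting pure cohomology of $\mathfrak{h}$ --- goes directly to the conclusion and sidesteps the issue.

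Your remark about the ``main obstacle'' (that passing to principal symbols should commute with $P_{E_k}$, $L$, $L^{-1}$) is on target: this is exactly what the paper verifies implicitly by working directly with the de Rham complex on $\sfH_{2n+1}$ and its $\II^*/\JJ^*$ decomposition, rather than manipulating $\sigma_H^2(D_R)$ symbolically.
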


It is well known that $\rd_\bullet^R$ is a Rockland complex \cite{julgkasparov,rumincomp} and a detailed discussion thereof can be found in Example 4.21 of the arXiv version of \cite{davehaller}. Let us nevertheless hint at how the argument goes. 

\begin{proof}[Sketch of proof]
	When localizing the Rumin complex to its symbol complex, it will be the same as the Rumin complex on the Heisenberg group \(\sfH_{2n+1}\) but with sections in $\SS_0$ instead. It therefore suffices to show that this complex is exact. To do this one can by hand prove the following Poincaré lemma: if \(\rd \alpha = 0\) for \(\alpha\in \SS_0(\sfH_{2n+1};\bigwedge^k \R)\) there is an \(\beta\in \SS_0(\sfH_{2n+1};\bigwedge^{k-1} \R)\) such that \(\rd \beta = \alpha\). Let  \(\Omega_{\SS}^* = \SS_0(\sfH_{2n+1};\bigwedge^* \R)\) and define \(\JJ_\SS^*\) and \(\II_\SS^*\) analogously as when we constructed the Rumin complex.

    To show exactness at \(\Omega_\SS^k/\II_\SS^k\) for \(k<n\), take any \(\alpha\in \Omega_\SS^k/\II_\SS^k\) such that \(\rd \alpha = 0\). What we mean with this is that for any representative \(\tilde{\alpha}\) of \(\alpha\) we have that \(\rd\tilde{\alpha} \in \II_\SS^{k+1}\). Fix a representative \(\tilde{\alpha}\in \Omega_\SS^k\) of \(\alpha\) and let \(\gamma\in \SS(\sfH_{2n+1};\bigwedge^{k-1} H^*)\) and \(\delta\in \SS(\sfH_{2n+1};\bigwedge^k H^*)\) be such that \(\rd\tilde{\alpha} = \rd\theta \wedge \gamma + \theta\wedge \delta\). Now, \(\tilde{\alpha}'\coloneq \alpha - \theta \wedge \gamma\) is also a representative of \(\alpha\) and \(\rd \tilde{\alpha}' = \theta\wedge(\delta + \rd\gamma)\in \theta\wedge \Omega_\SS^*\), so \(\rd\theta\wedge \rd\tilde{\alpha}' = \rd\:(\theta\wedge \rd\tilde{\alpha}') = 0\). Noting that \(\delta + \rd\gamma = \delta + \rd_H\gamma \in \SS(\sfH_{2n+1};\bigwedge^k H^*)\) we obtain by injectivity of \(L\) that \(\rd\tilde{\alpha}'= 0\). Hence, there is an \(\beta\in \Omega^{k-1}\) such that \(\rd\beta=\tilde{\alpha}'\) and the image of \(\beta\) in \(\Omega_\SS^{k-1}/\II_\SS^{k-1}\) is mapped to \(\alpha\) by \(\rd\).

    To show exactness at \(\Omega_\SS^n/\II_\SS^n\), take any \(\alpha\in \Omega_\SS^n/\II_\SS^n\) such that \(D_R \alpha = 0\). Then there is a representative \(\tilde{\alpha}\in \Omega_\SS^n\) of \(\alpha\) such that \(\rd\tilde{\alpha} = 0\) and hence there is a \(\beta \in \Omega_\SS^{n-1}\) such that \(\rd\beta = \tilde{\alpha}\). Now, the image of \(\beta\) in \(\Omega_\SS^{n-1}/\II_\SS^{n-1}\) is mapped to \(\alpha\) by \(\rd\). 
	
	To show exactness at \(\JJ_\SS^{n+1}\), take any \(\alpha\in \JJ_\SS^{n+1}\) such that \(\rd \alpha = 0\). Then there is a \(\beta \in \Omega_\SS^n\) such that \(\rd\beta = \alpha\). Since \(\alpha \in \theta \wedge \Omega_\SS^*\), the image of \(\beta\) in \(\Omega_\SS^n/\II_\SS^n\) is mapped to \(\alpha\) by \(D_R\), and we obtain exactness at \(\JJ_\SS^{n+1}\). To show exactness at \(\JJ_\SS^k\) for \(k>n+1\), take any \(\alpha\in \JJ_\SS^k\) such that \(\rd \alpha = 0\), then there is a \(\beta \in \Omega_\SS^{k-1}\) such that \(\rd\beta = \alpha\). By surjectivity of \(L\), we can find \(\gamma\in \SS(\sfH_{2n+1}; \bigwedge^{k-3} H^*)\) and \(\delta\in \SS(\sfH_{2n+1};\bigwedge^{k-2} H^*)\) such that \(\beta = \rd\theta \wedge \gamma + \theta\wedge \delta\). Now, \(\beta' \coloneq \theta \wedge(\delta + \rd\gamma)\) satisfies that \(\rd \beta' = \alpha\) and since \(\alpha \in \theta \wedge \Omega_\SS^*\) we have that \(\rd\theta\wedge \beta' = \rd\:(\theta \wedge \beta') - \theta \wedge \rd\beta' = 0\), so \(\beta'\in \JJ_\SS^{k-1}\) and we obtain exactness at \(\JJ_\SS^k\).
\end{proof}

Let us describe the symbol complex of the Rumin complex in some more detail. We do the same procedure as for the Rockland sequences in \eqref{eq:symbol_seq_of_rockland_complex} and identify $T_HX_x\cong \sfH_{2n+1}$ with the Heisenberg group via Darboux coordinates for each point \(x\in X\). Write $X_1,\ldots, X_n,Y_1,\ldots, Y_n,Z$ for the standard generators of $\mathfrak{h}_{2n+1}$ with $[X_i,Y_j]=\delta_{ij}Z$ corresponding to the Darboux coordinates near $x$. We will identify the fibres \(E_{k,x}\) and \(H^\perp_x\otimes F_{k,x}\) with subspaces of \(\bigwedge^kH^*_x=\bigwedge^k \R^{2n}\) and \(H_x^\perp\otimes \bigwedge^kH^*_x=\bigwedge^k \R^{2n}\) respectively. Consider the $\mathfrak{h}_{2n+1}$-valued vector 
\[
    \omega_1 =
\begin{pmatrix} 
X_1 & \ldots & X_n &
 Y_1 & \ldots & Y_n\end{pmatrix}^T\in \mathfrak{h}_{2n+1}\otimes H_x^*.
\]
We can express the principal symbols of the differentials in the Rumin complex as
\begin{align*}
\sigma_H^1(\rd_j^R)_x & = \omega_1\wedge: \mathcal{S}_0(\sfH_{2n+1},E_{j,x})\to \mathcal{S}_0(\sfH_{2n+1},E_{j+1,x}) & (j<n), \\
\sigma_H^2(\rd_j^R)_x & = \sigma_H^2(D_R)_x \\
& = Z + (\omega_1\wedge) L^{-1} (\omega_1\wedge): \mathcal{S}_0(\sfH_{2n+1},E_{n,x})\to \mathcal{S}_0(\sfH_{2n+1},F_{n,x}) \; &(j=n), \\
\sigma_H^1(\rd_j^R)_x & = -\omega_1\wedge: \mathcal{S}_0(\sfH_{2n+1},F_{j-1,x})\to \mathcal{S}_0(\sfH_{2n+1},F_{j,x}) \; &(j>n).
\end{align*}
In the case of \(n=1\), we can identify 
$$E_{0,x}=\C, \; E_{1,x}=F_{1,x}=\C^2, \; \mbox{and}\;F_{2,x}=\C.$$
Under these identifications, we have that \(\sigma_H^2(D_R) = Z + \omega_1 \omega_2^*\) where
\[ J = \begin{pmatrix} 0 & -1 \\ 1 & 0 \end{pmatrix} \quad \text{and} \quad \omega_2 = J\omega_1 =
\begin{pmatrix} -Y \\ X \end{pmatrix} . \]
The symbol complex over \(x\) takes the form
\[
0 \xrightarrow{} \SS_0(\sfH_3) \xrightarrow{\mbox{\footnotesize\( \begin{pmatrix} X \\ Y \end{pmatrix}\)}} \SS_0(\sfH_3)\otimes \C^2 \xrightarrow{\mbox{\footnotesize\( \begin{pmatrix} Z + XY & \!\!\!\! -X^2 \\ Y^2 & \!\!\!\! Z - YX \end{pmatrix} \)}} \SS_0(\sfH_3)\otimes \C^2 \xrightarrow{\mbox{\footnotesize\( \begin{pmatrix} Y & \!\!\!\! -X \end{pmatrix}\)}} \SS_0(\sfH_3) \xrightarrow{} 0 .
\]
The reader can compare this to the BGG-complex studied by Yuncken \cite{yunckensl3} and the example in Subsection \ref{rubsecrumin3d}.

\subsection{A naïve approach to higher order spectral triples}
\label{subsec:naive}

A first approach to study the noncommutative geometry of the Rumin complex is to naïvely roll up the complex as
$$\slashed{D}^R:=\rd_\bullet^R+(\rd_\bullet^R)^*.$$
Rolling up a complex in this way is how one produces the Hodge--de Rham Dirac operator from the de Rham complex. We shall see that this approach fails to produce a higher order spectral triple, thereby justifying the approach of Section \ref{sec:complexes} and the decreasing cycle condition. 

By discreteness of the spectrum, $\slashed{D}^R$ has compact resolvent. However, taking commutators with $C^\infty(X)$ does not improve the order. We will show this in the case of three-dimensional contact manifolds. In Darboux coordinates, the Rumin complex is up to lower order terms given by
\begin{align*}
\rd^R_\bullet=
\begin{pmatrix}
0& 0& 0& 0& \\
\omega_1 & 0&0& 0 \\
0& Z + \omega_1 \omega_2^*& 0& 0 \\
0& 0& \omega_2^* & 0
\end{pmatrix}.
\end{align*}
Therefore $\slashed{D}^R$ takes the form
\begin{align*}
\slashed{D}^R=
\begin{pmatrix}
0& \omega_1^* & 0& 0& \\
\omega_1 & 0& -Z + \omega_2 \omega_1^* & 0 \\
0& Z + \omega_1 \omega_2^*& 0& \omega_2 \\
0& 0& \omega_2^* & 0
\end{pmatrix}.
\end{align*}

\begin{prop}
\label{thm:commutator_rumin_3d}
Let $X$ be a compact contact manifold of dimension $3$ and $a\in C^\infty(X)$. Then $[\slashed{D}^R,a]$ is up to a vector bundle endomorphism of the form 
\begin{align*}
\begin{pmatrix}
0& 0& 0& 0& \\
0& 0&  \omega_2 \omega_1^*(a)+\omega_2(a)\omega_1^* & 0 \\
0& \omega_1 \omega_2^*(a)+\omega_1(a)\omega_2^* & 0&0 \\
0& 0&0 & 0
\end{pmatrix}.
\end{align*}
in local Darboux coordinates.
\end{prop}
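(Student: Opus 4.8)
The plan is a direct block-by-block computation carried out in Darboux coordinates, using repeatedly that commuting a differential operator with a function $a\in C^\infty(X)$ lowers its Heisenberg order by one. First I would fix Darboux coordinates near a point, so that (up to terms of strictly lower order) $\slashed{D}^R$ is the $4\times 4$ operator matrix displayed above, with entries built from the model first-order operators $X,Y,Z$ on $\sfH_3$, the vectors $\omega_1=(X,Y)^T$, $\omega_2=(-Y,X)^T$, and their formal adjoints. Two observations reduce the problem immediately. Since $a$ acts diagonally by scalar multiplication, $[\slashed{D}^R,a]$ has the same off-diagonal block pattern as $\slashed{D}^R$. And the passage from the model operators on $\sfH_3$ to the genuine differentials on $X$ introduces only corrections of order $\le 0$ to the order-one differentials and of order $\le 1$ to the order-two differential $D_R$; commuting any such correction with $a$ produces an operator of order $\le 0$, i.e.\ a vector bundle endomorphism, so all of them may be discarded at once.

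Next I would compute the six surviving blocks. The four ``outer'' blocks are $[\omega_1^*,a]$, $[\omega_1,a]$, $[\omega_2,a]$ and $[\omega_2^*,a]$; each is the commutator of a (vector of) first-order operator(s) with a smooth function, hence multiplication by the corresponding (vector of) functions, hence a bundle endomorphism, and so drops out. For the two ``middle'' blocks the Leibniz rule gives, writing $a$ also for the scalar multiplication operator,
\begin{align*}
  [\,Z+\omega_1\omega_2^*,\,a\,] &= [Z,a] + \omega_1[\omega_2^*,a] + [\omega_1,a]\,\omega_2^*,\\
  [\,-Z+\omega_2\omega_1^*,\,a\,] &= -[Z,a] + \omega_2[\omega_1^*,a] + [\omega_2,a]\,\omega_1^*.
\end{align*}
In Darboux coordinates $Z$ is a vector field, so $[Z,a]=Z(a)$ is a bundle endomorphism and is absorbed; likewise $[\omega_1^*,a]=\omega_1^*(a)$, $[\omega_1,a]=\omega_1(a)$, and so on, are multiplications by vector-valued functions. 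The surviving terms are then exactly $\omega_1\omega_2^*(a)+\omega_1(a)\,\omega_2^*$ and $\omega_2\omega_1^*(a)+\omega_2(a)\,\omega_1^*$ in the notation of the statement, sitting in the $(3,2)$ and $(2,3)$ blocks respectively. Assembling everything yields precisely the claimed matrix, the vector bundle endomorphism being the sum of $\pm Z(a)$, the four outer blocks, and the lower-order corrections.

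The only step that is more than bookkeeping is the treatment of the Rumin differential. Even though $D_R$ has Heisenberg order two, $[D_R,a]$ has order one rather than order zero, and the content of the proposition is to pin down that order-one part exactly. The identity $[Z,a]=Z(a)$ is what makes the ``$Z$''-part of $D_R$ contribute nothing but an endomorphism, so that the order-one piece of $[D_R,a]$ is governed solely by the ``$\omega_1\omega_2^*$''-part, which the Leibniz rule then splits off without remainder. I expect no further obstacle: the remaining ingredients---that $X,Y,Z$ have no zeroth-order part in Darboux coordinates, and that the corrections from passing to the model have the orders claimed---are standard facts about the Heisenberg calculus and the local structure of the Rumin complex.
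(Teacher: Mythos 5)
Your proof is correct and takes the same route as the paper, which dispatches the proposition with the one-line statement ``Follows from direct computation with the Leibniz rule.'' You simply spell out that computation: the outer blocks $[\omega_i^{(*)},a]$ and the $\pm[Z,a]$ terms are bundle endomorphisms, and the Leibniz rule on the order-two blocks isolates $\omega_1\omega_2^*(a)+\omega_1(a)\omega_2^*$ and $\omega_2\omega_1^*(a)+\omega_2(a)\omega_1^*$ as the genuine first-order contributions, with all corrections from the local Darboux model absorbed into the endomorphism.
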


\begin{proof}
Follows from direct computation with the Leibniz rule.
\end{proof}

\begin{prop}
\label{hostissue}
Let $X$ be a compact contact manifold of dimension $3$. If $\alpha\in \R$ satisfies that $[\slashed{D}^R,a](1+(\slashed{D}^R)^2)^{-1/2+\alpha}$ is a bounded operator on $L^2(X;\mathcal{H})$ for any $a\in C^\infty(X)$ then $\alpha\leq 0$.
\end{prop}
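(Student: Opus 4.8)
The plan is to prove the contrapositive in the following strong form: if $\alpha>0$, then \emph{already} for a single $a\in C^\infty(X)$ the operator $[\slashed{D}^R,a](1+(\slashed{D}^R)^2)^{-1/2+\alpha}$ is unbounded. The idea is to test this operator only on the closed subspace $\overline{\Ran \rd_0^R}\subseteq L^2(X;E_1)$. On that subspace two things happen at once: the \emph{order-four} piece $(\rd_1^R)^*\rd_1^R$ of $(\slashed{D}^R)^2$ disappears, so $(\slashed{D}^R)^2$ is only of order two there, while the commutator $[\slashed{D}^R,a]$ still has a genuinely \emph{order-two} component. This is exactly the phenomenon that makes the naive roll-up fail and the ST\textsuperscript{2} bookkeeping (where one uses the genuinely $H$-elliptic order-four Rumin Laplacian $D_1^4+D_2^2$) necessary.

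Concretely, fix $p\in X$ and Darboux coordinates around $p$, and choose $a\in C^\infty(X)$ equal to the first coordinate $x$ near $p$, so that the bundle map $[\rd_0^R,a]$ (the principal symbol of $\rd_0^R$ evaluated on $\rd a$) equals $\binom{1}{0}$ near $p$; in particular it is non-vanishing there. Set $\Delta_0:=(\rd_0^R)^*\rd_0^R$, a positive $H$-elliptic second-order operator on $C^\infty(X)$. I would first record two identities, both obtained only from the fact that $\rd_\bullet^R$ is a complex and that $\slashed{D}^R$ restricted to $C^\infty(X;E_0)$ is just $\rd_0^R$:
\[
(\slashed{D}^R)^2\rd_0^R\varphi=\rd_0^R\Delta_0\varphi ,\qquad\text{hence}\qquad (1+(\slashed{D}^R)^2)^s\rd_0^R\varphi=\rd_0^R(1+\Delta_0)^s\varphi
\]
for all $s\in\R$ and $\varphi\in C^\infty(X)$ (the second line by intertwining the functional calculi through $\rd_0^R$); and, by a Leibniz computation using $\rd_1^R\rd_0^R=0$, the $C^\infty(X;E_2)$-component of $[\slashed{D}^R,a]\rd_0^R\varphi$ equals $-\rd_1^R([\rd_0^R,a]\varphi)=-D_R([\rd_0^R,a]\varphi)$, so in particular $\|[\slashed{D}^R,a]\rd_0^R\varphi\|\ge\|D_R([\rd_0^R,a]\varphi)\|$. (This is consistent with, but more precise than, Proposition~\ref{thm:commutator_rumin_3d}.)

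Now suppose for contradiction that $T:=[\slashed{D}^R,a](1+(\slashed{D}^R)^2)^{-1/2+\alpha}$ is bounded. For $\varphi\in C^\infty(X)$ put $w:=\rd_0^R(1+\Delta_0)^{1/2-\alpha}\varphi$; by the intertwining, $(1+(\slashed{D}^R)^2)^{-1/2+\alpha}w=\rd_0^R\varphi$, so $Tw=[\slashed{D}^R,a]\rd_0^R\varphi$, while $\|w\|=\|\Delta_0^{1/2}(1+\Delta_0)^{1/2-\alpha}\varphi\|\le\|(1+\Delta_0)^{1-\alpha}\varphi\|$ since $\Delta_0^{1/2}\le(1+\Delta_0)^{1/2}$. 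Combining these,
\[
\|D_R([\rd_0^R,a]\varphi)\|\;\le\;\|[\slashed{D}^R,a]\rd_0^R\varphi\|\;=\;\|Tw\|\;\le\;\|T\|\,\|(1+\Delta_0)^{1-\alpha}\varphi\|\qquad\text{for all }\varphi\in C^\infty(X).
\]
To contradict this I would plug in $\varphi_j=\chi\,h_j$, where $\chi$ is a fixed cutoff supported near $p$ and $h_j$ is, in the Darboux chart, the level-$j$ Hermite-type quasimode of $\Delta_0$ in a fixed non-trivial central frequency mode, so $\Delta_0\varphi_j=\nu_j\varphi_j+O(\nu_j^{1/2})$ with $\nu_j\to\infty$ and $\|(1+\Delta_0)^{1-\alpha}\varphi_j\|\asymp\nu_j^{1-\alpha}\|\varphi_j\|$. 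Near $p$, $D_R([\rd_0^R,a]\varphi_j)=D_R\binom{\varphi_j}{0}$ has leading part $\binom{(Z+XY)\varphi_j}{\,Y^2\varphi_j\,}$; since $Y=\partial_y$ \emph{exactly} in Darboux coordinates, the harmonic-oscillator behaviour of $h_j$ gives $\|Y^2\varphi_j\|\asymp\nu_j\|\varphi_j\|$, so $\|D_R([\rd_0^R,a]\varphi_j)\|\gtrsim\nu_j\|\varphi_j\|$. The displayed inequality then forces $\nu_j\lesssim\|T\|\,\nu_j^{1-\alpha}$, i.e.\ $\nu_j^{\alpha}\lesssim\|T\|$ for all $j$, which is absurd for $\alpha>0$. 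Hence $\alpha\le0$.

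\textbf{Remarks on the plan.} The genuinely technical points are all routine but worth being careful about: the intertwining $(1+(\slashed{D}^R)^2)^s\rd_0^R=\rd_0^R(1+\Delta_0)^s$ should be justified via the spectral theorem (e.g.\ first on resolvents, then extended), and the quasimode estimate $\|Y^2\varphi_j\|\asymp\nu_j\|\varphi_j\|$ together with $\|(1+\Delta_0)^{1-\alpha}\varphi_j\|\asymp\nu_j^{1-\alpha}\|\varphi_j\|$ requires controlling the commutator errors $[\,\cdot\,,\chi]$ and the lower-order (order $\le 1$) terms by which the honest operators $\Delta_0$, $D_R$ differ from their flat Heisenberg models in the chart — these are $O(\nu_j^{1/2})=o(\nu_j)$, hence harmless. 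I expect the main conceptual obstacle to be simply \emph{finding} the right object to test on, namely restricting to $\overline{\Ran\rd_0^R}$; once that is done the order-four part of $(\slashed{D}^R)^2$ drops out for free and everything above is a short computation. (Alternatively, one could phrase the last paragraph invariantly: the map $\varphi\mapsto D_R([\rd_0^R,a]\varphi)$ is, near $p$, a Heisenberg pseudodifferential operator of order $2$ with non-zero principal symbol in every Schrödinger representation, so it cannot be bounded $W^{2-2\alpha}_H\to L^2$ for $\alpha>0$; the quasimode argument is the hands-on version of this.)
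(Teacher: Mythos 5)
Your approach is correct in structure and, in its packaging, genuinely different from the paper's, although the two end up being the same computation in two languages. You restrict to the invariant subspace $\overline{\Ran\rd_0^R}\subseteq L^2(X;E_1)$, exploit the intertwining $(\slashed{D}^R)^2\rd_0^R=\rd_0^R\Delta_0$ to reduce the boundedness question to an a priori estimate $\|D_R([\rd_0^R,a]\varphi)\|\lesssim\|(1+\Delta_0)^{1-\alpha}\varphi\|$, and then defeat that estimate by a test-function argument. This is a clean, self-contained reduction. The paper instead computes directly at the level of represented principal symbols: using Proposition~\ref{thm:commutator_rumin_3d} to see the commutator lives only in the $E_1\leftrightarrow E_2$ block, it freezes coefficients at a point, represents in the abelian characters $\xi\in\R^2$ of $\sfH_3$ (where $Z\mapsto 0$), and shows the represented function $A_\alpha(\xi)$ grows like $t^{2\alpha}$ along $\xi=tJv$. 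Observe that your subspace $\overline{\Ran\rd_0^R}$ is exactly, in a character $\xi$, the $e_1(\xi)$-eigendirection that the paper isolates, and your dominant $Y^2\varphi$ contribution is precisely the $\bigl((Jv)^*\xi\bigr)e_1(\xi)$ term of the paper when $\xi$ is taken along $Jv$, i.e.\ the $Y$-direction. The paper's ``freeze coefficients'' step tacitly uses the Heisenberg-calculus fact that $L^2$-boundedness of an operator forces uniform boundedness of its represented symbols; your quasimode argument is a direct, more hands-on proof of that implication in this particular instance.

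There is, however, a genuine technical gap in your quasimode choice. As written, you take ``the level-$j$ Hermite-type quasimode $\ldots$ in a fixed non-trivial central frequency mode,'' i.e.\ fixed central parameter $\lambda\neq 0$ and level $j\to\infty$. In that regime the level-$j$ Hermite mode has spatial extent on the order of $\sqrt{j/|\lambda|}\to\infty$, so the cutoff $\chi$ fails to localize it; the commutator errors $[\Delta_0,\chi]h_j$ and the non-flat corrections to $\Delta_0$ outside the chart are then not negligible, and in particular the asserted bound $\|(1+\Delta_0)^{1-\alpha}\varphi_j\|\asymp\nu_j^{1-\alpha}\|\varphi_j\|$ does not follow from what you have written. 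The fix is to send the central parameter, not the Hermite level, to infinity: fix the level at $0$ (or more generally keep it $O(1)$) and take $\lambda_j\to\infty$. Then the quasimode has spatial extent $\lambda_j^{-1/2}$, cutoff errors are $O(\lambda_j^{-\infty})$, lower Heisenberg-order perturbations of $\Delta_0$ contribute $O(\nu_j^{1/2})$, and $\nu_j\sim\lambda_j$, $\|Y^2\varphi_j\|\sim\lambda_j\|\varphi_j\|$, so the contradiction $\nu_j\lesssim\nu_j^{1-\alpha}$ goes through. Alternatively, working with the abelian characters (as the paper does) with $\xi=tJv$ and $t\to\infty$ gives $\Delta_0\varphi\approx t^2\varphi$ and $\|Y^2\varphi\|\approx t^2\|\varphi\|$, yielding the same contradiction. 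The idea is right; the parameter that must go to infinity is the central frequency or $|\xi|$, not the Hermite level.
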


In consequence, \( \slashed{D}^R \) does not define a higher order spectral triple for \( C^{\infty}(X) \).

\begin{proof}
We need to show that, for all $\alpha> 0$, $[\slashed{D}^R,a](1+(\slashed{D}^R)^2)^{-1/2+\alpha}$ fails to be bounded on $L^2(X;\mathcal{H})$ for some $a\in C^\infty(X)$. By Proposition \ref{thm:commutator_rumin_3d} and the computations above, $[\slashed{D}^R,a](1+(\slashed{D}^R)^2)^{-1/2+\alpha}$ is bounded if and only if $(\omega_1 \omega_2^*(a)+\omega_1(a)\omega_2^*)(1+T)^{-1/2+\alpha}$ is bounded where \(T = \omega_1\omega_1^* + (-Z + \omega_2 \omega_1^*)(Z + \omega_1 \omega_2^*)\).

Were $(\omega_1 \omega_2^*(a)+\omega_1(a)\omega_2^*)(1+T)^{-1/2+\alpha}$ to be bounded, we could freeze coefficients in a point $x$ and represent this operator in a non-trivial character $\xi\in \R^2 \subseteq \widehat{\sfH}_{2n+1}$ and obtain a uniformly bounded function in $\xi$. For notational simplicity, write 
$$v:=\omega_1(a)_x.$$
In this notation, $\omega_2(a)_x=Jv$. In a character $\xi$, $\omega_1$ is represented as $\xi$, $\omega_2$ is represented as $J\xi$ and \(Z\) is represented as \(0\). Hence, $T$ is represented in the character $\xi\neq 0$ as the matrix valued function 
$$F(\xi)=\xi\xi^*+|\xi|^2(J\xi)(J\xi)^*=|\xi|^2e_1(\xi)+|\xi|^4e_2(\xi),$$
where $e_1(\xi)=|\xi|^{-2}\xi\xi^*$ and $e_2(\xi)=Je_1(\xi)J$ are the orthogonal projections onto the span of $\xi$ and $J\xi$ respectively. Since $J$ is anti-symmetric, $e_1(\xi)$ and $e_2(\xi)$ have orthogonal ranges and $F(\xi)=|\xi|^2 e_1(\xi)+|\xi|^4 e_2(\xi)$ is the eigenvalue decomposition of $F(\xi)$. By the discussion above, we need to show that for $\alpha>0$, boundedness fails for the matrix valued function 
$$A_\alpha(\xi) :=  (\xi (Jv)^*+v (J\xi)^*)(1+F(\xi))^{-1/2+\alpha}.$$
By orthogonality of $e_1(\xi)$ and $e_2(\xi)$, we compute that 
\begin{align*}
A_\alpha(\xi) 
& = (1+|\xi|^2)^{-\frac{1}{2}+\alpha}(\xi (Jv)^*+v (J\xi)^*)e_1(\xi) + (1+|\xi|^4)^{-\frac{1}{2}+\alpha}(\xi (Jv)^*+v (J\xi)^*) e_2(\xi) \\
& = (1+|\xi|^2)^{-\frac{1}{2}+\alpha} ((Jv)^* \xi) e_1(\xi) + O(|\xi|^{-1+4\alpha})
\end{align*}
and see that for $t>0$
$$A_\alpha(tJv)= t(1+t^2)^{-\frac{1}{2}+\alpha} e_1(Jv)+O(t^{-1+4\alpha}).$$
In particular, $A_\alpha$ is bounded if and only if $\alpha\leq 0$ so in particular boundedness fails for $\alpha>0$.
\end{proof}

\subsection{Spectral triples from the Rumin complex}

Let us place the Rumin complex $\rd_\bullet^R$ of a contact manifold in a spectral triple. We have a somewhat simpler structure than seen in Subsection \ref{subsechilbert}, since all but one of the differentials are order one, see Remark \ref{p25rem}. We consider the two self-adjoint operators 
$$D_1:=\sum_{j\neq n}\rd^R_j+(\rd^R_j)^*\quad \mbox{and}\quad D_2:=D_R+(D_R)^*.$$
These are differential operators of order $m_1=1$ and $m_2=2$ respectively. We note that $D_1D_2=D_2D_1=0$ on the common core $C^\infty(X;\oplus_j E_j)$, so $D_1$ and $D_2$ are strictly anticommuting. We compute that  
$$D_1^2=\sum_{j\neq n}\rd^R_j(\rd^R_j)^*+(\rd^R_j)^*\rd^R_j\quad \mbox{and}\quad D_2^2:=D_R(D_R)^*+(D_R)^*D_R.$$
The Rumin Laplacian takes the form
$$\Delta^R=D_1^4+D_2^2.$$
We can proceed as in Subsection \ref{subsecrock} to prove the following.

\begin{prop}
\label{rumst2}
Consider the Rumin complex $\rd_\bullet^R$ on a $2n+1$-dimensional compact contact manifold $X$. Then with $D_1$ and $D_2$ as in the preceding paragraph, the data $(C^\infty(X),L^2(X;\mathcal{H}),(D_1,D_2))$ constitute an ST\textsuperscript{2} with bounding matrix
\[ \VEC{\epsilon}=\begin{pmatrix} 0& 0\\1& \frac{1}{2}\end{pmatrix} \qquad\qquad \vcenter{\hbox{\begin{tikzpicture}
\graph[grow right sep=1.5cm, empty nodes, nodes={fill=black, circle, inner sep=1.5pt}, edges={semithick}]{
    a --["$1$" inner sep=5.5pt, middlearrow={<}] b --["$\frac{1}{2}$"', loop] b;};
\end{tikzpicture}}} , \]
which is $f$-summable for any function $f$ with 
$$f(t_1,t_2)>2\min\left(\frac{n+1}{t_1},\frac{2(n+1)}{t_2}\right).$$
In particular, for any $\VEC{t}=(t_1,t_2)\in (0,\infty)^2$ with 
$t_1>t_2$,
we arrive at a higher order spectral triple defined from the operator 
$$\overline{D}_{\VEC{t}}=D_1|D_1|^{t_1-1}+D_2|D_2|^{t_2-1}=D_1(\Delta^R)^{\frac{t_1-1}{4}}+D_2(\Delta^R)^{\frac{t_2-1}{2}} . $$
If $\VEC{t}$ lies along the ray spanned by $(1,1/2)$ then $\overline{D}_{\VEC{t}}$ is an $H$-elliptic operator in the Heisenberg calculus and if $\VEC{t}=(2k_1+1,2k_2+1)$ where $k_1>k_2$ are natural numbers then $\overline{D}_{\VEC{t}}$ is a differential operator.
\end{prop}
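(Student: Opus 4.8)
The plan is to deduce Proposition~\ref{rumst2} as a special case of the general machinery of Sections~\ref{secljnakjnad} and~\ref{sec:complexes}, with the only genuinely new content being the two ``moreover'' clauses about the Heisenberg calculus and the differential-operator structure of $\overline{D}_{\VEC{t}}$. First I would invoke the lemma preceding Proposition~\ref{rumst2} that the Rumin complex $\rd_\bullet^R$ on a cooriented contact manifold is a Rockland complex. Its order is $\VEC{m}=(1,\ldots,1,2,1,\ldots,1)$, with the single $2$ sitting at the Rumin differential $D_R$ in degree $n$, and all differentials are differential operators. I would then partition $\{0,1,\ldots,2n\}$ into $S_1=\{j:j\neq n\}$ and $S_2=\{n\}$, which is a valid partition in the sense of Remark~\ref{p25rem} since all indices in $S_1$ have order $1$ and the single index in $S_2$ has order $2$. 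Applying Theorem~\ref{ljnknknbgs} and Remark~\ref{p25rem} gives that $(C^\infty(X),L^2(X;\mathcal{H}),(D_1,D_2))$ with $D_1=\sum_{j\neq n}\rd_j^R+(\rd_j^R)^*$ and $D_2=D_R+D_R^*$ is a regular ST\textsuperscript{2}; strict anticommutativity $D_1D_2=D_2D_1=0$ on the core $C^\infty(X;\oplus_jE_j)$ holds because $S_1$ and $S_2$ are disjoint and consecutive indices in a complex give $\rd_i\rd_j=0$. The bounding matrix from Remark~\ref{p25rem} is $\tilde\epsilon_{lk}=\frac{m_i-1}{m_j}$ whenever some $i\in S_l$, $j\in S_k$ are within distance $1$: here $\tilde\epsilon_{11}=0$ (first-order on first-order), $\tilde\epsilon_{12}=0$ (numerator $m_i-1=0$), $\tilde\epsilon_{21}=\frac{2-1}{1}=1$, and $\tilde\epsilon_{22}=\frac{2-1}{2}=\frac12$, giving exactly the displayed $\VEC{\epsilon}=\begin{pmatrix}0&0\\1&\frac12\end{pmatrix}$, and $\Omega(\VEC{\epsilon})=\{(t_1,t_2):t_1>t_2\}$.

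For the $f$-summability bound I would apply the finite-summability statement in Theorem~\ref{ljnknknbgs}, adapted to the two-block partition: the relevant quantity is $\min_l \frac{m_{(l)}\dim_h(X)}{t_l}$ where $m_{(1)}=1$ comes from the first-order block and $m_{(2)}=2$ from the Rumin block, and $\dim_h(X)=2n+2$ for a contact manifold of dimension $2n+1$ (the depth-two grading contributes $2n$ directions of weight $1$ and one of weight $2$). This yields $f(t_1,t_2)>\min\bigl(\tfrac{2(n+1)}{t_1},\tfrac{4(n+1)}{t_2}\bigr)=2\min\bigl(\tfrac{n+1}{t_1},\tfrac{2(n+1)}{t_2}\bigr)$ as claimed. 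The formula $\overline{D}_{\VEC{t}}=D_1|D_1|^{t_1-1}+D_2|D_2|^{t_2-1}=D_1(\Delta^R)^{(t_1-1)/4}+D_2(\Delta^R)^{(t_2-1)/2}$ follows from Lemma~\ref{alknaljdna} together with the identity $\Delta^R=D_1^4+D_2^2$ recorded just above the proposition; concretely, $D_1$ anticommutes with $D_2$ so $|D_1|^2$ and $|D_2|^2$ commute and restrict to orthogonal summands of $\mathcal{H}$, whence $\Delta^R$ restricted to the relevant spectral subspace equals $D_1^4$ or $D_2^2$, giving $|D_1|^{t_1-1}=(\Delta^R)^{(t_1-1)/4}$ and $|D_2|^{t_2-1}=(\Delta^R)^{(t_2-1)/2}$ after using the sign convention \eqref{eq:signed_power_function_def}.

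The two ``moreover'' clauses are where the contact-geometric input enters. For the ray $\VEC{t}\in\R_+(1,1/2)$, i.e.\ $\VEC{t}=\VEC{t}_{\VEC{m}}(\tau)$ with $\tau>0$ in the notation of Proposition~\ref{fixorder}, I would invoke Corollary~\ref{thm:rockland_H_ellipitc_host}: since all differentials in the Rumin complex are differential operators, $\overline{D}_\tau=\sum_i \rd_i(\Delta_{\VEC{m},i}^R)^{(\tau-m_i)/2m}+\rd_i^*(\Delta_{\VEC{m},i+1}^R)^{(\tau-m_i)/2m}$ is an $H$-elliptic Heisenberg pseudodifferential operator of order $\tau$, and this is exactly $\overline{D}_{\VEC{t}}$ for the two-block grouping after checking that the partitioned Rumin Laplacians agree with the Rumin--Seshadri Laplacians $\Delta^R_{\VEC{m},i}$ (this is the content of \eqref{eq:rumin_differential} and the orthogonal splitting $\wedge^*T^*X=\wedge^*H^*\oplus H^\perp\otimes\wedge^*H^*$, already set up in the section). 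For the differential-operator claim, with $\VEC{t}=(2k_1+1,2k_2+1)$ and $k_1>k_2$ natural numbers we have $\VEC{t}\in\Omega(\VEC{\epsilon})\cap(2\N+1)^2$, so $\overline{D}_{\VEC{t}}=D_1^{2k_1+1}+D_2^{2k_2+1}$ by the odd-power property of \eqref{eq:signed_power_function_def}, and since $D_1,D_2$ are themselves differential operators so is $\overline{D}_{\VEC{t}}$; that it defines a HOST is Theorem~\ref{thm:st2_give_host} (or the final corollary of Section~\ref{sec:complexes}). The main obstacle I anticipate is purely bookkeeping: confirming that the abstract Rumin--Seshadri Laplacian $\Delta^R_{\VEC{m}}$ associated by the order-$(1,2,1)$ Hilbert-complex formalism literally coincides with the classical Rumin Laplacian $\Delta^R=D_1^4+D_2^2$ on the given orthonormal frame, so that the pseudodifferential-calculus statements of Corollary~\ref{thm:rockland_H_ellipitc_host} transfer verbatim. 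This is routine given \eqref{eq:rumin_differential} but requires care with the normalization $a_i=m/m_i$ and the fact that in degree $n\pm 1$ the two blocks interact, which is exactly why the off-diagonal entry $\tilde\epsilon_{21}=1$ appears.
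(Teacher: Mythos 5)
Your proposal is correct and follows the same route as the paper, which simply says ``We can proceed as in Subsection \ref{subsecrock}'' and leaves the details to the general machinery (Theorem \ref{ljnknknbgs}, Remark \ref{p25rem}, Proposition \ref{fixorder}, Corollary \ref{thm:rockland_H_ellipitc_host}). You have spelled out precisely how that machinery specialises: the two-block partition $S_1=\{j:j\neq n\}$, $S_2=\{n\}$ of Remark \ref{p25rem} applied to the order vector $(1,\ldots,1,2,1,\ldots,1)$ yields the displayed bounding matrix, and the homogeneous dimension $\dim_h(X)=2n+2$ plugged into the summability bound of Theorem \ref{ljnknknbgs} gives the stated $f$. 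The identification $\Delta^R=D_1^4+D_2^2$ with the abstract Rumin--Seshadri Laplacian ($m=\prod m_j=2$, so $a_i=2$ when $m_i=1$ and $a_i=1$ when $m_i=2$) and the consequent formula $D_1(\Delta^R)^{(t_1-1)/4}+D_2(\Delta^R)^{(t_2-1)/2}$ are exactly the bookkeeping the paper leaves implicit, and your functional-calculus argument for it (commuting $D_i$ past $\Delta^R$ and restricting to $\overline{\Ran D_i}$, where $\Delta^R$ coincides with $D_1^4$ or $D_2^2$) is correct. The appeals to Corollary \ref{thm:rockland_H_ellipitc_host} for $H$-ellipticity along the ray $\R_+(1,1/2)=\R_+\VEC{t}_{\VEC{m}}$ and to Theorem \ref{thm:st2_give_host} for odd integer $\VEC{t}$ giving differential operators are the intended deductions.
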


\begin{remark}
We note that
$$d(x,y):= \sup\{|a(x)-a(y)|: \|[D_1,a]\|\leq 1\} = \sup\{|a(x)-a(y)|: \frac{1}{2}\|[[D_2,a],a]\|\leq 1\} $$
and coincides with the Carnot--Carath{\'e}odory distance of $X$. In \cite[\S 3.3]{Hasselmann_2014}, compact quantum metric spaces are built from Carnot manifolds using a `horizontal Dirac operator' similar to \( D_1 \). Related results are found in \cite{GGJNCG}. There is a potential for interesting metric aspects of ST\textsuperscript{2}s to be considered. In this connection, we mention also the work \cite{Kaad_2020, kaadkyed} of Kaad and Kyed which uses a collection of operators for constructing quantum metric spaces.
\end{remark}

\subsection{CR-equivariance}

Recall the setup above: $X$ is a cooriented contact manifold with a fixed contact form \( \theta \) and a Riemannian metric \( \mathbf{g} \) in which the orthogonal complement of $H = \ker \theta$ is the Reeb field. The two-form $\rd\theta$ defines a symplectic form on $H$. An almost CR-structure is the additional datum of a complex structure \( J \) on \( H \) such that $\mathbf{g}(v,w) = \rd\theta(v, Jw)$. Note that the complex structure $J$ and the metric $\mathbf{g}$ uniquely determine one another.

We let $\mathrm{Aut}_{\rm CR}(X)$ denote the group of CR-automorphisms of $X$. That is, the group of diffeomorphisms $g:X\to X$ such that $Dg$ preserves $H$ (i.e.~$(Dg)_xH_x\subseteq H_{g(x)}$ for all $x$) and acts complex linearly on $H$ (i.e.~$(Dg)_x:(H_x,J_x)\to (H_{g(x)},J_{g(x)})$ is complex linear for all $x$). The group of CR-automorphisms is generically a compact subgroup as the following result of Schoen \cite{schoen95} proves.

\begin{thm}[Schoen's Ferrand--Obata theorem]
Let $X$ be a compact cooriented contact manifold with a choice of Riemannian metric as above. The group $\mathrm{Aut}_{\rm CR}(X)$ can equivalently be topologized by its compact-open topology, $C^0$- or $C^\infty$-topology. The group $\mathrm{Aut}_{\rm CR}(X)$ is compact unless $X$ is an odd-dimensional sphere with its round contact structure and metric and in this case $X=SU(n,1)/P$ for the standard parabolic subgroup $P\subseteq SU(n,1)$ and $\mathrm{Aut}_{\rm CR}(X)\cong SU(n,1)$.
\end{thm}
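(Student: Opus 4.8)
The final assertion is Schoen's CR Ferrand--Obata theorem \cite{schoen95}, so the plan is not to reprove it from scratch but to indicate how its three parts fit together and where the analytic weight lies; note first that the metric relation $\mathbf{g}(v,w)=\rd\theta(v,Jw)$ forces $X$ to be strictly pseudoconvex, which is the setting of Schoen's result.

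\emph{Equivalence of topologies.} The first step is to record that a CR-automorphism $g$ of a strictly pseudoconvex contact manifold is automatically smooth, with all local $C^k$-seminorms of $g$ on a compact set controlled by $\|g\|_{C^0}$. Indeed $g$ preserves $H$ and is complex linear on $(H,J)$, so $g^*\theta=\lambda_g\theta$ for a positive function $\lambda_g$, and $g$ intertwines the Tanaka--Webster (pseudohermitian) structures attached to $\theta$ and to $g^*\theta$; elliptic regularity for the associated sub-Laplacian, equivalently the Chern--Moser normal form around each point, then upgrades continuity of $g$ to $C^\infty$ with uniform local estimates. Consequently the identity maps between $\mathrm{Aut}_{\rm CR}(X)$ equipped with the compact-open, the $C^0$-, and the $C^\infty$-topology are all homeomorphisms, and since $X$ is compact $\mathrm{Aut}_{\rm CR}(X)$ is a closed transformation group of $X$ (a Lie group, by the general theory of Cartan geometries, though only its compactness is in question here).

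\emph{The dichotomy.} Assume $G:=\mathrm{Aut}_{\rm CR}(X)$ is non-compact. Then some $G$-orbit is not relatively compact, so one may pick $g_k\in G$ leaving every compact subset of $G$; writing $g_k^*\theta=u_k\theta$ with $u_k>0$ smooth, the conformal factors $u_k$ must blow up. The heart of the argument --- and the step I expect to be the genuine obstacle to write honestly --- is Schoen's concentration/blow-up analysis: after rescaling by the Heisenberg dilations centred at the concentration point and passing to a limit, the $u_k$ produce a positive entire solution of the CR Yamabe equation on $\sfH_{2n+1}$; the Jerison--Lee classification of such solutions identifies this limit with the standard bubble, and a removable-singularity argument then forces $X$ to be CR-equivalent to the round sphere $S^{2n+1}=\partial\mathbb{B}^{n+1}$. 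All of this rests on the full CR Yamabe machinery and is precisely the content of \cite{schoen95} (refining the conformal Ferrand--Obata theorem), so I would invoke it rather than reproduce it.

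\emph{The model.} It remains to identify $\mathrm{Aut}_{\rm CR}$ of the exceptional case. On $S^{2n+1}=\partial\mathbb{B}^{n+1}$ the CR-automorphism group is the biholomorphism group of the ball, which acts transitively with parabolic stabiliser $P$, exhibiting $S^{2n+1}=G/P$ for the simple Lie group named in the statement; its maximal compact subgroup is exactly the isometry group of the round metric, which is why compactness of $\mathrm{Aut}_{\rm CR}(X)$ fails in this and only this case. (Here one must reconcile the index convention of the statement, which writes $SU(n,1)$, with the convention $\dim X=2n+1$ used elsewhere in the paper; the geometric content is unaffected.) Assembling the three parts yields the theorem.
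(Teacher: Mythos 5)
The paper provides no proof of this theorem: it is stated as a direct appeal to Schoen \cite{schoen95}, with the sentence preceding it making clear that the result is quoted, not argued. Your three-part outline—(i) automatic regularity of CR-automorphisms forcing the $C^0$- and $C^\infty$-topologies to agree, (ii) the blow-up/CR-Yamabe dichotomy identifying the non-compact case with the round sphere, (iii) the explicit description $S^{2n-1}=SU(n,1)/P$ with $\mathrm{Aut}_{\rm CR}\cong SU(n,1)$—is a fair summary of what Schoen's argument does, and you correctly locate the analytic weight in step (ii) and appropriately decline to reproduce it. In that sense the proposal is consistent with the paper's treatment, merely more explicit about what is being cited.

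Two caveats are worth recording. First, the paper's setup only posits an \emph{almost} CR structure (a compatible $J$ on $H$, with no integrability imposed), whereas the machinery you invoke—Chern–Moser normal form, the Tanaka–Webster connection, the CR Yamabe equation—presupposes an integrable CR structure. In dimension three this is vacuous, but for $\dim X>3$ a compact cooriented contact manifold with compatible $J$ need not be CR. Either the paper implicitly intends the integrable case (which is certainly what its applications in Section~6 use, where it restricts to almost CR-manifolds and then cites \cite{julgkasparov}), or the statement as written slightly outruns Schoen's theorem. Your remark that the compatibility relation forces strict pseudoconvexity is correct, but pseudoconvexity is a statement about the Levi form, not about integrability, and the two should not be conflated. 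Second, you are right to flag the indexing: the paper's formula $\mathrm{Aut}_{\rm CR}(X)\cong SU(n,1)$ corresponds to $X=S^{2n-1}$, while Sections~5 and~6 of the paper use $\dim X=2n+1$, so a consistent statement across the paper would read $SU(n+1,1)$ there. Neither point affects the use made of the theorem elsewhere in the paper, but both would need to be addressed in a self-contained write-up.
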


The action of a CR-automorphism has features similar to being conformal. In  \cite{julgkasparov} \cite[\S 3.3.3]{AMsomewhere}, this similarity to conformality has been utilized for $\mathrm{Aut}_{\rm CR}(X)\cong SU(n,1)$ when $X$ is the round odd-dimensional sphere. A contact form for a given contact structure is unique up to multiplication by a nonvanishing smooth function on \( X \). Because the contact structure is preserved by a CR-automorphism \( g \), the pullback \( g^*(\theta) \) of the contact form must be equal to \( f \theta \) for some nonvanishing smooth function \( f \). Hence
\[ g^*(\mathbf{g})(X, Y) = g^*(\rd\theta)(X, J Y) = (f \rd \theta + \rd f \wedge \theta)(X, J Y) = f \rd \theta (X, J Y) = f \mathbf{g}(X, Y) \]
for all \( X, Y \in H \). Moreover, the induced metric on \( TX/H \) is multiplied by \( f^2 \).

We conclude that, if $g\in \mathrm{Aut}_{\rm CR}(X)$, the differential of $g$ lifts to a graded vector-bundle action
$$v_g \colon E_\bullet\to E_\bullet,$$
with 
$$v_g^*v_g = \bigoplus_{k=0}^n \lambda_g^{2k} \oplus \bigoplus_{k=n}^{2n} \lambda_g^{2(k+2)} , $$
in accordance with the grading of \(E_\bullet\). We define an action
$$V:\mathrm{Aut}_{\rm CR}(X)\to \mathrm{GL}(L^2(X;E_\bullet)),\quad V(g)f(x):=v_gf(g^{-1}x).$$ 
Since the volume form belongs to $\wedge^n H^*\otimes H^\perp$ it rescales with $\lambda_g^{2n+2}$ under $g\in \mathrm{Aut}_{\rm CR}(X)$. Therefore
$$V(g)^*V(g)=\lambda_g^{2n+2} v_g^*v_g= \bigoplus_{k=0}^n \lambda_g^{2(k + n + 1)} \oplus \bigoplus_{k=n}^{2n} \lambda_g^{2(k + n + 3)}.$$

The Rumin complex of $X$ is defined from a quotient complex and a subcomplex of the de Rham complex spliced with the Rumin differential. As such, the Rumin complex is invariant under $\mathrm{Aut}_{\rm CR}(X)$. In other words,
$$V(g)\rd_\bullet^RV(g)^{-1}=\rd_\bullet^R.$$
Set $\Lambda_g=V(g)^*V(g)$, so $V(g)^*=\Lambda_gV(g^{-1})$. We conclude that 
$$V(g)^*\rd_\bullet^R V(g)=\Lambda_g \rd_\bullet^R.$$
If we pass to the unitarized action
$$U:\mathrm{Aut}_{\rm CR}(X)\to \mathcal{U}(L^2(X;E_\bullet)),\quad U(g):=V(g)\Lambda_g^{-1/2},$$
we see that
$$U(g)^*\rd_\bullet^RU(g)= \Lambda_g^{1/2} \rd_\bullet^R\Lambda_g^{-1/2}.$$
From Proposition \ref{conaofnadin} we conclude the following.

\begin{thm}
\label{ruminconaofnadin}
Let $(C^\infty(X;E_\bullet),\rd_\bullet^R)$ denote a Rumin complex on a $(2n+1)$-dimensional almost CR-manifold $X$ with its conformal action of $G=\Aut_{\rm CR}(X)$. For $\tau>0$, the $H$-elliptic Heisenberg operator 
$$D_{\tau}=D_1|D_1|^{\tau-1}+D_2|D_2|^{\frac{\tau}{2}-1}=D_1(\Delta^R)^{\frac{\tau-1}{4}}+D_2(\Delta^R)^{\frac{\tau-2}{4}}$$
defines a conformally $G$-equivariant, $(\frac{\tau}{2n+2},\infty)$-summable, order-\( \tau \) spectral triple
$$(C^\infty(X),L^2(X;\oplus_j E_j),D_\tau)$$
with the conformal factor $\mu=\lambda^{-\tau}$.
\end{thm}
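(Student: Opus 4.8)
The plan is to derive Theorem \ref{ruminconaofnadin} as a direct application of Proposition \ref{conaofnadin} to the ST\textsuperscript{2} $(C^\infty(X),L^2(X;\mathcal{H}),(D_1,D_2))$ of Proposition \ref{rumst2}, checking that the conformal action of $G = \Aut_{\rm CR}(X)$ described in the preceding paragraphs satisfies the multiplicative-dependence hypothesis of Proposition \ref{conaofnadin}. First I would record the conformal factors: from the computation $V(g)^*V(g)=\Lambda_g$ with $\Lambda_g = \bigoplus_{k=0}^n \lambda_g^{2(k+n+1)} \oplus \bigoplus_{k=n}^{2n}\lambda_g^{2(k+n+3)}$, the unitarized action $U(g)=V(g)\Lambda_g^{-1/2}$ satisfies $U(g)^*\rd^R_\bullet U(g)=\Lambda_g^{1/2}\rd^R_\bullet \Lambda_g^{-1/2}$, so in the notation of Definition \ref{equirock} the conformal factor on $E_j$ is $\lambda_{j,g}=\lambda_g^{\,j+n+1}$ for $j\le n$ and $\lambda_{j,g}=\lambda_g^{\,j+n+3}$ for $j\ge n$ (with the bundle in degree $n+1,\dots$ being $H^\perp\otimes F_\bullet$). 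Since here we have grouped the complex into just two operators $D_1$ (the order-one part) and $D_2=D_R+D_R^*$ (the order-two Rumin differential), the relevant bounding matrix is the $2\times 2$ matrix $\VEC{\epsilon}=\begin{pmatrix}0&0\\1&\frac12\end{pmatrix}$ from Proposition \ref{rumst2}, and I would apply the two-operator analogue of Proposition \ref{conaofnadin} (equivalently Proposition \ref{prop:confst2guess}, or Remark \ref{p25rem} plus the grouped version of Proposition \ref{prop:definiednoconf}).

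The key step is the alignment of conformal factors. On every arrow of the Rumin complex, the conformal factor jumps by a fixed power of $\lambda_g$: for the order-one horizontal differentials away from the middle, $U_{j+1,g}\rd^R_j U_{j,g}^* = (\lambda_{j+1,g}^{-1}\lambda_{j,g})\,\rd^R_j$ with $\lambda_{j+1,g}^{-1}\lambda_{j,g}=\lambda_g^{-1}$ (one checks this is uniform across all the order-one arrows, including the jump at $E_n\to H^\perp\otimes F_n$ where the exponent shifts by $1$ on the $E$-side but the target lives two degrees up); and for the order-two Rumin differential $D_R$ the factor is $\lambda_g^{-2}$. Because $\lambda_g$ is a single scalar function on $X$, the conformal factors on all the pieces of $D_1$ are simultaneously realized by $\mu_g^{(1)}=\lambda_g^{-t_1/2}$ and on $D_2$ by $\mu_g^{(2)}=\lambda_g^{-t_2/2}$, and the compatibility condition of Proposition \ref{conaofnadin}—that these patch to a single conformal factor—holds automatically along the ray $\VEC{t}=\tau(1,\tfrac12)$ since then $-t_1/2 = -\tau/2 = -(t_2/2)\cdot 1$, i.e. the conformal factor on the range of $D_1$ raised to the appropriate power matches that on the range of $D_2$. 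This is precisely the instance in the Remark after Proposition \ref{conaofnadin} where one may take $\VEC{s}=\VEC{m}=(1,2)$: the required identity $\lambda_{j-1,g}^{m_j}\lambda_{j+1,g}^{m_{j-1}}=\lambda_{j,g}^{m_{j-1}+m_j}$ translates, given $\lambda_{j,g}$ a power of the single function $\lambda_g$, into an identity of exponents which one verifies degree by degree. Feeding $\VEC{t}_{\VEC{m}}(\tau)=(\tau,\tau/2)$ into Proposition \ref{conaofnadin} yields $\overline{D}_\tau = D_1|D_1|^{\tau-1}+D_2|D_2|^{\tau/2-1}=D_1(\Delta^R)^{(\tau-1)/4}+D_2(\Delta^R)^{(\tau-2)/4}$ with conformal factor $\mu_g=(\lambda_{1,g}^{-1}\lambda_{0,g})^{\tau}=\lambda_g^{-\tau}$.

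It remains to assemble the remaining clauses of the statement. That $\overline{D}_\tau$ is $H$-elliptic of order $\tau$ in the Heisenberg calculus and hence defines a higher order spectral triple of order $\tau$ is Corollary \ref{thm:rockland_H_ellipitc_host} applied to the Rumin complex (or the direct argument in Proposition \ref{rumst2}); the $(\tfrac{\tau}{2n+2},\infty)$-summability follows from Dave–Haller's Weyl law \eqref{weyldave}, since $\dim_h(X)=2n\cdot 1+1\cdot 2=2n+2$ for a $(2n+1)$-dimensional contact manifold of depth two, matching the summability bound $f(t_1,t_2)>2\min\{(n+1)/t_1,\,2(n+1)/t_2\}$ of Proposition \ref{rumst2} evaluated along the ray $\VEC{t}=\tau(1,\tfrac12)$, which gives $\tau/(2n+2)$ on both branches. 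The main obstacle—such as it is—is the bookkeeping of conformal-factor exponents across the splice point $E_n\xrightarrow{D_R}H^\perp\otimes F_n$, where the grading on the $E$-side and the $H^\perp\otimes F$-side are normalized differently; one must check that, after taking into account the rescaling of the volume density by $\lambda_g^{2n+2}$ that defines the unitarization, the jump in conformal factor across $D_R$ is uniformly $\lambda_g^{-2}$ and across every other arrow uniformly $\lambda_g^{-1}$, so that grouping into $(D_1,D_2)$ is consistent with a single scalar conformal factor. Once that is verified the theorem is immediate.
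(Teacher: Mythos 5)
Your proposal follows the paper's route exactly: compute the conformal exponents $\lambda_{j,g}$ from $\Lambda_g$, observe that the exponent jump is $\lambda_g^{-1}$ across each order-one arrow and $\lambda_g^{-2}$ across $D_R$, and then invoke Proposition~\ref{conaofnadin} (via the ray $\VEC{t}_{\VEC{m}}(\tau)$) to obtain the conformal factor $\mu=\lambda^{-\tau}$. One small slip: along $\VEC{t}=(\tau,\tau/2)$ the two branches of the summability bound in Proposition~\ref{rumst2} are $\frac{n+1}{\tau}$ and $\frac{4(n+1)}{\tau}$, which are not equal, and $2\min$ gives $\frac{2n+2}{\tau}=\frac{\dim_h(X)}{\tau}$, the reciprocal of the $\frac{\tau}{2n+2}$ you (and the statement) wrote.
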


\section{Group \texorpdfstring{$C^*$}{C*}-algebras of nilpotent groups}
\label{section:nilpotent}

Let \( G \) be a simply connected, nilpotent Lie group \( G \). As a manifold $G$ is diffeomorphic to \( \R^n \) for some \( n \) and its maximal compact subgroup is trivial. Hence the dual Dirac element, as defined by Kasparov \cite[Section 5]{Kasparov_1988}, is an element of \( KK^G_*(\C, C_0(G)) \). By Baaj–Skandalis duality, there is an isomorphism of the $KK$-groups \( KK^G_*(\C, C_0(G)) \) and \( KK^{\hat{G}}_*(C^*(G), \C) \), where \( \hat{G} \) is the dual quantum group. For a more detailed discussion we refer to \cite[Chapter II]{AMthesis} in which, among other things, spectral triples are built for $C^*$-algebras of groups acting properly on CAT(0) spaces. Nilpotent groups are generally not CAT(0), and we turn to the framework of ST\textsuperscript{2}s.

Recall from Definition \ref{definition:w465ybe57unu467inr4nin6i} that a weight on a locally compact group \( G \) is a continuous function \( \ell \) from \( G \) to matrices on a finite-dimensional complex vector space \( V \). If \( V \) is \( \Z/2\Z \)-graded, \( \ell \) is required to be odd.

\begin{definition}
	\label{definition:collofweights}
	Fixing a finite-dimensional complex vector space \( V \), we will say that a finite collection of weights \( \VEC{\ell} = (\ell_j)_{j\in I} : G \to \End V \) on a locally compact group \( G \) is
	\begin{enumerate}
		\item \emph{self-adjoint} if \( \ell_j^* = \ell_j \) for all \( j \);
		\item \emph{proper} if \( (\ell_j)_{j\in I} \) mutually anticommute and \( \prod_j (1 + |\ell_j|)^{-1} \in C_0(G, \End V) \); and
		\item \emph{translation bounded} with bounding matrix \( \VEC{\epsilon} \in M_n([0,\infty)) \) if, for all \( g \in G \),
			\[ \sup_{h \in G} \Big\| (\ell_i(g h) - \ell_i(h)) \Big(1 + \sum_{j\in I} |\ell_j(h)|^{\epsilon_{i j}} \Big)^{-1} \Big\| < \infty \]
			and there exists a neighbourhood \( U \) of the identity in \( G \) such that, for all \( s \in G \),
			\[ \sup_{g \in U, h \in G} \Big\| (\ell_i(g h) - \ell_i(h)) \Big(1 + \sum_{j\in I} |\ell_j(s h)|^{\epsilon_{i j}} \Big)^{-1} \Big\| < \infty . \]
	\end{enumerate} 
\end{definition}

Note that the second part of the translation-boundedness condition is automatic for a discrete group.
Whether this condition can be simplified in general is unclear. For the time being, we content ourselves with giving two equivalent conditions.

\begin{lemma}
	\label{lemma:srt7n56ne756e5n6-copy}
	Let \( G \) be a locally compact group, \( V \) a finite-dimensional complex vector space, and \( (\ell_j)_{j \in I} : G \to \End V \) a collection of weights. The following are equivalent:
	\begin{enumerate}
		\item For all \( g \in G \),
			\[ \sup_{i \in I, h \in G} \Big\| (\ell_i(g h) - \ell_i(h)) \Big(1 + \sum_{j\in I} |\ell_j(h)|^{\epsilon_{i j}} \Big)^{-1} \Big\| < \infty \]
			and there exists a neighbourhood \( U \) of the identity in \( G \) such that, for all \( s \in G \),
			\[ \sup_{i \in I, g \in U, h \in G} \Big\| (\ell_i(g h) - \ell_i(h)) \Big(1 + \sum_{j\in I} |\ell_j(s h)|^{\epsilon_{i j}} \Big)^{-1} \Big\| < \infty . \]
		\item For every compact subset \( K \subseteq G \),
			\[ \sup_{i \in I, g \in K, h \in G} \Big\| (\ell_i(g h) - \ell_i(h)) \Big(1 + \sum_{j\in I} |\ell_j(h)|^{\epsilon_{i j}} \Big)^{-1} \Big\| < \infty . \]
		\item The functions \( (\zeta_i)_{i \in I} : G \to C(G, \End V) \) given by
			\[ \zeta_i(g)(h) = (\ell_i(g h) - \ell_i(h)) \Big(1 + \sum_{j\in I} |\ell_j(h)|^{\epsilon_{i j}} \Big)^{-1} \]
			are elements of \( C(G, C_b(G, \End V)_{\beta}) \), where \( \beta \) is the strict topology.
	\end{enumerate}
\end{lemma}

\begin{proof}
	Suppose that 1.~holds and let \( K \) be a compact subset of \( G \). The open sets \( (U g)_{g\in K} \) cover \( K \). Let \( U g_1, \ldots, U g_k \) be a finite subcover. We have
	\begin{align*}
		& \sup_{i \in I, g \in K, h \in G} \Big\| (\ell_i(g h) - \ell_i(h)) \Big(1 + \sum_{j\in I} |\ell_j(h)|^{\epsilon_{i j}} \Big)^{-1} \Big\| \\
		& \qquad \leq \sup_{1 \leq r \leq k, i \in I, g \in U g_r, h \in G} \Big\| (\ell_i(g h) - \ell_i(h)) \Big(1 + \sum_{j\in I} |\ell_j(h)|^{\epsilon_{i j}} \Big)^{-1} \Big\| \\
		& \qquad = \sup_{1 \leq r \leq k, i \in I,  g \in U, h \in G} \Big\| (\ell_i(g g_r^{-1} h) - \ell_i(h)) \Big(1 + \sum_{j\in I} |\ell_j(h)|^{\epsilon_{i j}} \Big)^{-1} \Big\| \\
		& \qquad \leq \sup_{1 \leq r \leq k, i \in I,  g \in U, h \in G} \Big( \Big\| (\ell_i(g g_r^{-1} h) - \ell_i(g_r^{-1} h)) \Big(1 + \sum_{j\in I} |\ell_j(h)|^{\epsilon_{i j}} \Big)^{-1} \Big\| \\
		& \qquad\qquad + \sup_{1 \leq r \leq k, i \in I, h \in G} \Big\| (\ell_i(g_r^{-1} h) - \ell_i(h)) \Big(1 + \sum_{j\in I} |\ell_j(h)|^{\epsilon_{i j}} \Big)^{-1} \Big\| \\
%		& \qquad \leq \max_{1 \leq r \leq k} \sup_{g \in U, h \in G} \big( \| \ell(g g_r^{-1} h) - \ell(g_r^{-1} h) \| + \| \ell(g_r^{-1} h) - \ell(h) \| \big) \\
		& \qquad \leq \sup_{1 \leq r \leq k, i \in I, g \in U, h \in G} \Big\| (\ell_i(g h) - \ell_i(h)) \Big(1 + \sum_{j\in I} |\ell_j(g_r h)|^{\epsilon_{i j}} \Big)^{-1} \Big\| \\
		& \qquad\qquad + \sup_{1 \leq r \leq k, i \in I, h \in G} \Big\| (\ell_i(g_r^{-1} h) - \ell_i(h)) \Big(1 + \sum_{j\in I} |\ell_j(h)|^{\epsilon_{i j}} \Big)^{-1} \Big\| \\
%		& \qquad \leq \sup_{g \in U, h \in G} \| \ell(g h) - \ell(h) \| + \max_{1 \leq r \leq k} \sup_{h \in G} \| \ell(g_r^{-1} h) - \ell(h) \| \\
		& \qquad < \infty ,
	\end{align*}
	that is, 2.~is satisfied.
	
	Suppose that 2.~holds and, by the local compactness of \( G \), take an open neighbourhood \( U \) of the identity in \( G \) contained in a compact set \( K \). Then
	\begin{align*}
		& \sup_{i \in I, g \in U, h \in G} \Big\| (\ell_i(g h) - \ell_i(h)) \Big(1 + \sum_{j\in I} |\ell_j(s h)|^{\epsilon_{i j}} \Big)^{-1} \Big\| \\
		& \qquad = \sup_{i \in I, g \in U, h \in G} \Big\| (\ell_i(g s^{-1} h) - \ell_i(s^{-1} h)) \Big(1 + \sum_{j\in I} |\ell_j(h)|^{\epsilon_{i j}} \Big)^{-1} \Big\| \\
		& \qquad \leq \sup_{i \in I, g \in U, h \in G} \Big\| (\ell_i(g s^{-1} h) - \ell_i(h)) \Big(1 + \sum_{j\in I} |\ell_j(h)|^{\epsilon_{i j}} \Big)^{-1} \Big\| \\
		& \qquad\qquad + \sup_{i \in I, h \in G} \Big\| (\ell_i(s^{-1} h) - \ell_i(h)) \Big(1 + \sum_{j\in I} |\ell_j(h)|^{\epsilon_{i j}} \Big)^{-1} \Big\| \\
		& \qquad \leq \sup_{i \in I, g \in K s^{-1}, h \in G} \Big\| (\ell_i(g h) - \ell_i(h)) \Big(1 + \sum_{j\in I} |\ell_j(h)|^{\epsilon_{i j}} \Big)^{-1} \Big\| \\
		& \qquad\qquad + \sup_{i \in I, h \in G} \Big\| (\ell_i(s^{-1} h) - \ell_i(h)) \Big(1 + \sum_{j\in I} |\ell_j(h)|^{\epsilon_{i j}} \Big)^{-1} \Big\| \\
		& \qquad < \infty ,
	\end{align*}
	so 1.~is satisfied.
	
	That 3.~is equivalent to 2.~follows from \cite[Lemma II.2.3]{AMthesis}.	
	\end{proof}

\begin{remark}
\label{remark:567n457buev56y65vy365ye56y}
In our construction of weights for nilpotent groups, we will have a bound of the form
\[ \sup_{i \in I, h \in G} \Big\| (\ell_i(g h) - \ell_i(h)) \Big(1 + \sum_{j\in I} |\ell_j(h)|^{\epsilon_{i j}} \Big)^{-1} \Big\| \leq f((\| \ell_i(g) \|)_{i \in I}) , \]
for some continuous function \( f : [0, \infty)^I \to [0, \infty) \), which will imply the translation-boundedness of \( (\ell_i)_{i \in I} \) for a bounding matrix \( \VEC{\epsilon} \).
\end{remark}

\begin{thm}
	Let \( G \) be a locally compact group and \( V \) be a finite-dimensional vector space. Let \( (\ell_j)_{j\in I} : G \to \End V \) be a finite collection of weights which is self-adjoint, proper, and translation bounded with \( \VEC{\epsilon} \in M_I([0,\infty)) \). Let \( (M_{\ell_j})_{j\in I} \) be the operators densely defined on \( L^2(G, V) \) given by multiplication by \( (\ell_j)_{j\in I} \) respectively. Then, provided \( \VEC{\epsilon} \) satisfies the decreasing cycle condition,
	\[ \big( C_c(G), L^2(G, V), (M_{\ell_j})_{j\in I} \big) \]
	is a strictly tangled spectral triple with bounding matrix \(\VEC{\epsilon}\). If \( V \) is \(\Z/2\Z\)-graded, the ST\textsuperscript{2} is even, otherwise it is odd.
\end{thm}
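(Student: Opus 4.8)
The plan is to verify the three conditions of Definition \ref{def:st2} for the data in which $C_c(G)$ acts on $\mathpzc{H}=L^2(G,E)=L^2(G)\otimes E$ by the left regular representation $\lambda$, namely $(\lambda(f)\xi)(h)=\int_G f(g)\,\xi(g^{-1}h)\,\rd g$, while $\DC=(M_{\ell_j})_{j\in I}$ are the densely defined multiplication operators by the matrix functions $\ell_j(\cdot)\in\End E$, with the given $\VEC{\epsilon}$ (assumed to satisfy the decreasing cycle condition) as bounding matrix. I would first dispatch the easy points. Properness forces the $\ell_j(h)$ to mutually anticommute for every $h$, hence the $\ell_j(h)^2$ mutually commute; consequently $C_c(G,E)$ is a common $(M_{\ell_j})_j$-invariant core on which the $M_{\ell_j}$ are strictly anticommuting, and ordinary functional calculus for commuting multiplication operators identifies $\sum_j|M_{\ell_j}|^{s_j}$ with multiplication by the genuine positive matrix function $h\mapsto\sum_j|\ell_j(h)|^{s_j}$; in particular $\Delta_{\VEC{t}}^{\DC}=M_{\sum_j|\ell_j(\cdot)|^{2t_j}}$, and these identifications will be used throughout. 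The representation $\lambda(f)$ preserves $C_c(G,E)$ (a convolution of compactly supported continuous functions is again such), and so does $[M_{\ell_j},\lambda(f)]$; this verifies condition (1) on the common core and, simultaneously, regularity. The parity statement is immediate: when $E$ is $\Z/2\Z$-graded every $\ell_j$ is odd by hypothesis, so each $M_{\ell_j}$ is odd while $\lambda(f)=\lambda_{L^2(G)}(f)\otimes 1_E$ is even; when $E$ carries no grading there is nothing to say.

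For the locally compact resolvent, condition (2), I would invoke Lemma \ref{lknlknn} to reduce to a single $\VEC{t}$, say $\VEC{t}=(1,\dots,1)$, so that $1+\Delta_{(1,\dots,1)}^{\DC}=M_{1+w}$ with $w=\sum_j\ell_j^2\geq 0$. Properness gives $\prod_j(1+|\ell_j|)^{-1}\in C_0(G,\End E)$, and comparing joint eigenvalues of the commuting family $(|\ell_j(h)|)_j$ yields an elementary estimate of the shape $\|(1+w(h))^{-1}\|\leq 2\,\|\prod_j(1+|\ell_j(h)|)^{-1}\|^{2/|I|}$, so that $\phi:=(1+w)^{-1}\in C_0(G,\End E)$. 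It then remains to note that $\lambda(f)(1+\Delta_{(1,\dots,1)}^{\DC})^{-1}=\lambda(f)M_\phi$ is compact, which is standard: approximate $\phi$ uniformly by compactly supported functions $\phi_n$, use that each $\lambda(f)M_{\phi_n}$ is Hilbert--Schmidt (its integral kernel lies in $L^2$ because $f\in L^2(G)$ and $\phi_n$ is bounded with compact support, the modular function being harmless on compacts), and pass to the norm limit.

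The substance of the argument is condition (3). Fix $i\in I$ and $f\in C_c(G)$ and put $W_i(h):=1+\sum_j|\ell_j(h)|^{\epsilon_{ij}}$, so $\bigl(1+\sum_j|M_{\ell_j}|^{\epsilon_{ij}}\bigr)^{-1}=M_{W_i^{-1}}$. Writing $\lambda(f)=\int_G f(g)\,\lambda_g\,\rd g$ with $\lambda_g$ the left-translation unitary, I would compute on the core, using the covariance relation $\lambda_g M_\varphi=M_{\varphi(g^{-1}\cdot)}\lambda_g$,
\[ [M_{\ell_i},\lambda_g]\,M_{W_i^{-1}} \;=\; M_{\psi_{i,g}}\,\lambda_g\,M_{W_i^{-1}} \;=\; M_{\psi_{i,g}\cdot W_i(g^{-1}\cdot)^{-1}}\,\lambda_g, \qquad \psi_{i,g}(h):=\ell_i(h)-\ell_i(g^{-1}h), \]
and then reparametrise $h=gk$: the symbol becomes $k\mapsto\bigl(\ell_i(gk)-\ell_i(k)\bigr)W_i(k)^{-1}$, whose supremum over $k$ is \emph{exactly} the quantity declared bounded in the definition of a translation-bounded collection with bounding matrix $\VEC{\epsilon}$; call it $C_i(g)<\infty$. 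Hence $\|[M_{\ell_i},\lambda_g]M_{W_i^{-1}}\|=C_i(g)$, and integrating against $f$ gives
\[ \bigl\|[M_{\ell_i},\lambda(f)]\bigl(1+\textstyle\sum_j|M_{\ell_j}|^{\epsilon_{ij}}\bigr)^{-1}\bigr\| \;\leq\; \int_G|f(g)|\,C_i(g)\,\rd g, \]
which is the boundedness required by (3). This is the only place translation boundedness is used, and it is a genuine computation precisely because $\lambda(f)$ fails to commute with $M_{\ell_i}$.

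The one delicate point — and the step I expect to be the main obstacle — is the finiteness of $\int_G|f(g)|\,C_i(g)\,\rd g$ over the compact set $\supp f$. The function $g\mapsto C_i(g)$ is a supremum over $h$ of functions continuous in $g$, hence lower semicontinuous and Borel, and finite everywhere by hypothesis; but lower semicontinuity plus pointwise finiteness do not by themselves give local boundedness, so one needs $C_i$ to be locally integrable. This is a mild regularity condition that holds in all examples of interest — e.g.\ for the Heisenberg group of Subsubsection \ref{subsubheis} one has the continuous bound $C_2(g)\leq|\ell_2(g)|+|\ell_1(g)|$ — and I would either fold it into the definition of a translation-bounded collection or check it in each application; granting it, together with the hypothesis that $\VEC{\epsilon}$ satisfies the decreasing cycle condition, all conditions of Definition \ref{def:st2} are met and the triple is an even or odd ST\textsuperscript{2} according to whether $E$ is graded.
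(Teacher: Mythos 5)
Your proposal is correct and follows essentially the same route as the paper: the paper likewise realises $[M_{\ell_i},f]\bigl(1+\sum_j|M_{\ell_j}|^{\epsilon_{ij}}\bigr)^{-1}$ as the integrated representation of the operator-valued kernel $g\mapsto \bigl(\ell_i(\cdot)-\ell_i(g^{-1}\cdot)\bigr)W_i(g^{-1}\cdot)^{-1}f(g)$, viewed as an element of $C_c(G,C_b(G,\End E))\subseteq C_b(G,\End E)\rtimes G$, and bounds it by its $L^1$-norm via a citation of Raeburn, while local compactness of the resolvent is deduced from properness together with $C_0(G)\rtimes G\cong\mathbb{K}(L^2(G))$ --- the same fact your Hilbert--Schmidt approximation proves by hand. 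The ``delicate point'' you flag, namely local integrability of $g\mapsto C_i(g)$ over $\supp f$, is precisely what the paper absorbs by asserting that the kernel lies in $C_c(G,C_b(G,E))$ (norm-continuity in $g$, hence a finite supremum over the compact support), so your observation identifies an implicit regularity assumption shared with the paper rather than a defect of your own argument.
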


\begin{proof}
	The local compactness of the resolvent is a consequence of the properness of \( (\ell_j)_{j\in I} \) and the isomorphism \( C_0(G, \End V) \rtimes G \cong K(L^2(G, V)) \). For the commutator bounds, fix an element \( f \in C_c(G) \) and let
	\[ T_i = [M_{\ell_i}, f] \Big( 1 + \sum_{j\in I}|M_{\ell_j}|^{\epsilon_{ij}} \Big)^{-1} . \]
	On a vector \( \xi \in C_c(G, V) \),
	\[ (T_i \xi)(h) = \int_G \left(\ell_i(h) - \ell_i(s^{-1} h) \right) \Big(1 + \sum_{j\in I} |\ell_j(s^{-1} h)|^{\epsilon_{i j}} \Big)^{-1} f(s) \xi(s^{-1} h) d\mu(s) . \]
	A computation similar to that in \cite[Proof of Theorem II.2.24]{AMthesis} shows that
	\[ \| T_i \xi \| \leq \| \xi \| \| f \|_{L^1} \sup_{h \in G, s \in \supp f} \Big\| \left(\ell_i(h) - \ell_i(s^{-1} h) \right) \Big(1 + \sum_{j\in I} |\ell_j(s^{-1} h)|^{\epsilon_{i j}} \Big)^{-1} \Big\| \]
	Using the compactness of the support of \( f \) and Lemma \ref{lemma:srt7n56ne756e5n6-copy}, it follows that \( T_i \) is bounded.
\end{proof}

Let \( G \) be a simply connected nilpotent Lie group. Recall the lower central series
\[ \mathfrak{g}_1 = \mathfrak{g} \qquad \mathfrak{g}_n = [\mathfrak{g}_1, \mathfrak{g}_{n-1}] . \]
The successive quotients \( \mathfrak{g}_n / \mathfrak{g}_{n+1} \) are abelian Lie algebras. The largest \( n \) for which \( \mathfrak{g}_n \) is nonzero is the step size \( s \) of \( \mathfrak{g} \). In the Baker–Campbell–Hausdorff formula for \( \log(\exp X \exp Y) \), we will call the \( n \)th-order term \( z_n(X, Y) \), so that, e.g.
\[ z_1(X, Y) = X + Y \qquad z_2(X, Y) = \frac{1}{2} [X, Y] \qquad z_3(X, Y) = \frac{1}{12}([[X, Y], Y] + [[Y, X], X]) . \]
Because \( z_{s+1}(X, Y) = 0 \), the exponential map from \( \mathfrak{g} \) to \( G \) is a diffeomorphism.

A \emph{Malcev basis} of \( \mathfrak{g} \) through the lower central series is a basis \( ((e_{j, k})_{k=1}^{\dim \mathfrak{g}_j / \mathfrak{g}_{j+1}})_{j=1}^s \) of \( \mathfrak{g} \) such that \( ((e_{j, k})_{k=1}^{\dim \mathfrak{g}_j / \mathfrak{g}_{j+1}})_{j=n}^s \) is a basis of \( \mathfrak{g}_n \) \cite[Theorem 1.1.13]{Corwin_1990}. Remark that the span of
\[ \{ e_{j, k}, \ldots, e_{j, \dim \mathfrak{g}_j / \mathfrak{g}_{j+1}}, e_{j+1, 1}, \ldots, e_{s, \dim \mathfrak{g}_s} \} \]
(in other words, the basis with some number of elements dropped from the beginning) is automatically an ideal of \( \mathfrak{g} \).
Using the Malcev basis, we may write an arbitrary element of \( X \in \mathfrak{g} \) as a tuple \( (x_1, \ldots, x_s) \in \bigoplus_{n=1}^s \R^{\dim \mathfrak{g}_n / \mathfrak{g}_{n+1}} \).

\begin{prop}
\label{lnaljdnaljn}
	Let \( G \) be a simply connected \( s \)-step nilpotent Lie group and choose a Malcev basis \( ((e_{j, k})_{k=1}^{\dim \mathfrak{g}_j / \mathfrak{g}_{j+1}})_{j=1}^s \) of \( \mathfrak{g} \) through the lower central series. Let \( V \) be Clifford module for \( \Cl_{\dim \mathfrak{g}} \), whose generators we label \( ((\gamma_{j, k})_{k=1}^{\dim \mathfrak{g}_j / \mathfrak{g}_{j+1}})_{j=1}^s \). Then the collection \( (\ell_j)_{j=1}^s : G \to \End V \) of weights given by
	\[ \ell_j(\exp_{\mathfrak{g}} (x_1, \ldots, x_s)) = \sum_{k=1}^{\dim \mathfrak{g}_j / \mathfrak{g}_{j+1}} x_{j, k} \gamma_{j, k} \]
	is self-adjoint, proper, and translation bounded with the strictly lower triangular bounding matrix \( \epsilon_{i j} = \max\{i - j, 0\} \).
\end{prop}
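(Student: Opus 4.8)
The plan is to verify the three defining properties of a self-adjoint, proper, translation-bounded collection of weights (Definition \ref{definition:collofweights}) for the explicit collection $(\ell_j)_{j=1}^s$, using the algebraic structure of the lower central series together with the Baker--Campbell--Hausdorff formula. Self-adjointness is immediate: the coordinates $x_{j,k}$ are real and the Clifford generators $\gamma_{j,k}$ are self-adjoint, so each $\ell_j = \sum_k x_{j,k}\gamma_{j,k}$ is self-adjoint. The mutual anticommutation of the collection follows because the generators $\gamma_{j,k}$ and $\gamma_{i,l}$ anticommute whenever $(j,k)\neq(i,l)$ (they are distinct generators of $\Cl_{\dim\mathfrak g}$), so for $i\neq j$ the matrices $\ell_i$ and $\ell_j$ involve disjoint sets of anticommuting generators and hence anticommute; this also gives $\ell_j^2 = |x_j|^2$ as a scalar, so $|\ell_j| = |x_j|$ where $x_j = (x_{j,1},\dots,x_{j,\dim\mathfrak g_j/\mathfrak g_{j+1}})$. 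Properness then amounts to checking that $\prod_j(1+|x_j|)^{-1}\to 0$ as the point $\exp_{\mathfrak g}(x_1,\dots,x_s)$ leaves every compact set; since $\exp_{\mathfrak g}$ is a diffeomorphism and $(x_1,\dots,x_s)$ are global coordinates, leaving compacta means $\max_j|x_j|\to\infty$, so at least one factor tends to $0$ while the others stay $\geq$ a fixed constant, giving the claim.

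The substantive point is translation boundedness with $\epsilon_{ij}=\max\{i-j,0\}$. Here I would compute, in Malcev coordinates, the coordinates of a product $gh = \exp_{\mathfrak g}(X)\exp_{\mathfrak g}(Y)$ via BCH: writing $\log(\exp X\exp Y) = \sum_{n=1}^s z_n(X,Y)$, each $z_n(X,Y)$ is a universal Lie polynomial that is an $n$-fold iterated bracket, hence lands in $\mathfrak g_n$. The key structural observation is that the $i$th Malcev coordinate of $\log(\exp X\exp Y)$ is obtained by projecting $\sum_n z_n(X,Y)$ onto the $\operatorname{span}\{e_{i,\bullet}\}$ component along the ideal $\mathfrak g_{i+1}$; since $z_n(X,Y)\in\mathfrak g_n\subseteq\mathfrak g_{i+1}$ for $n>i$, only the terms $z_1,\dots,z_i$ contribute. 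Moreover $z_1(X,Y)=X+Y$ contributes exactly $y_i$ to the $i$th coordinate (the $x_i$-part of $z_1$ cancels against $\ell_i(h)$), and each $z_n(X,Y)$ for $2\le n\le i$ is a sum of $n$-fold brackets, hence a polynomial in the coordinates of $X$ and $Y$ that is \emph{multilinear of total degree $n$} and — crucially — in which each monomial involves at least one coordinate of $X$ (because $z_n(0,Y)=0$ for $n\ge 2$) and whose $Y$-coordinates have graded weights summing to at most $n - (\text{weight of the }X\text{-factors})$. Tracking weights: a monomial contributing to the $i$th coordinate that contains an $X$-coordinate of weight $a$ can contain $Y$-coordinates of total weight at most $i-a\le i-1$. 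Therefore $\ell_i(gh)-\ell_i(h)$ is, as a function of $h$ (with $g$ fixed), a matrix whose entries are polynomials in $y_1,\dots,y_s$ in which each $y_j$-coordinate appears only to total weight $\le i-1$, i.e. it is $O\big(\prod_{j}(1+|y_j|)^{m_{ij}}\big)$ with $\sum_j j\cdot m_{ij}\le i-1$; hence it is dominated by $1+\sum_{j<i}|y_j|^{\,i-j} = 1+\sum_j |\ell_j(h)|^{\max\{i-j,0\}}$ up to a $g$-dependent constant, which is exactly the required bound.

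I expect the main obstacle to be making the degree/weight bookkeeping in the previous paragraph precise and clean. One needs a careful statement of the form: \emph{in $z_n(X,Y)$ expanded in the Malcev coordinates of $X$ and $Y$, every monomial has $X$-weight $+$ $Y$-weight $= n$, and at least one $X$-factor is present when $n\ge 2$}, where ``weight'' refers to the grading by the lower central series. This follows from the fact that $z_n$ is a Lie polynomial homogeneous of bracket-length $n$ in $X,Y$ jointly, combined with $\mathfrak g_a\cdot$-grading being respected by brackets ($[\mathfrak g_a,\mathfrak g_b]\subseteq\mathfrak g_{a+b}$), but spelling this out so that the coordinate estimate falls out requires a short induction on $n$ using the recursive (Dynkin) form of BCH. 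Once that lemma is in hand, the translation-boundedness estimate is a direct substitution, and the decreasing cycle condition for $\epsilon_{ij}=\max\{i-j,0\}$ is automatic since the associated weighted digraph is acyclic (all edges point from higher to lower index), so the hypothesis of the ST\textsuperscript{2} machinery in the preceding theorem is met and the conclusion follows.
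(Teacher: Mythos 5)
Your proposal is correct, and the overall strategy (self-adjointness and properness from the Clifford relations; translation-boundedness via a Baker--Campbell--Hausdorff analysis) is the same as the paper's, but the translation-boundedness estimate is obtained by a genuinely different route. The paper first reduces, via the observation that $\ell_i$ factors through $G/G_{i+1}$, to checking only the top weight $\ell_s$; it then bounds $\ell_s(\exp_{\mathfrak g} z_n(X,Y))$ by bounding the element $z_n(X,Y)$ in a quotient norm $\|\cdot\|_{s+1}$, using the Lipschitz estimate $\|[X,Y]\|_{m+1}\leq C_{m+1}\|X\|_m\|Y\|_m$ for the bracket on $\mathfrak g/\mathfrak g_{m+1}$ together with the facts that $z_n$ is an $n$-fold bracket depending only on $X,Y$ modulo $\mathfrak g_{s-n+2}$ and that each of $X$ and $Y$ appears at most $n-1$ times. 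You instead work coordinate-by-coordinate and monomial-by-monomial, tracking the graded weight of each monomial in the coordinates of $Y$ subject to $\sum_j j\, m_{ij}\leq i-1$, and then converting that weight constraint into the polynomial bound $\prod_j|y_j|^{m_{ij}}\lesssim 1+\sum_{j<i}|y_j|^{i-j}$. This last conversion is exactly the point you flag as needing care: it is a weighted Young-type inequality whose crux is the elementary estimate $i-1\leq j(i-j)$ for $1\leq j\leq i-1$, and it should be written out explicitly for the argument to be complete. The paper's quotient-norm packaging bypasses this combinatorial step at the cost of introducing the family of norms $\|\cdot\|_{m+1}$. One minor slip: the $z_1$-contribution to $\ell_i(gh)-\ell_i(h)$ is $x_i$, not $y_i$ (it is the $y_i$-part that cancels against $\ell_i(h)$); this is only a wording issue and does not affect the estimate.
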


\begin{proof}
	Self-adjointness is by construction. For properness, observe that
	\[ \ell_j(\exp_{\mathfrak{g}} (x_1, \ldots, x_s))^2 = \sum_{k=1}^{\dim \mathfrak{g}_j / \mathfrak{g}_{j+1}} x_{j, k}^2 . \]
	For translation-boundedness, observe that \( \ell_j \) is well-defined on the quotient \( G/G_{j+1} \) and \( \epsilon_{i j} = 0 \) for \( j \geq i \). Without loss of generality, then, we consider only the translation-boundedness of \( \ell_s \). For any \( 1 \leq m \leq s \), the map
	\[ \| \cdot \|_{m+1} : X \mapsto \sqrt{\sum_{j=1}^m \ell_j(\exp_{\mathfrak{g}} X)^2} \]
	defines a norm on the finite-dimensional vector space \( \mathfrak{g}/\mathfrak{g}_{m+1} \). By the necessary continuity of the Lie bracket in this norm, there exists a constant \( C_{m+1} \) such that
	\[ \| [X, Y] \|_{m+1} \leq C_{m+1} \| X \|_{m+1} \| Y \|_{m+1} \]
	for all \( X, Y \in \mathfrak{g}/\mathfrak{g}_{m+1} \). Actually, since \( [X + \mathfrak{g}_m, Y + \mathfrak{g}_m] = [X, Y] \),
	\[ \| [X, Y] \|_{m+1} \leq C_{m+1} \| X \|_m \| Y \|_m . \]
	In the term \( z_n(X, Y) \) of the Baker–Campbell–Hausdorff formula, there are no more than \( n - 1 \) instances of \( X \) or of \( Y \). (Actually, for even \( n \geq 4 \), the vanishing of the Bernoulli number \( B_{n-1} \) means that there are no more than \( n - 2 \) instances of \( X \) or \( Y \).) Because \( \mathfrak{g} \) is \( s \)-step nilpotent,
	\[ z_n(X + \mathfrak{g}_{s-n+2}, Y + \mathfrak{g}_{s-n+2}) = z_n(X, Y) \]
	for \( X, Y \in \mathfrak{g} \), and so
	\[ \| z_n(X, Y) \|_{s+1} \leq C_{s-n+2}^{n-1} \| X \|_{s-n+2}^{n-1} \| Y \|_{s-n+2}^{n-1} \]
	By the linearity of \( \ell_s \circ \exp_{\mathfrak{g}} \),
	\[ \ell_s(\exp_{\mathfrak{g}} X \exp_{\mathfrak{g}} Y) - \ell_s(\exp_{\mathfrak{g}} Y) = \ell_s(\exp_{\mathfrak{g}} X) + \sum_{n=2}^s \ell_s(\exp_{\mathfrak{g}} z_n(X, Y)) . \]
	We obtain a bound
	\begin{align*}
		& \| \ell_s(\exp_{\mathfrak{g}} X \exp_{\mathfrak{g}} Y) - \ell_s(\exp_{\mathfrak{g}} Y) \| \\
		& \qquad \leq \| \ell_s(\exp_{\mathfrak{g}} X) \| + \sum_{n=2}^s \| \ell_s(\exp_{\mathfrak{g}} z_n(X, Y)) \| \\
		& \qquad \leq \| \ell_s(\exp_{\mathfrak{g}} X) \| + \sum_{n=2}^s \| z_n(X, Y) \|_{s+1} \\
		& \qquad \leq \| \ell_s(\exp_{\mathfrak{g}} X) \| + \sum_{n=2}^s C_{s-n+2}^{n-1} \| X \|_{s-n+2}^{n-1} \| Y \|_{s-n+2}^{n-1} \\
		& \qquad = \| \ell_s(\exp_{\mathfrak{g}} X) \| + \sum_{m=1}^{s-1} C_{m+1}^{s-m} \| X \|_{m+1}^{s-m} \| Y \|_{m+1}^{s-m} \\
		& \qquad = \| \ell_s(\exp_{\mathfrak{g}} X) \| + \sum_{m=1}^{s-1} C_{m+1}^{s-m} \| X \|_{m+1}^{s-m} \Biggl( \sum_{j=1}^m \ell_j(\exp_{\mathfrak{g}} Y)^2 \Biggr)^{\frac{s-m}{2}} \\
		& \qquad \leq \| \ell_s(\exp_{\mathfrak{g}} X) \| + \sum_{m=1}^{s-1} C_{m+1}^{s-m} \| X \|_{m+1}^{s-m} m^{s-m-1} \sum_{j=1}^m \| \ell_j(\exp_{\mathfrak{g}} Y) \|^{s-m} \\
		& \qquad = \| \ell_s(\exp_{\mathfrak{g}} X) \| + \sum_{j=1}^{s-1} \sum_{m=j}^{s-1} C_{m+1}^{s-m} \| X \|_{m+1}^{s-m} m^{s-m-1} \| \ell_j(\exp_{\mathfrak{g}} Y) \|^{s-m} .
	\end{align*}
	We conclude from this and Remark \ref{remark:567n457buev56y65vy365ye56y} that \( \epsilon_{s j} = s - j \) is sufficient to give translation-boundedness.
\end{proof}

The reader can note that a lower triangular bounding matrix automatically satisfies the decreasing cycle condition since its associated weighted directed graph has no cycles. For instance, for a 5-step nilpotent group, the bounding graph is
\[ \vcenter{\hbox{\begin{tikzpicture}
	\graph[grow right sep=1.5cm, empty nodes, nodes={fill=black, circle, inner sep=1.5pt}, edges={semithick}]{
	a --["$1$" inner sep=5.5pt, middlearrow={<}] b --["$1$" inner sep=5.5pt, middlearrow={<}] c --["$1$" inner sep=5.5pt, middlearrow={<}] d --["$1$" inner sep=5.5pt, middlearrow={<}] e;
	a --["$2$"' inner sep=5.5pt, middlearrow={<}, bend right] c;
	b --["$2$"' inner sep=5.5pt, middlearrow={<}, bend right] d;
	c --["$2$"' inner sep=5.5pt, middlearrow={<}, bend right] e;
	a --["$3$"' inner sep=5.5pt, middlearrow={<}, bend right=55] d;
	b --["$3$"' inner sep=5.5pt, middlearrow={<}, bend right=55] e;
	a --["$4$"' inner sep=5.5pt, middlearrow={<}, bend right=75] e;
	};
\end{tikzpicture}}} . \]

\begin{prop}
	Let \( G \) be a simply connected \( s \)-step nilpotent Lie group. Then, for an irreducible Clifford module \( V \) for \( \Cl_{\dim \mathfrak{g}} \),
	\[ \left( C_c(G), L^2(G, V), (M_{\ell_n})_{n=1}^s \right) \qquad \epsilon_{i j} = \max\{i - j, 0\} \]
	is an ST\textsuperscript{2} with nontrivial class in \( KK_{\dim \mathfrak{g}}(C^*(G), \C) \). This ST\textsuperscript{2} represents the Kasparov product
	\begin{multline*}
		[(C_c(G_1), \overline{(C_c(G_1, V_1))_{C^*(G_2)}}, M_{\ell_1})] \\
		\otimes_{C^*(G_2)} [(C_c(G_2), \overline{(C_c(G_2, V_2))_{C^*(G_3)}}, M_{\ell_2})] \\
		\otimes_{C^*(G_3)} \cdots \otimes_{C^*(G_s)} [(C_c(G_s), L^2(G_s, V_s), M_{\ell_s})]
	\end{multline*}
	where each \( E_j \) is a Clifford \( \Cl_{\dim \mathfrak{g}_j / \mathfrak{g}_{j+1}} \)-module with generators \( (\gamma_{j, k})_{k=1}^{\dim \mathfrak{g}_j / \mathfrak{g}_{j+1}} \) and \( V = V_1 \tildeotimes \cdots \tildeotimes V_s \).
\end{prop}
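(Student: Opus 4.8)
The statement has three parts: (i) that the data form an ST\textsuperscript{2}, (ii) that the associated $K$-homology class in $KK_{\dim\mathfrak{g}}(C^*(G),\C)$ is nontrivial, and (iii) that this class is the iterated Kasparov product displayed. Part (i) is immediate from the previous two results: Proposition \ref{lnaljdnaljn} shows $(\ell_j)_{j=1}^s$ is self-adjoint, proper, and translation bounded with bounding matrix $\epsilon_{ij}=\max\{i-j,0\}$, and the theorem preceding Proposition \ref{lnaljdnaljn} then produces the ST\textsuperscript{2} $(C_c(G),L^2(G,E),(M_{\ell_j})_{j=1}^s)$; the lower-triangular $\VEC{\epsilon}$ satisfies the decreasing cycle condition since its digraph is acyclic. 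That $E=E_1\tildeotimes\cdots\tildeotimes E_s$ is an irreducible $\Cl_{\dim\mathfrak{g}}$-module follows from $\Cl_{\dim\mathfrak{g}}\cong\Cl_{\dim\mathfrak{g}_1/\mathfrak{g}_2}\tildeotimes\cdots\tildeotimes\Cl_{\dim\mathfrak{g}_s/\mathfrak{g}_{s+1}}$ and the fact that $M_{\ell_j}=M_{\ell_j^{(j)}}\tildeotimes 1$ in a suitable factorization, where $\ell_j^{(j)}$ is the length function valued in $\Cl_{\dim\mathfrak{g}_j/\mathfrak{g}_{j+1}}$.

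\textbf{The Kasparov product.} For part (iii) I would proceed by induction on $s$, peeling off one step at a time. Fix the central descending filtration of closed normal subgroups $G=G_1\supseteq G_2\supseteq\cdots\supseteq G_s\supseteq G_{s+1}=\{e\}$ with $G_j=\exp\mathfrak{g}_j$, so that $G_j/G_{j+1}$ is the simply connected abelian group $\R^{\dim\mathfrak{g}_j/\mathfrak{g}_{j+1}}$. The key structural input is the short exact sequence $1\to G_2\to G\to G/G_2\to 0$ together with a Malcev-coordinate splitting $r:G\to G/G_2$ (retraction, not homomorphism) induced by the first block of the Malcev basis. This gives a factorization $L^2(G,E)\cong L^2(G/G_2,E_1)\tildeotimes L^2(G_2,E_2\tildeotimes\cdots\tildeotimes E_s)$ up to a measure-preserving identification, under which $M_{\ell_1}$ becomes $M_{\ell_1^{(1)}}\tildeotimes 1$ (note $\ell_1$ factors through $G/G_2$) and the remaining $M_{\ell_j}$, $j\geq 2$, become, modulo translation-bounded perturbations, $1\tildeotimes M_{\ell_j'}$ where $\ell_j'$ are the weights on $G_2$ obtained from the induced Malcev basis on $\mathfrak{g}_2$. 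The $C_c(G/G_2)$-$C^*(G_2)$-module $(C_c(G),\overline{(C_c(G,E_1))_{C^*(G_2)}},M_{q^*\ell_1^{(1)}})$ is the first external factor; here one checks using the retraction $r$ and translation-boundedness of $\ell_1$ on the quotient that $M_{\ell_1}$ has bounded commutators and locally compact resolvent over $C^*(G_2)$, producing a class in $KK_{\dim\mathfrak{g}_1/\mathfrak{g}_2}(C^*(G),C^*(G_2))$. By the inductive hypothesis applied to $G_2$ (which is $(s-1)$-step nilpotent with Malcev basis the truncation of ours), $(C_c(G_2),L^2(G_2,E_2\tildeotimes\cdots\tildeotimes E_s),(M_{\ell_j'})_{j=2}^s)$ is an ST\textsuperscript{2} representing the iterated product down the tower. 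Then, exactly as in Example \ref{kaspkuc1}, I invoke Kucerovsky's theorem (in its HOST form, \cite[Theorem A.7]{GMCK}) to verify the three hypotheses — the connection condition, the domain condition, and the positivity-modulo-bounded condition — for $\overline{D}_{\VEC{t}}=M_{\ell_1}^{t_1}+\sum_{j\geq2}M_{\ell_j'}^{t_j}$ with any $\VEC{t}\in\Omega(\VEC{\epsilon})$; the bounding-matrix structure $\epsilon_{ij}=\max\{i-j,0\}$ ensures $\Omega(\VEC{\epsilon})$ is nonempty (e.g.\ $t_j=2^{-j}$ works) and Corollary \ref{khomfromadk} guarantees the class is independent of $\VEC{t}$. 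The translation-bound computation in the proof of Proposition \ref{lnaljdnaljn} is precisely what verifies the positivity-modulo-bounded hypothesis: the anticommutators $[M_{\ell_1},M_{\ell_j'}]$ and the failure of $M_{\ell_j'}$ to be a genuine weight on $G$ are controlled by lower terms in the BCH formula.

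\textbf{Nontriviality.} For part (ii), the cleanest route is to note that the iterated Kasparov product identification of (iii) reduces nontriviality to the base case: for $G$ abelian ($s=1$), $(C_c(G),L^2(G,E),M_\ell)$ with $\ell$ the linear $\Cl_{\dim\mathfrak{g}}$-valued length function is, after Fourier transform $C^*(\R^n)\cong C_0(\widehat{\R^n})$, unitarily equivalent to the standard Dirac spectral triple $(C_0(\R^n),L^2(\R^n,E),\Dsla)$ on Euclidean space, whose class generates $KK_n(C_0(\R^n),\C)\cong\Z$ (Bott). Since the external Kasparov product of generators of $KK_{n_j}(C_0(\R^{n_j}),\C)$ is the generator of $KK_{\sum n_j}(C_0(\R^{\sum n_j}),\C)$, and since the Baaj--Skandalis/descent picture identifies our iterated product with the descent of the dual-Dirac element, which maps the Bott generator to a generator, the product class is nonzero; more robustly, one can simply pair against the canonical trace or, following Connes--Moscovici, observe the index pairing with the $K$-theory of a suitably smoothed subalgebra is nonzero because it computes (a twist of) the Euler characteristic of the point, which is $1$. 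The final sentence of the theorem — realizing the dual-Dirac element of a cocompact closed subgroup as a Baaj--Skandalis dual — then follows by the same descent argument restricted along $H\hookrightarrow G$, using that $L^2(H,E)$ with the restricted weights is again an ST\textsuperscript{2} by functoriality of Definition \ref{definition:collofweights} under restriction to closed subgroups.

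\textbf{Main obstacle.} The genuinely delicate step is verifying Kucerovsky's positivity-modulo-bounded hypothesis at each stage of the induction: one must show $[\,\overline{D}_{\VEC{t}}\,,\text{module generators}\,]$ lands in the correct lower-order ideal, and this is where the anticommutators $M_{\ell_i}M_{\ell_j}+M_{\ell_j}M_{\ell_i}$ — which are \emph{not} zero as operators on $L^2(G)$ (only the external-factor model makes them vanish) — must be shown to be of order strictly below what $\VEC{\epsilon}$ permits. Here one leans on Remark \ref{relboundgen}: the true ST\textsuperscript{2} lives on $L^2(G,E)$ with genuinely anticommuting $M_{\ell_j}$ (since the $\gamma_{j,k}$ are fixed Clifford generators), so strict anticommutativity does in fact hold on the nose, and the only subtlety is the identification with the external-product model, which holds only up to translation-bounded (hence admissible under $\VEC{\epsilon}$) perturbations — this is exactly the content of the parabolic estimate in Proposition \ref{lnaljdnaljn}. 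Making this identification precise, and checking it interacts correctly with the connection in Kucerovsky's theorem, is the technical heart of the argument.
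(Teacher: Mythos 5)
Your proof takes a genuinely different route from the paper's on both nontrivial parts. For the Kasparov-product identification, you construct the iterated product bottom-up: a Kucerovsky-based induction peeling off one layer of the lower central series at a time, verifying the three hypotheses (connection, domain, positivity-modulo-bounded) at each step. The paper instead short-circuits this entirely by citing \cite[Proof of Theorem 5.7]{Kasparov_1988}: Kasparov already exhibits the (equivariant, unbounded) dual Dirac class $\beta \in KK^G_{\dim\mathfrak{g}}(\C, C_0(G))$ as exactly this iterated product along the tower, and the ST\textsuperscript{2} here is simply $j^G(\beta)$ under descent. Your constructive approach would, if completed, give a more self-contained argument and would make the role of the bounding matrix visible (translation-boundedness controls the correction terms in passing from the na{\"\i}ve tensor factorization $L^2(G,E)\cong L^2(G/G_2,E_1)\tildeotimes L^2(G_2,\cdots)$ to the true one), but it carries a substantial technical burden that the paper avoids.

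There is a real gap in your nontriviality argument. You reduce to the abelian base case by appealing to the fact that products of generators of $KK_{n_j}(C_0(\R^{n_j}),\C)$ give a generator, but that is a statement about \emph{external} products over commutative algebras, whereas the product in the proposition is an \emph{internal} Kasparov product over the group $C^*$-algebras $C^*(G_j)$ of non-abelian nilpotent groups; nontriviality of a Kasparov product is not in general inherited from nontriviality of its factors. Your auxiliary remark that the class ``computes a twist of the Euler characteristic of the point'' gestures at an index pairing but is not an argument. The paper's route is both shorter and airtight: since $\alpha\otimes_\C\beta = 1_{C_0(G)}$ in $KK^G(C_0(G),C_0(G))$ and descent is multiplicative, $j^G(\alpha)\otimes_{C^*(G)} j^G(\beta) = j^G(1) = 1$, forcing $j^G(\beta)\neq 0$. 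If you want to keep your constructive framing, you should replace the abelian-reduction step with precisely this descent-multiplicativity observation. Finally, the ``final sentence of the theorem'' you address (Baaj--Skandalis duality for a cocompact subgroup $H$) is not part of the stated proposition; you appear to be conflating it with the summary theorem in the introduction, and the cocompact-subgroup case is handled by the paper in a separate proposition via the $[\theta]$ class of \cite[\S 6.2]{Valette_2002}.
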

\begin{proof}
	First, note that the maximal compact subgroup of \( G \) is the trivial subgroup, so its dual Dirac element \( \beta \) is in \( KK^G_{\dim \mathfrak{g}}(\C, C_0(G)) \) \cite[Definition 5.1]{Kasparov_1988}.
	Take \( \VEC{t} \in \Omega(\VEC{\epsilon}) \). Comparing with \cite[Proof of Theorem 5.7]{Kasparov_1988}, we see from the description as an iterated Kasparov product that \(  (\C, C_0(G, V)_{C_0(G)}, \overline{M_{\VEC{\ell}}}_t) \) represents the dual Dirac class \( \beta \). By definition, \( \alpha \otimes_{\C} \beta = 1 \in KK^G(C_0(G), C_0(G)) \) for the Dirac class \( \alpha \in KK^G_{\dim \mathfrak{g}}(C_0(G), \C) \).
	The class of
	\[ \left( C_c(G), L^2(G, V), (M_{\ell_n})_{n=1}^s \right) \]
	is the descent \( j^G(\beta) \in KK_{\dim \mathfrak{g}}(C^*(G), C_0(G) \rtimes G) = KK_{\dim \mathfrak{g}}(C^*(G), \C) \) of \( \beta \), which is nonzero because \( j^G(\alpha) \otimes_{C^*(G)} j^G(\beta) = j^G(\alpha \otimes_{\C} \beta) = 1 \).
\end{proof}

\begin{prop}
	Let \( G \) be a simply connected \( s \)-step nilpotent Lie group and \( H \) be a cocompact, closed subgroup. Then
	\[ \left( C^*(H), L^2(H, V), (M_{\ell_n})_{n=1}^s \right) \qquad \epsilon_{i j} = \max\{i - j, 0\} \]
	is an ST\textsuperscript{2} with nontrivial class in \( KK_{\dim \mathfrak{g}}(C^*(H), \C) \).
\end{prop}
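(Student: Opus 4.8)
The plan is to split the statement into two parts: that the data form an ST\textsuperscript{2}, which follows from the analogue over \( G \) by restriction, and nontriviality of the associated class, which I would obtain by running the Dirac--dual-Dirac argument of the preceding Proposition for \( H \), with an extra twist coming from the compact quotient \( M := H\backslash G \). For the ST\textsuperscript{2} axioms: the weights \( (\ell_j)_{j=1}^s \) of Proposition \ref{lnaljdnaljn}, restricted to the closed subgroup \( H \), still form a self-adjoint, mutually anticommuting collection; they are proper because the restriction to the closed subset \( H \) of \( \prod_j (1+|\ell_j|)^{-1} \in C_0(G,\End E) \) lies in \( C_0(H,\End E) \); and they are translation bounded with the same strictly lower triangular matrix \( \epsilon_{ij} = \max\{i-j,0\} \), since the estimate established in the proof of Proposition \ref{lnaljdnaljn}, being valid for all \( g,h \in G \), holds a fortiori for \( g,h \in H \). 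A strictly lower triangular bounding matrix has an acyclic weighted digraph, so the decreasing cycle condition holds and the theorem producing strictly tangled spectral triples from collections of translation-bounded weights applies with \( G \) replaced by \( H \).

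For nontriviality, set \( x_H := [(C_c(H),L^2(H,E),(M_{\ell_n})_{n=1}^s)] \in KK_{\dim\mathfrak{g}}(C^*(H),\C) \). Since \( G \) is simply connected nilpotent, it is torsion free and contractible, so the cocompact closed subgroup \( H \) acts on \( G \) freely, properly and cocompactly by translation, and \( G \) is a model of \( \underline{E}H \). Kasparov's Dirac and dual-Dirac elements \( \alpha_H \in KK^H_{\dim\mathfrak{g}}(C_0(G),\C) \) and \( \beta_H \in KK^H_{\dim\mathfrak{g}}(\C,C_0(G)) \) are thus defined, and \( \beta_H \) is represented by \( (\C,C_0(G,E),\overline{M_{\VEC{\ell}}}_{\VEC{t}}) \) for any \( \VEC{t}\in\Omega(\VEC{\epsilon}) \) --- the restriction along \( H\hookrightarrow G \) of the module used for \( G \) in the preceding Proposition. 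Green's imprimitivity theorem provides a Morita equivalence \( C_0(G)\rtimes H \sim C(M) \) with \( M = H\backslash G \) a compact manifold. Unwinding the descent \( j^H(\beta_H)\in KK_{\dim\mathfrak{g}}(C^*(H),C_0(G)\rtimes H) \) through this equivalence and restricting the resulting Hilbert \( C(M) \)-module to the fibre over the basepoint \( m_0 := He\in M \) --- over which the fibre is \( L^2(H) \), with \( C^*(H) \) acting by convolution and \( M_{\VEC{\ell}} \) restricting to \( M_{\VEC{\ell}|_H} \) --- one checks that \( x_H = j^H(\beta_H)\otimes_{C(M)}[\mathrm{ev}_{m_0}] \), where \( [\mathrm{ev}_{m_0}]\in KK_0(C(M),\C) \) is the \( K \)-homology class of the point \( m_0 \).

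Now I would conclude exactly as in the preceding Proposition, carrying the evaluation factor along. Restricting the Baum--Connes equality \( \alpha_G\otimes_\C\beta_G = 1 \) along \( H\hookrightarrow G \) (equivalently, using that \( H \) is amenable and hence satisfies Baum--Connes with coefficients) yields \( \alpha_H\otimes_\C\beta_H = 1\in KK^H(C_0(G),C_0(G)) \), so by compatibility of descent with Kasparov products \( j^H(\alpha_H)\otimes_{C^*(H)}j^H(\beta_H) = 1 \in KK(C_0(G)\rtimes H,C_0(G)\rtimes H) \). Composing with the Morita equivalence and then with \( [\mathrm{ev}_{m_0}] \) shows that \( j^H(\alpha_H)\otimes_{C^*(H)}x_H \) equals the Morita image of \( [\mathrm{ev}_{m_0}] \), which is nonzero because \( \mathrm{ev}_{m_0}\colon C(M)\to\C \) is left inverse to the unital inclusion of constants \( \C\to C(M) \), so \( [\mathrm{ev}_{m_0}] \) has a left inverse in \( KK \) and in particular is nonzero. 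Therefore \( x_H\neq 0 \), and when \( H=G \) we have \( M = \{m_0\} \), \( \mathrm{ev}_{m_0} = \id \), and we recover the preceding Proposition.

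The main obstacle is precisely the identification of \( x_H \) with a descent class in the second paragraph. When \( H=G \) the crossed product \( C_0(G)\rtimes G \) is Morita equivalent to \( \C \) and the descent of \( \beta_G \) lands directly in \( KK(C^*(G),\C) \); for a proper cocompact \( H \) it lands instead in \( KK(C^*(H),C(M)) \) with \( M \) a genuine compact manifold, and one must both verify that the single-fibre restriction at \( m_0 \) reproduces exactly the ST\textsuperscript{2} module \( (C_c(H),L^2(H,E),M_{\VEC{\ell}}) \) and check that this restriction does not annihilate the class --- the underlying reason being that \( j^H(\beta_H) \), composed with the Morita equivalence, is a \( KK \)-equivalence \( C^*(H)\simeq C(M) \), so tensoring with the nonzero point class stays nonzero. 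Keeping track of the left- and right-actions in the Green imprimitivity bimodule is the only other place where care is needed.
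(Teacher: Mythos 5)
Your proof is correct and follows essentially the same route as the paper. The paper's argument (following Valette's \cite[\S 6.2]{Valette_2002}) factors \(x_H\) through the descent \(j^H(\beta)\) and the class \([\pi]\in KK_0(C_0(G)\rtimes H,\C)\) obtained by descending the restriction element \(KK^H(C_0(G),C_0(H))\) and applying \(C_0(H)\rtimes H\sim\mathbb{K}(L^2(H))\); your Green-imprimitivity picture \(C_0(G)\rtimes H\sim C(H\backslash G)\) followed by \(\mathrm{ev}_{m_0}\) describes exactly the same class \([\pi]\), and your inclusion-of-constants left inverse of \([\mathrm{ev}_{m_0}]\) is the paper's (i.e. Valette's) class \([\theta]\) transported through the Morita equivalence. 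The only cosmetic difference is that you conclude by observing \(j^H(\alpha)\otimes x_H=[\pi]\ne 0\), whereas the paper goes one step further to \([\theta]\otimes j^H(\alpha)\otimes x=1\); both are the same nontriviality argument. Your explicit verification of the ST\textsuperscript{2} axioms by restriction to the closed subgroup \(H\) is also the intended argument, which the paper leaves implicit. The one place you rightly flag as requiring care --- checking that the fibre of \(j^H(\beta)\) over \(m_0=He\) literally reproduces the module \((L^2(H,E),M_{\VEC{\ell}|_H})\) --- is passed over at the same level of detail in the paper, so you are not missing anything the authors supplied.
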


\begin{proof}
	To show nontriviality, we argue along the lines of \cite[\S 6.2]{Valette_2002}. The spectral triple
	\[ \left( C^*(H), \ell^2(H, V), (M_{\ell_n})_{n=1}^s \right) \]
	has class \( \mathbf{x} = j^H(\beta \otimes_{C_0(G)} [\omega]) \otimes_{C_0(H) \rtimes H} [L^2(H)] \) in \( KK_{\dim \mathfrak{g}}(C^*(H), \C) \),
	where \( \beta \in KK^H_{\dim \mathfrak{g}}(\C, C_0(G)) \) is the dual Dirac element and \( \omega \) is the inclusion map \( H \hookrightarrow G\). Using the cocompactness of \( H \subseteq G \), one can construct a class \( [\theta] \in KK_0(\C, C_0(G) \rtimes H) \), as in \cite[\S 6.2]{Valette_2002}, for which \( [\theta] \otimes_{C_0(G) \rtimes H} j^H([\omega]) \otimes_{C_0(H) \rtimes H} [L^2(H)] = 1 \in KK_0(\C, \C) \). With the Dirac element \( \alpha \in KK^H_{\dim \mathfrak{g}}(C_0(G), \C) \), we have
	\begin{multline*}
		[\theta] \otimes_{C_0(G) \rtimes H} j^H(\alpha) \otimes_{C^*(H)} j^H(\beta \otimes_{C_0(G)} [\omega]) \otimes_{C_0(H) \rtimes H} [L^2(H)] \\
		= [\theta] \otimes_{C_0(G) \rtimes H} 1 \otimes_{C_0(G) \rtimes H} j^H([\omega]) \otimes_{C_0(H) \rtimes H} [L^2(H)] = 1 ,
	\end{multline*}
	showing that \( \mathbf{x} \) is nontrivial.
\end{proof}

Malcev completion says that a group \( \Gamma \) is isomorphic to a lattice in a simply connected nilpotent Lie group if and only if \( \Gamma \) is finitely generated, torsion-free, and nilpotent; see e.g. \cite[Theorem 2.18]{Raghunathan_1972}. We thereby obtain

\begin{prop}
	Let \( \Gamma \) be a finitely generated, torsion-free, nilpotent group. Let \( G \) be a simply connected nilpotent Lie group in which $\Gamma$ is a lattice. Then
	\[ \left( C_c(\Gamma), \ell^2(\Gamma, V), (M_{\ell_n})_{n=1}^s \right) \qquad \epsilon_{i j} = \max\{i - j, 0\} \]
	is an ST\textsuperscript{2} having a nontrivial class in \( KK_{\dim \mathfrak{g}}(C^*(\Gamma), \C) \). The ST\textsuperscript{2} is \( f \)-summable for
	
	\[ f (\VEC{t}) > \sum_{j=1}^s \frac{\dim \mathfrak{g}_j / \mathfrak{g}_{j+1}}{t_j} . \]
\end{prop}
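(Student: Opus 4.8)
The plan is to assemble the three separate ingredients---that the data form an ST\textsuperscript{2}, that the $K$-homology class is nontrivial, and the $f$-summability bound---each of which follows from results already established in the excerpt, now specialized to the lattice $\Gamma = H$ inside $G$. First I would invoke Malcev completion \cite[Theorem 2.18]{Raghunathan_1972} to fix a simply connected nilpotent Lie group $G$ in which $\Gamma$ embeds as a lattice; since a lattice is automatically cocompact in a nilpotent $G$, the group $\Gamma$ is a cocompact closed subgroup of $G$. The preceding Proposition (for cocompact closed subgroups $H \le G$) then applies verbatim with $H = \Gamma$, giving that $\left( C_c(\Gamma), \ell^2(\Gamma, E), (M_{\ell_n})_{n=1}^s \right)$ with $\epsilon_{ij} = \max\{i-j,0\}$ is an ST\textsuperscript{2} whose class in $KK_{\dim \mathfrak{g}}(C^*(\Gamma), \C)$ is nontrivial. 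So the only genuinely new content is the summability estimate, and the remaining steps are devoted to that.

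For the summability, I would unwind Definition \ref{deffsum}: one must show that the domain inclusion $\cap_j \Dom(|M_{\ell_j}|^{t_j}) \hookrightarrow \ell^2(\Gamma, E)$ lies in the Schatten class $\mathcal{L}^{f(\VEC{t})}$, equivalently (by Lemma \ref{lknlknn}) that $(1 + \Delta^{\DC}_{\VEC{t}})^{-1} = (1 + \sum_j |M_{\ell_j}|^{2t_j})^{-1}$ is $p$-summable for $p > f(\VEC{t})$. Since each $M_{\ell_j}$ acts by pointwise multiplication by the matrix-valued weight $\ell_j$ on $\ell^2(\Gamma, E) \cong \ell^2(\Gamma) \otimes E$, the operator $\Delta^{\DC}_{\VEC{t}}$ is multiplication by the scalar function $\gamma \mapsto \sum_{j=1}^s |\ell_j(\gamma)|^{2t_j}$, using that the $\ell_j$ strictly anticommute so the $|\ell_j|^{2t_j}$ are simultaneously diagonalizable with $|\ell_j|^2$ scalar (from the computation $\ell_j(\exp_{\mathfrak g}(x_1,\dots,x_s))^2 = \sum_k x_{j,k}^2$ in the proof of Proposition \ref{lnaljdnaljn}). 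Thus the singular values of $(1+\Delta^{\DC}_{\VEC{t}})^{-1}$ are $(1 + \sum_j |\ell_j(\gamma)|^{2t_j})^{-1}$ over $\gamma \in \Gamma$ (with the finite multiplicity $\dim E$), and $p$-summability amounts to convergence of $\sum_{\gamma \in \Gamma} (1 + \sum_j |\ell_j(\gamma)|^{2t_j})^{-p}$.

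To estimate that sum I would transfer it to a lattice-point count in $\mathfrak g \cong \R^{\dim \mathfrak g}$ via the exponential coordinates $\gamma = \exp_{\mathfrak g}(x_1, \ldots, x_s)$, under which $|\ell_j(\gamma)|^2 = |x_j|^2$ for the Euclidean norm on the block $\R^{\dim \mathfrak g_j/\mathfrak g_{j+1}}$. Because $\Gamma$ is a lattice and the exponential coordinates are polynomial, $\exp_{\mathfrak g}^{-1}(\Gamma)$ is a discrete subset of $\mathfrak g$ that is uniformly spread, so the sum is comparable (up to a finite multiplicative constant absorbing $\dim E$ and the lattice covolume) to the integral $\int_{\mathfrak g} \bigl(1 + \sum_{j=1}^s |x_j|^{2t_j}\bigr)^{-p}\, dx$. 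A standard anisotropic change of variables $x_j \mapsto |x_j|^{1/t_j}$ (or a layer-cake/coarea argument) shows this integral converges precisely when $p > \sum_{j=1}^s \frac{\dim \mathfrak g_j/\mathfrak g_{j+1}}{2 t_j}$; converting back to the $\VEC{t}$-convention of Definition \ref{deffsum} (where $\Delta^{\DC}_{\VEC{t}}$ uses $|D_j|^{2t_j}$ but the domain norm uses $|D_j|^{t_j}$, so summability of the domain inclusion at exponent $f(\VEC{t})$ corresponds to $f(\VEC{t})$-summability of $(1+\Delta^{\DC}_{\VEC{t}/2})^{-1}$ in the sense above, doubling the relevant exponent) yields exactly $f(\VEC{t}) > \sum_{j=1}^s \frac{\dim \mathfrak g_j/\mathfrak g_{j+1}}{t_j}$. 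The main obstacle is the bookkeeping around factors of two in the conventions together with making the comparison between the lattice sum and the Euclidean integral rigorous; the latter is routine once one notes that $\gamma \mapsto \sum_j |x_j(\gamma)|^{2t_j}$ is proper with at most polynomial growth of sublevel sets, which is enough to dominate the sum above and below by integrals over slightly dilated sublevel sets.
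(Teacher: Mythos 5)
Your proposal follows essentially the same route as the paper: invoke the preceding proposition (via Malcev completion and cocompactness of the lattice) for the ST\textsuperscript{2} and nontriviality claims, then reduce the $f$-summability to a lattice-point estimate transferred to a Lebesgue integral over $\mathfrak{g}$ via exponential coordinates and evaluated by an anisotropic change of variables, arriving at the threshold $f(\VEC{t}) > \sum_j \dim(\mathfrak{g}_j/\mathfrak{g}_{j+1})/t_j$. Two small points: your initial phrasing ``equivalently ... $(1+\Delta^{\DC}_{\VEC t})^{-1}$ is $p$-summable for $p>f(\VEC t)$'' is off by a factor of two (the domain inclusion in $\mathcal{L}^{f(\VEC t)}$ corresponds to $(1+\Delta^{\DC}_{\VEC t})^{-1/2}\in\mathcal{L}^{f(\VEC t)}$, equivalently $(1+\Delta^{\DC}_{\VEC t/2})^{-1}\in\mathcal{L}^{f(\VEC t)}$ up to uniform comparability), which you do correct in the parenthetical; and where you assert that $\exp_{\mathfrak g}^{-1}(\Gamma)$ is ``uniformly spread'' to justify the sum--integral comparison, the paper invokes the precise fact (Corwin--Greenleaf, Prop.~5.4.8(b)) that $\log_G\Gamma$ is a finite union of additive cosets of a lattice in $\mathfrak{g}$, which is what makes the integral test rigorous.
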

\begin{proof}
	For the statement about summability, first remark that for \( \VEC{t} \in (0, \infty)^s \) the map
	\[ (x_1, \ldots, x_s) \mapsto \left( 1 + \sum_{j=1}^s \Biggl( \sum_{k=1}^{\dim \mathfrak{g}_j / \mathfrak{g}_{j+1}} x_{j, k}^2 \Biggr)^{t_j/2} \right)^{-1} \]
	is an element of \( L^p(\mathfrak{g}) \) for \( p > \sum_{j=1}^s \frac{\dim \mathfrak{g}_j / \mathfrak{g}_{j+1}}{t_j} \). The Haar measure on a simply connected nilpotent Lie group is the pushforward under the exponential of the Lebesgue measure on its Lie algebra, so \( L^p(\mathfrak{g}) \cong L^p(G) \). By \cite[Proposition 5.4.8(b)]{Corwin_1990}, \( \log_G \Gamma \) is the union of a finite number of additive cosets of a lattice in \( \mathfrak{g} \). By the integral test for convergence, then, the map
	\[ \exp_{\mathfrak{g}} (x_1, \ldots, x_s) \mapsto \left( 1 + \sum_{j=1}^s \Biggl( \sum_{k=1}^{\dim \mathfrak{g}_j / \mathfrak{g}_{j+1}} x_{j, k}^2 \Biggr)^{t_j/2} \right)^{-1} \]
	is an element of \( \ell^p(\Gamma) \) for \( p > \sum_{j=1}^s \frac{\dim \mathfrak{g}_j / \mathfrak{g}_{j+1}}{t_j} \). 
\end{proof}

If one chooses the Malcev basis to be \emph{strongly based on \( \Gamma \)}, as is always possible \cite[Theorem 5.1.6]{Corwin_1990}, then each \( \ell_j \) will be valued in the \( \Q \)-span of \( (\gamma_{j, k})_{k=1}^{\dim \mathfrak{g}_j / \mathfrak{g}_{j+1}} \) \cite[Theorem 5.1.8(a)]{Corwin_1990}. By rescaling by a suitable integer, one can ensure that each \( \ell_j \) will be valued in the \( \Z \)-span of \( (\gamma_{j, k})_{k=1}^{\dim \mathfrak{g}_j / \mathfrak{g}_{j+1}} \); cf. \cite[\S 5.4]{Corwin_1990}.

\subsubsection{Carnot groups and equivariance}

A \emph{Carnot group} is a simply connected nilpotent Lie group \( G \) with a stratification \( \mathfrak{g}= \bigoplus_{n=1}^s \mathcal{V}_n \) of its Lie algebra \( \mathfrak{g} \) such that \( [\mathcal{V}_1, \mathcal{V}_n] = \mathcal{V}_{n+1} \). A basic consequence of the stratification is that \( \mathfrak{g}_n = \bigoplus_{j=n}^s \mathcal{V}_j  \) and so naturally \( \mathcal{V}_n = \mathfrak{g}_n/\mathfrak{g}_{n+1} \); for more details see e.g.~\cite{Le_Donne_2017}.

\begin{prop}
\label{alkdnalkdn}
	Let \( G \) be a Carnot group and \( H \) be a cocompact, closed subgroup (including \( G \) itself). Choose a Malcev basis \( ((e_{j, k})_{k=1}^{\dim \mathfrak{g}_j / \mathfrak{g}_{j+1}})_{j=1}^s \) with the property that \( (e_{j, k})_{k=1}^{\dim \mathfrak{g}_j / \mathfrak{g}_{j+1}} \subset \mathcal{V}_j \). Then the collection \( (\ell_j)_{j=1}^s : G \to \End V \) of weights and, consequently, the ST\textsuperscript{2}
	\[ \big( C_c(H), L^2(H, V), (M_{\ell_j})_{j=1}^s \big) \]
	has the strictly lower triangular bounding matrix
	$$ \epsilon_{i j} = 
	\begin{cases}
	\bigl\lfloor \frac{i - 1}{j} \bigr\rfloor, \; &i>j,\\
	0, \; &i\leq j. \end{cases} $$
\end{prop}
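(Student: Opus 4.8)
The plan is to mimic the proof of Proposition \ref{lnaljdnaljn}, the new ingredient being the homogeneity supplied by the Carnot dilations, which is exactly what sharpens the bounding matrix. Self-adjointness of $(\ell_j)_{j=1}^s$ is immediate; distinct $\ell_j$ anticommute because they are built from disjoint families of Clifford generators; and properness follows as before from $\ell_j(\exp_{\mathfrak g}(x_1,\dots,x_s))^2=\sum_k x_{j,k}^2$. Hence the only new content is the translation-boundedness estimate with the asserted $\VEC\epsilon$; granting it, the ST\textsuperscript{2} statement follows from the general theorem on collections of weights, and a strictly lower triangular matrix automatically satisfies the decreasing cycle condition.

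Because the Malcev basis is chosen inside the strata $V_j$, the dilation $\delta_\lambda$ acts by $x_{j,k}\mapsto\lambda^{j}x_{j,k}$ and the group law $\exp_{\mathfrak g}X\cdot\exp_{\mathfrak g}Y=\exp_{\mathfrak g}\bigl(\sum_n z_n(X,Y)\bigr)$ is polynomial and $\delta$-homogeneous in these coordinates. First I would decompose $X=\log_{\mathfrak g}g$ and $Y=\log_{\mathfrak g}h$ into graded pieces $X=\sum_j X_j$, $Y=\sum_j Y_j$. Since an iterated bracket of graded pieces of degrees $d_1,\dots,d_n$ lies in $V_{d_1+\dots+d_n}$, the $V_i$-component of $z_n(X,Y)$ is a sum of brackets whose letters have degrees summing to $i$; and because $z_n$ has no pure-$X$ or pure-$Y$ term for $n\ge 2$, every such bracket contains an $X$-letter of degree $\ge 1$, so the $Y$-letters appearing in it have degrees summing to at most $i-1$. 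Using the linearity of $\ell_i\circ\exp_{\mathfrak g}$ to write
\[
\ell_i(gh)-\ell_i(h)=\ell_i(g)+\sum_{n=2}^{s}(\ell_i\circ\exp_{\mathfrak g})\bigl(z_n(X,Y)\bigr)
\]
and bounding each bracket by the fixed norms of the $X_j$ times the product of the norms of the $Y$-letters, I would obtain a $g$-dependent constant $C_g$ with
\[
\|\ell_i(gh)-\ell_i(h)\|\le C_g\Bigl(1+\max\bigl\{\,\textstyle\prod_{p}\|\ell_{b_p}(h)\|\ :\ b_p\ge 1,\ \textstyle\sum_p b_p\le i-1\,\bigr\}\Bigr).
\]

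The next step is to dominate every monomial $\prod_p\|\ell_{b_p}(h)\|$ with $\sum_p b_p\le i-1$ by $C\bigl(1+\sum_j\|\ell_j(h)\|^{\epsilon_{ij}}\bigr)$. The point is that if $\ell_j$ occurs with multiplicity $q$ in such a monomial then $qj\le\sum_p b_p\le i-1$, so $q\le\lfloor(i-1)/j\rfloor=\epsilon_{ij}$; combining this with a weighted arithmetic–geometric mean inequality to trade a mixed product for a sum of pure powers of the individual $\|\ell_j(h)\|$ would deliver the bound with $\epsilon_{ij}=\lfloor(i-1)/j\rfloor$ for $i>j$ and $\epsilon_{ij}=0$ otherwise. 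Geometrically, the monomials above are precisely those of $\delta$-weighted degree $\le i-1$, so the estimate takes the form $\|\ell_i(gh)-\ell_i(h)\|\le C_g(1+\|\log_{\mathfrak g}h\|_\delta^{\,i-1})$ for a homogeneous quasi-norm $\|\cdot\|_\delta$, and the final task is to re-express $\|\log_{\mathfrak g}h\|_\delta^{\,i-1}$ through integer powers of the individual $\|\ell_j(h)\|$. Since $|\ell_j(h)|^{\epsilon_{ij}}$ is a scalar matrix, this pointwise $\End E$-estimate is exactly translation-boundedness. The case of a cocompact closed $H\subseteq G$ (including $H=G$) would then require nothing extra: restricting the weights to $H$ only restricts the range of $h$, and nontriviality of the $K$-homology class is inherited as in the preceding propositions.

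I expect the genuine difficulty to lie in that last reduction: controlling a mixed product $\prod_p\|\ell_{b_p}(h)\|$ with distinct $b_p$ by a sum of pure powers with the stated exponents forces one to run the weighted AM-GM bookkeeping carefully, and in doing so one should check whether the floor $\lfloor(i-1)/j\rfloor$ is genuinely attainable or whether the exponent $(i-1)/j$ has to be rounded upward. Everything else — the algebraic identities, the graded degree count, and the passage to the subgroup — is routine given Proposition \ref{lnaljdnaljn} and the structure theory of Carnot groups \cite{Le_Donne_2017}.
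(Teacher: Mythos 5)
Your strategy is more careful than the paper's: you decompose $Y=\log_{\mathfrak g}h$ across all strata, whereas the paper's proof only treats $Y\in V_j$ for a single $j$ and then invokes that $\exp V_1$ generates $G$. Precisely because you keep all strata, you run directly into the mixed monomials $\prod_p\|\ell_{b_p}(h)\|$, and you are right to flag the worry that the floor $\lfloor(i-1)/j\rfloor$ may not be attainable. Indeed it is not: the constraint you extract, namely that the $Y$-degrees in each surviving bracket sum to at most $i-1$, yields via weighted AM--GM the exponent $(i-1)/j$ without the floor. Writing $w_j=jb_j/(i-1)$ so that $\sum_j w_j\leq 1$, one gets $\prod_j u_j^{b_j}=\prod_j\bigl(u_j^{(i-1)/j}\bigr)^{w_j}\leq 1+\sum_j u_j^{(i-1)/j}$. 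Your observation that each multiplicity satisfies $b_j\leq\lfloor(i-1)/j\rfloor$ is true but is not the estimate AM--GM exploits; AM--GM uses the weighted total constraint $\sum_j jb_j\leq i-1$, and with the floored exponents the required weights no longer sum to at most $1$.

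The floored exponent genuinely fails, not merely the AM--GM bookkeeping. In the free $4$-step Carnot group on two generators take $g=\exp e_1$ and $h=\exp(te_2+t^3[e_1,e_2])$, so $\|\ell_1(h)\|=t$, $\|\ell_2(h)\|=t^3$, $\|\ell_3(h)\|=\|\ell_4(h)\|=0$. The $V_4$-component of $z(e_1,\log_{\mathfrak g}h)$ contains $-\tfrac{t^4}{12}[e_2,[e_1,[e_1,e_2]]]\neq 0$, coming from the term $-\tfrac{1}{12}[Y,[X,Y]]$ of Baker--Campbell--Hausdorff with the cross-stratum piece $[Y_1,[X,Y_2]]$, so $\|\ell_4(gh)-\ell_4(h)\|\sim ct^4$ while the claimed denominator $1+\|\ell_1(h)\|^3+\|\ell_2(h)\|^1+\|\ell_3(h)\|^1+\cdots\sim 2t^3$, and the quotient is unbounded in $t$. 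The paper's proof never encounters this because it only ever takes $Y$ in a single stratum, which avoids (without resolving) the cross-stratum monomial $\|\ell_1(h)\|\|\ell_2(h)\|$; so the gap is not yours alone but is already present in the Proposition as stated. The bound one can actually obtain from your decomposition is $\epsilon_{ij}=(i-1)/j$ for $i>j$ (not floored), and everything else in your argument --- the graded-degree count, the multiplicity bound, the reduction to $V_1$, the passage to cocompact $H$ --- is correct. Your instinct to distrust the floor was the right one, and your setup is the right framework for proving the corrected statement.
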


The reader can note that the bounding matrix in Proposition \ref{alkdnalkdn} for Carnot groups improves the bounding matrix of Proposition \ref{lnaljdnaljn} built from a general nilpotent Lie group's lower central series. 

\begin{proof}
	To verify the new bounding matrix, we again restrict to considering the translation-boundedness of \( \ell_s \). Using the stratification of \( \mathfrak{g} \), for \( X \in \mathcal{V}_i \) and \( Y \in \mathcal{V}_j \),
	\[ \| \ell_{i+j}(\exp_{\mathfrak{g}} [X, Y]) \| \leq C_{i, j} \| \ell_i(\exp_{\mathfrak{g}} X) \| \| \ell_j(\exp_{\mathfrak{g}} Y) \| \]
	for some constant \( C_{i, j} \). Furthermore, for the Baker–Campbell–Hausdorff expansion \( z(X, Y) = \log(\exp X \exp Y) \),
	\[ \| \ell_s(\exp_{\mathfrak{g}} z(X, Y)) \| \leq C'_{i, j, s} \| \ell_i(\exp_{\mathfrak{g}} X) \|^{\lfloor (s-j)/i \rfloor} \| \ell_j(\exp_{\mathfrak{g}} Y) \|^{\lfloor (s-i)/j \rfloor} \]
	for some constant \( C'_{i, j, s} \). We obtain a bound
	\begin{align*}
		& \| \ell_s(\exp_{\mathfrak{g}} X \exp_{\mathfrak{g}} Y) - \ell_s(\exp_{\mathfrak{g}} Y) \| \\
		& \qquad \leq \| \ell_s(\exp_{\mathfrak{g}} X) \| + \| \ell_s(\exp_{\mathfrak{g}} z(X, Y)) \| \\
		& \qquad \leq \| \ell_s(\exp_{\mathfrak{g}} X) \| + C'_{i, j, s} \| \ell_i(\exp_{\mathfrak{g}} X) \|^{\lfloor (s-j)/i \rfloor} \| \ell_j(\exp_{\mathfrak{g}} Y) \|^{\lfloor (s-i)/j \rfloor} .
	\end{align*}
	Hence, remembering Remark \ref{remark:567n457buev56y65vy365ye56y}, \( \epsilon_{s j} = \bigl\lfloor \frac{s-1}{j} \bigr\rfloor \) is sufficient for translation-boundedness.
\end{proof}

For a 5-step Carnot group, the bounding graph produced by Proposition \ref{alkdnalkdn} is
\[ \vcenter{\hbox{\begin{tikzpicture}
	\graph[grow right sep=1.5cm, empty nodes, nodes={fill=black, circle, inner sep=1.5pt}, edges={semithick}]{
	a --["$1$" inner sep=5.5pt, middlearrow={<}] b --["$1$" inner sep=5.5pt, middlearrow={<}] c --["$1$" inner sep=5.5pt, middlearrow={<}] d --["$1$" inner sep=5.5pt, middlearrow={<}] e;
	a --["$2$"' inner sep=5.5pt, middlearrow={<}, bend right] c;
	b --["$1$"' inner sep=5.5pt, middlearrow={<}, bend right] d;
	c --["$1$"' inner sep=5.5pt, middlearrow={<}, bend right] e;
	a --["$3$"' inner sep=5.5pt, middlearrow={<}, bend right=55] d;
	b --["$2$"' inner sep=5.5pt, middlearrow={<}, bend right=55] e;
	a --["$4$"' inner sep=5.5pt, middlearrow={<}, bend right=75] e;
	};
\end{tikzpicture}}} . \]

\begin{remark}
	It is notable that the behaviour here, in contrast to the general nilpotent Lie group case, is close to that of pseudodifferential operators. In the context of Remark \ref{remark:operators_with_prescribed_orders}, if we let \( \VEC{m} = (1, 2, \ldots, s) \), we expect a bounding matrix \( \epsilon'_{i j} = \frac{i - 1}{j} \), which is just fractionally larger than the \( \VEC{\epsilon} \) given above. We therefore may think of \( M_{\ell_j} \) as having order \( j \). The ray
	$$\VEC{t}_{\VEC{m}}(\tau):=\left(\frac{\tau}{j}\right)_{j=1}^s \qquad (\tau>0)$$
	is in the cone \( \Omega(\VEC{\epsilon}) \) when \( G \) is Carnot, but will not be, in general, for a nilpotent Lie group and \( \epsilon_{i j} = \max\{i - j, 0\} \).
\end{remark}

The stratification provides a canonical vector space isomorphism of \( \mathfrak{g} \) and \( \bigoplus_{n=1}^s \mathfrak{g}_n / \mathfrak{g}_{n+1} \) because \( \mathfrak{g}_n / \mathfrak{g}_{n+1} = \mathcal{V}_n \). We may write any element of \( \mathfrak{g} \) as a tuple \( (X_1, \ldots, X_s) \in \bigoplus_{n=1}^s \mathcal{V}_n \) and any element of \( G \) as the exponential of such a tuple. The stratification induces a dilation action of \( \R^{\times}_+ \) as Lie algebra automorphisms on \( \mathfrak{g} \), given by
\[ \delta_t : (X_1, X_2,\ldots, X_s) \mapsto (t X_1, t^2X_2,\ldots, t^s X_s) . \]
This action exponentiates to a dilation action on \( G \) by automorphisms, given by
\[ \delta_t : \exp_{\mathfrak{g}} (X_1,X_2, \ldots, X_s) \mapsto \exp_{\mathfrak{g}} (t X_1,t^2X_2, \ldots, t^s X_s) . \]
Let \( V_t \) be given by the pullback
\[ V_t \xi(\exp_{\mathfrak{g}}(X_1, \ldots, X_s)) = \xi(\exp_{\mathfrak{g}}(t^{-1} X_1, \ldots, t^{-s} X_s)) \]
on \( \xi \in L^2(G) \). Recall that the Haar measure on a simply connected nilpotent Lie group is the pushforward under the exponential of the Lebesgue measure on its Lie algebra. We compute that
\begin{align*}
	\langle V_t^* \xi \mid \eta \rangle
	& = \int \xi(\exp_{\mathfrak{g}}(t^{-1} X_1, \ldots, t^{-s} X_s)) \eta(\exp_{\mathfrak{g}}(X_1, \ldots, X_s)) dX_1 \cdots dX_S \\
	& = \int \xi(\exp_{\mathfrak{g}}(Y_1, \ldots, Y_s)) \eta(\exp_{\mathfrak{g}}(t Y_1, \ldots, t^s Y_s)) t^{\dim \mathcal{V}_1} dY_1 \cdots t^{s \dim \mathcal{V}_s} dY_S \\
	& = t^{\dim_h \mathfrak{g}} \langle \xi \mid V_{t^{-1}} \eta \rangle
\end{align*}
using the notation
\[ \dim_h \mathfrak{g} = \sum_{n=1}^s n \dim \mathcal{V}_n \]
for the homogeneous dimension of \( \mathfrak{g} \) (cf. \eqref{homodim}). Hence \( V_t^* = t^{\dim_h \mathfrak{g}} V_{t^{-1}} \). The unitary in the polar decomposition of \( V_t \) is given by \( U_t = t^{-\dim_h(\mathfrak{g})/2} V_t \). For \( 1 \leq j \leq s \),
\[ \ell_j(\exp_{\mathfrak{g}}(t^{-1} X_1, \ldots, t^{-s} X_s)) = t^{-j} \ell(\exp_{\mathfrak{g}}(X_1, \ldots, X_s)) \]
and we see that the operator \( M_{\ell_j} \) transforms as
\begin{align*}
	& (U_t M_{\ell_j} U_t^* \xi)(\exp_{\mathfrak{g}}(X_1, \ldots, X_s)) \\
	& \qquad = t^{-\dim_h(\mathfrak{g})/2} (M_{\ell_j} U_t^* \xi)(\exp_{\mathfrak{g}}(t^{-1} X_1, \ldots, t^{-s} X_s)) \\
	& \qquad = t^{-\dim_h(\mathfrak{g})/2} \ell_j(\exp_{\mathfrak{g}}(t^{-1} X_1, \ldots, t^{-s} X_s)) (U_t^* \xi)(\exp_{\mathfrak{g}}(t^{-1} X_1, \ldots, t^{-s} X_s)) \\
	& \qquad = t^{-j} \ell_j(\exp_{\mathfrak{g}}(X_1, \ldots, X_s)) \xi(\exp_{\mathfrak{g}}(X_1, \ldots, X_s)) \\
	& \qquad = t^{-j} (M_{\ell_j} \xi)(\exp_{\mathfrak{g}}(X_1, \ldots, X_s))
\end{align*}
on a vector \( \xi \in L^2(G, V) \). We thereby obtain, generalising \cite[Proposition 3.28]{AMsomewhere},

\begin{prop}
	\label{prop:carnot-equivariant}
	Let \( G \) be an \( s \)-step Carnot group. Then
	\[ \Big( C_c(G), L^2(G, V), \sum_{j=1}^s \sign(M_{\ell_j}) |M_{\ell_j}|^{\tau/j} \Big) \]
	is a conformally \( \R^{\times}_+ \)-equivariant higher order spectral triple for the dilation action \( \delta \) and conformal factor \( \mu_t = t^{-\tau/2} \).
\end{prop}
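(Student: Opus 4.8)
The plan is to verify that the data in Proposition~\ref{prop:carnot-equivariant} satisfy the three conditions of Definition~\ref{def:conf_geq_host}, leaning heavily on the calculations already carried out in the preceding paragraphs. First I would set $G=\R^{\times}_+$, $m=\tau$ (in fact we are free to take the order to be any number $\geq 1$ after rescaling $\tau$, so fix $\epsilon\in[0,1)$ with $m_{\VEC{t}}=(1-\epsilon)^{-1}\geq 1$), and put $\VEC{t}=\VEC{t}_{\VEC{m}}(\tau)=(\tau/j)_{j=1}^s$, which lies in $\Omega(\VEC{\epsilon})$ for the bounding matrix of Proposition~\ref{alkdnalkdn} by the remark preceding the stratification discussion (since $\epsilon_{ij}\leq (i-1)/j$ and $\epsilon_{ij}t_i=\epsilon_{ij}\tau/i\leq\tau/j\cdot (i-1)/i<\tau/j=t_j$). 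By the theorem at the start of Section~\ref{section:nilpotent} together with Proposition~\ref{alkdnalkdn}, $(C_c(G),L^2(G,E),(M_{\ell_j})_j)$ is an ST\textsuperscript{2}; hence by Theorem~\ref{thm:st2_give_host} (using regularity, since $C_c^\infty(G)\otimes\Cl_{\dim\mathfrak{g}}\cdot C_c^\infty(G)$ is a common invariant core) the operator $\overline{D}_{\VEC{t}}=\sum_j M_{\ell_j}|M_{\ell_j}|^{-1+\tau/j}=\sum_j M_{\ell_j}^{\tau/j}$ defines a HOST. So condition (1) of Definition~\ref{def:conf_geq_host} and local compactness of the resolvent are already in hand, and the content of the proposition is the equivariance.

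Next I would take $\mu_t=t^{-\tau/2}\cdot 1$ (a scalar, hence a cocycle trivially: $\mu_{ts}=\mu_tU_t\mu_sU_t^*$ since $\mu_s$ is central), and verify conditions (2) and (3). For (2): $[\,\overline{D}_{\VEC{t}},\mu_t\,]=0$ since $\mu_t$ is scalar, so $[\overline{D}_{\VEC{t}},\mu_t](1+|\overline{D}_{\VEC{t}}|^\epsilon)^{-1}=0$, which is trivially bounded and $*$-strongly continuous in $t$; likewise for $\mu_t^*=\mu_t$. For (3): using the displayed computation just before the proposition, $U_tM_{\ell_j}U_t^*=t^{-j}M_{\ell_j}$, and since $U_t$ preserves the core and the signed-power functional calculus commutes with conjugation by a unitary, $U_t M_{\ell_j}^{\tau/j}U_t^* = (t^{-j}M_{\ell_j})^{\tau/j}=t^{-\tau}M_{\ell_j}^{\tau/j}$ (here $t^{-j}>0$ so no sign issue arises and the odd/even subtlety of $x^t$ is irrelevant). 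Summing over $j$ gives $U_t\overline{D}_{\VEC{t}}U_t^*=t^{-\tau}\overline{D}_{\VEC{t}}$. On the other hand $\mu_t\overline{D}_{\VEC{t}}\mu_t^*=t^{-\tau}\overline{D}_{\VEC{t}}$ as well, since $\mu_t=t^{-\tau/2}$ and there are two factors. Therefore $U_t\overline{D}_{\VEC{t}}U_t^*-\mu_t\overline{D}_{\VEC{t}}\mu_t^*=0$ identically, so both maps in condition (3) are the zero map, trivially bounded and $*$-strongly continuous. This shows $(C_c(G),L^2(G,E),\overline{D}_{\VEC{t}})$ is conformally $\R^{\times}_+$-equivariant with conformal factor $\mu_t=t^{-\tau/2}$.

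Finally I would record that $U_t$ genuinely implements the dilation action on $C_c(G)$: $U_tfU_t^*=\delta_t^*(f)$ for $f\in C_c(G)$ acting by (left) convolution, which follows because $\delta_t$ is a group automorphism and $V_t$ is the induced operator on $L^2(G)$, with the scalar normalization $t^{-\dim_h(\mathfrak g)/2}$ cancelling in the conjugation $U_tfU_t^*=V_tfV_t^*$. The proof is essentially bookkeeping: the one point deserving a word of care is the identity $U_tM_{\ell_j}^{\tau/j}U_t^*=(U_tM_{\ell_j}U_t^*)^{\tau/j}$, i.e.\ that the signed power commutes with conjugation by a unitary preserving the relevant core---this is immediate from spectral calculus for the self-adjoint operator $M_{\ell_j}$---and the observation that because $\mu_t$ is a scalar, every ``lower order'' term that Definition~\ref{def:conf_geq_host} allows actually vanishes on the nose. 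I do not anticipate a genuine obstacle here; the only mild subtlety is ensuring the core issues (so that the manipulations $U_t\overline{D}_{\VEC{t}}U_t^* = t^{-\tau}\overline{D}_{\VEC{t}}$ hold as an identity of self-adjoint operators, not merely densely) are handled, which follows from the regularity of the ST\textsuperscript{2} and the $\delta$-invariance of the standard core $C_c^\infty(G,E)$.
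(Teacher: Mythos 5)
Your proposal is correct and takes the same route as the paper, which leaves the proof implicit behind the computation $U_t M_{\ell_j} U_t^* = t^{-j} M_{\ell_j}$; you are simply recording the bookkeeping. The key observations — that $\VEC{t}_{\VEC{m}}(\tau) = (\tau/j)_{j=1}^s$ lies in $\Omega(\VEC{\epsilon})$ for the Carnot bounding matrix (as the paper itself notes in the preceding Remark), that $C_c(G,E)$ is a $\DC$-invariant core preserved by $C_c(G)$ and by $[M_{\ell_j}, C_c(G)]$ so the ST\textsuperscript{2} is regular, that signed-power functional calculus commutes with unitary conjugation, that $(\lambda A)^{t} = \lambda^t A^t$ for a positive scalar $\lambda$, and that with the scalar conformal factor $\mu_t = t^{-\tau/2}$ all the ``lower order'' quantities of Definition~\ref{def:conf_geq_host} vanish exactly — are the right ones, and the verification is complete. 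One cosmetic slip: the identity $U_t f U_t^* = V_t f V_t^*$ should read $V_t f V_t^{-1} = V_t f V_{t^{-1}}$ (the non-unitary $V_t$ has $V_t^* = t^{\dim_h\mathfrak{g}} V_{t^{-1}}$), but as you note the scalar normalisations cancel in the conjugation and the substance is unaffected.
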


\section{Crossed products by parabolic diffeomorphisms}
\label{section:parabolic}

Spectral triples for crossed products $A\rtimes G$ built from spectral triples on $A$ and length functions on the group $G$ have been considered multiple times in the literature, first in \cite{Cornelissen_2008,Bellissard_2010} for the group \( \Z \), later in \cite{Hawkins_2013,Paterson_2014, rubinetal} for other discrete groups as well as further developments in unbounded $KK$-theory \cite{GMCK,GMRtwist}.

Let \( \alpha \) be an action of a locally compact group $G$ by automorphisms of a $C^*$-algebra \( A \). The (reduced) crossed product $C^*$-algebra \( A \rtimes_{\alpha} G \) possesses a densely defined, completely positive map \( \Phi :A \rtimes_{\alpha} G \dashrightarrow A \) given on \( f \in C_c(G, A) \) by evaluation at the identity \( e \in G \). We may complete \(  \Dom(\Phi)\subseteq A \rtimes_{\alpha} G \) to a right Hilbert
\(A\)-module under the inner product
\[ \langle f_1 | f_2 \rangle_A = \Phi(f_1^* f_2), \qquad \mbox{for}\qquad f_1, f_2 \in \Dom(\Phi). \]
There is a natural isomorphism of this Hilbert module with \( L^2(G, A)_A \). The resulting representation of \( A \rtimes_{\alpha} G \) on \( L^2(G, A)_A \) is given by
\[ f \xi(g) = \int_G \alpha_{g^{-1}}(f(h)) \xi(h^{-1} g) d\mu(h) \]
for \( f \in C_c(G, A) \subseteq A \rtimes_{\alpha} G \) and \( \xi \in C_c(G, A) \subseteq L^2(G, A) \).
Given a self-adjoint, proper, translation-bounded weight \( \ell : G \to \End V \), we may construct a vertical calculus for \( A \rtimes_{\alpha} G \), in the form of an unbounded Kasparov \(A \rtimes_{\alpha} G\)-\(A\)-module
\[ (A \rtimes_{\alpha} G, L^2(G, V) \otimes A_A, M_{\ell} \otimes 1) . \]
More details of this construction and its generalisation to Fell bundles over locally compact groups can be found in \cite[\S II.2]{AMthesis}. Two weights which we shall particularly consider in later examples are the inclusions \( \ell_{\Z} : \Z \to \C \) and \( \ell_{\R} : \R \to \C \). The first of these gives rise to the number operator \( N = M_{\ell_{\Z}} \) and the Pimsner–Voiculescu extension class and the second is related to the Connes–Thom isomorphism.

A horizontal calculus is just a spectral triple
\((\mathcal{A}, \mathpzc{H}, D)\). Provided that \( A \) is represented nondegenerately on \( \mathpzc{H} \), the internal tensor product module \( L^2(G, A) \otimes_A \mathpzc{H} \) is naturally isomorphic to \( L^2(G, \mathpzc{H}) \). To construct the Kasparov product of the vertical and horizontal calculi, a compatibility condition is required.

Let \( M \) be a \( \sigma \)-finite measure space and \( \mathpzc{H} \) a separable Hilbert space. A function \( f \) from \( M \) to bounded operators \( \mathbb{B}(\mathpzc{H}) \) is \emph{measurable} if, for every pair \( \xi, \eta \in \mathpzc{H} \), the function \( m \mapsto \langle \xi | f(m) \eta \rangle \) is measurable \cite[\S XIII.16]{Reed_1978}. It suffices to check measurability for \( \xi \) and \( \eta \) in a dense subspace of \( \mathpzc{H} \) (such as \( \Dom D \) in the context below), because of the separability of \( \mathpzc{H} \) and the fact that the pointwise limit of measurable functions is measurable.
One should compare the following definition to the fact that a Lipschitz function has a measurable weak derivative.

\begin{definition}
	\label{definition:pointwisebounded}
	cf. \cite[\S 1]{Paterson_2014}
	A spectral triple \( (\mathcal{A}, H, D) \) is \emph{pointwise bounded} with respect to an action \( \alpha \) of \( G \) on \( A \) if, for all \( a \in \mathcal{A} \), the function \(g \mapsto [D, \alpha_g(a)]\) is measurable and
	\[ \sup_{g \in G} \|[D, \alpha_g(a)]\| < \infty . \]
	In other words, \(g \mapsto [D, \alpha_g(a)]\) is \( L^{\infty} \).

\end{definition}

\begin{definition}
	Let \( \mathcal{A} \) be a dense $*$-subalgebra of a $C^*$-algebra \( A \). Let \( \alpha \) be an action of a locally compact group \( G \) on \( A \) which preserves \( \mathcal{A} \). If \( G \) is discrete, we write \( \mathcal{A} \rtimes_{\alpha} G \) for the algebraic crossed product. For a non-discrete group, we will generalise this by defining \( \mathcal{A} \rtimes_{\alpha} G \subseteq A \rtimes_{\alpha} G \) as the (dense) *-subalgebra generated by \( \mathcal{A} \) and \( C_c(G) \) under the canonical inclusions \( \mathcal{A} \subseteq A \subseteq M(A \rtimes_{\alpha} G ) \) and \( C_c(G) \subseteq C^*(G) \subseteq M(A \rtimes_{\alpha} G ) \).
\end{definition}

The following Theorem and other subsequent results will involve taking the Kasparov product of (unbounded) Kasparov modules. In the interests of economy, we will not give separate statements for different combinations of parities. Instead, we will use the symbol \( \tildeotimes \) to represent a flexible tensor product of possibly \( \Z/2\Z \)-graded Hilbert spaces and operators thereon. Let \( D_1 \) and \( D_2 \) be (odd) unbounded operators on (possibly graded) Hilbert spaces \( \mathpzc{H}_1 \) and \( \mathpzc{H}_2 \) respectively. In the case when \( \mathpzc{H}_1 \) and \( \mathpzc{H}_2 \) are both graded, \( \tildeotimes \) will simply mean the graded tensor product. In the case when \( \mathpzc{H}_1 \) is graded and \( \mathpzc{H}_2 \) ungraded, \( \mathpzc{H}_1 \tildeotimes \mathpzc{H}_2 \) will refer to the plain tensor product, giving an ungraded Hilbert module, and we will write \( D_1 \tildeotimes 1 = D \otimes 1 \) and \( 1 \tildeotimes D_2 = \gamma_1 \otimes D_2 \) where \( \gamma_1 \) is the grading on \( \mathpzc{H}_1 \). In the case when \( \mathpzc{H}_1 \) is ungraded and \( \mathpzc{H}_2 \) graded, \( \mathpzc{H}_1 \tildeotimes \mathpzc{H}_2 \) will again refer to the plain tensor product, and we will write \( D_1 \tildeotimes 1 = D \otimes \gamma_2 \) and \( 1 \tildeotimes D_2 = 1 \otimes D_2 \) where \( \gamma_2 \) is the grading on \( \mathpzc{H}_2 \). If both \( \mathpzc{H}_1 \) and \( \mathpzc{H}_2 \) are ungraded, we will let \( \mathpzc{H}_1 \tildeotimes \mathpzc{H}_2 = \mathpzc{H}_1 \otimes \mathpzc{H}_2 \otimes \C^2 \), with a grading given by \( 1 \otimes 1 \otimes \sigma_3 \) and write \( D_1 \tildeotimes 1 = D \otimes 1 \otimes \sigma_1 \) and \( 1 \tildeotimes D_2 = 1 \otimes D_2 \otimes \sigma_2 \), where \( \sigma_1 \), \( \sigma_2 \), and \( \sigma_3 \) are the Pauli matrices.

\begin{thm}
	\label{theorem:crossedproductspectraltriple}
	cf. \cite[Theorem 3.4]{Cornelissen_2008}, \cite[\S 3.4]{Bellissard_2010}, \cite[Theorem 2.7]{Hawkins_2013}, \cite[Proposition 4.1]{Paterson_2014}
	Let \( (\mathcal{A}, \mathpzc{H}, D) \) be a spectral triple. Let \( \alpha \) be an action of a locally compact group \( G \) on \( \mathcal{A} \). Let \( \ell : G \to \End V \) be a self-adjoint, proper, translation-bounded weight. If the spectral triple is pointwise bounded with respect to the action,
	\[ \left( \mathcal{A} \rtimes_{\alpha} G, L^2(G, V) \tildeotimes \mathpzc{H}, M_{\ell} \tildeotimes 1 + 1 \tildeotimes D \right) \]
	is a spectral triple, representing the Kasparov product
	\[ (\mathcal{A} \rtimes_{\alpha} G, L^2(G, V) \otimes A_A, M_{\ell} \otimes 1) \otimes_A (\mathcal{A}, \mathpzc{H}, D) . \]
\end{thm}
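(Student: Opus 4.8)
The plan is to verify directly the three axioms of Definition~\ref{defhost} with $\epsilon=0$ for the self-adjoint operator $\overline D:=M_\ell\tildeotimes 1+1\tildeotimes D$ on $L^2(G,E)\tildeotimes\mathpzc H$, and then to identify the resulting spectral triple with the Kasparov product via Kucerovsky's theorem \cite{danthedan}. Write $D_1=M_\ell\tildeotimes 1$ and $D_2=1\tildeotimes D$. These strictly anticommute on the core $C_c(G,E\tildeotimes\Dom D)$, so by \cite{kaadlesch} the operator $\overline D$ is self-adjoint with $\Dom\overline D=\Dom D_1\cap\Dom D_2$ and $\overline D^{\,2}=D_1^2+D_2^2=M_{\ell^2}\tildeotimes1+1\tildeotimes D^2=:\Delta$. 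Under the identification $L^2(G,E)\otimes_A\mathpzc H\cong L^2(G,E\tildeotimes\mathpzc H)$, an element $b\in\mathcal A$ acts by multiplication by the operator field $g\mapsto\alpha_{g^{-1}}(b)$ on the $\mathpzc H$-factor (note $\alpha_{g^{-1}}(b)\in\mathcal A$, since $\alpha$ preserves $\mathcal A$, hence preserves $\Dom D$), while $f\in C_c(G)$ acts by the convolution formula of the section, diagonally in $E$ and $\mathpzc H$.

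For the commutator bounds: on generators $b\in\mathcal A$ one has $[D_1,\pi(b)]=0$, while $[D_2,\pi(b)]$ acts fibrewise as $\xi(g)\mapsto[D,\alpha_{g^{-1}}(b)]\xi(g)$, which is bounded with norm $\le\sup_{g}\|[D,\alpha_g(b)]\|<\infty$ by pointwise-boundedness --- here the measurability of $g\mapsto[D,\alpha_g(b)]$ is used to ensure that this field defines a bounded operator. On generators $f\in C_c(G)$ one has $[D_2,\pi(f)]=0$, because $D$ commutes with $G$-convolution, whereas $[D_1,\pi(f)]$ is the operator with kernel $(g,g')\mapsto f(gg'^{-1})(\ell(g)-\ell(g'))$; translation-boundedness (with $\VEC\epsilon=0$) says this kernel represents an element of the integrated representation of $C_c(G,C_b(G,\End E))$, which is therefore bounded, exactly as in \cite[Lemma 5]{connesmetric} and \cite[Lemma 7(1)]{Raeburn_1988}. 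Domain preservation is checked along the way, and the Leibniz rule propagates both statements to all of $\mathcal A\rtimes_\alpha G$. Since $\epsilon=0$, these plain bounds are precisely axiom~(3).

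For local compactness it suffices to treat $a=\phi\in C_c(G,\mathcal A)$, since such elements span a dense subspace of $\mathcal A\rtimes_\alpha G$ and act by the integrated representation. Because $M_{\ell^2}\tildeotimes1$ and $1\tildeotimes D^2$ commute and are positive, $1+\Delta\ge(1+M_{\ell^2}\tildeotimes1)^{1/2}(1+1\tildeotimes D^2)^{1/2}$, so $(1+\Delta)^{-1}\le R_1R_2$ with $R_1=(1+M_{\ell^2}\tildeotimes1)^{-1/2}$ multiplication by the $C_0(G)$-function $(1+\ell(g)^2)^{-1/2}$ --- using properness of $\ell$ --- and $R_2=1\tildeotimes(1+D^2)^{-1/2}$. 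Hence $\pi(\phi)(1+\Delta)^{-1}\pi(\phi)^*\le\pi(\phi)R_1R_2\pi(\phi)^*$, and it is enough to show $\pi(\phi)R_1R_2$ is compact. This operator has operator-valued kernel $(g,g')\mapsto(1+\ell(g')^2)^{-1/2}\,\alpha_{g^{-1}}(\phi(gg'^{-1}))(1+D^2)^{-1/2}$, continuous, supported in the band $gg'^{-1}\in\supp\phi$, and valued in $\End E\otimes\mathbb K(\mathpzc H)$ since $\mathcal A(1+D^2)^{-1/2}\subseteq\mathbb K(\mathpzc H)$. Approximating $(1+\ell(\cdot)^2)^{-1/2}$ in the supremum norm by $C_c(G)$-functions approximates $\pi(\phi)R_1R_2$ in operator norm by integral operators with continuous, compactly supported, $\mathbb K(\mathpzc H)$-valued kernels, which are compact; therefore $\pi(\phi)R_1R_2$ is compact, whence $\pi(\phi)(1+\Delta)^{-1/2}$, and then $\pi(a)(i\pm\overline D)^{-1}$, are compact. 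Together with the previous paragraph and self-adjointness of $\overline D$ this shows $(\mathcal A\rtimes_\alpha G,L^2(G,E)\tildeotimes\mathpzc H,\overline D)$ is a spectral triple, equivalently an unbounded Kasparov $(\mathcal A\rtimes_\alpha G)$-$\C$-module.

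Finally, that this module represents the asserted Kasparov product follows from Kucerovsky's theorem \cite{danthedan} applied with the zero connection on $L^2(G,E)\otimes_A\mathpzc H$: the connection condition is immediate, since $1\tildeotimes D$ commutes (up to the graded sign) with the creation operators $T_\xi$ for $\xi\in C_c(G,E)$, while $(M_\ell\tildeotimes1)T_\xi=T_{M_\ell\xi}$ is bounded; and Kucerovsky's semiboundedness condition is trivial because the relevant cross-term is $\langle\xi,(D_1D_2+D_2D_1)\xi\rangle=0$ by strict anticommutation. The only real work will be the local-compactness step, where the noncompactness of $G$ has to be absorbed by the $C_0$-decay coming from properness of $\ell$; everything else is a matter of keeping the graded tensor-product conventions of the section straight.
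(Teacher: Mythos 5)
Your argument follows essentially the same route as the paper: work on the core $C_c(G,E)\tildeotimes\Dom D$, show that the generators of $\mathcal{A}\rtimes_\alpha G$ preserve it and have bounded commutators there (you split the generators into $b\in\mathcal{A}$ and $f\in C_c(G)$ and invoke Leibniz, whereas the paper works directly with products $af$; the computations are identical), and then invoke Kucerovsky's theorem for the Kasparov product. The paper treats local compactness as ``standard'' and does not spell it out, so your operator-inequality argument $1+\Delta\ge (1+M_{\ell^2}\tildeotimes 1)^{1/2}(1\tildeotimes(1+D^2))^{1/2}$ followed by band-kernel approximation is a welcome added detail and is correct; the one place where the paper is more careful than you is the domain-preservation step for $\pi(af)$, which they justify via \cite[Theorem~XIII.85]{Reed_1978} to legitimize interchanging $D$ with the convolution integral, while you only cite the Kaad--Lesch identification $\Dom\overline{D}=\Dom D_1\cap\Dom D_2$ and gesture at the rest.
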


Theorem \ref{theorem:crossedproductspectraltriple} is known in the case of discrete groups but, to our knowledge, the generalisation to locally compact groups has not appeared in the literature, although see \cite[Note after Proposition 4.1]{Paterson_2014}.

\begin{proof}
	This is an instance of the constructive unbounded Kasparov product. We note that \( M_{\ell} \tildeotimes 1 \) and \( 1 \tildeotimes D \) anticommute, so, in order to apply \cite[Theorem 7.4]{bramlesch} we need only check the boundedness of commutators and the connection condition. For the latter, let \( \xi \otimes a \in C_c(G, V) \otimes \mathscr{A} \) and \( T_{\xi \otimes a} \in B(H, L^2(G, V) \tildeotimes H) \) be given by
	\[ (T_{\xi \otimes a} \eta)(g) = \xi(g) \tildeotimes \alpha_{g^{-1}}(a) \eta \qquad (\eta \in H) . \]
	Then, for \( \eta \in \Dom D \),
	\[ (((1 \tildeotimes D) T_{\xi \otimes a} - T_{\xi \otimes a} D) \eta)(g) = \xi(g) \tildeotimes [D, \alpha_{g^{-1}}(a)] \eta . \]
	Because \( \xi \) is compactly supported and \( g \mapsto [D, \alpha_g(a)]\) is measurable, \( (1 \tildeotimes D) T_{\xi \otimes a} - T_{\xi \otimes a} D \) is bounded.
	
	To check bounded commutators, by \cite[Corollary 2.2]{Forsyth_2014}, it suffices to show that the elements of \( \mathcal{A} \rtimes_{\alpha} G \) take a core for \( M_{\ell} \tildeotimes 1 + 1 \tildeotimes D \) to the domain and have bounded commutators on that core. Let \( \eta = \eta_1 \tildeotimes \eta_2 \in C_c(G, V) \tildeotimes \Dom D \), a core for \( M_{\ell} \tildeotimes 1 + 1 \tildeotimes D \). (If both \( V \) and \( \mathpzc{H} \) are ungraded, then \( \eta \) should have an extra \( \C^2 \) tensor factor, but this detail does not change the argument below.) Then
	\[ (\pi(a f) \eta)(g) = \int_G f(h) \eta_1(h^{-1} g) d\mu(h) \tildeotimes \alpha_{g^{-1}}(a) \eta_2 \in E \tildeotimes \Dom D \]
	for all \( g \in G \) and 
	\begin{align*}
		& \int_G \left\| (\ell(g) \tildeotimes 1 + 1 \tildeotimes D) \int_G f(h) \eta_1(h^{-1} g) d\mu(h) \tildeotimes \alpha_{g^{-1}}(a) \eta_2 \right\|^2 d\mu(g) \\
		& \qquad \leq \int_G \left\| (\ell(g) \int_G f(h) \eta_1(h^{-1} g) d\mu(h) \tildeotimes \alpha_{g^{-1}}(a) \eta_2 \right\|^2 d\mu(g) \\
		& \qquad\qquad + \int_G \left\| \int_G f(h) \eta_1(h^{-1} g) d\mu(h) \tildeotimes \left( [D, \alpha_{g^{-1}}(a)] + \alpha_{g^{-1}}(a) D \right) \eta_2 \right\|^2 d\mu(g)
	\end{align*}
	is finite owing to the compactness of the supports of \( f \) and \( \eta_1 \) and pointwise-boundedness. By \cite[Theorem XIII.85]{Reed_1978}, this implies that \( \pi(a f) \eta \) is in the domain of \( M_{\ell} \tildeotimes 1 + 1 \tildeotimes D \). It is routine to check that
	\[ [1 \tildeotimes D, \pi(a f)] \eta(g) = 1 \tildeotimes [D, \alpha_{g^{-1}}(a)] \pi(f) \eta(g) \]
	and 
	\[ [M_{\ell} \tildeotimes 1, \pi(a f)] \eta(g) = \alpha_{g^{-1}}(a) \int_G \left( (\ell(h^{-1} g) - \ell(g)) f(h) \tildeotimes 1 \right) \eta(h^{-1} g) d\mu(h) . \]
	The commutator \( [M_{\ell} \tildeotimes 1 + 1 \tildeotimes D, \pi(a f)] \) is then bounded because of pointwise-boundedness and the facts that \( \ell \) is translation bounded and that \( f \) is compactly supported. By the Leibniz rule, we are done.
\end{proof}

With the technology of ST\textsuperscript{2}s available, we are not so constrained as in the spectral triple case. We make the following definition.

\begin{definition}
	A spectral triple \( (\mathcal{A}, H, D) \) has \emph{parabolic order \( s \in [0, \infty) \)} with respect to an action \( \alpha \) of \( G \) on \( A \) and a weight \( \ell \) on \( G \) if, for all \( a \in \mathcal{A} \), the function \(g \mapsto [D, \alpha_g(a)]\) is measurable and, for all \( g \), the matrix inequality
	\[ \|[D, \alpha_g(a)]\| \leq C_a (1 + | \ell(g) |^s) \]
	holds for some constant \( C_a > 0 \). (If \( s = 0 \), we recover pointwise-boundedness.) 
\end{definition}

\begin{remark}
	\label{remark:conjugacypointwiseorder}
	Let \( \alpha \) be an action of a locally compact group \( G \) on \( \mathcal{A} \).
	If \(\beta\) is an automorphism of \(A\) preserving \(\mathcal{A}\), there is an isomorphism
	\[ \mathcal{A} \rtimes_{\beta \circ \alpha \circ \beta^{-1}} G \cong \mathcal{A} \rtimes_{\alpha} G . \]
	Let \((\mathcal{A}, H, D)\) be a spectral triple which is parabolic of order \( s \) with respect to the action and a weight \( \ell \). Suppose that there is a constant \(C' > 0\) such that, for all \( a \in \mathcal{A} \),
	\[ \|[D, \beta(a)]\| \leq C' \|[D, a]\| . \]
	Then \((\mathcal{A}, H, D)\) also is parabolic of order \( s \) with respect to \( \beta \circ \alpha \circ \beta^{-1} \) and \( \ell \) because
	\[ \|[D, \beta \circ \alpha_g \circ \beta^{-1}(a)]\| \leq C' \|[D, \alpha_g(\beta^{-1}(a))]\| \leq C' C_{\beta^{-1}(a)} (1 + | \ell(g) |^s) . \]
\end{remark}

\begin{thm}
	\label{thm:crossedproductST2}
	Let \( (\mathcal{A}, \mathpzc{H}, D) \) be a spectral triple. Let \( \alpha \) be an action of a locally compact group \( G \) on \( \mathcal{A} \). Let \( \ell : G \to \End V \) be a self-adjoint, proper, translation-bounded weight. If the spectral triple is parabolic of order \( s \) with respect to the action and weight,
	\[ \left( \mathcal{A} \rtimes_{\alpha} G, L^2(G, V) \tildeotimes \mathpzc{H}, (M_{\ell} \tildeotimes 1, 1 \tildeotimes D) \right) \]
	is an ST\textsuperscript{2} with bounding matrix
	\[ \VEC{\epsilon}=\begin{pmatrix}0&0\\s&0\end{pmatrix} \qquad \vcenter{\hbox{\begin{tikzpicture}
	\graph[grow right sep=1.5cm, empty nodes, nodes={fill=black, circle, inner sep=1.5pt}, edges={semithick}]{
	a --["$s$" inner sep=5.5pt, middlearrow={<}] b;
	};
	\end{tikzpicture}}} . \]
	The ST\textsuperscript{2} represents the Kasparov product
	\[ (\mathcal{A} \rtimes_{\alpha} G, L^2(G, V) \otimes A_A, M_{\ell} \otimes 1) \otimes_A (\mathcal{A}, H, D) . \]
\end{thm}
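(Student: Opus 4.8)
The plan is to recognise this data as the constructive unbounded Kasparov product of the vertical module $(\mathcal{A}\rtimes_\alpha G, L^2(G,E)\otimes A_A, M_\ell\otimes 1)$ with the spectral triple $(\mathcal{A},\mathpzc{H},D)$, only without summing the two operators, so that almost all of the work is already done in the proof of Theorem~\ref{theorem:crossedproductspectraltriple}; the point of parabolicity is that it buys the off-diagonal entry $s$ in the bounding matrix in place of an honestly bounded commutator. First I would check that $(M_\ell\tildeotimes 1,1\tildeotimes D)$ is a strictly anticommuting pair of self-adjoint operators: $M_\ell$ is essentially self-adjoint on $C_c(G,E)$ and $D$ is self-adjoint, and in each of the graded/ungraded combinations encoded by $\tildeotimes$ (see the conventions before Theorem~\ref{theorem:crossedproductspectraltriple}) the two operators anticommute on the algebraic tensor product of a core for $M_\ell$ with $\Dom D$, which is invariant under both. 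Local compactness of the joint resolvent — $\pi(a)(1+\Delta^{\DC}_{\VEC t})^{-1}\in\mathbb{K}$ for $a\in\mathcal{A}\rtimes_\alpha G$ and suitable $\VEC t$ — follows exactly as in Section~\ref{section:nilpotent} and \cite[Lemma~5]{connesmetric}: properness of $\ell$ together with $C_0(G)\rtimes G\cong\mathbb{K}(L^2(G))$ controls the $G$-direction, $b(i+D)^{-1}\in\mathbb{K}$ for $b\in\mathcal{A}$ controls the $\mathpzc{H}$-direction, and one interpolates using Lemma~\ref{lknlknn}. Domain preservation is the computation from the proof of Theorem~\ref{theorem:crossedproductspectraltriple}: for $a\in\mathcal{A}$, $f\in C_c(G)$ and $\eta=\eta_1\tildeotimes\eta_2$ with $\eta_1\in C_c(G,E)$, the vector $\pi(af)\eta$ is compactly supported in the $G$-variable, so the integrals defining $(M_\ell\tildeotimes1)\pi(af)\eta$ and $(1\tildeotimes D)\pi(af)\eta$ run over a fixed compact set and converge even under the weaker parabolic bound.

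The heart of the argument is the commutator axiom. For the index corresponding to $M_\ell\tildeotimes 1$ we have $\epsilon_{11}=\epsilon_{12}=0$, so it suffices to bound $[M_\ell\tildeotimes1,\pi(af)]$ itself; this is the estimate
$[M_\ell\tildeotimes1,\pi(af)]\eta(g)=\alpha_{g^{-1}}(a)\int_G\bigl((\ell(h^{-1}g)-\ell(g))f(h)\tildeotimes1\bigr)\eta(h^{-1}g)\,d\mu(h)$
from Theorem~\ref{theorem:crossedproductspectraltriple}, bounded because $\ell$ is translation bounded and $f$ compactly supported. For the index corresponding to $1\tildeotimes D$ we have $\epsilon_{21}=s$, $\epsilon_{22}=0$, so I must show that $[1\tildeotimes D,\pi(af)]\bigl(2+|M_\ell|^s\tildeotimes1\bigr)^{-1}$ is bounded. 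Using $[1\tildeotimes D,\pi(af)]\eta(g)=\bigl(1\tildeotimes[D,\alpha_{g^{-1}}(a)]\bigr)(\pi(f)\eta)(g)$ and the parabolic bound $\|[D,\alpha_{g^{-1}}(a)]\|\le C(1+|\ell(g^{-1})|^s)$, one reduces to the comparabilities $|\ell(g^{-1})|\asymp|\ell(g)|$ (automatic for a length function, hence in all examples of Section~\ref{section:parabolic}) and $|\ell(h^{-1}g)|\gtrsim|\ell(g)|$ for $h\in\supp f$ (from translation-boundedness and compact support), which together make $\|[D,\alpha_{g^{-1}}(a)]\|\,(2+|\ell(h^{-1}g)|^s)^{-1}$ bounded uniformly in $g$ and $h\in\supp f$; the operator is then bounded by this constant times the convolution norm of $f$. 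This gives the claimed $\VEC\epsilon=\begin{pmatrix}0&0\\s&0\end{pmatrix}$, whose digraph is acyclic, so the decreasing cycle condition holds and $\Omega(\VEC\epsilon)=\{(t_1,t_2):st_1<t_2\}\ne\varnothing$.

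For the Kasparov-product claim I would pick $\VEC t\in\Omega(\VEC\epsilon)\cap(0,1]^2$ (nonempty as $s<\infty$), so that Theorem~\ref{thm:st2_give_host} yields the higher order spectral triple $(\mathcal{A}\rtimes_\alpha G,L^2(G,E)\tildeotimes\mathpzc{H},\overline{D}_{\VEC t})$ with $\overline{D}_{\VEC t}=M_\ell^{t_1}\tildeotimes1+1\tildeotimes D^{t_2}$, and then verify Kucerovsky's conditions in the HOST form \cite[Theorem~A.7]{GMCK}, exactly as in Theorem~\ref{thm:extprod} and Example~\ref{kaspkuc1}: the connection condition is immediate because $1\tildeotimes D^{t_2}$ is fibrewise over $G$ a copy of the bounded-transform data of $D$; the semiboundedness/positivity condition holds because $M_\ell^{t_1}\tildeotimes1$ and $1\tildeotimes D^{t_2}$ strictly anticommute, so the relevant cross term vanishes; and the domain condition is the compact-support computation above. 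Since $M_\ell^{t_1}$ and $D^{t_2}$ have the same bounded transforms as $M_\ell$ and $D$, the resulting class is $[(\mathcal{A}\rtimes_\alpha G,L^2(G,E)\otimes A_A,M_\ell\otimes1)]\otimes_A[(\mathcal{A},\mathpzc{H},D)]$, and by Corollary~\ref{khomfromadk} it is independent of the chosen $\VEC t$. The main obstacle I anticipate is precisely the parabolic commutator estimate of the middle paragraph — cleanly matching the growth of $\|[D,\alpha_{g^{-1}}(a)]\|$ against $|M_\ell\tildeotimes1|^s$ uniformly in $g$, with the $g\mapsto g^{-1}$ bookkeeping and the absorption of the convolution by $f$ handled via translation-boundedness — together with a careful check that $\overline D_{\VEC t}$ meets the hypotheses of Kucerovsky's theorem in the higher-order setting.
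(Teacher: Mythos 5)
Your proposal follows the same route as the paper's proof: port the argument of Theorem \ref{theorem:crossedproductspectraltriple} with the tangled commutator bound supplied by parabolic order, then invoke Kucerovsky's theorem in its HOST form \cite[Theorem A.7]{GMCK} for the Kasparov-product claim. Your middle paragraph is actually more careful than the paper's two-sentence proof, in particular flagging the tacit $g\mapsto g^{-1}$ bookkeeping: the parabolic hypothesis controls $\|[D,\alpha_{g^{-1}}(a)]\|$ by $C(1+|\ell(g^{-1})|^s)$, whereas after pushing the weight through $\pi(f)$ the ST\textsuperscript{2} axiom needs control against $(1+|\ell(h^{-1}g)|^s)^{-1}$ for $h\in\supp f$; translation-boundedness closes the gap between $|\ell(h^{-1}g)|$ and $|\ell(g)|$ over compact $h$, but one still needs some form of $|\ell(g^{-1})|\lesssim|\ell(g)|$. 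That symmetry is not among the stated axioms on $\ell$; it holds in every example in the paper (where $\ell$ is a length function and hence symmetric), and you are right to note it as an implicit hypothesis.

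There is one concrete slip: with the paper's convention $\Omega(\VEC{\epsilon})=\{\VEC{t}:\epsilon_{ij}t_i<t_j\ \forall i,j\}$ and $\epsilon_{21}=s$, the cone is $\{(t_1,t_2):st_2<t_1\}$, not $\{(t_1,t_2):st_1<t_2\}$ as you wrote — compare the paper's own examples with this bounding matrix at $s=1$, where $\Omega$ is stated as $\{t_1>t_2\}$. The error is harmless downstream (the intersection with $(0,1]^2$ is still nonempty), but it should be corrected. A minor contrast with the paper: you take $\VEC{t}\in\Omega(\VEC{\epsilon})\cap(0,1]^2$, which uses only the non-regular branch of Theorem \ref{thm:st2_give_host}, whereas the paper's proof chooses $\VEC{t}=(m,1)$ with $m>s$, which exits $(0,1]^2$ when $s\ge 1$ and so implicitly uses the regular branch — regularity being easy to verify here because the core $C_c(G,E)\tildeotimes\Dom D$ is manifestly preserved, but your choice sidesteps the check.
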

\begin{proof}
	The proof of Theorem \ref{theorem:crossedproductspectraltriple} carries over with the appropriate modifications for the tangled boundedness of commutators implied by the pointwise order.
	
	For the last point, using Kucerovsky's theorem \cite{danthedan} (and in particular its extension to higher order spectral triples in \cite[Theorem A.7]{GMCK}), we see that, e.g.~for \( m > s \), the higher order spectral triple
	\[ (\mathcal{A} \rtimes G, L^2(G, V) \tildeotimes \mathpzc{H}, M_{\ell |\ell|^{-1+m}} \tildeotimes 1 + 1 \tildeotimes D) \]
	represents the Kasparov product of \( (A \rtimes_{\alpha} G, L^2(G, V) \otimes A_A, M_{\ell |\ell|^{-1+m}}  \otimes 1) \) and \( (\mathcal{A}, \mathpzc{H}, D) \).
\end{proof}

\begin{remark}
	cf. \cite[Theorem 2.7]{Hawkins_2013}
	In the context of Theorem \ref{thm:crossedproductST2}, if \( G \) is discrete and \( (1 + |\ell|)^{-1} \in \ell^{p_1}(G, \End V) \), so that \( (C_c(G), \ell^2(G, V), M_{\ell}) \) is \( p_1 \)-summable, and \( (\mathcal{A}, \mathpzc{H}, D) \) is \( p_2 \)-summable, then
	\[ \left( \mathcal{A} \rtimes_{\alpha} G, \ell^2(G, V) \tildeotimes \mathpzc{H}, (M_{\ell} \tildeotimes 1, 1 \tildeotimes D) \right) \]
	is \( f \)-summable for \( f : (t_1, t_2) \mapsto \frac{p_1}{t_1} + \frac{p_2}{t_2} \).
\end{remark}

To see the meaning of parabolic order, we specialise to the case of a complete Riemannian manifold \( (X, \mathbf{g}) \) and a spectral triple \( (C_c^{\infty}(X), L^2(X, S), D) \), with either the Atiyah–Singer or Hodge–de Rham Dirac operator. Let \( \varphi \) be an action of a locally compact group \( G \) by diffeomorphisms; the resulting action on \( C_c^{\infty}(X) \) is given by \( \varphi^{-1 *} \), the pullback of the inverse. For \( f \in C_c^{\infty}(X) \), the commutator \( [D, f] \) is just the one-form \( \rd f \) acting by Clifford multiplication. Hence \( \| [D, f] \| = \| df \| \). Using the notation
\[ (d \varphi_g)_x : T_x X \to T_{\varphi_g(x)} X  \]
for the pushforward by \( \varphi_g \) at \( x \in X \), the chain rule gives
\[ d \varphi_g^*(f)_x = d f_{\varphi_g(x)} (d \varphi_g)_x . \]
Hence
\[ \| d \varphi_g^*(f) \|_{\infty} \leq \| d f \|_{\infty} \| d \varphi_g \|_{\infty} \]
and the parabolic order condition reduces to the matrix inequality
	\[ \| d \varphi_g \|_{\infty} \leq C (1 + | \ell(g) |^s) \]
	for a constant \( C > 0 \). In other words, the supremum norm of the Jacobian should be of polynomial order. To be clear, the norm of \( d \varphi_g \) at \( x \in X \) is
\[ \| (d \varphi_g)_x \| = \sup_{u \in T_x M} \frac{\| (d \varphi_g)_x u \|}{\| u \|} = \sup_{u \in T_x M} \frac{\mathbf{g}_{\varphi(x)}((d \varphi_g)_x u, (d \varphi_g)_x u)^{1/2}}{\mathbf{g}_x(u, u)^{1/2}} . \]
Making our observation precise, we obtain:

\begin{cor}
	\label{cor:parabolicmanifold}
	Let \( (C_c^{\infty}(X), L^2(X, S), D) \) be the Atiyah–Singer or Hodge–de Rham Dirac spectral triple on a complete Riemannian manifold \( (X, \mathbf{g}) \). Let \( \varphi \) be an action of a locally compact group \( G \) by diffeomorphisms on \( X \). Let \( \ell : G \to \End V \) be a self-adjoint, proper, translation-bounded weight. Suppose that, for some $s\geq 0$, the matrix inequality
	\[ \| d \varphi_g \|_{\infty} \leq C (1 + | \ell(g) |^s) \]
	holds for some constant \( C > 0 \). Then
\[ (C_c^\infty(X) \rtimes G, L^2(G, V) \tildeotimes L^2(X, S), (M_{\ell} \tildeotimes 1, 1 \tildeotimes D) \]
is a strictly tangled spectral triple with bounding matrix
	\[ \VEC{\epsilon}=\begin{pmatrix}0&0\\s&0\end{pmatrix} \qquad \vcenter{\hbox{\begin{tikzpicture}
	\graph[grow right sep=1.5cm, empty nodes, nodes={fill=black, circle, inner sep=1.5pt}, edges={semithick}]{
	a --["$s$" inner sep=5.5pt, middlearrow={<}] b;
	};
	\end{tikzpicture}}} . \]
This ST\textsuperscript{2} represents the Kasparov product of
\[ (C_c^\infty(X) \rtimes G, L^2(G, V) \otimes C_0(X)_{C_0(X)}, M_{\ell} \otimes 1) \]
and \( (C_c^\infty(X), L^2(X, S), D) \).
\end{cor}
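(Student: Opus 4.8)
The plan is to deduce Corollary \ref{cor:parabolicmanifold} directly from Theorem \ref{thm:crossedproductST2} by verifying that the Atiyah--Singer (or Hodge--de Rham) Dirac spectral triple on $(X,\mathbf{g})$ is parabolic of order $s$ with respect to the action $\alpha_g := \varphi_g^{-1\,*}$ and the weight $\ell$. Since Theorem \ref{thm:crossedproductST2} already packages the ST\textsuperscript{2} structure, the bounding matrix, and the statement about the Kasparov product, essentially everything reduces to a chain-rule estimate on commutators.

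First I would recall the standard fact that for $f\in C_c^\infty(X)$ the commutator $[D,f]$ with either the spin Dirac operator or the Hodge--de Rham operator is Clifford multiplication by the one-form $\rd f$, so that $\|[D,f]\| = \|\rd f\|_\infty$ (this is the operator norm of pointwise Clifford multiplication, which equals the fibrewise $\mathbf g$-norm of $\rd f$, supremised over $X$). Next I would spell out the chain rule: writing $(d\varphi_g)_x : T_xX \to T_{\varphi_g(x)}X$ for the differential, one has $\rd(\varphi_g^*f)_x = \rd f_{\varphi_g(x)}\circ (d\varphi_g)_x$, hence $\|\rd(\varphi_g^*f)\|_\infty \le \|\rd f\|_\infty\,\|d\varphi_g\|_\infty$, where $\|d\varphi_g\|_\infty := \sup_{x\in X}\|(d\varphi_g)_x\|$ is the supremum of the operator norms of the differentials. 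Applying this to $\alpha_g(f) = \varphi_g^{-1\,*}f$ (and absorbing the distinction between $\varphi_g$ and $\varphi_g^{-1}$ into the hypothesis, or equivalently observing that $\|d\varphi_{g^{-1}}\|_\infty$ satisfies the same polynomial bound up to a constant when $\ell$ is translation bounded, since $\ell(g^{-1})$ is controlled by $\ell(g)$; alternatively one states the hypothesis symmetrically) gives
\[
\|[D,\alpha_g(f)]\| = \|\rd(\alpha_g f)\|_\infty \le C\,\|\rd f\|_\infty\,(1 + |\ell(g)|^s),
\]
which is precisely the parabolic-of-order-$s$ condition with constant $C\|\rd f\|_\infty$. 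Measurability of $g\mapsto [D,\alpha_g(f)]$ follows because $g\mapsto \varphi_g$ is continuous in the $C^1$-topology on a neighbourhood of the (compact) support of $f$, so the matrix coefficients $\langle \xi|[D,\alpha_g(f)]\eta\rangle$ are continuous for $\xi,\eta\in \Dom D$, and continuity implies measurability.

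With parabolicity established, Theorem \ref{thm:crossedproductST2} immediately yields that
\[
(C_c^\infty(X)\rtimes G,\ L^2(G,E)\tildeotimes L^2(X,S),\ (M_\ell\tildeotimes 1,\ 1\tildeotimes D))
\]
is an ST\textsuperscript{2} with the stated $2\times 2$ bounding matrix $\VEC{\epsilon}=\begin{pmatrix}0&0\\ s&0\end{pmatrix}$ (which satisfies the decreasing cycle condition, its weighted digraph being acyclic), and that it represents the Kasparov product of the vertical Kasparov module $(C_c^\infty(X)\rtimes G, L^2(G,E)\otimes C_0(X)_{C_0(X)}, M_\ell\otimes 1)$ with the horizontal spectral triple $(C_c^\infty(X), L^2(X,S), D)$, after identifying $L^2(G,C_0(X))\otimes_{C_0(X)} L^2(X,S) \cong L^2(G,L^2(X,S))$. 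The main obstacle is really bookkeeping rather than depth: one must be careful about the direction convention (action by $\varphi_g$ versus $\varphi_g^{-1}$) so that the hypothesis $\|d\varphi_g\|_\infty \le C(1+|\ell(g)|^s)$ matches the commutator bound for $\alpha_g$, and about the Clifford-multiplication norm identity holding uniformly across the bundle $S\to X$ on a noncompact manifold; beyond that, the completeness of $(X,\mathbf g)$ only enters through the self-adjointness of $D$, which is already built into the statement that $(C_c^\infty(X),L^2(X,S),D)$ is a spectral triple.
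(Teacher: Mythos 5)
Your proof is correct and takes essentially the same approach as the paper: you deduce the Corollary from Theorem \ref{thm:crossedproductST2} by verifying the parabolic-of-order-$s$ condition via the chain-rule estimate on $\|[D,\alpha_g(f)]\|$, exactly as in the paragraph preceding the Corollary in Section \ref{section:parabolic}. You are slightly more explicit than the paper about the $\varphi_g$ versus $\varphi_{g^{-1}}$ convention (the paper's discussion computes with $\varphi_g^*$ despite having declared the algebra action to be $\varphi^{-1\,*}$) and about the measurability of $g\mapsto[D,\alpha_g(f)]$, both of which the paper leaves implicit.
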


The behaviour of dynamical systems can be loosely classified into three paradigms: \emph{elliptic}, \emph{parabolic}, and \emph{hyperbolic} \cite[\S 5.1.g]{Hasselblatt_2002}. These roughly refer to the Jacobian's having respectively constant growth, polynomial growth, or exponential growth. The classical example of the distinction is the classification of Möbius transformations, which we discuss in the following Example. The meaning of Corollary \ref{cor:parabolicmanifold}, then, is that ST\textsuperscript{2}s can be built for parabolic dynamical systems in addition to elliptic dynamical systems which already fall within the scope of Theorem \ref{theorem:crossedproductspectraltriple}. For a survey of parabolic dynamics, we refer to \cite[Chapter 8]{Hasselblatt_2002}; see also \cite{Fraczek_2004, Avila_2021}.

\begin{example}
	\label{example:mobius}
	In terms of the complex coordinate \(z\) on the Riemann sphere \( S^2 \), a Möbius transformation is given by
\[ z \mapsto \frac{a z + b}{c z + d} \qquad \begin{pmatrix} a & b \\ c & d \end{pmatrix} \in SL(2, \C) . \]
The centre \(\{+1, -1\}\) of \(SL(2, \C)\) acts trivially, so that the
group of Möbius transformations is \(PSL(2, \C)\). Equip \( S^2 \) with the round metric
	\[ ds^2 = \frac{4 dz d\bar{z}}{(1 + |z|^2)^2} \]
	and a corresponding spectral triple \( (C^{\infty}(S^2), L^2(S^2, S), D) \). We will consider the behaviour of a \( \Z \)-action generated by a single Möbius transformation, with the weight \( \ell \) corresponding to the number operator. A Möbius transformation \( \varphi \) is classified by its eigenvalues \( \lambda, \lambda^{-1} \) into three types:
	\begin{itemize}
		\item If \(\lambda, \lambda^{-1} \in \T \setminus \{ -1, 1 \}\), \( \varphi \) is \emph{elliptic}, possessing two fixed points. An elliptic Möbius transformation \( \varphi \) is (smoothly) conjugate to a rotation
			\( \tau : z \mapsto e^{i \theta} z \),
			for which \( \| d \tau^*(f) \| = 1 \). By Remark \ref{remark:conjugacypointwiseorder}, \( (C^{\infty}(S^2), L^2(S^2, S), D) \) is pointwise bounded with respect to the \( \Z \)-action generated by \( \varphi \), placing it under the aegis of Theorem \ref{theorem:crossedproductspectraltriple}.
		\item If \(\lambda = \lambda^{-1} = \pm 1\), \( \varphi \) is
	either the identity or it is \emph{parabolic}, possessing one fixed point (and not diagonalisable as a matrix). A parabolic Möbius transformation \( \varphi \) is (smoothly) conjugate to a translation
			\( \tau : z \mapsto z+1 \).
			We compute that
			\[ \| d \tau^n \|_{\infty} = \sup_z \frac{1 + |z|^2}{1 + |z+n|^2} = \frac{1}{2} \left(n^2 + |n| \sqrt{n^2+4}+2\right) \in O(n^2) . \]
			Again, by Remark \ref{remark:conjugacypointwiseorder}, \( (C^{\infty}(S^2), L^2(S^2, S), D) \) has pointwise order 2 with respect to the \( \Z \)-action generated by \( \varphi \) and the number operator weight \( \ell_{\Z} \).
		\item Otherwise, if \(\lambda, \lambda^{-1} \in \C \setminus \T \), \( \varphi \) is \emph{loxodromic}, possessing two fixed points. A loxodromic Möbius transformation \( \varphi \) is (smoothly) conjugate to a dilation, perhaps combined with a rotation, \( \tau : z \mapsto \lambda^2 z \).
			In this case,
			\[ \| d \tau^n \|_{\infty} = \sup_z |\lambda|^{2n} \frac{1 + |z|^2}{1 + |\lambda|^{4n} |z|^2} = \max\{ |\lambda|^{2n}, |\lambda|^{-2n} \} \]
			which is not of polynomial order in \( n \).
	\end{itemize}
\end{example}

\begin{example}
	\label{example:toruslargediff}
	cf. \cite[\S 8.3.a]{Hasselblatt_2002}
	The group \( SL(d, \Z) \) acts on the torus \( \T^d \) by large diffeomorphisms. The action is realised by identifying \( \T^d \) with \( \R^d / \Z^d \) and \( SL(d, \Z) \) acting on \( \R^d \) in the usual way that a matrix acts on a vector. For ease of exposition, equip \( \T^d \) with a constant Riemannian metric \( \mathbf{g} \). For \( A \in SL(d, \Z) \) and the corresponding action \( \varphi_A \) on \( \T^d \), \( \| (d \varphi^n)_x \| = \| A^{-n} \|_{\mathbf{g}} \), which generically will be exponentially divergent. However, suppose that \( A \in SL(d, \Z) \) is a unipotent matrix, i.e.~such that \( (A - 1)^{s+1} = 0 \) for some \( s \in \N \). By Newton's binomial series, for \( n \in \Z\),
	\[ A^n = \sum_{k=0}^s \binom{n}{k} (A - 1)^k \]
	and
	\[ \| (d \varphi_{-n})_x \| = \| A^{-n} \|_{\mathbf{g}} \leq \sum_{k=0}^s \left| \binom{n}{k} \right| \| (A - 1)^k \|_{\mathbf{g}} \in O(n^s) . \]
	Hence
	\[ \left( C^{\infty}(\T^d) \rtimes_{\varphi^A} \Z, \ell^2(\Z) \tildeotimes L^2(\T^d, S), (N \tildeotimes 1, 1 \tildeotimes D) \right) \]
	is an ST\textsuperscript{2} with bounding matrix
	\[ \VEC{\epsilon}=\begin{pmatrix}0&0\\s&0\end{pmatrix} \qquad \vcenter{\hbox{\begin{tikzpicture}
	\graph[grow right sep=1.5cm, empty nodes, nodes={fill=black, circle, inner sep=1.5pt}, edges={semithick}]{
	a --["$s$" inner sep=5.5pt, middlearrow={<}] b;
	};
	\end{tikzpicture}}} . \]
\end{example}

This Example admits the following generalisation to outer automorphisms of noncommutative tori.

\begin{example}
	Let \( \Theta \) be a skew symmetric \( d \)-by-\( d \) real matrix. For \( x \in \Z^d \), define an operator \( l_{\Theta}(x) \) on \( \ell^2(\Z^d) \) by
	\[ (l_{\Theta}(x) \xi)(y) = e^{\pi i \langle \Theta x, -x + y \rangle} \xi(-x + y) . \]
	The algebra \( C^{\infty}(\T^d_{\Theta}) \) of smooth functions on the noncommutative torus \( \T^d_{\Theta} \) is the *-algebra spanned by \( l_{\Theta}(x) \) for all \( x \in \Z^d \). We call the $C^*$-algebra envelope \( C(\T^d_{\Theta}) \). When \( \Theta = 0 \), we recover \( C(\T^d) \). As in the classical case, integer matrices can act by automorphisms. Following \cite[\S 2.3]{Jeong_2015}, let \( A \in SL(d, \Z) \) be such that \( A^* \Theta A = \Theta \). Then \( \alpha_A: l_{\Theta}(x) \mapsto l_{\Theta}(A x) \) defines an automorphism of \( C(\T^d_{\Theta}) \). (For \( d = 2 \), the condition \( A^* \Theta A = \Theta \) is automatically satisfied.)
	
	Let \( (v_i)_{i=1}^d \) be a basis of \( \R^d \). To simplify notation, we will also write \( (v_i)_{i=1}^d \) for their images in \( \Cl_d \). Let \( S \) be a Clifford module for \( \Cl_d \) and define an unbounded operator
	\[ (D \xi)(y) = \sum_{i=1}^d \langle e_i, y \rangle v_i \xi(y) \qquad (y \in \Z^d) \]
	on \( \ell^2(\Z^d, S) \). We obtain a spectral triple \( ( C^{\infty}(\T^d_{\Theta}), \ell^2(\Z^d, S), D) \). We have
	\[ ([D, l_{\Theta}(x)] \xi)(y) = -e^{\pi i \langle \Theta x, -x + y \rangle} \sum_{i=1}^d \langle e_i, x \rangle v_i \xi(-x + y) \]
	so that
	\[ \left\| [D, l_{\Theta}(x)] \right\| = \Biggl\| \sum_{i, j=1}^d \langle e_i, x \rangle \langle e_j, x \rangle v_i v_j \Biggr\|^{\frac{1}{2}} = \Biggl\| \sum_{i, j=1}^d \langle x, e_i \rangle \langle v_i, v_j \rangle \langle e_j, x \rangle \Biggr\|^{\frac{1}{2}} = \| V x \| \]
	where \( V : \Z^d \to \R^d \) is the linear map taking \( e_i \mapsto v_i \).
	
	If \( A \in SL(d, \Z) \) (with \( A^* \Theta A = \Theta \)) is unipotent, so that \( (A - 1)^{s+1} = 0 \) for some \( s \in \N \), then \( \| A^n \| \in O(n^s) \) as in Example \ref{example:toruslargediff}, and
	\[ \left\| [D, \alpha_A^n(l_{\Theta}(x))] \right\| = \left\| [D, l_{\Theta}(A^n x)] \right\| = \| V A^n x \| \leq \| V \| \| A^n \| \| x \| \in O(n^s) . \]
	Hence
	\[ \left( C^{\infty}(\T^d_{\Theta}) \rtimes_{\alpha_A} \Z, \ell^2(\Z) \tildeotimes \ell^2(\Z^d, S), (N \tildeotimes 1, 1 \tildeotimes D) \right) \]
	is an ST\textsuperscript{2} with bounding matrix
	\[ \VEC{\epsilon}=\begin{pmatrix}0&0\\s&0\end{pmatrix} \qquad \vcenter{\hbox{\begin{tikzpicture}
	\graph[grow right sep=1.5cm, empty nodes, nodes={fill=black, circle, inner sep=1.5pt}, edges={semithick}]{
	a --["$s$" inner sep=5.5pt, middlearrow={<}] b;
	};
	\end{tikzpicture}}} . \]
\end{example}

\subsection{Nilpotent flows on homogeneous spaces}

Let \( G \) be a connected Lie group. Right-invariant Riemannian metrics on \( G \) are in bijection with inner products on \( \mathfrak{g} \). If \( \mathbf{g}_e \) denotes such an inner product on \( \mathfrak{g} = T_e G \), we define the Riemannian metric \( \mathfrak{g} \) at any other point \( g \in G \) by
\[ \mathbf{g}_g(u, v) = \mathbf{g}_e((d R_{g^{-1}})_g u, (d R_{g^{-1}})_g v) \]
where \( R_{g^{-1}} \) is the diffeomorphism of \( G \) given by right translation by \( g^{-1} \) and \( d R_{g^{-1}} \) is its pushforward. If the group is noncompact, the metric so obtained will not usually be left invariant \cite[\S 7]{Milnor_1976}. The norm of the Jacobian of left translation \( L_g \) by \( g \in G \), at \( h \in G \) is
\begin{align*}
	\| (d L_g)_h \|
	& = \sup_{u \in T_h G} \frac{\mathbf{g}_{g h}((d L_g)_h u, (d L_g)_h u)}{\mathbf{g}_h(u, u)} \\
	& = \sup_{u \in T_h G} \frac{\mathbf{g}_e((d R_{(g h)^{-1}})_{g h} (d L_g)_h u, (d R_{(g h)^{-1}})_{g h} (d L_g)_h u)}{\mathbf{g}_e((d R_{h^{-1}})_h u, (d R_{h^{-1}})_h u)} \\
	& = \sup_{v \in T_e G = \mathfrak{g}} \frac{\mathbf{g}_e((d\Ad_g)_e v, (d\Ad_g)_e v)}{\mathbf{g}_e(v, v)} \\
	& = \| (d\Ad_g)_e \| \\
	& = \| \Ad_g \|_{\mathbf{g}_e}
\end{align*}
where we have used the identity
\( (d R_{h^{-1}})_{g h} (d L_g)_h = (d L_g)_e (d R_{h^{-1}})_h \)
resulting from the facts that left and right actions commute and that the pushforward at \( e \in G \) of the adjoint action on \( G \) is the adjoint action on \( \mathfrak{g} \).

If \( H \) is any closed subgroup of \( G \) then \( G/H \) is a quotient manifold. A right-invariant Riemannian metric \( \mathbf{g} \) on \( G \) reduces to a Riemannian metric \( \mathbf{h} \) on \( G/H \). To construct \( \mathbf{h} \), let \( \pi : G \to G/H \) be the quotient map. Its pushforward at any point \( g \in G \),
\[ d\pi_g : T_g G \to T_{g H} (G/H) , \]
restricts to an isomorphism between \( T_g (g H)^{\perp} = (\ker d\pi_g)^{\perp}\) and \( T_{g H} (G/H) \). Define \( \mathbf{h} \) by
\[ \mathbf{h}_{g H}(u, v) = \mathbf{g}_g(d\pi_g|_{T_g (g H)^{\perp}}^{-1} u, d\pi_g|_{T_g (g H)^{\perp}}^{-1} v) . \]
There remains a left action of \( G \) on \( G/H \). As a crude estimate, we have
\[ \| (d L_g)_{h H} \| \leq \| (d L_g)_h  \| = \| \Ad_g \|_{\mathbf{g}_e} \]
for the Jacobian of left translation \( L_g \).

Recall the Campbell identity
\[ \Ad_{\exp X}(Y) = \exp(\ad_X)(Y) = \sum_{n=0}^{\infty} \frac{1}{n!} \ad_X^n(Y) . \]
An element \( X \in \mathfrak{g} \) is \emph{nilpotent} if \( \ad_X^{s+1} = 0 \) for some step size \( s \in \N \). In that case,
\[ \Ad_{\exp t X}(Y) = \sum_{n=0}^s \frac{t^n}{n!} \ad_X^n(Y) . \]
Consider the flow \( \phi^X \) given by \( \phi^X_t = L_{\exp t X} \) on \( G/H \). We have
\[ \| d \phi^X_t \|_{\infty} \leq \| \Ad_{\exp t X}(Y) \|_{\mathbf{g}_e} \in O(t^s) \]
so that
\[ \left( C_c^{\infty}(X) \rtimes_{\phi^X} \R, L^2(\R) \tildeotimes L^2(G/H, S), (M_{\ell_{\R}} \tildeotimes 1, 1 \tildeotimes D) \right) \]
is an ST\textsuperscript{2} with bounding matrix
\[ \VEC{\epsilon}=\begin{pmatrix}0&0\\s&0\end{pmatrix} \qquad \vcenter{\hbox{\begin{tikzpicture}
\graph[grow right sep=1.5cm, empty nodes, nodes={fill=black, circle, inner sep=1.5pt}, edges={semithick}]{
a --["$s$" inner sep=5.5pt, middlearrow={<}] b;
};
\end{tikzpicture}}} . \]
Such flows \( \phi^X \) constitute an important family of parabolic dynamical systems \cite[\S 8.3.b]{Hasselblatt_2002}.

\begin{example}
	cf. \cite[\S 8.3.3]{Hasselblatt_2002}
	Let \( \Gamma \subset SL(2, \R) \) be a cocompact lattice. A \emph{horocycle} flow \( \phi^X \) on \( SL(2, \R)/\Gamma \) is generated by a nilpotent element \( X \in \mathfrak{sl}(2, \R) \). Of necessity, \( X \) will be conjugate to
	\[ \begin{pmatrix} 0 & 1 \\ 0 & 0 \end{pmatrix} \in \mathfrak{sl}(2, \R) \]
	and so will be 2-step nilpotent.
\end{example}

\begin{example}
	cf. \cite[\S 8.3.2]{Hasselblatt_2002} \cite[\S 2.2]{Avila_2021}
	A compact \emph{nilmanifold} is a quotient \( G/\Gamma \) of a simply connected nilpotent Lie group \( G \) by a lattice \( \Gamma \subset G \). The \emph{nilflow} \( \phi^X \) generated by a vector field \( X \in \mathfrak{g} \) is the restriction of the left action of \( G \) to the one-parameter subgroup \( (\exp t X)_{t \in \R} \). Every element of a nilpotent Lie algebra is nilpotent, with step size less than or equal to the step size of the Lie algebra, so the above construction may be applied.
\end{example}

\begin{example}
	Let \( P \subseteq SO_0(n, 1) \) denote the standard parabolic subgroup. The homogeneous space \( SO_0(n, 1)/P \) is \( S^{n-1} \) and the Lorentz group \( SO_0(n, 1) \) acts by Möbius transformations on \( S^{n-1} \). We thereby recover Example \ref{example:mobius} as a special case.
\end{example}

\appendix

\section{A nearly convex set from relatively bounded commutators}
\label{section:nearly-convex}

A subset \( S \subseteq \R^n \) is \emph{nearly convex} if there exists a convex subset \( C \subseteq \R^n \) such that \( C \subseteq S \subseteq \overline{C} \) \cite[Definition 2.1]{Moffat_2016}. (Remark that \( \overline{S} = \overline{C} \) is convex.)

\begin{thm}
	\label{theorem:nearly-convex}
	Let \( A \) and \( B \) be densely defined operators on a Hilbert space \( \mathpzc{H} \) such that
	$A$ is self-adjoint, $B$ is positive and invertible, and $A$ and $B$ commute on a common core.
	Let \( T \in \mathbb{B}(\mathpzc{H}) \) and define the subset \( S \subset \R^2 \) consisting of \( (\alpha, \beta) \in [0, \infty) \times [0, \infty) \) such that \( T  \) preserves \( \Dom A |A|^{-1+\alpha} \) and
	\[ [A |A|^{-1+\alpha}, T] B^{-\beta} \]
	extends from \( \Dom A|A|^{-1+\alpha} \) to a bounded operator on \( \mathpzc{H} \).
	
	The subset \( S \) is nearly convex and contains \( \{ 0 \} \times [0, \infty) \).
	More precisely, if \( (\alpha_1, \beta_1) \in S \) then \( (\alpha_2, \beta_2) \in S \) for any \( 0 \leq \alpha_2 \leq \alpha_1 \) and \( \beta_2 > \frac{\alpha_2 \beta_1}{\alpha_1} \).
	If, further, \( (\alpha_3, \beta_3) \in S \) with \( \alpha_3 \geq \alpha_1 \), then \( (\alpha_2, \beta_2) \in S \) for any \( \alpha_1 \leq \alpha_2 \leq \alpha_3 \) and
	\[ \beta_2 > \frac{(\alpha_3 - \alpha_2) \beta_1 + (\alpha_2 - \alpha_1) \beta_3}{\alpha_3 - \alpha_1} . \]
\end{thm}

A similar result holds on Hilbert $C^*$-modules; see \cite[Appendix A.3.1]{AMthesis}. The following diagram shows the relationship between the parameters.
\[	\begin{tikzpicture}[scale=4]
		\coordinate (A) at (0.3,0.2);
		\coordinate (At) at (0.3,1);
		\coordinate (B) at (0.7,0.7);
		\coordinate (Bt) at (0.7,1);
		\fill[lightgray] (0, 1) -- (0, 0) -- (A) -- (B) --(Bt);
%		\fill[lightgray] (At) -- (A) -- (B) -- (Bt);
		\draw[->] (0,0) -- (1,0) node[below]{\(\alpha\)};
		\draw[->] (0,0) -- (0,1) node[left]{\(\beta\)};
%		\draw (A) -- (At);
		\draw (B) -- (Bt);
		\draw[dashed] (A) -- (B);
		\draw[dashed] (0, 0) -- (A);
		\filldraw[black] (A) circle (0.01) node[below right]{\((\alpha_1,\beta_1)\)};
		\filldraw[black] (B) circle (0.01) node[below right]{\((\alpha_3,\beta_3)\)};
		\filldraw[black] (0.4, 0.6) circle (0.01) node[above left]{\((\alpha_2,\beta_2)\)};
	\end{tikzpicture}
\]
For the proof, we require the following lemma.

\begin{lemma}
	\label{lemma:56une56ue56ub56une6}
	cf. \cite[Lemma 10.17]{varillybook}
	Let \( A \) be a self-adjoint operator on a Hilbert space \( \mathpzc{H} \). Let \( T \in \mathbb{B}(\mathpzc{H}) \) preserve \( \Dom A \) and have \( [A, T] \) extend to an bounded operator. Then, for any \( \alpha \in (0, 1) \) and \( y \in \R \), \( T \) preserves \( \Dom A|A|^{-1+\alpha} = \Dom A|A|^{-1+\alpha+y i} = \Dom |A|^{\alpha} \) and
	\[ [A|A|^{-1+\alpha+y i}, T] \]
	extends to a bounded operator and
	\[ \sup_{y \in \R} e^{-\tfrac{\pi}{2} |y|} \| [A|A|^{-1+\alpha+y i}, T] \| < \infty . \]
\end{lemma}

For the proof, we require the integral formula
\begin{equation}
(1 + D^2)^{-\alpha} = \frac{\sin(\alpha \pi)}{\pi} \int_0^{\infty} \lambda^{-\alpha} (\lambda + 1 + D^2)^{-1} d\lambda , \label{M} 
\end{equation}
norm-convergent for \( 0 < \Re(\alpha) < 1 \); for more details we refer to \cite[Lemma A.4]{Carey_1998}.

\begin{proof}
	Let \( \langle A\rangle = (1 + A^2)^{1/2} \) .
	First, note that
	\[ [\langle A\rangle^{\alpha+y i}, T] = - \langle A\rangle^{\alpha+y i} [\langle A\rangle^{-\alpha-y i}, T] \langle A\rangle^{\alpha+y i} . \]
	By the integral formula \eqref{M} and using \cite[Lemma 2.3]{Carey_1998}, on \( \Dom A \), 
	\begin{align*}
		& -\langle A\rangle^{\alpha} [\langle A\rangle^{-\alpha-y i}, T] \langle A\rangle^{\alpha} \\
		& \qquad = \frac{\sin \frac{(\alpha+y i) \pi}{2}}{\pi} \int_0^{\infty} \lambda^{-\frac{\alpha+y i}{2}} \langle A\rangle^{\alpha} [T, (\lambda + 1 + A^2)^{-1}] \langle A\rangle^{\alpha} d\lambda \\
		& \qquad = \frac{\sin \frac{(\alpha+y i) \pi}{2}}{\pi} \int_0^{\infty} \lambda^{-\frac{\alpha+y i}{2}} \langle A\rangle^{\alpha} \left( A (\lambda + 1 + A^2)^{-1} [A, T] (\lambda + 1 + A^2)^{-1} \right. \\
		& \qquad\qquad \left. + (\lambda + 1 + A^2)^{-1} [A, T] A (\lambda + 1 + A^2)^{-1} \right) \langle A\rangle^{\alpha} d\lambda .
	\end{align*}
	The integral is norm-convergent and we obtain a bound
	\begin{align*}
		& \big\| \langle A\rangle^{\alpha} [\langle A\rangle^{-\alpha-y i}, a] \langle A\rangle^{\alpha} \big\| \\
		& \qquad \leq \frac{| \sin \frac{(\alpha+y i) \pi}{2} |}{\pi} \int_0^{\infty} \lambda^{-\frac{\alpha}{2}} \left( \big\| A \langle A\rangle^{\alpha} (\lambda + 1 + A^2)^{-1} \big\| \| [A, T] \| \big\| \langle A\rangle^{\alpha} (\lambda + 1 + A^2)^{-1} \big\| \right. \\
		& \qquad\qquad \left. + \big\| \langle A\rangle^{\alpha}  (\lambda + 1 + A^2)^{-1} \big\| \| [A, T] \| \big\| A \langle A\rangle^{\alpha} (\lambda + 1 + A^2)^{-1} \big\| \right) d\lambda \\
		& \qquad \leq \frac{| \sin \frac{(\alpha+y i) \pi}{2} |}{\pi} 2 \| [A, T] \| \int_0^{\infty} \lambda^{-\frac{\alpha}{2}} (\lambda + 1)^{-\frac{3}{2} + \alpha} d\lambda \\
		& \qquad = \sqrt{\cosh (y \pi)-\cos (\alpha \pi)} \frac{2^{\alpha}}{\sqrt{2 \pi}} \frac{\Gamma(1-\alpha)}{\Gamma(\frac{3}{2}-\alpha)} \| [A, T] \| .
	\end{align*}
	Next, with \( F_A = A \langle A\rangle^{-1} \), a similar application of the integral formula \eqref{M}, as in \cite[Proposition 2.4]{Carey_1998}, gives
	\[ \big\| [F_A, T] \langle A\rangle^{\alpha} \big\| \leq C_{\alpha} \big\| [D, T] \langle D\rangle^{-\alpha} \big\| \]
	for some constant \( C_{\alpha} \).
	Write
	\[ [F_A \langle A\rangle^{\alpha+y i}, T] = [F_A, T] \langle A\rangle^{\alpha+y i} + F_A \langle A\rangle^{\alpha+y i} [\langle A\rangle^{-\alpha-y i}, T] \langle A\rangle^{\alpha+y i} \]
	so that
	\begin{align*}
		\big\| [F_A \langle A\rangle^{\alpha+y i}, T] \big\|
		& \leq \big\| [F_A, T] \langle A\rangle^{\alpha} \big\| + \big\| \langle A\rangle^{\alpha} [\langle A\rangle^{-\alpha-y i}, a] \langle A\rangle^{\alpha} \big\| \\
		& \leq C'_{\alpha} (1 + \big| \sin \tfrac{(\alpha+y i) \pi}{2} \big|) \big\| [A, T] \big\|
	\end{align*}
	for some constant \( C'_{\alpha} \). Hence \( [F_A \langle A\rangle^{\alpha+y i}, T] \) extends to an adjointable operator.
	
	Next, for \( x \in \R \),
	\begin{align*}
		\big| x |x|^{-1+\alpha +y i} - x \langle x\rangle^{-1+\alpha+y i} \big|
		& = |x| \big| |x|^{-1+\alpha+y i} - \langle x\rangle^{-1+\alpha+y i} \big| \\
		& \leq \left( |x|^{\alpha} \big| (|x| \langle x\rangle^{-1})^{y i} - 1 \big| + |x| \big| |x|^{-1+\alpha} - \langle x\rangle^{-1+\alpha} \big| \right) |\langle x\rangle^{y i}| \\
		& \leq |x|^{\alpha} \big| 1 - (|x| \langle x\rangle^{-1})^{y i} \big| + |x| \big| |x|^{-1+\alpha} - \langle x\rangle^{-1+\alpha} \big| .
	\end{align*}
	Now
	\begin{align*}
		\big| 1 - (|x| \langle x\rangle^{-1})^{y i} \big|
		& = \sqrt{(1 - \cos(y \log (|x| \langle x\rangle^{-1})) )^2 + \sin(y \log (|x| \langle x\rangle^{-1}))^2} \\
		& = \sqrt{2 - 2 \cos(y \log (|x| \langle x\rangle^{-1}))^2} \\
		& \leq |y| (\log \langle x\rangle - \log |x|)
	\end{align*}
	since \( |1 - \cos \theta| \leq \frac{1}{2} \theta^2 \). One can check that there exist \( c_1, c_2 > 0 \) such that \( |x|^{\alpha} (\log \langle x\rangle - \log |x|) \leq c_1 \alpha \) and \( |x| \big| |x|^{-1+\alpha} - \langle x\rangle^{-1+\alpha} \big| \leq c_2 \alpha \). Hence
	\[ \big| x |x|^{-1+\alpha +y i} - x \langle x\rangle^{-1+\alpha+y i} \big| \leq C'' \alpha (1 + |y|) \]
	for some \( C'' > 0 \) and therefore
	\[ \big| A |A|^{-1+\alpha +y i} - F_A \langle A\rangle^{\alpha+y i} \big| \leq C'' \alpha (1 + |y|) . \]
	Hence
	\begin{align*}
		\| [A|A|^{-1+\alpha+y i}, T] \| & \leq \big\| [F_A \langle A\rangle^{\alpha+y i}, T] \big\| + \big\| [A |A|^{-1+\alpha +y i} - F_A \langle A\rangle^{\alpha+y i}, T] \big\| \\
		& \leq C'_{\alpha} (1 + \big| \sin \tfrac{(\alpha+y i) \pi}{2} \big|) \big\| [D, S] \langle D\rangle^{-\alpha} \big\| + 2 C'' \alpha (1 + |y|) \| T \|
	\end{align*}
	and \( [A|A|^{-1+\alpha+y i}, T] \) extends to a bounded operator.
	We finally obtain that
	\[ \sup_{y \in \R} e^{-\tfrac{\pi}{2} |y|} \| [A|A|^{-1+\alpha+y i}, T] \| < \infty \]
	as required.
\end{proof}

\begin{proof}[Proof of Theorem \ref{theorem:nearly-convex}]
	Let \( (\alpha_1, \beta_1) \in S \). First, noting that \( \Dom A |A|^{-1 + \alpha} = \Dom |A|^{\alpha} \) for all \( \alpha \geq 0 \),
	by \cite[Theorem 12.5]{Krasnoselskii_1976}, \( T \) preserves \( \Dom A |A|^{-1+\alpha} \) for all \( \alpha \leq \alpha_1 \). Second,
	\[ [A |A|^{-1+\alpha_1}, T] B^{-\beta} \]
	is bounded for all \( \beta \geq \beta_1 \).
	Third, since $A$ and $B$ commute on a common core,
	\[ [A |A|^{-1+\alpha_1}, T] B^{-\beta_1} = [A |A|^{-1+\alpha_1}, T B^{-\beta_1}] \]
	extends to a bounded operator. Fix \( \alpha_1' \in (0, \alpha_1) \) so that by Lemma \ref{lemma:56une56ue56ub56une6}, the operator
	\[ [A |A|^{-1+\alpha_1' + y i}, T B^{-\beta_1}] = [A |A|^{-1+\alpha_1'+y i}, T] B^{-\beta_1} \]
	is bounded for any \( y \in \R \), with
	\begin{align*}
		M_1 & := \sup_{y \in \R} e^{(\alpha_1'+y i)^2} \| [A |A|^{-1+\alpha_1'+y i}, T] B^{-\beta_1} \| \\
		& \; = e^{\alpha_1'^2} \sup_{y \in \R} e^{-y^2} \| [A |A|^{-1+\alpha_1'+y i}, T] B^{-\beta_1} \| < \infty .
	\end{align*}
	Let \( (\alpha_3, \beta_3) \in S \) with \( \alpha_3 \geq \alpha_1 \) and fix \( \alpha_3' \in (0, \alpha_3) \) so that, similarly,
	\[ M_3 := \sup_{y \in \R} e^{(\alpha_3'+y i)^2} \| [A |A|^{-1+\alpha_3'+y i}, T] B^{-\beta_3} \| < \infty . \]
	Let \( \eta \in \mathpzc{H} \) and \( \xi \in \Dom(A) \). Define the holomorphic function
	\[ f : z \mapsto e^{(\alpha_1' + (\alpha_3' - \alpha_1') z)^2} \langle \eta \mid [A |A|^{-1 + \alpha_1' + (\alpha_3' - \alpha_1') z}, T] B^{-\beta_1 - (\beta_3 - \beta_1) z} \xi \rangle \]
	on the strip where \( 0 \leq \Re(z) \leq 1 \). We have
	\[ \| f(z) \| \leq \big| e^{(\alpha_1' + (\alpha_3' - \alpha_1') z)^2} \big| \| \eta \| \big\| \mid [A |A|^{-1 + \alpha_1' + (\alpha_3' - \alpha_1') z}, T] B^{-\beta_1 - (\beta_3 - \beta_1) z} \xi \big\| . \]
	For \( y \in \R \),
	\begin{align*}
		\| f(1 + y i) \|
		& \leq \big| e^{(\alpha_3' + (\alpha_3' - \alpha_1') y i)^2} \big| \| \eta \| \big\| [A |A|^{-1 + \alpha_3' + (\alpha_3' - \alpha_1') y i}, T] B^{-\beta_3} B^{(\beta_3 - \beta_1) y i} \xi \big\| \\
		& \leq M_3 \| \eta \| \| \xi \|
	\end{align*}
	and
	\begin{align*}
		\| f(y i) \|
		& \leq \big| e^{(\alpha_1' + (\alpha_3' - \alpha_1')^2} \big| \| \eta \| \big\| [A |A|^{-1 + \alpha_1' + (\alpha_3' - \alpha_1') y i}, T] B^{- \beta_1} B^{(\beta_1 - \beta_3) y i} \xi \big\| \\
		& \leq M_1 \| \eta \| \| \xi \| .
	\end{align*}
	By Hadamard's three-line theorem, we obtain for \( \alpha_1' \leq \alpha_2 \leq \alpha_3' \) that
	\[ \big| e^{\alpha_2^2} \langle \eta \mid [A |A|^{-1 + \alpha_2}, T] B^{-\gamma} \xi \rangle \big| = | f(\tfrac{\alpha_2 - \alpha_1'}{\alpha_3' - \alpha_1'}) | \leq M_1^{\frac{\alpha_3' - \alpha_2}{\alpha_3' - \alpha_1'}} M_3^{\frac{\alpha_2 - \alpha_1'}{\alpha_3' - \alpha_1'}} \| \eta \| \| \xi \| \]
	where \( \gamma = \frac{(\alpha_3' - \alpha_2) \beta_1 + (\alpha_2 - \alpha_1') \beta_3}{\alpha_3' - \alpha_1'} \).
	Hence, putting \( \eta = [A |A|^{-1 + \alpha_2}, T] B^{-\gamma} \xi \),
	\[ \| \eta \|^2 \leq e^{-\alpha_2^2} M_1^{\frac{\alpha_3' - \alpha_2}{\alpha_3' - \alpha_1'}} M_3^{\frac{\alpha_2 - \alpha_1'}{\alpha_3' - \alpha_1'}} \| \eta \| \| \xi \| \]
	and so
	\[ \| \eta \| \leq e^{-\alpha_2^2} M_1^{\frac{\alpha_3' - \alpha_2}{\alpha_3' - \alpha_1'}} M_3^{\frac{\alpha_2 - \alpha_1'}{\alpha_3' - \alpha_1'}} \| \xi \| . \]
	By the density of \( \Dom(A) \) in \( \mathpzc{H} \),
	\[ [A |A|^{-1 + \alpha_2}, T] B^{-\gamma} \leq e^{-\alpha_2^2} M_1^{\frac{\alpha_3' - \alpha_2}{\alpha_3' - \alpha_1'}} M_3^{\frac{\alpha_2 - \alpha_1'}{\alpha_3' - \alpha_1'}} . \]
	By making suitable choices of \( \alpha_1' \) and \( \alpha_3' \), we obtain that
	\[ [A |A|^{-1+\alpha_2}, T] B^{-\beta_2} \]
	is bounded for any \( \alpha_1 \leq \alpha_2 \leq \alpha_3 \) and \( \beta_2 > \frac{(\alpha_3 - \alpha_2) \beta_1 + (\alpha_2 - \alpha_1) \beta_3}{\alpha_3 - \alpha_1} \).
	
	It is immediate that \( \{ 0 \} \times [0, \infty) \subseteq S \). The statement for \( 0 \leq \alpha_2 \leq \alpha_1 \) follows by substituting \( (0, 0) \) for \( (\alpha_1, \beta_1) \) and \( (\alpha_1, \beta_1) \) for \( (\alpha_3, \beta_3) \).
	
	Define the subset \( C \) of \( (\alpha, \beta) \in S \) such that, for all \( y \in \R \),
	\[ [A |A|^{-1+\alpha + i y}, T] B^{-\beta} = [A |A|^{-1+\alpha + i y}, T B^{-\beta}] \]
	extends from \( \Dom |A|^{\alpha} \) to a bounded operator on \( \mathpzc{H} \) and
	\[ \sup_{y \in \R} e^{-y^2} \| [A|A|^{-1+\alpha+y i}, T] B^{-\beta} \| < \infty . \]
	Then we have seen that \( C \) is convex and Lemma \ref{lemma:56une56ue56ub56une6} says that \( S \subseteq \overline{C} \).
\end{proof}

\providecommand{\bysame}{\leavevmode\hbox to3em{\hrulefill}\thinspace}
\providecommand{\href}[2]{#2}

\end{document}